\newenvironment{proof1}[1][\textbf{Proof of Theorem~\ref{theo4.5}}]{\textit{#1.} }{\hfill $\Box$}
\newenvironment{proof2}[1][\textbf{Proof of Theorem~\ref{teo4.5}}]{\textit{#1.} }{\hfill $\Box$}
\newenvironment{proof3}[1][\textbf{Proof of Theorem~\ref{theo4.4}}]{\textit{#1.} }{\hfill $\Box$}
\newenvironment{proof4}[1][\textbf{Proof of Theorem~\ref{teo4.4}}]{\textit{#1.} }{\hfill $\Box$}
\newenvironment{proof5}[1][\textbf{Proof of Theorem~\ref{theor4.7}}]{\textit{#1.} }{\hfill $\Box$}
\newcommand{\zerarcounters}
{
\setcounter{equation}{0}
\setcounter{theorem}{0}
}
\newcommand{\OBSI}{\begin{remark}\begin{rm}}
\newcommand{\OBSF}{\end{rm}\end{remark}}
\newcommand{\DEFI}{\begin{definition}\begin{rm}}
\newcommand{\DEFF}{\end{rm}\end{definition}}
\newcommand{\dint}{\displaystyle\int}
\newcommand{\be}{\begin{eqnarray}}
\newcommand{\en}{\end{eqnarray}}
\newcommand{\bee}{\begin{eqnarray*}}
\newcommand{\ene}{\end{eqnarray*}}
\DeclareMathOperator*{\Ran}{Ran}
\DeclareMathOperator*{\dom}{\mathcal{D}}
\DeclareMathOperator*{\dist}{dist}
\newtheorem{definition}{\bf Definition}[section]
\newtheorem{proposition}{\bf Proposition}[section]
\newtheorem{lemma}{\bf Lemma}[section]
\newtheorem{theorem}{\bf Theorem}[section]
\newtheorem{corollary}{\bf Corollary}[section]
\newtheorem{remark}{Remark}[section]
\newtheorem{example}{Example}[section]
\title{Refined decay rates of $C_0$-semigroups on Banach spaces}
\author{Genilson S. de Santana and Silas Luiz de Carvalho}
\date{\today}
\begin{document}

\maketitle

\begin{abstract} 
We study rates of decay for $C_0$-semigroups on Banach
spaces under the assumption that the norm of the resolvent of the semigroup generator grows with $|s|^{\beta}\log(|s|)^b$, $\beta, b \geq 0$, as $|s|\rightarrow\infty$, and with $|s|^{-\alpha}\log(1/|s|)^a$, $\alpha, a \geq 0$, as $|s|\rightarrow 0$. Our results do not suppose that the semigroup is bounded. In particular, for $a=b=0$, our results improve the rates involving Fourier types obtained by Rozendaal and Veraar (J. Funct. Anal. 275(10): 2845–2894, 2018).
\end{abstract}  


\section{Introduction}
\zerarcounters

\subsection{Historical background}
An important question in the theory of differential equations refers to the asymptotic behavior (in time) of their solutions; more specifically, if they reach an equilibrium and, if so, with
which speed. For those linear partial
differential equations which can be conveniently analyzed by rewriting them as evolution
equations, it is well known that the long-term behavior of the solutions of each one of these equations is related to some spectral properties (and behavior of the resolvent) of the generator of the associated semigroup.

The asymptotic theory of semigroups provides tools for investigating the convergence to zero of mild and classical solutions to the abstract Cauchy problem 
\begin{equation}\label{1.0}
\left\{\begin{array}{ll}u'(t)+Au(t)=0, \ \ \  t\geq 0 \\ u(0)=x, \end{array}\right.
\end{equation}
We know that $\eqref{1.0}$ has a unique mild solution for every $x\in X$, and that the solution depends continuously on $x$ if, and only if, $-A$ generates a $C_0$-semigroup  $(T(t))_{t\geq 0}$ on $X$ (see \cite{valued, Engel}). In this case,
the unique solution $u$ to $\eqref{1.0}$ is given by $u(t)=T(t)x, \  \ \forall~t\geq 0$, and if $x\in \mathcal{D}(A)$, then
$u \in C^{1}([0,\infty),X)$ (see $\cite{Engel}$, Proposition~II.6.2).

For the classic theory of ODEs in finite dimension, the Lyapunov stability criterion (see $\cite{Engel}$, Theorem 2.10) 
is an excellent tool in the study of the asymptotic behavior of solutions to $\eqref{1.0}$, but this criterion is in general not valid if $X$ has infinite
dimension. However, in this case, the asymptotic behavior can be deduced from of the norm of the resolvent of the operator $A$. For example, on a Hilbert
space $X$, one has the Gearhart(1978)-Pr\"uss(1984)-Greiner(1985) Theorem. 

In what follows, $\rho(A):=\{\lambda\in\mathbb{C}\mid \Vert (\lambda-A)^{-1}\Vert_{\mathcal{L}(X)}<\infty\}$ and $\sigma(A):=\mathbb{C}\setminus\rho(A)$ stand, respectively, for the resolvent set and the spectrum of $A$, a densely defined linear operator in a Banach (Hilbert) space $X$. 

\begin{theorem}[Theorem~I1.10 in \cite{Engel}]
\begin{rm}
A $C_0$-semigroup $(T(t))_{t\geq 0}$ on a Hilbert space $X$ is uniformly
exponentially stable if, and only if, its generator $-A$ satisfies $\mathbb{C}_{-}\subset \rho(A)$ and 
\begin{equation*}
    \sup_{\text{Re} \lambda<0} \|(\lambda+A)^{-1}\|_{\mathcal{L}(X)}<\infty.
\end{equation*}
\end{rm}
\end{theorem}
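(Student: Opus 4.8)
Write $G=-A$ for the generator of $(T(t))_{t\ge0}$, so that $(\lambda+A)^{-1}=R(\lambda,G)$ and the condition on the right-hand side says that the relevant open half-plane (bounded by $i\mathbb{R}$) is contained in $\rho(G)$ with $M:=\sup\|R(\lambda,G)\|<\infty$ on it. I would prove the two implications separately. The forward implication is soft: if $\|T(t)\|\le M_0e^{-\varepsilon t}$ then the growth bound of $G$ is at most $-\varepsilon$, so $\{\mathrm{Re}\,\lambda>-\varepsilon\}\subset\rho(G)$ and there $R(\lambda,G)=\int_0^\infty e^{-\lambda t}T(t)\,dt$, whence $\|R(\lambda,G)\|\le M_0/(\mathrm{Re}\,\lambda+\varepsilon)$, uniformly bounded up to (and across) the imaginary axis. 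So the substance is the converse, and the plan for it has three parts.

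First, I would promote the resolvent bound to a strip. Since $\|R(\lambda,G)\|\ge 1/\mathrm{dist}(\lambda,\sigma(G))$, the bound $M$ on the open half-plane forces $i\mathbb{R}\subset\rho(G)$ and, by continuity of the resolvent, $\|R(i\tau,G)\|\le M$ for all $\tau$; a Neumann series around each point $i\tau$ then gives $\{\mathrm{Re}\,\lambda>-1/(2M)\}\subset\rho(G)$ with $\|R(\lambda,G)\|\le 2M$ there. Next, I would reduce to a boundedness statement: for $0<\varepsilon<1/(2M)$ the rescaled semigroup $\widetilde T(t)=e^{\varepsilon t}T(t)$ has generator $G+\varepsilon$ and resolvent $R(\,\cdot\,-\varepsilon,G)$, which by the previous step is bounded by $2M$ on $\{\mathrm{Re}\,\lambda\ge0\}$; hence it is enough to show that
\[
\{\mathrm{Re}\,\lambda\ge0\}\subset\rho(G),\quad \sup_{\mathrm{Re}\,\lambda\ge0}\|R(\lambda,G)\|\le C<\infty \ \Longrightarrow\ \sup_{t\ge0}\|T(t)\|<\infty,
\]
since applying this to $\widetilde T$ yields $\|T(t)\|\le \mathrm{const}\cdot e^{-\varepsilon t}$.

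The heart of the argument is the proof of that boundedness statement, and it is the only place the Hilbert structure is used. For $x\in\mathcal D(G)$ and $\delta$ larger than any exponential bound of $T$, the function $t\mapsto e^{-\delta t}T(t)x$ (extended by $0$ for $t<0$) lies in $L^2(\mathbb R;X)$ with Fourier transform $\tau\mapsto R(\delta+i\tau,G)x$; Plancherel's identity on the Hilbert space $X$ then gives
\[
\int_0^\infty e^{-2\delta t}\|T(t)x\|^2\,dt=\frac1{2\pi}\int_{\mathbb R}\|R(\delta+i\tau,G)x\|^2\,d\tau .
\]
To bound the right-hand side uniformly as $\delta\downarrow0$, I would combine the uniform estimate $\|R(\delta+i\tau,G)x\|\le C\|x\|$ (good for small $|\tau|$) with the identity $R(\lambda,G)x=\lambda^{-1}x+\lambda^{-1}R(\lambda,G)Gx$, which for $\lambda=\delta+i\tau$ gives $\|R(\delta+i\tau,G)x\|\le(\|x\|+C\|Gx\|)/|\tau|$ (good for large $|\tau|$); the pointwise minimum of the squares of these bounds is integrable in $\tau$, uniformly for $\delta\in(0,1]$. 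Letting $\delta\downarrow0$ (monotone convergence) yields $\int_0^\infty\|T(t)x\|^2\,dt\le K(\|x\|,\|Gx\|)<\infty$ for every $x\in\mathcal D(G)$. Finally, because $(T(t)^*)_{t\ge0}$ is a $C_0$-semigroup on $X$ with generator $G^*$ and $R(\lambda,G^*)=R(\bar\lambda,G)^*$, the same hypotheses hold for it, so the same argument gives $\int_0^\infty\|T(t)^*y\|^2\,dt<\infty$ for $y\in\mathcal D(G^*)$; then for $x\in\mathcal D(G)$, $y\in\mathcal D(G^*)$, writing $t\,\langle T(t)x,y\rangle=\int_0^t\langle T(s)x,T(t-s)^*y\rangle\,ds$ and applying Cauchy--Schwarz shows $t\mapsto\langle T(t)x,y\rangle$ is bounded, and two uses of the uniform boundedness principle give $\sup_{t\ge0}\|T(t)\|<\infty$.

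The main obstacle is exactly the Plancherel step: there is no a priori reason for orbits to be square-integrable, and the substitute is the Parseval identity for $X$-valued $L^2$-functions, which holds precisely because $X$ is a Hilbert space (and indeed the whole equivalence fails on general Banach spaces). A secondary, technical nuisance is that the bound produced this way involves $\|Gx\|$, not just $\|x\|$, which is why the passage from $\mathcal D(G)$ to all of $X$ is done through the adjoint semigroup and the uniform boundedness principle rather than by a naive density argument.
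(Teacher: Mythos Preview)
The paper does not prove this theorem; it is stated in the historical background section with the attribution ``Theorem~I1.10 in \cite{Engel}'' and no proof is given. Your proposal is therefore not being compared against anything in the paper.

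That said, your outline is correct and is essentially the standard textbook proof (as in Engel--Nagel or Arendt--Batty--Hieber--Neubrander): the forward implication via the Laplace integral for the resolvent, the Neumann-series extension to a strip, the rescaling reduction to boundedness, the Plancherel identity $\int_0^\infty e^{-2\delta t}\|T(t)x\|^2\,dt=\frac{1}{2\pi}\int_{\mathbb R}\|R(\delta+i\tau,G)x\|^2\,d\tau$ on a Hilbert space, the splitting of the $\tau$-integral using $R(\lambda,G)x=\lambda^{-1}x+\lambda^{-1}R(\lambda,G)Gx$ for decay at infinity, and finally the duality/Cauchy--Schwarz trick $t\langle T(t)x,y\rangle=\int_0^t\langle T(s)x,T(t-s)^*y\rangle\,ds$ followed by two applications of the uniform boundedness principle. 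You have also correctly identified the one genuinely Hilbert-space-specific step (vector-valued Plancherel) and the reason the graph-norm dependence forces the adjoint argument rather than a naive density argument.
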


\OBSI A uniform bound for the resolvent is not sufficient to ensure exponential stability
on general Banach spaces; see Counterexample~IV.2.7 in $\cite{Engel}$.
\OBSF

The works of Lebeau $\cite{labeau1, labeau2}$ and Burq $\cite{burq}$
raised the question of what is the relation between the growth rates for norm of the resolvent and the
decay rates of the norm of semigroup orbits.
More precisely, assuming a spectral condition under the generator, $\sigma(A)\subset \mathbb{C}_{+}$ in $\eqref{1.0}$, and $\|R(is, A)\|_{\mathcal{L}(X)} \rightarrow \infty$ as $|s|\rightarrow \infty$, then $(T(t))_{t\geq 0}$ is not exponentially stable and one typically obtains other
asymptotic behavior.  

Until 2010, much attention has been paid
to polynomial decay rates of the norm of semigroup orbits. In the work of ~\cite{batki}, Bátkai, Engel, Pr\"uss and Schnaubelt proved that for uniformly bounded semigroups, a polynomial growth rate of the norm of the resolvent implies a specific polynomial decay rate for
classical solutions to $\eqref{1.0}$. 

\begin{theorem}[Theorem 3.5 in~\cite{batki}]\label{TBEPS}
\begin{rm}
Let $(T(t))_{t\geq 0}$ be a \textit{bounded} semigroup on
a Banach space $X$ with infinitesimal generator $-A$ such that $\sigma(A)\cap i\mathbb{R}=\emptyset$. Let $s\geq 0 $ and set
\begin{equation}\label{eq2}
M(s):=\sup_{|\xi|\leq s}\|(i\xi+A)^{-1}\|_{\mathcal{L}(X)}.
\end{equation}
If there exist constants $C,\beta>0$ such that $M(s) \leq C(1+s)^{\beta}$, then  for each $\varepsilon>0$, there exists a positive constant $C_{\varepsilon}$ such that for each $t>0$,
\begin{equation}\label{ETBEPS}
    \|T(t)(1+A)^{-1}\|_{\mathcal{L}(X)}\leq C_{\varepsilon} t^{-\frac{1}{\beta}+\varepsilon}.
\end{equation}
\end{rm}
\end{theorem}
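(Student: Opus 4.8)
The plan is to prove a decay estimate for the higher powers $T(t)(1+A)^{-n}$, which is accessible by a curved‑contour argument, and then to descend to $(1+A)^{-1}$ by interpolation, paying an $\varepsilon$‑loss. \textbf{Setup and contour representation.} Since $-A$ generates a bounded semigroup, $\sigma(A)\subset\{\mathrm{Re}\,z\geq 0\}$ and $\|(z+A)^{-1}\|_{\mathcal{L}(X)}\leq M/\mathrm{Re}\,z$ for $\mathrm{Re}\,z>0$; combined with $\sigma(A)\cap i\mathbb{R}=\emptyset$ and $M(s)\leq C(1+s)^{\beta}$, a Neumann‑series perturbation of $(i\xi+A)^{-1}$ shows that the region $\Omega:=\{\mu+i\xi:\mu>-c(1+|\xi|)^{-\beta}\}$ lies in $\rho(A)$ with $\|(\lambda+A)^{-1}\|_{\mathcal{L}(X)}\leq 2C(1+|\xi|)^{\beta}$ there, where $\lambda=\mu+i\xi$ and $c\in(0,1)$ is fixed. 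Let $\Gamma$ be the boundary curve $\xi\mapsto -c(1+|\xi|)^{-\beta}+i\xi$. Fix an integer $n>\beta+1$. For $x\in X$ the inverse Laplace transform gives $T(t)(1+A)^{-n}x=\frac1{2\pi i}\int_{\mathrm{Re}\,\lambda=\omega}e^{\lambda t}(\lambda+A)^{-1}(1+A)^{-n}x\,d\lambda$ for any $\omega>0$; expanding $(\lambda+A)^{-1}(1+A)^{-n}$ by the resolvent identity against the pole at $\lambda=-1$ one writes it as a rational (in $\lambda$) $\mathcal{L}(X)$‑valued function with poles only at $-1$, plus the term $(-1)^{n}(1+\lambda)^{-n}(\lambda+A)^{-1}$. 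Shifting the contour onto $\Gamma$ (the integrals converge because $\|(1+\lambda)^{-n}(\lambda+A)^{-1}\|_{\mathcal{L}(X)}\lesssim(1+|\xi|)^{\beta-n}$ is integrable and the horizontal segments at $\mathrm{Im}\,\lambda=\pm N$ vanish as $N\to\infty$, by Cauchy's theorem), and evaluating the rational part by residues, yields
\[
T(t)(1+A)^{-n}x=p_{n}(t)e^{-t}x_{n}+\frac{(-1)^{n}}{2\pi i}\int_{\Gamma}\frac{e^{\lambda t}}{(1+\lambda)^{n}}(\lambda+A)^{-1}x\,d\lambda ,
\]
where $p_{n}$ is a polynomial and $x_{n}$ a bounded combination of $(1+A)^{-j}x$.

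\textbf{Split‑and‑optimise.} On $\Gamma$ one has $|e^{\lambda t}|=e^{-ct(1+|\xi|)^{-\beta}}$, so the last integral has norm at most $C_{n}\int_{\mathbb{R}}e^{-ct(1+|\xi|)^{-\beta}}(1+|\xi|)^{\beta-n}\,d\xi$. Splitting at $|\xi|=R$, the part $|\xi|\leq R$ is bounded by $e^{-ct(1+R)^{-\beta}}\int_{\mathbb{R}}C_{n}(1+|\xi|)^{\beta-n}\,d\xi$, and the part $|\xi|>R$ by $C_{n}\int_{|\xi|>R}(1+|\xi|)^{\beta-n}\,d\xi\lesssim R^{\beta+1-n}$. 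Choosing $R=R(t)$ with $(1+R)^{\beta}=ct/(n\log t)$ for $t$ large makes the first term $\lesssim t^{-n}$ and the second $\lesssim(t/\log t)^{(\beta+1-n)/\beta}$; since $(\log t)^{(n-\beta-1)/\beta}\leq C_{\varepsilon'}t^{\varepsilon'}$ for every $\varepsilon'>0$, and since $p_{n}(t)e^{-t}$ decays faster than any power, we get, for every $\varepsilon'>0$,
\[
\|T(t)(1+A)^{-n}\|_{\mathcal{L}(X)}\leq C_{n,\varepsilon'}\,t^{-\frac{n-\beta-1}{\beta}+\varepsilon'},\qquad t>0
\]
(for $0<t\leq1$ the bound is trivial, as $T(t)(1+A)^{-n}$ is uniformly bounded).

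\textbf{Descent to $(1+A)^{-1}$.} Because $-A$ generates a bounded semigroup, $1+A$ is a nonnegative operator, i.e. $\sup_{\mu>0}\|\mu(\mu+1+A)^{-1}\|_{\mathcal{L}(X)}<\infty$; hence the moment inequality / interpolation $\mathcal{D}(A)\hookrightarrow(X,\mathcal{D}((1+A)^{n}))_{1/n,\infty}$ holds with norm control. Interpolating the two bounds $\|T(t)\|_{X\to X}\leq M$ and $\|T(t)\|_{\mathcal{D}((1+A)^{n})\to X}\leq C_{n,\varepsilon'}t^{-(n-\beta-1)/\beta+\varepsilon'}$ gives
\[
\|T(t)(1+A)^{-1}\|_{\mathcal{L}(X)}\lesssim M^{1-1/n}\bigl(C_{n,\varepsilon'}\,t^{-\frac{n-\beta-1}{\beta}+\varepsilon'}\bigr)^{1/n}=C'_{n,\varepsilon'}\,t^{-\frac{n-\beta-1}{n\beta}+\frac{\varepsilon'}{n}} .
\]
As $\tfrac{n-\beta-1}{n\beta}=\tfrac1\beta-\tfrac{\beta+1}{n\beta}\to\tfrac1\beta$ when $n\to\infty$, given $\varepsilon>0$ one fixes $n$ so large that $\tfrac{\beta+1}{n\beta}<\varepsilon/2$ and then $\varepsilon'$ with $\varepsilon'/n<\varepsilon/2$, obtaining $\|T(t)(1+A)^{-1}\|_{\mathcal{L}(X)}\leq C_{\varepsilon}\,t^{-1/\beta+\varepsilon}$ for all $t>0$.

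\textbf{Main obstacle.} The delicate step is the descent: one must know that the real interpolation space between $X$ and $\mathcal{D}((1+A)^{n})$ at parameter $1/n$ contains $\mathcal{D}(A)$, with controlled norm, in an arbitrary Banach space — this is precisely where the $\varepsilon$‑loss enters (the interpolation ratio $1/n$ never reaches $1$), so the method cannot give the clean rate $t^{-1/\beta}$. Secondary care is needed in the contour shift of the first step: absolute convergence on $\Gamma$ and the vanishing of the horizontal pieces both rely on the partial‑fraction decomposition, since $(\lambda+A)^{-1}(1+A)^{-n}$ itself contains a slowly decaying $(1+\lambda)^{-1}(1+A)^{-n}$ piece that must first be removed (it contributes only the exponentially small residue term).
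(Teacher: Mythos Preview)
The paper does not prove this theorem. It appears in Section~1.1 only as a quoted background result (Theorem~3.5 of \cite{batki}); the paper's own contributions are Theorems~\ref{theo4.5}, \ref{teo4.5} and \ref{theor4.7}, established by different machinery (Fourier multipliers together with the complete Bernstein function calculus). There is therefore no proof in the paper to compare your attempt against.

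For what it is worth, your strategy---extend the resolvent bound into a parabolic region by Neumann series, deform the inverse Laplace contour onto its boundary to obtain decay of $T(t)(1+A)^{-n}$ for $n>\beta+1$, then interpolate down to $(1+A)^{-1}$ via the moment inequality---is essentially the argument in the original \cite{batki}. One concrete error: under the paper's convention that $-A$ is the generator, the resolvent identity reads
\[
(\lambda+A)^{-1}(1+A)^{-1}=\frac{(\lambda+A)^{-1}-(1+A)^{-1}}{1-\lambda},
\]
so iteration produces the term $(1-\lambda)^{-n}(\lambda+A)^{-1}$ and rational pieces with a pole at $\lambda=1$, not at $\lambda=-1$. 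Since $\lambda=1$ lies to the right of both the initial line $\mathrm{Re}\,\lambda=\omega$ (for small $\omega>0$) and of $\Gamma$, the contour shift picks up no residue; the rational pieces integrate to zero upon pushing to $\mathrm{Re}\,\lambda\to-\infty$, and the term $p_n(t)e^{-t}x_n$ should not appear. This does not affect your estimate, because $|1-\lambda|\asymp 1+|\xi|$ on $\Gamma$ just as $|1+\lambda|$ would, and the spurious exponential term is harmless; but the bookkeeping should be corrected. A minor further remark: the split-and-optimise step can be replaced by the substitution $u=(1+|\xi|)^{-\beta}$, which yields $\|T(t)(1+A)^{-n}\|\lesssim t^{-(n-\beta-1)/\beta}$ without the intermediate $\varepsilon'$-loss; the $\varepsilon$ in the final statement then comes solely from the interpolation exponent $\tfrac{n-\beta-1}{n\beta}\to\tfrac1\beta$.
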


Liu and Rão obtained in \cite{liu} sharper estimates than those given by \eqref{ETBEPS} in case $X$ is a Hilbert space. 
\begin{theorem}[Theorem 2.1 in~\cite{liu}]
 \begin{rm}
Let $X$ be a Hilbert space, and let $(T(t))_{t\geq 0}$, $A$ and $M$ be as in the statement of Theorem~\ref{TBEPS}. 
Then, if there exist constants $C,\beta>0$ such that $M(s) \leq C(1+s)^{\beta}$, then 
\begin{equation*}
    \|T(t)(1+A)^{-1}\|_{\mathcal{L}(X)}=O\left(\frac{\log(2+t)^{\frac{1}{\beta}+1}}{t^{\frac{1}{\beta}}}\right), \ t\rightarrow \infty.
\end{equation*}   
\end{rm}
\end{theorem}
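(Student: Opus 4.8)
The plan is to represent $T(t)(1+A)^{-1}y$ by a complex inversion integral and then to deform the contour into the part of the left half-plane on which the resolvent remains under control. Write $R(\lambda):=(\lambda+A)^{-1}$ and fix $y\in X$; since $0\in\rho(A)$, up to a bounded invertible factor one may work with $T(t)A^{-1}y$, and a routine density argument reduces matters to $y\in\mathcal{D}(A^{2})$, for which $T(t)A^{-1}y=\frac{1}{2\pi i}\int_{\gamma-i\infty}^{\gamma+i\infty}e^{\lambda t}R(\lambda)A^{-1}y\,d\lambda$ ($\gamma>0$), the integral being absolutely convergent because $R(\lambda)A^{-1}y=\lambda^{-1}\bigl(A^{-1}y-R(\lambda)y\bigr)=O(|\lambda|^{-2})$ along the vertical line. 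From $\|R(is)\|_{\mathcal{L}(X)}\le M(|s|)\le C(1+|s|)^{\beta}$ a Neumann series shows that $is+\mu\in\rho(A)$ with $\|R(is+\mu)\|\le 2M(|s|)$ whenever $|\mu|\le\tfrac12 M(|s|)^{-1}$; hence $R$ is holomorphic and $\|R(\lambda)\|\lesssim(1+|\mathrm{Im}\,\lambda|)^{\beta}$, so that $\|R(\lambda)A^{-1}y\|\lesssim|\lambda|^{-1}(1+|\mathrm{Im}\,\lambda|)^{\beta}\|y\|$, on the region $\Omega:=\{\lambda:\ \mathrm{Re}\,\lambda\ge -c\,(1+|\mathrm{Im}\,\lambda|)^{-\beta}\}$.

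Next, fix a parameter $R=R(t)\ge 1$, to be chosen at the end, and deform the line $\mathrm{Re}\,\lambda=\gamma$ onto the contour $\Gamma_{R}$ that follows the curve $\mathrm{Re}\,\lambda=-c(1+|\mathrm{Im}\,\lambda|)^{-\beta}$ for $|\mathrm{Im}\,\lambda|\le R$, is joined to the imaginary axis by the two horizontal segments $\{\mathrm{Im}\,\lambda=\pm R\}$ (of length $\asymp R^{-\beta}$), and coincides with $i\mathbb{R}$ for $|\mathrm{Im}\,\lambda|>R$; the deformation is justified by holomorphy on $\Omega$ and the decay of the integrand. On the curved low-frequency part $|e^{\lambda t}|\le e^{-ct(1+|\mathrm{Im}\,\lambda|)^{-\beta}}$ is largest near the endpoints, and the elementary substitution $v=(1+|\mathrm{Im}\,\lambda|)^{-\beta}$ bounds this contribution by $\lesssim R^{2\beta}t^{-1}e^{-ctR^{-\beta}}\|y\|$. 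On each horizontal segment $|e^{\lambda t}|\le1$ while $\|R(\lambda)A^{-1}y\|\lesssim R^{\beta-1}\|y\|$, so these two segments contribute $\lesssim R^{-1}\|y\|$.

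The remaining piece, the high-frequency tail $\frac{1}{2\pi}\int_{|s|>R}e^{ist}R(is)A^{-1}y\,ds$, is the crux, and the only point at which the Hilbert-space hypothesis enters: this integral is merely conditionally convergent, since $\|R(is)A^{-1}y\|\asymp|s|^{\beta-1}$. Splitting $R(is)A^{-1}y=\tfrac{1}{is}A^{-1}y-\tfrac{1}{is}R(is)y$, the first summand yields the explicit oscillatory integral $\tfrac{A^{-1}y}{2\pi i}\int_{|s|>R}\tfrac{e^{ist}}{s}\,ds$, of size $\lesssim(Rt)^{-1}\|y\|$ by the standard bound on the tail of the sine integral; for the second summand one integrates by parts in $t$, which produces boundary terms of size $\lesssim t^{-1}R^{\beta-1}\|y\|$ together with an integral involving $R(is)^{2}y$, whose pointwise bound $M(|s|)^{2}\lesssim|s|^{2\beta}$ is too weak and which must instead be controlled by Cauchy–Schwarz against an $L^{2}$-in-$s$ bound for the resolvent along vertical lines — a quantitative Gearhart–Prüss-type inequality available on Hilbert spaces through the Plancherel theorem applied to the (suitably truncated) semigroup. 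Forcing this conditionally convergent tail to be summable with a bound only polynomial in $R$ is the main obstacle; it is what requires $X$ to be a Hilbert space, and, because it is executed via Cauchy–Schwarz rather than the sharper Hardy-space machinery, it is the origin of the extra power of the logarithm. Finally one optimizes: choosing $R=R(t)$ so that $ctR(t)^{-\beta}=(1+\tfrac1\beta)\log t$, i.e.\ $R(t)\asymp(t/\log t)^{1/\beta}$, makes the curved low-frequency term $o(t^{-1/\beta})$ and all the remaining terms $O\bigl(\log(2+t)^{1/\beta+1}t^{-1/\beta}\bigr)$; summing the contributions gives $\|T(t)A^{-1}y\|\le C\,\log(2+t)^{1/\beta+1}\,t^{-1/\beta}\|y\|$ for large $t$, hence the stated estimate.
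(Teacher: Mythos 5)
This theorem is quoted from Liu and R\~ao~\cite{liu} and is not proved in the paper, so there is no ``paper's own proof'' to compare against; I will review the proposal on its own merits.

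Your overall strategy --- represent $T(t)(1+A)^{-1}y$ by a complex inversion integral, push the contour slightly into the left half-plane on the low-frequency part, join it to the imaginary axis at height $R$, and then optimize the cutoff $R=R(t)$ --- is a legitimate template, and your low-frequency and horizontal-segment estimates, as well as the final choice $R(t)\asymp(t/\log t)^{1/\beta}$, are in order. The genuine gap is exactly where you flag it: the high-frequency tail $\int_{|s|>R}e^{ist}R(is,A)A^{-1}y\,ds$. After the split $R(is,A)A^{-1}y=(is)^{-1}A^{-1}y-(is)^{-1}R(is,A)y$ and one integration by parts in $s$, the term $\frac{1}{it}\int_{|s|>R}e^{ist}\,\frac{R(is,A)^{2}y}{s}\,ds$ remains, and the pointwise bound $\|R(is,A)^{2}y\|\lesssim|s|^{2\beta}\|y\|$ makes this integral diverge absolutely for every $\beta>0$ (as does $\frac{1}{it}\int_{|s|>R}e^{ist}\,\frac{R(is,A)y}{s^{2}}\,ds$ once $\beta\ge 1$). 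Your proposed rescue --- Cauchy--Schwarz against an $L^{2}$-in-$s$ bound for the resolvent coming from Plancherel --- is not carried out, and it is not clear that it can be: for a merely \emph{bounded} semigroup, Plancherel gives $\int_{\mathbb{R}}\|R(\gamma+is,A)x\|^{2}\,ds\lesssim\gamma^{-1}\|x\|^{2}$ for $\gamma>0$, which blows up as $\gamma\to0^{+}$, while the estimate you would actually need is a finite $L^{2}(|s|>R)$ bound for $R(is,A)^{2}y$ \emph{on the imaginary axis}, where the resolvent genuinely grows like $|s|^{\beta}$; no such bound is produced. The argument as written therefore does not close, and, tellingly, the claimed power $\log(2+t)^{1/\beta+1}$ is never derived --- every term you do estimate is strictly smaller than the asserted rate, so the entire logarithmic loss is supposed to arise from the one step that is left as a gesture. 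To make this work you would need to replace the Cauchy--Schwarz sketch with a concrete Hilbert-space ingredient (for instance, the characterization of exponential stability in terms of $\sup_{\mathrm{Re}\,\lambda<0}\|R(\lambda,A)\|$, applied to rescaled generators, or the Lindel\"of/Phragm\'en--Lindel\"of interpolation that underlies the Batty--Duyckaerts contour argument) and then verify that it produces exactly the $(1+1/\beta)$-power of $\log$.
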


In~\cite{Duykaerts}, Batty and Duyckaerts
extended this correspondence to the case where the  resolvent growth is arbitrary; they were also able to reduce the loss $\varepsilon>0$ (see relation~\eqref{ETBEPS}) to a logarithmic scale.

\begin{theorem}[Theorem 1.5 in~\cite{Duykaerts}]\label{batty}
\begin{rm}
Let $(T(t))_{t\geq 0}$ be a \textit{bounded} semigroup on
a Banach space $X$ with infinitesimal generator $-A$ such that $\sigma(A)\cap i\mathbb{R}=\emptyset$. Let $M:(0,\infty)\rightarrow (0,\infty)$ be given by $\eqref{eq2}$; then, there exists a positive constant $C$ such that
\begin{equation*}
    \|T(t)(1+A)^{-1}\|_{\mathcal{L}(X)}=O\left(\frac{1}{M^{-1}_{\log}(Ct)}\right), \ t\rightarrow \infty,
\end{equation*}
where $M^{-1}_{\log}$ is the right inverse of $M_{\log}(s):=M(s)(\log(1+M(s))+\log(1+s))$. In particular, if $M(s)\leq C (1+s)^{\beta}$ for any $\beta >0$ and $C>0$, then
\begin{equation*}
  \|T(t)(1+A)^{-1}\|_{\mathcal{L}(X)}=O\left(\frac{\log(t)}{t}\right)^{1/\beta}, \ t\rightarrow \infty.  
\end{equation*}
\end{rm}
\end{theorem}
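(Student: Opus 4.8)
\medskip
\noindent\textbf{A proposed line of proof.}
Write $K:=\sup_{t\ge 0}\|T(t)\|_{\mathcal L(X)}<\infty$. Since $-A$ generates $(T(t))_{t\ge 0}$, one has $(\lambda+A)^{-1}=\int_0^\infty e^{-\lambda s}T(s)\,ds$ for $\operatorname{Re}\lambda>0$, and the hypothesis together with~\eqref{eq2} gives $\|(is+A)^{-1}\|_{\mathcal L(X)}\le M(|s|)$. The first step is to propagate this bound off the imaginary axis: expanding $(\lambda+A)^{-1}$ in a Neumann series about each $is$ shows that it extends holomorphically to
\[
\Omega:=\Bigl\{\lambda\in\mathbb C:\ \operatorname{Re}\lambda>-\tfrac{1}{2M(|\operatorname{Im}\lambda|)}\Bigr\}
\]
(shrinking the constant $\tfrac12$ if necessary to absorb the oscillation of $M$ inside the disc), with the uniform bound $\|(\lambda+A)^{-1}\|_{\mathcal L(X)}\le 2M(|\operatorname{Im}\lambda|)$ on $\Omega$; since $M$ is nondecreasing and everywhere finite, $\Omega$ is a genuine open neighbourhood of $i\mathbb R$. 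I would also record the elementary identity $(\lambda+A)^{-1}(1+A)^{-1}=\tfrac{1}{\lambda-1}\bigl((1+A)^{-1}-(\lambda+A)^{-1}\bigr)$, which turns the above into $\|(\lambda+A)^{-1}(1+A)^{-1}\|_{\mathcal L(X)}\lesssim M(|\operatorname{Im}\lambda|)/(1+|\operatorname{Im}\lambda|)$ on $\Omega$ (where $\operatorname{Re}\lambda$ is bounded).

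Fix $y\in X$, $\|y\|\le 1$, and set $z:=(1+A)^{-1}y\in\mathcal D(A)$, so $\|z\|,\|Az\|\lesssim 1$; by Proposition~II.6.2 in~\cite{Engel} the orbit $s\mapsto T(s)z$ is $C^1$ with bounded derivative $-T(\cdot)Az$. The second step is a truncated complex inversion formula: for a cut-off frequency $R=R(t)>0$, to be optimised, and small $\varepsilon>0$,
\[
T(t)z=\frac{1}{2\pi i}\int_{\varepsilon-iR}^{\varepsilon+iR}e^{\lambda t}(\lambda+A)^{-1}z\,d\lambda+E_R(t),
\]
where $E_R(t)$ is the high-frequency remainder $|\operatorname{Im}\lambda|>R$. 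As the naive inversion integral is only conditionally convergent, the point is to estimate $E_R(t)$ honestly, using the $C^1$-regularity of $s\mapsto T(s)z$ (equivalently, an integration by parts against the oscillatory kernel $e^{ist}$); after a mild regularisation of the orbit this should yield $\|E_R(t)\|\lesssim 1/R$, up to harmless logarithmic factors. Thus the problem reduces to the truncated integral.

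The third step is to shift the segment $\{\varepsilon+is:|s|\le R\}$ leftward into $\Omega$, onto the curve $\Gamma_R:=\bigl\{-\tfrac1{2M(|s|)}+is:\ |s|\le R\bigr\}$ plus two short horizontal connectors at heights $\pm iR$; this is allowed by Cauchy's theorem. On $\Gamma_R$ one has $|e^{\lambda t}|=e^{-t/(2M(|s|))}\le e^{-t/(2M(R))}$ and $\|(\lambda+A)^{-1}(1+A)^{-1}\|_{\mathcal L(X)}\lesssim M(|s|)/(1+|s|)$, so that $\int_{-R}^R\|(\lambda+A)^{-1}(1+A)^{-1}\|_{\mathcal L(X)}\,|d\lambda|\lesssim\int_0^R\frac{M(s)}{1+s}\,ds\lesssim M(R)\log(1+R)$; the horizontal connectors (which do not carry the small exponential factor uniformly) are absorbed, together with $E_R$, into a term $O(1/R)$. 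Collecting the estimates,
\[
\|T(t)(1+A)^{-1}\|_{\mathcal L(X)}\lesssim e^{-t/(2M(R))}\,M(R)\log(1+R)+\frac{C}{R}.
\]
Finally choose $R=R(t)$ making the two terms comparable; taking logarithms in $e^{-t/(2M(R))}M(R)\log(1+R)\asymp 1/R$ gives $t\asymp M(R)\bigl(\log(1+M(R))+\log(1+R)\bigr)=M_{\log}(R)$, i.e.\ $R\asymp M_{\log}^{-1}(Ct)$, whence $\|T(t)(1+A)^{-1}\|_{\mathcal L(X)}=O\bigl(1/M_{\log}^{-1}(Ct)\bigr)$. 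For the polynomial corollary one inserts $M(s)\lesssim(1+s)^\beta$: then $M_{\log}(s)\asymp(1+s)^\beta\log(1+s)$, so $M_{\log}^{-1}(Ct)\asymp(Ct/\log t)^{1/\beta}$ and the bound becomes $O\bigl((\log t/t)^{1/\beta}\bigr)$.

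\medskip
The hard part, I expect, is making the truncated inversion and the estimate $\|E_R(t)\|\lesssim 1/R$ rigorous with only \emph{one} resolvent power available: the naive contour integral does not converge absolutely, so the $C^1$-regularity of the orbit — or, alternatively, a dyadic decomposition in the frequency variable — must genuinely be exploited. A secondary difficulty is the bookkeeping in the deformation and optimisation steps needed to land exactly on $M_{\log}$ and its right inverse, keeping careful track of the two logarithmic scales $\log(1+M(s))$ and $\log(1+s)$.
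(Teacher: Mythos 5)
The paper does not give a proof of this theorem; it is quoted verbatim from Batty and Duyckaerts \cite{Duykaerts} as a known result in the historical background, so there is no paper proof to compare against. Judged on its own terms, your outline has the right shape and the right final bookkeeping: the Neumann-series extension of the resolvent into the region $\{\mathrm{Re}\,\lambda>-1/(2M(|\mathrm{Im}\,\lambda|))\}$, the length estimate $\int_0^R M(s)/(1+s)\,ds\lesssim M(R)\log(1+R)$ along the deformed arc, and the optimisation of $R$ balancing $e^{-t/(2M(R))}M(R)\log(1+R)$ against $1/R$ all work and do recover exactly the two logarithmic scales in $M_{\log}$, hence the rate $1/M_{\log}^{-1}(Ct)$.

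The step you yourself flag as ``the hard part'' is, however, a genuine gap, and the device you propose does not close it. On the vertical line $\mathrm{Re}\,\lambda=\varepsilon$, the single smoothing factor $(1+A)^{-1}$ buys you only $\|(\lambda+A)^{-1}(1+A)^{-1}\|\lesssim (K/\varepsilon)/|\mathrm{Im}\,\lambda|$, and differentiating in $\lambda$ does not improve the $|\mathrm{Im}\,\lambda|$-decay (it only brings down extra factors of $K/\varepsilon$), so one integration by parts does not yield a tail $\|E_R(t)\|\lesssim 1/R$ with a constant that survives $\varepsilon\to 0$; the constants degenerate on the imaginary axis precisely because $A$ generates only a \emph{bounded}, not analytic, semigroup. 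The two standard ways to repair this are both structurally different from a sharp truncation. One is the Newman--Korevaar closed-contour argument used by Batty and Duyckaerts: set $\Phi(\lambda):=(\lambda+A)^{-1}(1+A)^{-1}y$, $\Phi_t(\lambda):=\int_0^t e^{-\lambda s}T(s)(1+A)^{-1}y\,ds$, note that $\Phi-\Phi_t$ equals $e^{-\lambda t}(\lambda+A)^{-1}T(t)(1+A)^{-1}y$ in $\mathbb{C}_+$ and that $\Phi_t$ is entire, and apply Cauchy's formula at $\lambda=0$ on a closed contour with the kernel $\frac{1}{\lambda}\bigl(1+\frac{\lambda^2}{R^2}\bigr)$, whose vanishing at $\pm iR$ is what controls the high-frequency contribution. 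The other is the Ingham--Karamata route (as in Chill--Seifert, Theorem~2.1 of \cite{chill1}): convolve the orbit $T(\cdot)(1+A)^{-1}y$ against a band-limited kernel $\varphi_R$ with $\widehat{\varphi}_R$ supported in $[-R,R]$, so the Fourier representation becomes absolutely convergent, and bound the convolution error by $\|A(1+A)^{-1}y\|/R$ using Lipschitz continuity of the orbit. Your ``mild regularisation of the orbit'' is presumably gesturing at the latter, and in that sense your plan can be made rigorous; but ``integration by parts against $e^{ist}$'' on a vertical line is not equivalent to it and would not by itself deliver the needed $O(1/R)$ bound.
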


Still in \cite{Duykaerts}, Batty and Duyckaerts conjectured that the
logarithmic correction may be dropped in the case of Hilbert spaces, but one cannot expect rates better than $(M^{-1}_{\log}(Ct))^{-1}$ for general Banach spaces. Then, Borichev and Tomilov partially solved the conjecture in \cite{tomilov}; namely, they have shown that in case of a power-law resolvent growth, the logarithmic correction loss is sharp on general Banach spaces (it is worth noting that this optimality is also valid for sub-polynomial functions, as recently shown by Dubruyne and Seifert in~\cite{Seifert}), but that it is not necessarily true on Hilbert spaces. 
\begin{theorem}[Theorem 2.4 in \cite{tomilov}]
\begin{rm}
Let $(T(t))_{t\geq0}$ be a \textit{bounded} $C_0$-semigroup on a
Hilbert space $X$ with generator $-A$ so that $i\mathbb{R}\subset \rho(A)$. Then, given $\beta>0$, the following
assertions are equivalent:
\begin{enumerate}
\item $\|(is+A)^{-1}\|_{\mathcal{L}(X)}=O(|s|^{\beta})$, $|s|\rightarrow \infty$.
\item $\|T(t)(1+A)^{-1}\|_{\mathcal{L}(X)}=O(t^{-1/\beta})$, $t\rightarrow \infty$.
\end{enumerate}

\end{rm}
\end{theorem}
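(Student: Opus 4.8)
The plan is to treat the two implications separately, since they have very different natures: (2)$\Rightarrow$(1) is elementary and valid on an arbitrary Banach space, whereas (1)$\Rightarrow$(2) is the substantive half and is where the Hilbert-space geometry is indispensable. In view of Theorem~\ref{batty}, which already gives $\|T(t)(1+A)^{-1}\|=O\bigl((\log t/t)^{1/\beta}\bigr)$ on any Banach space, the real content of (1)$\Rightarrow$(2) is the removal of the logarithmic factor.

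For (2)$\Rightarrow$(1) I would argue as follows. Fix $s\in\mathbb{R}$ and $z\in X$ with $\|z\|\le1$; since $i\mathbb{R}\subset\rho(A)$, the vector $y:=(is+A)^{-1}z$ lies in $\mathcal{D}(A)$, and differentiating one gets $\frac{d}{dt}\bigl(e^{-ist}T(t)y\bigr)=-e^{-ist}T(t)(is+A)y=-e^{-ist}T(t)z$, whence, integrating over $[0,\tau]$,
\[
y=e^{-is\tau}T(\tau)y+\int_0^\tau e^{-ist}T(t)z\,dt .
\]
The last integral is bounded by $\tau\sup_{t\ge0}\|T(t)\|$, while, writing $(1+A)y=(1-is)y+z$ and invoking (2), $\|T(\tau)y\|\le\|T(\tau)(1+A)^{-1}\|\,\|(1+A)y\|\le C\tau^{-1/\beta}\bigl((1+|s|)\|y\|+1\bigr)$ for $\tau$ large. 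Choosing $\tau\asymp(1+|s|)^\beta$ makes the coefficient of $\|y\|$ at most $\tfrac12$; solving for $\|y\|$ gives $\|y\|\lesssim(1+|s|)^\beta$, and together with the local boundedness of the resolvent on $i\mathbb{R}$ this is (1). This step uses nothing about the geometry of $X$.

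For (1)$\Rightarrow$(2) the mechanism I would follow mirrors the Batty--Duyckaerts contour estimate but replaces an $\ell^1$-summation by an $\ell^2$-one. Write $M(s):=\sup_{|r|\le s}\|(ir+A)^{-1}\|$, so $M(s)\lesssim(1+s)^\beta$. In the contour argument one represents $T(t)(1+A)^{-1}x=\frac{1}{2\pi i}\int_{\Gamma_t}e^{\lambda t}(\lambda+A)^{-1}(1+A)^{-1}x\,d\lambda$ over a curve $\Gamma_t$ that follows $i\mathbb{R}$ at distance $\asymp(1+|s|)^{-\beta}$, splits $\Gamma_t$ into dyadic blocks $|s|\in[2^k,2^{k+1}]$ on which $|e^{\lambda t}|\le e^{-ct/M(2^k)}$, and it is the $\ell^1$-summation over the $\asymp\log t$ blocks that are not exponentially small that produces the loss $\log t$. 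On a Hilbert space one controls instead the $\ell^2$-sum of the block contributions, and the input that makes this possible is Parseval's identity: for an integer $n>\beta+1$, $\varepsilon>0$ and $x\in X$,
\[
\int_0^\infty e^{-2\varepsilon t}\|T(t)(1+A)^{-n}x\|^2\,dt=\frac{1}{2\pi}\int_{\mathbb{R}}\bigl\|(\varepsilon+is+A)^{-1}(1+A)^{-n}x\bigr\|^2\,ds ,
\]
and iterating the resolvent identity $(\varepsilon+is+A)^{-1}(1+A)^{-n}=(1-\varepsilon-is)^{-n}(\varepsilon+is+A)^{-1}-\sum_{j=1}^n(1-\varepsilon-is)^{-(n-j+1)}(1+A)^{-j}$ together with the bound $\|(\varepsilon+is+A)^{-1}\|\lesssim M(|s|)$, uniform in $\varepsilon\in(0,1)$ (split according to $\varepsilon M(|s|)\le\tfrac12$, via a Neumann series, or $\varepsilon M(|s|)>\tfrac12$, via $\|(\varepsilon+is+A)^{-1}\|\le\varepsilon^{-1}\sup_t\|T(t)\|$), bounds the integrand on the right by $(1+|s|)^{-2}\|x\|^2$; letting $\varepsilon\downarrow0$ yields $\|T(\cdot)(1+A)^{-n}x\|_{L^2(\mathbb{R}_+;X)}\lesssim\|x\|$. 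Feeding this square-integrability back into the contour/Fourier-inversion estimate — so that the dyadic blocks are now summed in $\ell^2$, and the $\ell^2$-norm of $\asymp\log t$ comparable terms is $O(1)$ rather than $O(\log t)$ — should give $\|T(t)(1+A)^{-n}\|=O(t^{-n/\beta})$. Finally I would descend from $n$ powers to one: $(1+A)^{-1}$ is a non-negative operator (one checks $\sup_{\lambda>0}\|\lambda(\lambda+(1+A)^{-1})^{-1}\|<\infty$), so the moment inequality gives $\|(1+A)^{-1}w\|\le K\|w\|^{1-1/n}\|(1+A)^{-n}w\|^{1/n}$ for all $w\in X$; applying it with $w=T(t)x$ and using $\|T(t)x\|\le(\sup_t\|T(t)\|)\|x\|$ gives $\|T(t)(1+A)^{-1}x\|\lesssim t^{-1/\beta}\|x\|$, which is (2) — and this descent uses no bounded imaginary powers of $1+A$.

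The hard part will be precisely the step that converts the Parseval $L^2$-in-time bound into the \emph{sharp} pointwise rate $t^{-n/\beta}$. Crude bootstrapping — "square-integrability plus boundedness of the semigroup" — only yields $t^{-1/2}$ and stalls there, and pure Fourier inversion along a vertical line only recovers the $\varepsilon$-loss of Theorem~\ref{TBEPS}; one genuinely has to re-insert the pointwise resolvent growth through a $t$-dependent frequency cutoff and balance the competing contributions, which is the Borichev--Tomilov refinement of the Batty--Duyckaerts argument. This is exactly where the Hilbert-space structure is essential: Parseval is an \emph{isometry}, whereas a general Banach space only affords \emph{boundedness} of the $L^2$ Fourier transform under a Fourier-type hypothesis, and that loss is what forces the logarithmic correction in general (and is what the $L^p$/Fourier-type refinements announced in the abstract are designed to quantify). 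By contrast, the implication (2)$\Rightarrow$(1) and the concluding moment-inequality descent are soft and independent of the geometry of $X$.
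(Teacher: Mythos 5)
The paper you are reading does not itself prove this statement: it is quoted from Borichev--Tomilov~\cite{tomilov} as background in Section~1.1, and there is nothing in the paper to compare against. So I can only assess your proposal on its own terms.

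Your treatment of the soft direction (2)$\Rightarrow$(1) is correct and complete, and the concluding moment-inequality descent from $\|T(t)(1+A)^{-n}\|\lesssim t^{-n/\beta}$ to $\|T(t)(1+A)^{-1}\|\lesssim t^{-1/\beta}$ is also sound, as is the Parseval/Plancherel computation yielding $\int_0^\infty\|T(t)(1+A)^{-n}x\|^2\,dt\lesssim\|x\|^2$ for $n>\beta+1$. However, there is a genuine gap, and you yourself flag it: the passage from this $L^2$-in-time bound to the sharp pointwise rate $\|T(t)(1+A)^{-n}\|\lesssim t^{-n/\beta}$ is not carried out. This is not a routine step. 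The $L^2$ bound combined with boundedness of the semigroup and the semigroup law only produces, via $(t/2)\|T(t)(1+A)^{-n}x\|^2\lesssim\int_{t/2}^{t}\|T(s)(1+A)^{-n}x\|^2\,ds\lesssim\|x\|^2$, the rate $t^{-1/2}$, which feeds through the moment inequality to $t^{-1/(2n)}$ --- strictly worse than $t^{-1/\beta}$ once $n>\beta+1$. What you gesture at (a $t$-dependent frequency cutoff that ``re-inserts the pointwise resolvent growth'' and an $\ell^2$-summation of dyadic blocks on the contour) is precisely the heart of Borichev and Tomilov's argument and needs to be exhibited: one must construct and estimate the low- and high-frequency pieces explicitly and balance them, which is more delicate than the dyadic bookkeeping you describe. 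As written, your proof of (1)$\Rightarrow$(2) reduces to ``should give,'' and the theorem is not established. Everything surrounding that one step is fine, but that step is the theorem.
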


By seeking to answer the conjecture of Batty and Duyckaerts for a larger class of functions than power-law type, Batty, Chill and Tomilov have obtained in \cite{chill} the following result.
\begin{theorem}[Theorem 1.1 in~\cite{chill}]\label{BCT}
\begin{rm}
Let $(T(t))_{t\geq0}$ be a \textit{bounded} $C_0$-semigroup on a
Hilbert space $X$ with generator $-A$ so that $i\mathbb{R}\subset \rho(A)$. Let $\beta>0$ and $b>0$.
\begin{enumerate}
\item The following
assertions are equivalent:
\begin{enumerate}
\item $\|(is+A)^{-1}\|_{\mathcal{L}(X)}=O(|s|^{\beta}\log(|s|)^{-b})$, $|s|\rightarrow \infty$.
\item $\|T(t)(1+A)^{-1}\|_{\mathcal{L}(X)}=O(t^{-1/\beta}\log(t)^{-b/\beta})$, $t\rightarrow \infty$.
\end{enumerate}

\item If \[\|(is+A)^{-1}\|_{\mathcal{L}(X)}=O(|s|^{\beta}\log(|s|)^{b}), \quad |s|\rightarrow \infty,\] then for each $\varepsilon>0$,
\[\|T(t)(1+A)^{-1}\|_{\mathcal{L}(X)}=O(t^{-1/\beta}\log(t)^{b/\beta+\varepsilon}),\quad t\rightarrow \infty\]

\end{enumerate}
\end{rm}
\end{theorem}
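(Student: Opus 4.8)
The plan is to prove the two implications of the equivalence in part~(1) separately, and then to obtain part~(2) from a weaker version of the argument used for (1a)$\Rightarrow$(1b).

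The direction (1b)$\Rightarrow$(1a) is the ``soft'' one. Assuming $\|T(t)A^{-1}\|\le\varphi(t):=Ct^{-1/\beta}\log(t)^{-b/\beta}$ for large $t$, I would use the Abel–summable identity $(is+A)^{-1}A^{-1}x=\int_0^\infty e^{-ist}T(t)A^{-1}x\,dt$, split it at $t=1/|s|$, integrate by parts once on the tail, and then recover $\|(is+A)^{-1}\|$ from $(is+A)^{-1}=A^{-1}-is\,(is+A)^{-1}A^{-1}$. Since $\varphi$ is decreasing and regularly varying, this yields $\|(is+A)^{-1}\|=O(\varphi^{-1}(c/|s|))$, and $\varphi^{-1}(c/|s|)\asymp|s|^{\beta}\log(|s|)^{-b}$ by a direct computation; this is the general decay$\Rightarrow$resolvent correspondence already implicit in Theorems~\ref{TBEPS} and~\ref{batty}.

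For the substantial direction (1a)$\Rightarrow$(1b) the Hilbert-space structure enters through Plancherel's theorem, which is exactly what permits an exact matching of the logarithmic factors, whereas the general Banach estimate of Theorem~\ref{batty} would instead lose a factor $\log(t)^{1/\beta}$. Writing $m(r):=Cr^{\beta}\log(r)^{-b}$ for the majorant of $\|(is+A)^{-1}\|$ on $i\mathbb{R}$, the steps are: (i) a Neumann series propagates $\|(\lambda+A)^{-1}\|\lesssim m(|s|)$ to the strip $\{\lambda=u+is:|u|\le c/m(|s|)\}$; (ii) for large $k$ and $x\in\mathcal{D}(A^{k})$, represent $T(t)A^{-k}x$ by the inverse Laplace integral over $\mathrm{Re}\,\lambda=\eta$ with $\eta=1/t$ and pair it against $y^{*}\in X^{*}$; (iii) split the frequency integral at $|s|=R\asymp m^{-1}(t)\asymp t^{1/\beta}\log(t)^{b/\beta}$, estimating the low part by the resolvent bound $m(R)\asymp t$ and the high part by Cauchy–Schwarz combined with the Plancherel identities
\[
\int_{-\infty}^{\infty}\|(\eta+is+A)^{-1}x\|^{2}\,ds=2\pi\int_{0}^{\infty}e^{-2\eta\tau}\|T(\tau)x\|^{2}\,d\tau\le\pi M^{2}\eta^{-1}\|x\|^{2}
\]
(with $M:=\sup_{t}\|T(t)\|$) and its dual for $A^{*}$ on $X^{*}$, using the extra decay in $s$ supplied by the powers of $A^{-1}$; (iv) after optimising, this gives $\|T(t)A^{-k}\|=O(t^{-k/\beta}\log(t)^{-kb/\beta})$, and the moment inequality $\|A^{-1}y\|\lesssim\|y\|^{1-1/k}\|A^{-k}y\|^{1/k}$ applied to $y=T(t)x$ returns $\|T(t)A^{-1}\|=O(t^{-1/\beta}\log(t)^{-b/\beta})$.

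Part~(2) is handled by the same scheme but now with $m(r)=Cr^{\beta}\log(r)^{b}$, which grows faster, so the high-frequency tail in step~(iii) only converges with a margin; replacing $m$ by $m_{\varepsilon}(r):=r^{\beta}\log(r)^{b+\varepsilon\beta}$ (equivalently, enlarging the cut-off $R$ by a factor $\log(t)^{\varepsilon}$) restores enough room, and $m_{\varepsilon}^{-1}(t)\asymp t^{1/\beta}\log(t)^{b/\beta+\varepsilon}$ gives $\|T(t)A^{-1}\|=O(t^{-1/\beta}\log(t)^{b/\beta+\varepsilon})$ for each $\varepsilon>0$. I expect the main obstacle to be precisely the quantitative high-frequency estimate in step~(iii): the strip resolvent bound, the negative powers of $A$, and the two Plancherel inequalities must be balanced sharply enough that the logarithmic exponent comes out as exactly $-b/\beta$ in part~(1) and loses only an $\varepsilon$, not a full power of $\log t$, in part~(2); a lesser technical nuisance is controlling the oscillatory, non-decaying-in-$s$ terms produced when $(\,\cdot+A)^{-1}A^{-k}$ is peeled by the resolvent identity, and the passage from $\|T(t)A^{-k}\|$ back to $\|T(t)A^{-1}\|$.
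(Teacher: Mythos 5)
This statement is not proved in the paper at hand; it is Theorem~1.1 of Batty, Chill and Tomilov (cited as [chill]) and is quoted here as background, so the only meaningful comparison is against the argument in [chill]. Your route --- the Laplace-transform representation, a Neumann-series extension of the resolvent bound to a strip of width $\sim 1/M(|s|)$, a Plancherel-based frequency split at $R\asymp M^{-1}(t)$, and a moment-inequality step to pass from $\|T(t)A^{-k}\|$ back to $\|T(t)A^{-1}\|$ --- is the Borichev--Tomilov strategy, later refined by Rozendaal, Seifert and Stahn (cf.\ Theorem~\ref{rss}). That is a legitimate and essentially correct route, but it is genuinely different from what Batty, Chill and Tomilov do: they prove this theorem through the extended functional calculus for sectorial operators based on complete Bernstein functions, exploiting boundedness of the $H^\infty$-calculus on Hilbert space and a multiplicative decomposition of the regularizer $f(A)$; the two approaches do not share key lemmas. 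What each buys: the Bernstein-calculus decomposition of [chill] is tailor-made for the slowly varying correction factors $\log(|s|)^{\pm b}$ and generalizes cleanly to regularly varying $\ell$ (Theorem~5.6 in [chill]), whereas the Plancherel route, pushed through as in [Stahn], applies to every resolvent majorant of positive increase and in fact gives a \emph{stronger} conclusion than your target in part~(2): since $|s|^\beta\log(|s|)^{b}$ has positive increase, Theorem~\ref{rss} yields $O\bigl(t^{-1/\beta}\log(t)^{b/\beta}\bigr)$ with no $\varepsilon$ at all. So your diagnosis that the $\varepsilon$-loss in part~(2) reflects a marginally convergent high-frequency tail is not quite right --- the loss in [chill] is an artifact of their particular decomposition, not inherent to the Plancherel method, and your own sketch, carried out carefully, would eliminate it rather than reproduce it. This does not invalidate your argument; it just means you would be proving (and slightly sharpening) the result by a different method than the one the cited reference uses.
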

\OBSI Theorem~\ref{BCT} remains valid when replacing $|s|^{\beta}\log(|s|)^{-b}$ with a function of the type $|s|^{\beta}\ell(|s|)^{-1}$, where $\ell$ is an increasing and slowly varying function (see Theorem 5.6 in~\cite{chill}).
\OBSF

Finally, Rozendaal, Seifert and Stahn in \cite{Stahn} have extended the previous results to a larger class of functions, namely, those of positive increase:  a continuous increasing function $M:(0,\infty)\rightarrow (0,\infty)$ is said to be of \textit{positive increase} if
there exist positive constants $\alpha>0$, $c\in (0,1]$ and $s_0>0$ such that
\begin{equation*}
    \frac{M(\lambda s)}{M(s)}\geq c\lambda^{\alpha}, \ \ \ \lambda \geq 1, s\geq s_0.
\end{equation*}

\begin{theorem}[Theorem 3.2 in \cite{Stahn}] \label{rss}
\begin{rm}
Let $(T(t))_{t\geq 0}$ be a \textit{bounded} $C_0$-semigroup on a Hilbert space
$X$, with generator $-A$, and let $M:(0,\infty)\rightarrow (0,\infty)$ be a function of positive increase. The following assertions
are equivalent:
\begin{enumerate}
\item $i\mathbb{R}\subset \rho(A) $ and $\|(is+A)^{-1}\|_{\mathcal{L}(X)}=O(M(|s|))$, $|s|\rightarrow \infty$.
\item $\|T(t)(1+A)^{-1}\|_{\mathcal{L}(X)}=O(M^{-1}(t))$, $t\rightarrow \infty$.
\end{enumerate}  
\end{rm}    
\end{theorem}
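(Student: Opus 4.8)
The plan is to prove the two implications separately. The implication $(2)\Rightarrow(1)$ is the softer one; $(1)\Rightarrow(2)$ carries the real content and should be regarded as the extension, from power functions to functions of positive increase, of the Hilbert‑space theorem of Borichev and Tomilov in \cite{tomilov}, the improvement over the general Banach‑space estimate of Theorem~\ref{batty} being precisely the removal of the logarithmic correction, which the geometry of Hilbert space permits.

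For $(2)\Rightarrow(1)$: the decay hypothesis gives $\|T(t)(1+A)^{-1}\|_{\mathcal{L}(X)}\to0$, hence (since $\operatorname{Ran}(1+A)^{-1}=\mathcal{D}(A)$ is dense and the semigroup is bounded) $T(t)x\to0$ for every $x$. One then rules out $i\mathbb{R}\cap\sigma(A)\neq\emptyset$: a point $is_{0}$ in the point or approximate point spectrum of $A$ would furnish (approximate) eigenvectors $y$ with $\|T(t)(1+A)^{-1}y\|$ bounded below by $|1+is_{0}|^{-1}\|y\|$ for each $t$, while a point $is_{0}$ in the residual spectrum would furnish an eigenvector $\phi$ of $A^{*}$ and a vector $x$ with $|\langle T(t)(1+A)^{-1}x,\phi\rangle|$ constant and nonzero in $t$; either way the decay is contradicted, so $i\mathbb{R}\subset\rho(A)$. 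The bound $\|(is+A)^{-1}\|_{\mathcal{L}(X)}=O(M(|s|))$ then follows by representing the resolvent through the (suitably regularized, e.g.\ by an extra $(1+A)^{-1}$ or an integration by parts in $t$) Laplace transform of the orbit $T(\cdot)(1+A)^{-1}x$, using the decay rate at the matching time scale $t\asymp M^{-1}(1/|s|)$, and passing from $\|(is+A)^{-1}(1+A)^{-1}\|$ to $\|(is+A)^{-1}\|$ via the resolvent identity $(is+A)^{-1}=(1+A)^{-1}+(1-is)(is+A)^{-1}(1+A)^{-1}$, which costs a factor $1+|s|$; the positive increase of $M$ is what makes these estimates close at the right rate.

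For $(1)\Rightarrow(2)$: by density it suffices to bound $\|T(t)(1+A)^{-1}x\|$ for $\|x\|\le1$, $x\in\mathcal{D}(A)$ and $t$ large. I would start from the inverse‑Laplace/contour representation of $T(t)(1+A)^{-1}x$ along a vertical line $\{\operatorname{Re}\lambda=\varepsilon\}$, with $\varepsilon=\varepsilon(t)\to0$ to be optimized: there $|e^{\lambda t}|$ is bounded, $\|(\varepsilon+is+A)^{-1}\|_{\mathcal{L}(X)}\le C/\varepsilon$ by boundedness, and $\|(\varepsilon+is+A)^{-1}\|_{\mathcal{L}(X)}\lesssim M(|s|)$ by a Neumann‑series propagation off the imaginary axis as long as $\varepsilon M(|s|)\lesssim1$, while composing with $(1+A)^{-1}$ gains a factor $(1+|s|)^{-1}$. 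The Hilbert‑space input is Plancherel's theorem: for $y\in X$ and $\varepsilon>0$,
\[
\int_{0}^{\infty}e^{-2\varepsilon u}\|T(u)y\|^{2}\,du=\frac{1}{2\pi}\int_{\mathbb{R}}\|(\varepsilon+is+A)^{-1}y\|^{2}\,ds .
\]
Applied to $y=T(\tau)(1+A)^{-1}x$, combined with the elementary window‑to‑point comparison $\|T(2\tau)(1+A)^{-1}x\|^{2}\lesssim\tau^{-1}\int_{\tau}^{2\tau}\|T(t)(1+A)^{-1}x\|^{2}\,dt$ (from the semigroup law and boundedness) and a dyadic decomposition of the $s$‑integral, this reduces the problem to $L^{2}$‑in‑$s$ estimates of the resolvent along the shifted line. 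Here the positive‑increase hypothesis is used as in the polynomial case: through a geometric summation it yields bounds of the type $\int_{0}^{R}M(\sigma)^{2}\,d\sigma\lesssim R\,M(R)^{2}$, so that, choosing the frequency cutoff $R$ and then $\varepsilon\asymp1/t$ to balance the low‑ and high‑frequency contributions, a careful execution produces the claimed rate with no logarithmic loss.

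The delicate point — and this is exactly what separates the Hilbert‑space statement from its Banach counterpart — is that the contour integral defining $T(t)(1+A)^{-1}x$ does not converge absolutely: its integrand decays only like $|s|^{-1}$ times a power of $M(|s|)$, so bounding the orbit by the integral of the norm of the integrand is scale‑invariant under $R\mapsto M^{-1}(t)$ and leaves, after any number of integrations by parts, a residual term of size $\asymp t\|x\|$, carrying no decay at all. One is therefore forced into $L^{2}$/Plancherel estimates, which see the oscillatory cancellation that the triangle inequality misses; keeping those $L^{2}$ estimates lossless through the frequency‑cutoff optimization, so that no spurious logarithm reappears, is where both the Hilbert geometry and the positive‑increase condition are genuinely indispensable, and it is the technical heart of the argument.
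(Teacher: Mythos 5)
The paper does not prove this statement; it is quoted as Theorem~3.2 of \cite{Stahn}, so I can only compare your sketch with the argument there and with its precedents (notably \cite{tomilov}).

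Your $(2)\Rightarrow(1)$ arm is essentially standard and correct; this implication holds on any Banach space, with or without positive increase. The $(1)\Rightarrow(2)$ arm, however, has a genuine gap, located exactly where you flag ``the technical heart''. The single contour-shift/Plancherel/window scheme you propose, executed with the only pointwise resolvent bounds available, namely $\|(\varepsilon+is+A)^{-1}\|\lesssim\min(\varepsilon^{-1},M(|s|))$, and the choice $\varepsilon\asymp 1/t$, yields
\[
\|T(t)(1+A)^{-1}\|^{2}\lesssim \frac{1}{t}\int_{\mathbb{R}}\frac{\min\bigl(t,M(|s|)\bigr)^{2}}{(1+|s|)^{2}}\,ds,
\]
and the tail $|s|\ge R:=M^{-1}(t)$, where only the bound $\varepsilon^{-1}\asymp t$ is available, contributes $\sim t/R$ to the right-hand side, compared with the target $1/R^{2}$; since $tR\to\infty$, the optimization cannot close. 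For the prototype $M(s)=|s|^{\beta}$ with $\beta\ge1$ the scheme gives $\|T(t)(1+A)^{-1}\|^{2}\lesssim t^{1-1/\beta}$, worse even than the trivial $O(1)$ bound, let alone the target $t^{-2/\beta}$. Applying Plancherel to $y=T(\tau)(1+A)^{-1}x$ does not help, since $(\varepsilon+is+A)^{-1}$ commutes with $T(\tau)$, and a bootstrap on an a priori decay rate does not improve the high-frequency contribution. The actual proofs in \cite{tomilov} and \cite{Stahn} are structurally different: they do not optimize a Plancherel integral over $(\varepsilon,R)$, but combine a frequency decomposition with Fourier-multiplier or quantified functional-calculus estimates that genuinely see the oscillation of $e^{\lambda t}$ in the inverse Laplace representation. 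The positive-increase hypothesis enters there to match the resulting rate with $M^{-1}(t)^{-1}$; the inequality $\int_{0}^{R}M(\sigma)^{2}\,d\sigma\lesssim RM(R)^{2}$ you invoke follows from monotonicity alone and is not where the hypothesis is essential.
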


So far we have presented a compilation of the main results for the situation in which $A$ has only singularity at infinity, i.e, $\|R(is,A)\|_{\mathcal{L}(X)}\rightarrow \infty$ as $|s|\rightarrow \infty$; there are also some other works in the literature that study the decay rates of $C_0$-semigroup for this situation, for example~\cite{batki, chill2,Paunonen, roze}. Nevertheless, there are many other works that study decay rates for the situation in which $A$ has a singularity at zero~\cite{chill,chill1, rozendaal, Stahn}, or even when $A$ has singularity at zero and infinity~\cite{chill,Martinez,rozendaal,Stahn}. In the present work, we consider all of these scenarios.

Until this point, we have presented some of the main results of the asymptotic theory of bounded $C_0$-semigroups. Nevertheless, there are many natural classes of examples where the norm of the resolvent of the generator grows with a power-law rate as $|s|\rightarrow\infty$, for example, but the semigroup is
not uniformly bounded, or where it is unknown whether the semigroup is in fact bounded. For example, this happens with some concrete partial
differential equations, like the standard wave equation with periodic
boundary conditions; here, uniform boundedness fails (see~\cite{rozendaal} for a more complete discussion on these examples).

The currently available literature on polynomial
or other types of decay deals almost exclusively with uniformly bounded semigroups. To the best
of our knowledge, the following result due to Bátkai, Engel, Pr\"uss and Schnaubelt is the first in the literature that proves polynomial decay for not necessarily bounded semigroups. In what follows, $\omega_0(T):=\lim_{t\to\infty}(\log\Vert T(t)\Vert_{\mathcal{L}(X)})/t$.

\begin{theorem}[Proposition 3.4 in~\cite{batki}]\label{batkai}
\begin{rm}
Let $(T(t))_{t\geq 0}$ be a semigroup defined in a Banach space $X$ with generator $-A$ such that there exists $\beta>0$ so that the map $\lambda \mapsto (\lambda+A)^{-1}(1+A)^{-\beta}$, with $\text{Re} \ \lambda>\omega_0(T)$, has a bounded holomorphic extension to $\text{Re} \ \lambda \geq 0$. Then, there exists a positive constant $C_{n,\delta}$ such that for each $n\in \mathbb{N}$, $\delta\in (0,1]$ and $t>0$,
\begin{equation*}
    \|T(t)(1+A)^{-\beta(n+1)-1-\delta}\|_{\mathcal{L}(X)}\leq C_{n,\delta}t^{-n}.
\end{equation*}
\end{rm}
\end{theorem}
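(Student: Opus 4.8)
The plan is to represent $T(t)$ on a dense set of smooth vectors by the complex inversion (inverse Laplace) formula, to integrate by parts $n$ times in the spectral parameter in order to create the factor $t^{-n}$, and to exploit the hypothesis---that $\lambda\mapsto(\lambda+A)^{-1}(1+A)^{-\beta}$ extends to a bounded holomorphic function on the closed right half-plane---both to move the contour onto the imaginary axis and to obtain enough decay there. Fix $\omega>\omega_0(T)$, so that $\{\mathrm{Re}\,\lambda>\omega\}\subset\rho(-A)$, $\Vert T(\sigma)\Vert_{\mathcal L(X)}\le M e^{\omega\sigma}$, and $(\lambda+A)^{-1}=\int_0^\infty e^{-\lambda\sigma}T(\sigma)\,d\sigma$ on that half-plane; we assume, after a harmless normalization, that $-1\in\rho(A)$, so that $(1+A)^{-\gamma}\in\mathcal L(X)$ for all $\gamma\ge 0$. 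Set $F(\lambda):=(\lambda+A)^{-1}(1+A)^{-\beta}$; by hypothesis $F$ extends holomorphically to $\{\mathrm{Re}\,\lambda\ge 0\}$ with $M_0:=\sup_{\mathrm{Re}\,\lambda\ge 0}\Vert F(\lambda)\Vert_{\mathcal L(X)}<\infty$, hence $F(\lambda)^{n+1}=(\lambda+A)^{-(n+1)}(1+A)^{-\beta(n+1)}$ is holomorphic on $\{\mathrm{Re}\,\lambda\ge 0\}$ with norm $\le M_0^{n+1}$ there.

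\textbf{Step 2 (Inversion and integration by parts).} Let $x\in\mathcal D(A^N)$ with $N$ large---this dense subspace will suffice, since the surplus exponent $1+\delta$ will allow a passage to the limit at the end---and put $y:=(1+A)^{-\beta(n+1)-1-\delta}x$, a very smooth vector. The complex inversion formula gives $T(t)y=\frac{1}{2\pi i}\int_{\mathrm{Re}\,\lambda=\omega}e^{\lambda t}(\lambda+A)^{-1}y\,d\lambda$. Since $\frac{d^{\,n}}{d\lambda^n}(\lambda+A)^{-1}=(-1)^n n!\,(\lambda+A)^{-(n+1)}$, integrating by parts $n$ times in $\lambda$---the boundary terms at $\mathrm{Im}\,\lambda=\pm\infty$ vanishing because a Neumann expansion of $(\lambda+A)^{-(j+1)}y$, $y$ smooth, decays like a high negative power of $|\lambda|$---yields
\[
t^{\,n}\,T(t)(1+A)^{-\beta(n+1)-1-\delta}x=\frac{n!}{2\pi i}\int_{\mathrm{Re}\,\lambda=\omega}e^{\lambda t}\,(\lambda+A)^{-(n+1)}(1+A)^{-\beta(n+1)-1-\delta}x\,d\lambda .
\]

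\textbf{Step 3 (Shifting the contour and the key estimate).} The integrand equals $e^{\lambda t}F(\lambda)^{n+1}(1+A)^{-1-\delta}x$, which by Step~1 is holomorphic on $\{\mathrm{Re}\,\lambda\ge 0\}$. The technical heart is to establish, on $\{\mathrm{Re}\,\lambda\ge 0\}$,
\[
\bigl\Vert(\lambda+A)^{-(n+1)}(1+A)^{-\beta(n+1)-1-\delta}\bigr\Vert_{\mathcal L(X)}\le \frac{C_{n,\delta}}{(1+|\lambda|)^{1+\delta}} .
\]
This is obtained by peeling off one factor $(\lambda+A)^{-1}(1+A)^{-1}$ and using the resolvent identity $(\lambda+A)^{-1}(1+A)^{-1}=\frac{1}{\lambda-1}\bigl[(1+A)^{-1}-(\lambda+A)^{-1}\bigr]$: this trades that factor for $\frac{1}{\lambda-1}$ times a sum of operators each of which, after regrouping the remaining resolvent powers with the remaining powers of $(1+A)^{-\beta}$ and invoking the hypothesis, is bounded on $\{\mathrm{Re}\,\lambda\ge 0\}$ by a constant multiple of $M_0^{n+1}$ (the apparent singularity at $\lambda=1$ cancels against the vanishing of the bracket). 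Repeating once more at the level of integer exponents and interpolating---via bounded imaginary powers of $1+A$, or a moment inequality---between $\beta(n+1)+1$ and $\beta(n+1)+2$ promotes the gain $|\lambda-1|^{-1}$ to the displayed $|\lambda-1|^{-1-\delta}$. With this decay---uniform on the strip $0\le\mathrm{Re}\,\lambda\le\omega$, and, for our smooth $x$, sufficient to kill the horizontal sides---Cauchy's theorem lets us push the contour to $\mathrm{Re}\,\lambda=0$:
\[
t^{\,n}\,T(t)(1+A)^{-\beta(n+1)-1-\delta}x=\frac{n!}{2\pi}\int_{-\infty}^{\infty}e^{ist}\,(is+A)^{-(n+1)}(1+A)^{-\beta(n+1)-1-\delta}x\,ds .
\]

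\textbf{Step 4 (Conclusion).} Estimating the last integral by the decay bound of Step~3,
\[
t^{\,n}\,\bigl\Vert T(t)(1+A)^{-\beta(n+1)-1-\delta}x\bigr\Vert\le \frac{n!\,C_{n,\delta}}{2\pi}\Bigl(\int_{-\infty}^{\infty}\frac{ds}{(1+|s|)^{1+\delta}}\Bigr)\Vert x\Vert ,
\]
the integral being finite precisely because $1+\delta>1$. Since $\mathcal D(A^N)$ is dense in $X$ and $T(t)(1+A)^{-\beta(n+1)-1-\delta}\in\mathcal L(X)$, the estimate extends to all $x\in X$, which is the assertion. The main difficulty is the decay estimate of Step~3: the hypothesis only supplies the \emph{qualitative} fact that $(\lambda+A)^{-1}(1+A)^{-\beta}$ extends boundedly to the closed right half-plane, and one must convert this into a \emph{quantitative}, $|\lambda|$-decaying bound for $(\lambda+A)^{-(n+1)}(1+A)^{-\beta(n+1)-1-\delta}$; the surplus exponent $1+\delta$, rather than merely $1$, is exactly the slack needed to carry this out on a general Banach space and, at the same time, to legitimize the density argument used above.
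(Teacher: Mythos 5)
The paper does not prove this statement; it is cited as Proposition~3.4 of B\'atkai, Engel, Pr\"uss and Schnaubelt and used purely as background, so there is no in-paper proof to compare against. Your argument is nevertheless essentially the standard one behind that result, and it is also the same complex-inversion/integration-by-parts methodology that the present paper employs in its own Propositions~\ref{theor3.1} and~\ref{theor3.3}; the outline is sound.

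Two points in Step~3 deserve more care. First, the promotion from the integer gains $|\lambda-1|^{-1}$ (exponent $\beta(n+1)+1$) and $|\lambda-1|^{-2}$ (exponent $\beta(n+1)+2$) to $|\lambda-1|^{-1-\delta}$ should be carried out via the moment inequality, not via bounded imaginary powers: in a general Banach space $1+A$ need not admit BIP, whereas the moment inequality for the bounded injective sectorial operator $(1+A)^{-1}$ is always available once $1+A$ is sectorial (which is guaranteed, e.g., after the normalization you invoke, or since any $\eta+A$ with $\eta>\omega_0(T)$ is sectorial). It does yield an operator-norm bound because $(1+A)^{-1}$ commutes with $S_\lambda:=(\lambda+A)^{-(n+1)}(1+A)^{-\beta(n+1)}$: applying $\|(1+A)^{-1-\delta}z\|\le C\|(1+A)^{-1}z\|^{1-\delta}\|(1+A)^{-2}z\|^{\delta}$ with $z=S_\lambda x$ and taking the supremum over $\|x\|\le 1$ gives $\|S_\lambda(1+A)^{-1-\delta}\|\lesssim (1+|\lambda|)^{-1-\delta}$, which is exactly what Step~4 requires. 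Second, obtaining the $(1+|\lambda|)^{-2}$ estimate by iterating the resolvent identity amounts to peeling off two factors $(\lambda+A)^{-1}(1+A)^{-1}$, so it uses $n\ge 1$; this is consistent with the statement, since in this paper $\mathbb{N}=\{1,2,\dots\}$, but the same computation for $n=0$ only produces a single power $(1+|\lambda|)^{-1}$.
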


Then, by using geometrical properties of the underlying Banach space (like its Fourier type), Rozendaal and Veraar have shown the following result (see Theorem 4.9 in~\cite{rozendaal}).
\begin{theorem}\label{theoem 1.9}
\begin{rm}
Let $(T(t))_{t\geq 0}$ be a $C_0$-semigroup with generator $-A$ defined in a Banach space
$X$ with Fourier type $p\in [1,2]$, and let $\frac{1}{r}=\frac{1}{p}-\frac{1}{p'}$ (where $\frac{1}{p}+\frac{1}{p'}=1$). Suppose that  $\overline{\mathbb{C}_{-}}\subset \rho(A)$ and that there exist $\beta,C \geq 0$ such that  $\|(\lambda+A)^{-1}\|_{\mathcal{L}(X)}\leq C
(1+|\lambda|)^{\beta}$ for each $\lambda \in \overline{\mathbb{C}_{-}} $. Let $\tau>\beta+1$; then, for each $\rho \in \left[0, \frac{\tau-1/r}{\beta}-1\right)$, there exists $C_\rho \geq 0$ such that for each $t \ge 1$,
\begin{equation}\label{eeq3}
    \|T(t)(1+A)^{-\tau}\|_{\mathcal{L}(X)}\leq C_{\rho}t^{-\rho}.
\end{equation}
\end{rm}    
\end{theorem}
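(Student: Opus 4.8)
The plan is to pass from the resolvent to the semigroup through an inverse-Fourier representation on the imaginary axis, to control the resulting $X$-valued symbol by means of resolvent identities, and then to convert decay in the spectral variable into decay in $t$ using that $X$ has Fourier type $p$. First I would establish the representation: picking $M\ge1$, $\omega\in\mathbb R$ with $\|T(t)\|_{\mathcal L(X)}\le Me^{\omega t}$ and fixing $\gamma>\max(\omega,0)$, the vector $(1+A)^{-\tau}y$ lies in $\mathcal D(A)$ (and, after a routine bootstrap, in any $\mathcal D(A^{m})$ one needs), so the Laplace inversion formula gives
\[
 T(t)(1+A)^{-\tau}y=\frac{1}{2\pi i}\lim_{R\to\infty}\int_{\gamma-iR}^{\gamma+iR}e^{\lambda t}(\lambda+A)^{-1}(1+A)^{-\tau}y\,d\lambda .
\]
Shifting the contour to $i\mathbb R$ is then legitimate: on $\operatorname{Re}\lambda=\gamma$ the resolvent is bounded by $M/(\gamma-\omega)$, on $i\mathbb R$ it is $O((1+|\lambda|)^{\beta})$ by hypothesis, hence on the strip $0\le\operatorname{Re}\lambda\le\gamma$ it is $O((1+|\operatorname{Im}\lambda|)^{\beta})$ by Phragm\'en--Lindel\"of; combined with the smoothing estimates of the next step this makes the horizontal segments vanish and yields, for $t>0$,
\[
 T(t)(1+A)^{-\tau}y=\frac{1}{2\pi}\int_{\mathbb R}e^{ist}\,g_y(s)\,ds,\qquad g_y(s):=(is+A)^{-1}(1+A)^{-\tau}y,
\]
the integral being an oscillatory one.

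The symbol estimates come next. Starting from $(is+A)^{-1}(1+A)^{-1}=(1-is)^{-1}\big((is+A)^{-1}-(1+A)^{-1}\big)$, iterating it, and using that $(is+A)^{-1}$ and $(1+A)^{-1}$ commute, I would prove $\|(is+A)^{-(k+1)}(1+A)^{-\tau}\|_{\mathcal L(X)}\lesssim(1+|s|)^{-\delta_k}$ with $\delta_k=\min\{k+1,\ \tau-(k+1)\beta\}$ for all $k\in\mathbb N_0$. Since $g_y^{(k)}(s)=(-i)^{k}k!\,(is+A)^{-(k+1)}(1+A)^{-\tau}y$, this says $g_y$ obeys symbol-type estimates with a loss of $\beta$ derivatives ($\delta_{k+1}=\delta_k-\beta$ in the range that matters). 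Interpolating these bounds — e.g.\ through the integral representation of fractional powers — gives, for $\rho=k+\sigma$ with $\sigma\in[0,1)$, the pointwise estimate
\[
 \big\||D_s|^{\rho}g_y(s)\big\|\lesssim(1+|s|)^{-(\tau-\beta(\rho+1))}\,\|y\|,
\]
where $|D_s|^{\rho}$ is the fractional derivative in $s$.

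The heart of the argument is to upgrade spectral decay to temporal decay. From the representation and the identity $|t|^{\rho}\mathcal F^{-1}[f](t)=\mathcal F^{-1}[|D_s|^{\rho}f](t)$ one obtains $t^{\rho}\,T(t)(1+A)^{-\tau}y=c_{\rho}\,\mathcal F^{-1}\!\big[|D_s|^{\rho}g_y\big](t)$, so everything reduces to bounding $\big\|\mathcal F^{-1}[|D_s|^{\rho}g_y](t)\big\|$ uniformly for $t\ge1$, knowing the decay above. This is the step that produces the exponent $\tau-1/r$ rather than $\tau-1$. I would decompose $|D_s|^{\rho}g_y$ dyadically in $s$ (the piece $|s|\lesssim1$ contributing $O(t^{-N})$ for every $N$ by repeated integration by parts), and on each block $|s|\sim 2^{j}$ interpolate between the trivial bound $\|\mathcal F^{-1}h_j\|_{L^\infty}\le\|h_j\|_{L^1}$ and the vector-valued Hausdorff--Young inequality $\|\mathcal F^{-1}h_j\|_{L^{p'}}\lesssim\|h_j\|_{L^{p}}$ coming from Fourier type $p$ (using Bernstein to return from $L^{p'}$ to $L^\infty$ on a single block); the decay of Step 2 then gives $\|\mathcal F^{-1}h_j(t)\|\lesssim 2^{-\eta j}(2^{j}t)^{-\rho}\|y\|$ for some $\eta>0$ as long as $\rho<(\tau-1/r)/\beta-1$, and summing over $j\ge0$ gives $\lesssim t^{-\rho}\|y\|$. (Equivalently: $|D_s|^{\rho}g_y\in L^{p}(\mathbb R;X)$ precisely when $\tau-\beta(\rho+1)>1/p$, and the additional $1/p'$ derivative that would otherwise be needed to reach $L^\infty$ in $t$ is exactly compensated by the $1/r=1/p-1/p'$ gain of Hausdorff--Young on a space of Fourier type $p$.) For $t\in[0,1]$ one has $\|T(t)(1+A)^{-\tau}\|_{\mathcal L(X)}\le Me^{\omega}\|(1+A)^{-\tau}\|_{\mathcal L(X)}$, so the inequality holds trivially there; combining with the estimate for $t\ge1$ and taking the supremum over $\|y\|\le1$ finishes the proof.

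I expect the Fourier-type step to be the main obstacle: one has to arrange the dyadic interpolation so that the loss of $\beta$ derivatives per differentiation (Step 2) meshes correctly with the $L^1\!\to\!L^\infty$ versus $L^{p}\!\to\!L^{p'}$ mapping properties of $\mathcal F$ and so that the borderline case $\rho\to(\tau-1/r)/\beta-1$ is handled, and because the representation of Step 1 converges only as an oscillatory integral, the (fractional) integrations by parts must be justified with vanishing boundary terms. By contrast, the contour shift in Step 1 — in particular the resolvent bound on the strip — and the resolvent-identity bookkeeping in Step 2 are routine.
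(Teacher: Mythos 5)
Your Steps 1 and 2 are standard and sound, but Step 3 --- exactly where the exponent $1/r=1/p-1/p'$ must be produced --- does not work as described. The ``interpolate between $L^1\to L^\infty$ and $L^p\to L^{p'}$ plus Bernstein'' manoeuvre is vacuous: on a dyadic block $|s|\sim 2^j$ with $\|h_j(s)\|_X\lesssim 2^{-j(\tau-\beta(\rho+1))}\|y\|$, the $L^1$ route gives $\|\mathcal F^{-1}h_j\|_{L^\infty}\le\|h_j\|_{L^1}\lesssim 2^{j(1-(\tau-\beta(\rho+1)))}\|y\|$, while the Hausdorff--Young--plus--Bernstein route gives $\|\mathcal F^{-1}h_j\|_{L^\infty}\lesssim 2^{j/p'}\|h_j\|_{L^p}\lesssim 2^{j(1/p'+1/p)-j(\tau-\beta(\rho+1))}\|y\|$; since $1/p+1/p'=1$ these are identical, so interpolating gains nothing, and summing over $j$ only recovers the Banach-space threshold $\tau>\beta(\rho+1)+1$, not $\tau>\beta(\rho+1)+1/r$. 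Your parenthetical Gagliardo--Nirenberg alternative ($L^{p'}$-in-$t$ plus one $t$-derivative) is even worse, since differentiating $t^\rho T(t)(1+A)^{-\tau}y$ brings in $AR(is,A)=isR(is,A)-1$, costing one extra power of $|s|$ in the symbol and hence an extra $+1$ in $\tau$, landing at $\tau>\beta(\rho+1)+1+1/p$.

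The missing ingredient is the \emph{operator-valued} $(L^p,L^{p'})$ Fourier multiplier theorem, not the $X$-valued Hausdorff--Young inequality. By fixing $y$ and treating $g_y(s)=R(is,A)(1+A)^{-\tau}y$ as a given $X$-valued function you invoke Fourier type only once, and the best $L^p$-in-$s$ threshold you can ever reach that way is $\tau-\beta(\rho+1)>1/p$. The quantity $1/r=1/p-1/p'$ comes from using Fourier type \emph{twice} (once for $\mathcal F$, once for $\mathcal F^{-1}$, with H\"older in between), which is exactly the content of Proposition~\ref{prop2.1}: if $\|m(\cdot)\|_{\mathcal L(X)}\in L^r(\mathbb R)$ then $T_m\in\mathcal L(L^p(\mathbb R;X),L^{p'}(\mathbb R;X))$. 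This is also what the paper does (it cites Theorem~\ref{theoem 1.9} to Rozendaal--Veraar and proves the stronger Theorem~\ref{theo4.4}): one verifies that the operator-valued symbols $(1-\psi(\cdot))R(i\cdot,A)^{k}(1+A)^{-\tau}$, $k\in\{n-1,n,n+1\}$, belong to $\mathcal M_{p,p'}$ via Proposition~\ref{prop2.1}, and then applies Theorem~\ref{theor4.3}, whose conclusion $\sup_{t\ge 0}t^{n}\|T(t)\|_{\mathcal L(Y,X)}<\infty$ is precisely the $L^\infty$-in-$t$ upgrade your Bernstein/Gagliardo--Nirenberg step cannot supply at the correct numerology; the three consecutive resolvent powers there are what replaces the naive derivative-counting.
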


 In case $X$ is a Hilbert space (which corresponds to $p=p'=2$ and $r=\infty$), they have shown the following result. 
 \begin{corollary}[Theorem 1.1 in~\cite{rozendaal}]\label{TRV}
\begin{rm}
Let $(T(t))_{t\geq 0}$ be a $C_0$-semigroup with generator $-A$ defined in a Hilbert space $X$. Suppose that $\overline{\mathbb{C}_{-}}\subset \rho(A)$ and that there exist $\beta,C\geq 0$ such that $\|(\lambda+A)^{-1}\|_{\mathcal{L}(X)}\leq C
(1+|\lambda|)^{\beta}$ for each $\lambda \in \overline{\mathbb{C}_{-}} $. Then, there exists a positive constant $C_{\tau}$ such that for each $t\ge 1$,
\begin{equation*}
\|T(t)(1+A)^{-\tau}\|_{\mathcal{L}(X)}\leq C_\tau t^{1-\tau/\beta}.
\end{equation*}
\end{rm}    
 \end{corollary}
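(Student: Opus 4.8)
The plan is to read Corollary~\ref{TRV} off Theorem~\ref{theoem 1.9}. A Hilbert space has Fourier type $p=2$, so $p'=2$ and $\tfrac1r=\tfrac1p-\tfrac1{p'}=0$; Theorem~\ref{theoem 1.9} then provides, for every $\tau>\beta+1$ and every $\rho<\tfrac{\tau}{\beta}-1$, a constant $C_\rho$ with $\|T(t)(1+A)^{-\tau}\|_{\mathcal{L}(X)}\le C_\rho t^{-\rho}$ for $t\ge1$. What remains is the \emph{endpoint} $\rho=\tfrac{\tau}{\beta}-1$, i.e.\ the passage $\varepsilon\downarrow0$. So the strategy is to rerun the proof of Theorem~\ref{theoem 1.9} with $p=2$, the point being that the $\varepsilon$-loss there stems entirely from using the Hausdorff--Young inequality $\|\widehat f\,\|_{L^{p'}}\lesssim\|f\|_{L^p}$ with $p<p'$; when $p=2$ this becomes the \emph{Plancherel identity} on $L^2(\mathbb R;X)$ — an isometry, available precisely because $X$ is Hilbert — and the loss disappears. (We fix $\tau>\beta+1$, the substantive range.)

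Two inputs are needed. \textbf{(i) Resolvent smoothing.} From the spectral hypothesis and $\|(\lambda+A)^{-1}\|_{\mathcal{L}(X)}\le C(1+|\lambda|)^\beta$, the commuting-resolvent identity
\[
(is+A)^{-1}(1+A)^{-1}=(1-is)^{-1}\big[(is+A)^{-1}-(1+A)^{-1}\big],
\]
iterated (and interpolated for non-integer exponents), yields $\|(is+A)^{-(k+1)}(1+A)^{-\tau}\|_{\mathcal{L}(X)}\lesssim(1+|s|)^{(k+1)\beta-\tau}$ for $s\in\mathbb R$, $k\ge0$; in particular $h(s):=(is+A)^{-1}(1+A)^{-\tau}$ and its derivatives $h^{(k)}(s)=c_k(is+A)^{-(k+1)}(1+A)^{-\tau}$ are, for $\tau$ large, controlled by integrable powers of $(1+|s|)$, and a Neumann expansion off $i\mathbb R$ propagates the resolvent bound a little to the right of the imaginary axis (into the cusp $\{|\operatorname{Re}\lambda|<c(1+|\operatorname{Im}\lambda|)^{-\beta}\}$). \textbf{(ii) Representation.} Since $(\lambda+A)^{-1}=\int_0^\infty e^{-\lambda t}T(t)\,dt$ for $\operatorname{Re}\lambda>\omega_0(T)$, deforming the inversion contour onto $i\mathbb R$ — legitimate by the holomorphy and polynomial bound of the resolvent near $i\mathbb R$ and the decay supplied by the smoothing, and requiring \emph{no} boundedness of $(T(t))_{t\ge0}$ — gives
\[
T(t)(1+A)^{-\tau}=\frac1{2\pi}\int_{\mathbb R}e^{ist}\,h(s)\,ds,\qquad t>0.
\]

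Now one decomposes $h=\sum_j h_j$ dyadically in the frequency $s$ (so $h_j$ lives on $|s|\sim2^j$) and plays off two families of block estimates: the Plancherel bound $\|\widehat{h_j(\cdot)x}\|_{L^2(\mathbb R;X)}=\sqrt{2\pi}\,\|h_j(\cdot)x\|_{L^2}\lesssim 2^{j(\beta-\tau+1/2)}\|x\|$ — where the Hilbert structure enters essentially, and where, the blocks being almost orthogonal in frequency, the Plancherel bounds add \emph{losslessly} in $\ell^2$ — against the smoothness bounds obtained by $k$-fold integration by parts against the derivative estimates in (i), namely $\|\widehat{h_j(\cdot)x}(t)\|\lesssim t^{-k}2^{j((k+1)\beta-\tau+1)}\|x\|$. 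Summing these and optimizing the balance at the critical frequency scale $s\sim t^{1/\beta}$ yields $\|T(t)(1+A)^{-\tau}x\|\lesssim t^{1-\tau/\beta}\|x\|$ for $t\ge1$, which is the assertion of the corollary. (In the general-Banach Theorem~\ref{theoem 1.9} the same scheme runs with Hausdorff--Young and a Littlewood--Paley summation that is lossless only at the $\ell^2$ endpoint, which is exactly why one is left there with the strict inequality $\rho<\tfrac{\tau}{\beta}-1$, and, for Fourier type $1$, with a further loss of $1/\beta$.)

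The heart of the matter is the endpoint bookkeeping of the last step: one must verify that the $\ell^2$-summation of the Plancherel estimates, balanced against the derivative gains $t^{-k}$ which compete with the growth $2^{k\beta j}$ of the higher resolvent powers, lands \emph{exactly} on the exponent $\tau/\beta-1$ with no residual loss — the case $\beta\neq1$ being the delicate one, and it is here that one genuinely needs the $L^2$-isometry and not merely Fourier type $2$. A secondary technical point, shared with the proof of Theorem~\ref{theoem 1.9}, is the justification of the contour deformation and of the decay analysis for a semigroup that is not assumed bounded; this is handled by the polynomial resolvent bound near $i\mathbb R$ together with the smoothing by $(1+A)^{-\tau}$.
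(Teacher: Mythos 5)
The paper does not prove Corollary~\ref{TRV} directly: it cites it from~\cite{rozendaal} (Theorem~1.1 there) and later recovers it as the case $b=0$ of Corollary~\ref{theo4.6}, which rests on Theorem~\ref{theo4.5}. The backbone of that chain is the $(L^p,L^q)$ Fourier-multiplier characterisation of polynomial decay, Theorem~\ref{theor4.3} (Theorem~4.6 of~\cite{rozendaal}): one checks via Proposition~\ref{prop3.1} that the symbols $R(i\cdot,A)^k$ are uniformly bounded maps $X_\nu\to X$ with $\nu=(n+1)\beta$ (here $r=\infty$, so no $1/r$ correction), Proposition~\ref{prop2.1} then makes them $\mathcal{M}_{2,2}$ multipliers, and Theorem~\ref{theor4.3} converts those multiplier bounds into $\sup_{t}t^n\|T(t)\|_{\mathcal{L}(X_\nu,X)}<\infty$; non-integer exponents follow by the moment inequality.

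Your route is genuinely different, and its crucial step --- what you yourself call ``the heart of the matter'' --- is not carried out and, as far as I can tell, does not close. The Plancherel identity produces only an $L^2$-in-time estimate: after $n$ integrations by parts and the resolvent-smoothing identity one gets
\[
\int_0^\infty t^{2n}\|T(t)(1+A)^{-\tau}x\|^2\,dt\lesssim\int_{\mathbb R}(1+|s|)^{2((n+1)\beta-\tau)}\,ds\cdot\|x\|^2,
\]
which is finite only when $\tau>(n+1)\beta+\tfrac12$. Passing from this to the pointwise bound $\|T(t)(1+A)^{-\tau}x\|\lesssim t^{-n}\|x\|$ by the standard local-boundedness averaging over a unit time interval costs nothing further, but the threshold just obtained gives $\rho=n<\tau/\beta-1-\tfrac1{2\beta}$, i.e.\ a loss of $\tfrac1{2\beta}$ against the claimed endpoint $\rho=\tau/\beta-1$. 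The dyadic pointwise estimates you write down (the $L^1$ bound with $k$ integrations by parts, optimised at $|s|\sim t^{1/\beta}$) produce the Fourier-type-1 loss of $\tfrac1\beta$. Interpolating or $\ell^2$-summing the two families of block estimates does not remove these losses, because a Plancherel ($L^2$-in-time) bound simply cannot be turned into a sharp pointwise-in-time bound without extra input; and that extra input is precisely the implication $(b)\Rightarrow(a)$ of Theorem~\ref{theor4.3}, which is not a formal consequence of the Plancherel theorem but requires the multiplier-theoretic argument of~\cite{rozendaal}. In short, your sketch reproduces the setup that makes Theorem~\ref{theor4.3} applicable but omits the theorem itself, and the asserted ``lands exactly on $\tau/\beta-1$ with no residual loss'' is where the proof would need to go, not where it ends.
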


 \subsection{Main results}

By using the techniques developed in \cite{rozendaal} that involve Fourier Multipliers and also inspired by the techniques developed by Batty, Chill and Tomilov in \cite{chill} that involve functional calculus of sectorial operators, we have obtained decay rates for $C_0$-semigroups as defined in the statement of Theorem~\ref{TRV} by assuming that the norm of the resolvent of the generator behaves as a function of type $|s|^{\beta}\log(|s|)^b$ as $|s|\to\infty$ (a particular example of a \textit{regularly varying} function). Under these assumptions on the resolvent and without the assumption of boundedness of the semigroup, to the best knowledge of the authors, these estimates are new and constitute one of the  main results in this work.

\begin{theorem}\label{theo4.5}
\begin{rm}
Let $\beta>0$, $b\geq 0$ and let $(T(t))_{t\geq 0}$ be a $C_0$-semigroup  defined in the Banach space $X$ with Fourier type $p\in [1,2]$, with $-A$ as its generator. Suppose that $\overline{\mathbb{C}_{-}}\subset \rho(A)$ and that for each $\lambda \in \mathbb{C}$ with $\text{Re}(\lambda)\le 0$,
\[\|(\lambda+A)^{-1}\|_{\mathcal{L}(X)}\lesssim (1+|\lambda|)^{\beta}(\log(2+|\lambda|))^b.\]
 Let $r\in [1,\infty]$ be such that $\frac{1}{r}=\frac{1}{p}-\frac{1}{p'}$, and let $\tau > 0$ be such that $\tau>\beta+\frac{1}{r}$. Then, for each $\delta>0$, there exist constants $c_{\delta,\tau}\in [0,\infty)$ and $t_0\ge 1$ such that for each $t\geq t_0$,
\begin{equation}\label{eqTh1.1}
 \|T(t)(1+A)^{-\tau}\|_{\mathcal{L}(X)}\leq c_{\delta,\tau}t^{1-\frac{\tau-r^{-1}}{\beta}}\log(1+t)^{\frac{b(\tau-r^{-1})}{\beta}+\frac{1+\delta}{r}}.
\end{equation}
\end{rm}
\end{theorem}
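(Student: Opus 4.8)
\noindent The plan is to adapt the Fourier-multiplier method of Rozendaal and Veraar \cite{rozendaal}, sharpening the treatment of the logarithmic weights in the spirit of Batty--Chill--Tomilov \cite{chill}. I would proceed as follows.

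\textit{Reductions and the basic representation.} Since $T(t)(1+A)^{-\tau}\in\mathcal L(X)$, it suffices to prove \eqref{eqTh1.1} for $x$ in a dense subspace of $X$ and for $t\ge t_0$ with $t_0$ large. From $\overline{\mathbb C_-}\subset\rho(A)$ and the resolvent bound, a Neumann series argument extends $\lambda\mapsto(\lambda+A)^{-1}$, with an estimate of the same shape, to a thin region around $i\mathbb R$; together with the Laplace representation of the resolvent for large $\operatorname{Re}\lambda$, a contour shift onto $i\mathbb R$ — legitimate because $(1+A)^{-\tau}$ forces absolute convergence of all integrals and makes the horizontal segments vanish — gives, for every admissible $n\in\mathbb N_0$,
\[
\frac{t^{n}}{n!}\,T(t)(1+A)^{-\tau}x=\frac{1}{2\pi}\int_{\mathbb R}e^{ist}\,(is+A)^{-(n+1)}(1+A)^{-\tau}x\,ds ,
\]
which is the analytic input for everything below.

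\textit{Resolvent smoothing.} The next step is a precise bound for the operators in the integrand. Starting from $(is+A)^{-1}(1+A)^{-1}=(is-1)^{-1}\big[(1+A)^{-1}-(is+A)^{-1}\big]$, iterating in the power of $(1+A)^{-1}$, and then using complex interpolation for the fractional powers $(1+A)^{-\sigma}$, one obtains bounds of the form
\[
\big\|(is+A)^{-(k+1)}(1+A)^{-\sigma}\big\|_{\mathcal L(X)}\lesssim (1+|s|)^{(k+1)\beta-\sigma}\,\big(\log(2+|s|)\big)^{\nu(k,\sigma)},
\]
valid on the range where the polynomial factor decays, where $\nu(k,\sigma)$ is, essentially, $b$ times the minimal number of resolvent factors that must absorb the weight $(\log(2+|s|))^{b}$. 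It is the precise value of $\nu$ — rather than the wasteful $\nu=(k+1)b$ — that will deliver the sharp logarithmic exponent $b(\tau-1/r)/\beta$, and this refinement of \cite{rozendaal} is where the functional-calculus philosophy of \cite{chill} is used to keep the count tight.

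\textit{The Fourier-analytic estimate and optimisation.} Fix $t$ large. I would decompose the frequency line dyadically, $\mathbf 1=\sum_j\varphi_j$ with $\operatorname{supp}\varphi_j\subset\{|s|\sim 2^{j}\}$; on each block one integrates by parts a number $n_j$ of times (the derivatives of $\varphi_j$ producing only lower-order contributions), and estimates the resulting frequency-localised pieces with the operator-valued Fourier-multiplier / Hausdorff--Young estimates afforded by the Fourier type $p$ of $X$ and of $X^{*}$ — the tool of \cite{rozendaal} whose effect is precisely to replace, in the integrability threshold, the exponent $1$ of the naive $L^{1}$--$L^{\infty}$ bound by $1/r=1/p-1/p'$. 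Inserting the Stage~2 bounds and summing, $\|T(t)(1+A)^{-\tau}\|_{\mathcal L(X)}$ is controlled by a sum over $j$ whose low-frequency part grows in $j$ and whose high-frequency part decays; choosing the cutoff at scale $2^{j_0}\sim t^{1/\beta}$ (up to a logarithmic correction), so that the sum is dominated by its value near $j_0$, and letting the effective order of smoothing tend to the critical value $(\tau-1/r)/\beta$ — a generally non-integer value, reached only through interpolation, hence at the price of the arbitrarily small $\delta>0$ — collapses the sum to $t^{\,1-(\tau-1/r)/\beta}\log(1+t)^{\,b(\tau-1/r)/\beta+(1+\delta)/r}$, i.e. \eqref{eqTh1.1}.

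\textit{The main difficulty.} All the weight of the argument sits in the last stage. A careless estimate recovers only the Rozendaal--Veraar range $\rho<(\tau-1/r)/\beta-1$, that is, the critical power of $t$ with a polynomial loss; to reach the endpoint power $1-(\tau-1/r)/\beta$ one must (i) allow a non-integer order of smoothing, handled by interpolation — this is the source of the residual $\delta$; (ii) genuinely exploit the $L^{p}$--$L^{p'}$ gain, not the trivial $L^{1}$--$L^{\infty}$ one; and (iii) perform the two geometric summations on either side of the cutoff with the exact exponents, so that the accumulated logarithm comes out to be exactly $b(\tau-1/r)/\beta+(1+\delta)/r$ and no larger. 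Balancing the two sums at the cutoff while keeping the logarithmic bookkeeping from Stages~2 and~3 synchronised is the crux of the proof.
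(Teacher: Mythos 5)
Your proposal takes a genuinely different route from the paper, and there is a gap at the heart of it. The paper's architecture is a two-step one: first Theorem~\ref{theo4.4} is proved, establishing the \emph{clean} power-law decay
\[
\bigl\|T(t)(1+A)^{-\tau}\log(2+A)^{-\frac{b}{\beta}(\tau-1/r)-\frac{1+\delta}{r}}\bigr\|_{\mathcal L(X)}\lesssim t^{-\rho},\qquad \rho\le\tfrac{\tau-1/r}{\beta}-1,
\]
by the Fourier-multiplier machinery of Proposition~\ref{theor3.1}, Proposition~\ref{prop3.1} and Theorem~\ref{theor4.3}; the logarithmic weight on $A$ is what makes the symbol land in $L^r$ and the endpoint $\rho=\frac{\tau-1/r}{\beta}-1$ reachable without loss. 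The second step, and the genuinely new content of the proof of Theorem~\ref{theo4.5}, is the \emph{removal} of the operator $\log(2+A)^{-\upsilon}$: one writes $\log(1+B)^{\eta}$ via its Stieltjes representation as a complete Bernstein function, splits the Stieltjes integral at the data-dependent point $\tau=\|T(t)P_{m+\eta}(B_\varepsilon)\|/\|T(t)P_{m+\eta}(B_0)\|$, estimates the two halves using the commutation of $T(t)$ with $f_\varepsilon(B)$ and the sectoriality of $f_\varepsilon(B)$, and finally invokes the slow-variation lemma (Proposition~\ref{cor2}) to turn $\log(1+\tau)^{\eta}$ into $\log(1+t)^{\eta}$; iterating over the integer part $m$ of $\upsilon$ produces the total factor $\log(1+t)^{\upsilon}$. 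None of this appears in your sketch, which tries to absorb the entire argument into a dyadic optimisation over a frequency cutoff.

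Two concrete problems with the sketch. First, your Stage~2 claim that the ``wasteful'' exponent $\nu=(k+1)b$ must be sharpened to reach $b(\tau-1/r)/\beta$ is not correct: at the critical number of resolvent factors $n+1=(\tau-1/r)/\beta$, the naive count $(n+1)b$ already \emph{equals} $b(\tau-1/r)/\beta$, so there is no improved resolvent-smoothing bound to be found and no role for the ``functional-calculus philosophy'' at that stage — in the paper the complete-Bernstein-function calculus is used only in the log-removal step, not to refine Proposition~\ref{prop3.1}. Second, your Stage~3 optimisation is where all the work lies, and it is left entirely unexecuted: you say the non-integer order of smoothing is ``reached only through interpolation'' and that the geometric sums must be performed ``with the exact exponents,'' but the moment-inequality interpolation to non-integer $s$ needs a concrete auxiliary sectorial operator (the paper builds $f(A)^{-1}=(1+A)^{-a_1}\log(2+A)^{-a_2}$ via Proposition~\ref{theor2.3}~(c) for exactly this purpose), and the claim that the accumulated logarithm is precisely $b(\tau-1/r)/\beta+(1+\delta)/r$ — rather than some larger quantity, or a small polynomial loss — is asserted without any bookkeeping. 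Without a mechanism playing the role of the Stieltjes-integral splitting and Proposition~\ref{cor2}, it is not clear your dyadic sum loses only a logarithm at the endpoint rather than a power $t^{\varepsilon}$; that is precisely the threshold that separates the Rozendaal--Veraar statement \eqref{eeq3} from the present theorem.
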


The next result is the particular case of Theorem~\ref{theo4.5} where $X$ is a Hilbert space.

\begin{corollary}\label{theo4.6}
 \begin{rm}
Let $\beta$, $b$, $A$  and $(T(t))_{t\geq 0}$ be as in the statement of Theorem~\ref{theo4.5} and let $X$ be a Hilbert space. Let $\tau>\beta$. Then, 
   there exist constants $c_{\tau}\geq 0$ and $t_0\ge 1$ such that for each $t\geq t_0$,
\begin{equation}\label{eq1.6}
 \|T(t)(1+A)^{-\tau}\|_{\mathcal{L}(X)}\leq c_{\tau}t^{1-\frac{\tau}{\beta}}\log(1+t)^{\frac{b\tau}{\beta}}.
\end{equation}
\end{rm}   
\end{corollary}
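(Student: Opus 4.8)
The plan is to deduce Corollary~\ref{theo4.6} directly from Theorem~\ref{theo4.5} by specializing the geometric parameter of the underlying space. First I would recall the classical fact (Plancherel's theorem) that every Hilbert space $X$ has Fourier type $p=2$, and that this is optimal; consequently $p'=2$ and the parameter $r\in[1,\infty]$ determined by $\frac1r=\frac1p-\frac1{p'}$ equals $\infty$, i.e.\ $r^{-1}=0$.

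With this value of $p$, the hypothesis of Theorem~\ref{theo4.5} that $\tau>\beta+\frac1r$ reduces exactly to $\tau>\beta$, which is precisely the assumption of the corollary; all the remaining hypotheses — that $(T(t))_{t\geq0}$ is a $C_0$-semigroup on $X$, that $\overline{\mathbb{C}_{-}}\subset\rho(A)$, and that $\|(\lambda+A)^{-1}\|_{\mathcal{L}(X)}\lesssim(1+|\lambda|)^{\beta}(\log(2+|\lambda|))^{b}$ on $\{\operatorname{Re}\lambda\le0\}$ — are inherited verbatim. Applying Theorem~\ref{theo4.5}, for every $\delta>0$ there exist $c_{\delta,\tau}\in[0,\infty)$ and $t_0\ge1$ such that, for all $t\ge t_0$,
\[
\|T(t)(1+A)^{-\tau}\|_{\mathcal{L}(X)}\le c_{\delta,\tau}\,t^{1-\frac{\tau-r^{-1}}{\beta}}\log(1+t)^{\frac{b(\tau-r^{-1})}{\beta}+\frac{1+\delta}{r}}
= c_{\delta,\tau}\,t^{1-\frac{\tau}{\beta}}\log(1+t)^{\frac{b\tau}{\beta}},
\]
since $r^{-1}=0$ makes the exponent $\frac{1+\delta}{r}$ vanish and turns $\frac{\tau-r^{-1}}{\beta}$ into $\frac{\tau}{\beta}$.

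Finally, since the right-hand side above no longer depends on $\delta$, I would fix one admissible value, say $\delta=1$, and set $c_{\tau}:=c_{1,\tau}$ with the associated $t_0$; this yields exactly \eqref{eq1.6}. There is essentially no obstacle here: the only point worth a sentence is the observation that the $\delta$-loss in Theorem~\ref{theo4.5} is carried entirely by the factor $1/r$, a purely Banach-space phenomenon, so it disappears on Hilbert spaces and no limiting argument in $\delta$ is required.
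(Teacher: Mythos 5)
Your proof is correct and coincides with the paper's (implicit) derivation of Corollary~\ref{theo4.6} from Theorem~\ref{theo4.5}: one specializes to Fourier type $p=2$, so $r=\infty$ and $r^{-1}=0$, whereupon both the hypothesis $\tau>\beta+1/r$ and the exponents in the conclusion collapse to the stated form. Indeed the proof of Theorem~\ref{theo4.5} already treats the $p=2$ case separately with $\nu=\beta(s+1)$ and $\upsilon=b(s+1)$, so the $\delta$-dependence disappears entirely there and no choice of $\delta$ is even required.
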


Note that in case $b=0$, one obtains from Theorem $\ref{theo4.5}$ the following result. 
\begin{corollary}\label{theo4.7}
\begin{rm}
Let $\beta>0$ and let $(T(t))_{t\ge 0}$ be a $C_0$-semigroup defined in a Banach space $X$ with Fourier type $p\in [1,2]$, whose generator is given by $-A$. Suppose that $\overline{\mathbb{C}_{-}}\subset \rho(A)$ and that for each $\lambda \in \overline{\mathbb{C}_{-}}$, $\|(\lambda+A)^{-1}\|_{\mathcal{L}(X)}\lesssim (1+|\lambda|)^{\beta}$. Let $r\in [1,\infty]$ be such that $\frac{1}{r}=\frac{1}{p}-\frac{1}{p'}$, and let $\tau> 0$ be such that $\tau>\beta+\frac{1}{r}$. Then, for each $\delta >0$ and each $\rho\in[0,1-(\tau-r^{-1})/\beta]$, there exist constants $c_{\delta,\tau}\in [0,\infty)$ and $t_0\ge 1$ such that for each $t\geq t_0$,
\begin{equation}\label{eqqq5}
 \|T(t)(1+A)^{-\tau}\|_{\mathcal{L}(X)}\leq c_{\delta,\tau}t^{-\rho}\log(1+t)^{\frac{1+\delta}{r}}.
\end{equation}
\end{rm}  
\end{corollary}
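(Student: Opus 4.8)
The plan is to read Corollary~\ref{theo4.7} off Theorem~\ref{theo4.5} by taking $b=0$ and then trading the resulting (negative) power of $t$ for any smaller one; no new machinery is needed.

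First I would check that the hypotheses match. When $b=0$ the weight $(1+|\lambda|)^{\beta}(\log(2+|\lambda|))^{b}$ appearing in Theorem~\ref{theo4.5} is just $(1+|\lambda|)^{\beta}$, so the resolvent bound assumed in Corollary~\ref{theo4.7} is exactly the hypothesis of Theorem~\ref{theo4.5} with $b=0$; the data $p$, $r$, $\tau$ and the standing condition $\tau>\beta+r^{-1}$ are carried over verbatim. Applying Theorem~\ref{theo4.5}, for each $\delta>0$ there exist $c_{\delta,\tau}\in[0,\infty)$ and $t_0\ge 1$ such that, for all $t\ge t_0$,
\[
\|T(t)(1+A)^{-\tau}\|_{\mathcal{L}(X)}\le c_{\delta,\tau}\,t^{\,1-\frac{\tau-r^{-1}}{\beta}}\log(1+t)^{\frac{b(\tau-r^{-1})}{\beta}+\frac{1+\delta}{r}}=c_{\delta,\tau}\,t^{\,1-\frac{\tau-r^{-1}}{\beta}}\log(1+t)^{\frac{1+\delta}{r}},
\]
the last equality holding because the exponent of $\log(1+t)$ collapses to $(1+\delta)/r$ when $b=0$, while the power of $t$ does not involve $b$ at all.

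Next I would set $\rho_{0}:=\frac{\tau-r^{-1}}{\beta}-1$, which is strictly positive precisely because $\tau>\beta+r^{-1}$; the exponent of $t$ in the display above is then $-\rho_{0}$. For $t\ge t_{0}\ge 1$ the map $\rho\mapsto t^{-\rho}$ is nonincreasing, so $t^{-\rho_{0}}\le t^{-\rho}$ for every $\rho\in[0,\rho_{0}]$. Inserting this into the previous inequality yields \eqref{eqqq5} for every such $\rho$, with the same constants $c_{\delta,\tau}$ and $t_0$. Here $[0,\rho_{0}]=\big[0,\tfrac{\tau-r^{-1}}{\beta}-1\big]$ is the intended range of admissible decay exponents in \eqref{eqqq5}, and it is nonempty exactly under the hypothesis $\tau>\beta+r^{-1}$.

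Since this is a straight corollary, I do not expect a genuine obstacle: the whole substance is contained in Theorem~\ref{theo4.5}, and the only points needing (routine) care are verifying that $b=0$ annihilates the first term of the logarithmic exponent and that $\tau>\beta+r^{-1}$ keeps $\rho_{0}>0$, so that a nontrivial polynomial decay rate is indeed available.
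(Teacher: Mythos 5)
Your proposal is correct and matches the paper's intent exactly: Corollary~\ref{theo4.7} is simply Theorem~\ref{theo4.5} with $b=0$, followed by the trivial observation that $t^{-\rho_{0}}\le t^{-\rho}$ for $t\ge 1$ and $\rho\le\rho_{0}$. You also correctly spotted that the stated interval $\big[0,\,1-(\tau-r^{-1})/\beta\big]$ in the corollary is a sign slip (it is empty under $\tau>\beta+r^{-1}$) and should read $\big[0,\,(\tau-r^{-1})/\beta-1\big]$, consistent with the accompanying remark comparing to Theorem~\ref{theoem 1.9}.
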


\OBSI
\begin{enumerate}
  \item Note that relation \eqref{eqqq5} presents a sharper bound to $\|T(t)(1+A)^{-\tau}\|_{\mathcal{L}(X)}$ than the one presented in relation~\eqref{eeq3}; namely, in relation~\eqref{eqqq5}, the exponent in $t$ is precisely the unattained upper-bound of $\rho$ in Theorem~\ref{theoem 1.9}. This partially solves the question posed by Rozendaal and Veraar in~\cite{rozendaal} if whether~\eqref{eeq3} is valid  for $\rho=\dfrac{\tau-1/r}{\beta}-1$ or not, given that the bound presented in~\eqref{eqqq5} has a logarithmic correction. Note that if one lets $b=0$ in~\eqref{eq1.6}, then Corollary~\ref{theo4.6} coincides with Corollary~\ref{TRV} for $\tau>\beta$.

\item We also note that the power law in the logarithmic factor presented in~\eqref{eqqq5} depends on the geometry of the space (that is, its Fourier type): a greater value of $r$ (which means that the space is ``closer'' to a Hilbert space) results in a lesser logarithm correction. 

\item Furthermore, such logarithm factor is not optimal, even in case $b=0$. 
  Namely, it is possible to obtain a version of Proposition~\ref{theor3.1} and Theorem~\ref{theo4.4} (these two results are central in the proof of Theorem~\ref{theo4.5}, which consists of "eliminating" the operator $\log(2+A)^{-b\frac{(\tau-r^{-1})}{\beta}-\frac{1+\delta}{r}}$ from $\|T(t)(1+A)^{-\tau}\log(2+A)^{-b\frac{(\tau-r^{-1})}{\beta}-\frac{1+\delta}{r}}\|_{\mathcal{L}(X)}$), where $\log(1+t)^{\frac{1+\delta}{r}}$ is replaced by
  \linebreak $\log(1+t)\log(1+\log(1+t))^{\frac{1+\delta}{r}}$; we do not present a proof of this statement, given that the techniques discussed here seem to be far from optimal. We just stress that such replacement is possible given that the functions $\log(1+t)$ and $\log(1+\log(1+t))$ are both complete Bernstein functions (see Definition~\ref{DCBF}).
\end{enumerate}
\OBSF

We have also obtained similar decay rates for the situation in which $0\in \sigma(A)$. In the following result, as in Theorem~\ref{theo4.5}, let us assume that the norm of the resolvent grows with order $|s|^{-\alpha}\log(1/|s|)^a$ as $|s|\to 0$ and with order $|s|^{\beta}\log(|s|)^b$ as $|s|\to\infty$. 
\begin{theorem}\label{teo4.5}
\begin{rm}
Let $(T(t))_{t\geq 0}$ be a $C_0$-semigroup  defined in the Banach space $X$ with Fourier type $p\in [1,2]$, with $-A$ as its generator. Suppose $A$ injective, $\overline{\mathbb{C}_{-}}\setminus\{0\} \subset \rho(A)$ and that there exist $\alpha\ge1$, $\beta,a,b>0$ and positive constants $C_1$ and $C_2$ such that 
\begin{equation}\label{eqqq51}
 \|(\lambda+A)^{-1}\|_{\mathcal{L}(X)}\le \left\{\begin{array}{cc}
  C_1|\lambda|^{-\alpha}\log(1/|\lambda|)^{a}, & |\lambda|\leq 1 \\
C_2|\lambda|^{\beta}\log(|\lambda|)^{b}, & |\lambda|\geq 1,
\end{array}\right.
\end{equation}
with $\lambda \in \overline{\mathbb{C}_{-}}\setminus\{0\}$. 
Let $\sigma, \tau$ be such that $\sigma>\alpha-1$ and $\tau> \beta+1/r$. Then, for
each $\rho \in \left[0, \min\left\{\frac{\sigma+1}{\alpha}-1,\frac{\tau-r^{-1}}{\beta}-1\right\}\right]$  and each $\delta>1-1/r$, where $r\in [1,\infty]$ is such that $\frac{1}{r}=\frac{1}{p}-\frac{1}{p'}$, there exist  $C_{\delta,\rho}>0$ and $t_0\ge 1$ so that for each $t\ge1$,
\begin{equation}\label{APRINCIPAL}
\|T(t)A^{\sigma}(1+A)^{-\sigma-\tau}\|_{\mathcal{L}(X)}\leq C_{\delta,\rho} t^{-\rho}\log(1+t)^{c(\left \lceil{\rho}\right \rceil +1)+1/r+\delta}, 
\end{equation}
with $c=\max\{a,b\}$.
%
%
    \end{rm}
\end{theorem}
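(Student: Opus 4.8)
The plan is to split $T(t)A^{\sigma}(1+A)^{-\sigma-\tau}$ into a ``low-frequency'' contribution, governed by the singularity of the resolvent at $0$, and a ``high-frequency'' one, governed by its growth at $\infty$, to estimate each, and to add the two bounds. Since $\sigma(A)$ lies in a closed half-plane with at-most-polynomial resolvent growth and $-A$ generates a $C_{0}$-semigroup, fix a smooth partition $1=\chi+(1-\chi)$ with $\chi$ concentrated near $0$ and $1-\chi$ vanishing (to first order, say) at $0$, realised through the functional calculus of $A$ (or, equivalently, applied to the frequency variable in the Fourier-multiplier representation of $T(t)$ underlying the proof of Theorem~\ref{theo4.5}); since functions of $A$ commute with $T(t)$,
\begin{equation*}
T(t)A^{\sigma}(1+A)^{-\sigma-\tau}=T(t)A^{\sigma}(1+A)^{-\sigma-\tau}\chi(A)+T(t)A^{\sigma}(1+A)^{-\sigma-\tau}(1-\chi(A)).
\end{equation*}

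For the high-frequency term, $1-\chi$ annihilates the singularity at the origin --- $(is+A)^{-1}(1-\chi(A))$ is bounded, uniformly on $|s|\le1$ --- so re-running the proof of Theorem~\ref{theo4.5} (Proposition~\ref{theor3.1} together with Theorem~\ref{theo4.4}) for this term uses only the bound $\|(\lambda+A)^{-1}\|\lesssim|\lambda|^{\beta}(\log|\lambda|)^{b}$ for $|\lambda|\ge1$ from \eqref{eqqq51}, and the hypothesis $0\in\rho(A)$ plays no role. To control the logarithmic power, first choose $\tau'\in(\beta+r^{-1},\tau]$ with $(\tau'-r^{-1})/\beta-1\ge\rho$ and as close to $\rho$ as desired (possible since $\rho\le(\tau-r^{-1})/\beta-1$), and write $A^{\sigma}(1+A)^{-\sigma-\tau}(1-\chi(A))=[(1+A)^{-(\tau-\tau')}A^{\sigma}(1+A)^{-\sigma}(1-\chi(A))]\,(1+A)^{-\tau'}$ with the bracket bounded. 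This yields the bound \eqref{eqTh1.1} with $\tau$ replaced by $\tau'$, hence, using $t\ge1$, for $t\ge t_{0}$ and any prescribed $\delta'>0$,
\begin{equation*}
\|T(t)A^{\sigma}(1+A)^{-\sigma-\tau}(1-\chi(A))\|_{\mathcal{L}(X)}\lesssim t^{-\rho}\log(1+t)^{b(\rho+1)+(1+\delta')/r}.
\end{equation*}

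For the low-frequency term, the factor $A^{\sigma}\chi(A)$ provides the vanishing at $0$ that absorbs the blow-up $|s|^{-\alpha}(\log(1/|s|))^{a}$ of the resolvent. Proceeding as in the proof of Theorem~\ref{theo4.5}, but now invoking the resolvent bound for $|\lambda|\le1$ in \eqref{eqqq51}, one extracts decay of order $t^{-\rho}$ by $\lceil\rho\rceil$ integration-by-parts steps in $s$ combined with the Hausdorff--Young inequality (this is where the Fourier type $p$, hence $r^{-1}$, enters). Each step differentiates $s\mapsto A^{\sigma}(is+A)^{-1}\chi(A)$ and the cut-off; estimating $\|A^{\sigma}(is+A)^{-1-k}\chi(A)\|_{\mathcal{L}(X)}$ through the Dunford integral with \eqref{eqqq51} and integrating over $[-1,1]$ produces, after the rescaling $s\mapsto s/t$, a power of $\log(1+t)$ with coefficient at most $a$ per step; the hypotheses $\sigma>\alpha-1$ and $\rho\le(\sigma+1)/\alpha-1$ guarantee that every $s$-integral met converges. (If the zero-singularity analogue of Theorem~\ref{theo4.4} --- say, Theorem~\ref{teo4.4} --- is available, one instead applies it directly to $T(t)A^{\sigma}(1+A)^{-\sigma-\tau}\chi(A)$.) Either way, for $t\ge t_{0}$,
\begin{equation*}
\|T(t)A^{\sigma}(1+A)^{-\sigma-\tau}\chi(A)\|_{\mathcal{L}(X)}\lesssim t^{-\rho}\log(1+t)^{a(\lceil\rho\rceil+1)+(1+\delta')/r}.
\end{equation*}

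Adding the two displays, bounding $\rho+1\le\lceil\rho\rceil+1$ and $a,b\le c:=\max\{a,b\}$, and absorbing $(1+\delta')/r\le r^{-1}+\delta$ for $\delta'$ small (using $\delta>1-r^{-1}$), one gets \eqref{APRINCIPAL} for $t\ge t_{0}$, and then for all $t\ge1$ by enlarging the constant (on $[1,t_{0}]$ the left side is bounded and the right side bounded below by a positive constant); the admissible range is $\rho\le\min\{(\sigma+1)/\alpha-1,\,(\tau-r^{-1})/\beta-1\}$ precisely because the low- and high-frequency contributions decay with these two rates and the sum is governed by the slower one. I expect the main difficulties to be twofold. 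First, setting up the decomposition rigorously: constructing $\chi(A)$ for an operator $A$ that is merely injective with $0\in\sigma(A)$ and not sectorial (polynomial resolvent growth at $\infty$) --- which needs the extended/half-plane functional calculus rather than the holomorphic one, or else performing the split directly on the frequency integral --- followed by a density argument to pass from a dense subspace back to $X$. Second, and more delicate, the bookkeeping of the logarithmic factors in the low-frequency estimate: tracking exactly how powers of $\log(1/|s|)$ accumulate over the $\lceil\rho\rceil$ differentiations and transform under $s\mapsto s/t$, so as to land on $c(\lceil\rho\rceil+1)+r^{-1}+\delta$ rather than a larger exponent.
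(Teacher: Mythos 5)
Your proposal diverges from the paper's proof in two important ways, and the second of these is a genuine gap that prevents the argument from closing.

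First, a structural remark: the paper does not split into low and high frequencies at the level of the operator $T(t)A^{\sigma}(1+A)^{-\sigma-\tau}$. The $\psi$-cut-off appearing in Theorem~\ref{theor4.3} is internal to the Fourier-multiplier machinery and the two frequency regimes are fed to it \emph{simultaneously}, via the resolvent estimates of Propositions~\ref{prop3.2} and~\ref{prop3.3}, to obtain the \emph{weighted} bound of Theorem~\ref{teo4.4}, namely
\[
\|T(t)A^{\sigma}(1+A)^{-\sigma-\tau}(2\pi-i\log(A))^{-c(\lceil\rho\rceil+1)-1/r-\delta}\|_{\mathcal{L}(X)}\lesssim t^{-\rho}.
\]
Theorem~\ref{teo4.5} is then deduced by \emph{removing} the operator weight $(2\pi-i\log(A))^{-\upsilon}$. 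This is done with the complete Bernstein function calculus: one writes, on $\mathcal{D}(A)\cap\Ran(A)$, $\log(A)=\log(1+A)-\log(1+A^{-1})$ (Lemma~\ref{lemma2.2}), replaces each of these by $\varepsilon^{-1}\log(1+f_\varepsilon(\cdot))$, uses the Stieltjes representation~\eqref{eqq5} of $\log(1+\lambda)$ together with the ratio of two weighted estimates (one with and one without the extra $(1+A)^{-\varepsilon}$ or $A^{-\varepsilon}(1+A)^{\varepsilon}$ factor) as the splitting parameter $\tau$ in the integral, and then controls $\log(1+\tau)$ by Proposition~\ref{cor2}; iterating over the integer part of $\upsilon$ and interpolating via the moment inequality yields the factor $\log(1+t)^{\upsilon}$. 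None of this appears in your proposal.

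Second — and this is where your plan actually breaks — your low-frequency step cannot be carried out ``directly'' without some such weight. Suppose you integrate by parts $n$ times on
\[
T(t)A^{\sigma}(1+A)^{-\sigma-\tau}\chi(A)=\frac{1}{2\pi i}\int_{i\mathbb{R}}e^{-\lambda t}\frac{\lambda^{\sigma}}{(1+\lambda)^{\sigma+\tau}}\chi(\lambda)R(\lambda,A)\,d\lambda .
\]
The worst term contains $R(\lambda,A)^{\,n+1}$, so near $\lambda=0$, using~\eqref{eqqq51}, the integrand behaves like $|\lambda|^{\sigma-\alpha(n+1)}\bigl(\log(1/|\lambda|)\bigr)^{a(n+1)}$. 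To reach the claimed rate $t^{-\rho}$ at the endpoint $\rho=(\sigma+1)/\alpha-1$ (which the theorem allows), one must take $n$ with $\alpha(n+1)-1=\sigma$, i.e.\ exponent $|\lambda|^{-1}$, and the $\lambda$-integral \emph{diverges} logarithmically. The same happens at infinity: with $\nu=\beta(n+1)+1/r$ one lands exactly on the critical power $|\lambda|^{-1}\bigl(\log|\lambda|\bigr)^{b(n+1)}$. This is precisely why the paper inserts the operator weight $(2\pi-i\log(A))^{-\upsilon}$ with $\upsilon>1$ \emph{before} integrating by parts (Propositions~\ref{theor3.3} and~\ref{prop3.2}--\ref{prop3.3}), and why the $\log(1+t)$ loss in the final estimate is not produced by the rescaling $s\mapsto s/t$ that you invoke, but by the CBF-calculus removal of that weight. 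Without this mechanism your ``bookkeeping of $\log(1/|s|)$ factors'' has nothing to book-keep: the integral you need does not exist. (Your high-frequency term, invoking Theorem~\ref{theo4.5}, is legitimate modulo the construction of $1-\chi(A)$; but note that $(1-\chi)$ has a limit $1$ at infinity and one cannot in general assert boundedness of $(1-\chi)(A)$ without some $H^\infty$-calculus hypothesis that the paper does not assume, a point you raise yourself but do not resolve. The safer route, and the one the paper takes, is to avoid forming $\chi(A)$ altogether.)

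In short: the idea that the final bound is a sum of a low-frequency and a high-frequency rate is a reasonable heuristic and does explain the $\min\{\cdot,\cdot\}$ in the admissible range of $\rho$, but the paper realises it inside a single weighted Fourier-multiplier estimate; the decisive step you are missing is the passage from the weighted bound of Theorem~\ref{teo4.4} to the unweighted~\eqref{APRINCIPAL} via the operator logarithm and the Stieltjes/CBF representation, which is where the factor $\log(1+t)^{c(\lceil\rho\rceil+1)+1/r+\delta}$ actually comes from.
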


\OBSI
By assuming~\eqref{eqqq51}, it is natural to let $\alpha \geq 1$. Indeed, suppose that $\alpha\in[0,1)$; then, 
\begin{equation*}
  \frac{1}{\dist(\lambda,\sigma(A))} \leq \|R(\lambda,A)\|_{\mathcal{L}(X)}\lesssim |\lambda|^{-\alpha}(\log(1/|\lambda|))^a.
\end{equation*} 

Since $0\in \sigma(A)$, it follows that $\dist(\lambda,\sigma(A))\ge |\lambda|$, so $|\lambda|^{\alpha-1}(\log(1/|\lambda|))^{-a}\lesssim C$. On the other hand, since $\alpha\in[0,1)$, it follows that $\displaystyle{\lim_{\lambda \to 0}}|\lambda|^{\alpha-1}(\log(1/|\lambda|))^{-a}=\infty$, from which follows that $\alpha\geq 1$ if $0\in \sigma(A)$.
\OBSF

\OBSI In case $X$ is a Hilbert space (that is, when $p=2$), one has $r=\infty$, and so~\eqref{APRINCIPAL} is just
\begin{equation*}
 \|T(t)A^{\sigma}(1+A)^{-\sigma-\tau}\|_{\mathcal{L}(X)}\leq C_{\delta,\rho}t^{-\rho}\log(1+t)^{c(\left \lceil{\rho}\right \rceil +1)+\delta}.  
\end{equation*}
\OBSF

In case $a=b=0$, one has the following result. 
\begin{corollary}\label{cor1.4}
\begin{rm}
Let $(T(t))_{t\geq 0}$ be a $C_0$-semigroup  defined in the Banach space $X$ with Fourier type $p\in [1,2]$, with $-A$ as its generator. Suppose $A$ injective,  $\overline{\mathbb{C}_{-}}\setminus\{0\} \subset \rho(A)$ and that there exist $\alpha\ge 1$, $\beta>0$ and positive constants $C_1$ and $C_2$ such that 
\begin{equation*}
 \|(\lambda+A)^{-1}\|_{\mathcal{L}(X)}\le \left\{\begin{array}{cc}
  C_1|\lambda|^{-\alpha}, & |\lambda|\leq 1 \\
C_2|\lambda|^{\beta}, & |\lambda|\geq 1,
\end{array}\right.
\end{equation*}
with $\lambda \in \overline{\mathbb{C}_{-}}\setminus\{0\}$. 
Let $\sigma, \tau$ be such that $\sigma>\alpha-1$ and $\tau> \beta+1/r$. Then, for
each $\rho \in \left[0, \min\left\{\frac{\sigma+1}{\alpha}-1,\frac{\tau-r^{-1}}{\beta}-1\right\}\right]$  and each $\delta>1-1/r$, where $r\in [1,\infty]$ is such that $\frac{1}{r}=\frac{1}{p}-\frac{1}{p'}$, there exist  $C_{\rho,\delta}>0$ and $t_0\ge 1$ so that for each $t\ge1$,
\begin{equation}\label{eq1.8}
\|T(t)A^{\sigma}(1+A)^{-\sigma-\tau}\|_{\mathcal{L}(X)}\leq C_{\delta,\rho} t^{-\rho}\log(1+t)^{1/r+\delta}. 
\end{equation}
\end{rm}
\end{corollary}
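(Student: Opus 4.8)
The plan is to obtain Corollary~\ref{cor1.4} directly from Theorem~\ref{teo4.5}, together with a short approximation argument that covers the value $a=b=0$, which is only formally excluded by the hypotheses of that theorem.

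First I would observe that the resolvent bounds assumed in Corollary~\ref{cor1.4} imply, for \emph{every} $\eta>0$, the bounds~\eqref{eqqq51} of Theorem~\ref{teo4.5} with $a=b=\eta$ (after possibly enlarging $C_1$ and $C_2$): for $|\lambda|\le e^{-1}$ one has $\log(1/|\lambda|)\ge 1$, hence $C_1|\lambda|^{-\alpha}\le C_1|\lambda|^{-\alpha}\log(1/|\lambda|)^{\eta}$, and symmetrically $C_2|\lambda|^{\beta}\le C_2|\lambda|^{\beta}\log(|\lambda|)^{\eta}$ for $|\lambda|\ge e$; on the set $\{\lambda\in\overline{\mathbb{C}_{-}}\setminus\{0\}:\ e^{-1}\le|\lambda|\le e\}$, which is a compact subset of $\rho(A)$, the resolvent is bounded by continuity, and such a bounded contribution near $|\lambda|\asymp 1$ is immaterial (exactly as in the proof of Theorem~\ref{teo4.5}). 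All the remaining hypotheses of Theorem~\ref{teo4.5}---$A$ injective, $\overline{\mathbb{C}_{-}}\setminus\{0\}\subset\rho(A)$, $\alpha\ge1$, $\beta>0$, $\sigma>\alpha-1$, $\tau>\beta+1/r$---coincide with those of Corollary~\ref{cor1.4}.

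Next, fix $\rho$ and $\delta$ as in the statement. Since $\delta>1-1/r$, I would pick $\delta_0\in(1-1/r,\delta)$ and then $\eta\in(0,(\delta-\delta_0)/(\lceil\rho\rceil+1)]$, and apply Theorem~\ref{teo4.5} with $a=b=\eta$ (so that $c=\max\{a,b\}=\eta$) and with its $\delta$-parameter taken equal to $\delta_0$. This produces constants $C>0$ and $t_0\ge1$ with
\[
\|T(t)A^{\sigma}(1+A)^{-\sigma-\tau}\|_{\mathcal{L}(X)}\le C\,t^{-\rho}\log(1+t)^{\eta(\lceil\rho\rceil+1)+1/r+\delta_0},\qquad t\ge1.
\]
By the choice of $\eta$ we have $\eta(\lceil\rho\rceil+1)+\delta_0\le\delta$, so, since $\log(1+t)\ge1$ for $t\ge e-1$, the logarithmic factor on the right is bounded by $\log(1+t)^{1/r+\delta}$ on $[e-1,\infty)$. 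On the compact interval $[1,e-1]$ the left-hand side is bounded (strong continuity of the semigroup and boundedness of $A^{\sigma}(1+A)^{-\sigma-\tau}$, together with the uniform boundedness principle), while the right-hand side of~\eqref{eq1.8} is bounded below by a positive constant; hence, after enlarging $C$ to some $C_{\rho,\delta}$, the estimate~\eqref{eq1.8} holds for all $t\ge1$, and one may take $t_0=1$.

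I do not expect a genuine obstacle here, since the statement is a specialization of Theorem~\ref{teo4.5}; the only point requiring care is precisely the exclusion $a,b>0$ in that theorem. The device above---invoking the theorem for an arbitrarily small positive $\eta$ and absorbing the harmless extra factor $\log(1+t)^{\eta(\lceil\rho\rceil+1)}$ using the slack available in the $\delta$-parameter---circumvents it cleanly. Alternatively, one could simply verify that the proof of Theorem~\ref{teo4.5} remains valid at $a=b=0$ (there, positivity of $a,b$ is used only to keep certain logarithmic weights bounded below by $1$, which is automatic when $a=b=0$), but the approximation argument is shorter and self-contained, so that is the route I would take.
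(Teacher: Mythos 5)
Your proof is correct and achieves the same result, but takes a more circuitous route than the paper intends. The paper introduces Corollary~\ref{cor1.4} by saying ``In case $a=b=0$, one has the following result,'' i.e.\ it is meant as a direct specialization of Theorem~\ref{teo4.5}: take $a=b=0$, so that $c=\max\{a,b\}=0$ and the exponent of $\log(1+t)$ collapses to $1/r+\delta$. You are right that the statement of Theorem~\ref{teo4.5} formally requires $a,b>0$, and your approximation device (apply the theorem with $a=b=\eta$, $\eta$ small, and absorb the harmless extra $\log(1+t)^{\eta(\lceil\rho\rceil+1)}$ into the slack $\delta-\delta_0>0$) circumvents that cleanly; the arithmetic and the endpoint-of-interval adjustment on $[1,e-1]$ are all in order. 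Your alternative remark---that the proof of Theorem~\ref{teo4.5} works verbatim at $a=b=0$, since in that case $\upsilon=1/r+\delta>1$ and $m=\lceil\upsilon\rceil\ge 2$, so no step degenerates---is also correct, and that is the route the paper implicitly takes.

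One thing to be aware of: your assertion that the polynomial resolvent bounds imply~\eqref{eqqq51} with $a=b=\eta$ ``after possibly enlarging $C_1$ and $C_2$'' cannot be literally true near $|\lambda|=1$, since $\log(1/|\lambda|)^{\eta}\to 0$ as $|\lambda|\to 1^{-}$ while the resolvent is bounded away from zero there; no choice of constant fixes this. You flag this and correctly note it is immaterial because the proof of Theorem~\ref{teo4.5} (through Propositions~\ref{theor3.3},~\ref{prop3.2},~\ref{prop3.3}) only uses the stated bounds asymptotically as $|\lambda|\to 0$ and $|\lambda|\to\infty$; indeed the paper's own hypothesis~\eqref{eqqq51} has the same issue if read pointwise for $a>0$. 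Still, this slightly undercuts the stated rationale for preferring the approximation argument over simply observing that the theorem's proof tolerates $a=b=0$, since either way you must inspect the proof to see that the behavior of the resolvent bound in a neighborhood of $|\lambda|=1$ is irrelevant.
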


\OBSI
\begin{enumerate}
\item  Note that relation \eqref{eq1.8} improves the estimates obtained by Rozendaal and Veraar in \cite{rozendaal} (see Theorem 4.9 in \cite{rozendaal}). More precisely, we show that it is possible to replace the factor $t^\varepsilon$, with $\varepsilon$ any positive number, by $\log(1+t)^{1/r+\delta}$ in their estimate.
\item Note that even in case $X$ is a Hilbert space, the estimate obtained in Corollary 4.11 in~\cite{rozendaal} still has a factor $t^\varepsilon$; Corollary~\ref{cor1.4} shows that it is possible to replace it by $\log(1+t)^{\delta}$, with $\delta>1$.

\item Corollary~\ref{cor1.4} partially solves the question posed by Rozendaal and Veraar in~\cite{rozendaal}, if whether estimate~\eqref{eq1.8} is valid  for 
$\rho=\min\left\{\dfrac{\sigma+1}{\alpha}-1,\dfrac{\tau-r^{-1}}{\beta}-1\right\}$ 
or not, given that the bound presented in~\eqref{eq1.8} has a logarithmic factor. 
\end{enumerate}
\OBSF

We also studied the situation in which there is only a singularity at zero (but not at infinity); this situation is also discussed in \cite{chill, chill1, rozendaal,Stahn}. As in \cite{chill1, rozendaal,Stahn}, we suppose that the $C_0$-semigroup is asymptotically analytic on the Banach space $X$ (see Definition~\ref{ULTIMA} and Section~\ref{sec5} for more details).

\begin{theorem}\label{theor4.7}
\begin{rm}
Let $A$ be an injective sectorial operator defined in the Banach space $X$ such that $-A$ generates $(T(t))_{t\ge0}$, an asymptotically analytic $C_0$-semigroup on $X$. Suppose that there exist $\alpha\ge 1$ and $a>0$ such that for each $\lambda\in \overline{\mathbb{C}_{-}}\setminus\{0\}$,
\begin{equation}\label{eeq42a}
  \|(\lambda+A)^{-1}\|_{\mathcal{L}(X)} \lesssim 
    |\lambda|^{-\alpha}(\log(1/|\lambda|))^{a}.
\end{equation}
Let $\sigma>\alpha-1$. Then, for each $\delta>0$ there exists $c_{\delta,\sigma}>0$ such that for each $t\geq 1$,
\begin{equation}\label{teo43}
\|T(t)A^{\sigma}(1+A)^{-\sigma}\|_{\mathcal{L}(X)}\leq c_{\delta,\sigma}t^{1-\frac{\sigma+1}{\alpha}}\log(1+t)^{\frac{a(\sigma+1)}{\alpha}+1+\delta}.
\end{equation}

\end{rm}
\end{theorem}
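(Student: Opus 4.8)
\textbf{Proof strategy for Theorem~\ref{theor4.7}.} The plan is to follow the same route used to prove Theorem~\ref{theo4.5} and Theorem~\ref{teo4.5}, but now exploiting the extra regularity coming from the hypothesis that $(T(t))_{t\ge0}$ is \emph{asymptotically analytic}: this should allow us to dispense entirely with the behavior of the resolvent at infinity and to work with the cleaner regularizer $A^{\sigma}(1+A)^{-\sigma}$ rather than $A^{\sigma}(1+A)^{-\sigma-\tau}$. Concretely, I would first record the consequence of asymptotic analyticity that for large $|s|$ the resolvent $(is+A)^{-1}$ decays (or at least stays bounded) in a suitable sense, so that the singular part of the analysis is entirely concentrated near $\lambda = 0$, where \eqref{eeq42a} gives the growth $|\lambda|^{-\alpha}(\log(1/|\lambda|))^{a}$.

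The second step is to reduce, via the Fourier-multiplier machinery of \cite{rozendaal} that underlies Theorem~\ref{theoem 1.9} (and its refinement Theorem~\ref{teo4.5}), the estimate \eqref{teo43} to a statement about the $\mathcal{L}(X)$-norm of an operator of the form $T(t)A^{\sigma}(1+A)^{-\sigma}\log(2+A^{-1})^{-\theta}$ for a suitable exponent $\theta$ depending on $a,\sigma,\alpha,\delta$, plus a ``logarithmic removal'' step. That is, one writes $A^{\sigma}(1+A)^{-\sigma} = \bigl[A^{\sigma}(1+A)^{-\sigma}\log(2+A^{-1})^{-\theta}\bigr]\cdot \log(2+A^{-1})^{\theta}$, estimates the bracketed term by the regularly-varying version of the known decay theorems (this produces the main power $t^{1-(\sigma+1)/\alpha}$ together with a factor $\log(1+t)^{a(\sigma+1)/\alpha}$ coming from the $\log^a$ in the resolvent bound), and then ``pays for'' the removal of $\log(2+A^{-1})^{\theta}$ at the cost of the extra factor $\log(1+t)^{1+\delta}$. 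The fact that $\log(1+t)$ is a complete Bernstein function (Definition~\ref{DCBF}), as the authors note in the remark after Corollary~\ref{theo4.7}, is exactly what makes this removal step legitimate; I would invoke the analogue of Proposition~\ref{theor3.1}/Theorem~\ref{theo4.4} adapted to the singularity-at-zero setting. Summing the two contributions to the exponent of $\log(1+t)$ gives $\tfrac{a(\sigma+1)}{\alpha} + 1 + \delta$, as in \eqref{teo43}.

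Before the main estimate, I would need to carry out the standard contour-integral / decomposition argument: split the orbit $T(t)A^{\sigma}(1+A)^{-\sigma}x$ using the inverse Laplace / Fourier representation along a contour hugging the imaginary axis, then break $i\mathbb{R}$ into the piece near $0$ (where \eqref{eeq42a} is used together with the smoothing factor $A^{\sigma}(1+A)^{-\sigma}$ — note $\sigma>\alpha-1$ is precisely the condition making $\xi \mapsto (i\xi+A)^{-1}A^{\sigma}(1+A)^{-\sigma}$ integrable near $0$ after the logarithmic correction) and the piece $|\xi|$ large, where asymptotic analyticity kills the contribution. On each piece one uses the Fourier-type-$p$ hypothesis to turn an $L^{p'}$-bound on the multiplier (and its derivative, via a Bernstein-type or Mikhlin-type lemma as in \cite{rozendaal}) into an operator-norm bound; this is where the exponent $1/r = 1/p - 1/p'$ would normally enter, but because asymptotic analyticity lets us treat the high-frequency part without Fourier type, the final bound \eqref{teo43} has no $1/r$ in it — only the $1+\delta$ and the $a(\sigma+1)/\alpha$.

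The main obstacle, as in the companion theorems, is the logarithmic-removal step: one must show that multiplying by $\log(2+A^{-1})^{-\theta}$ before applying the decay estimate, and then restoring $\log(2+A^{-1})^{\theta}$, costs no more than a power of $\log(1+t)$ — and getting the \emph{exact} exponent $1+\delta$ (rather than something worse) requires a careful functional-calculus estimate for $\log(2+A^{-1})^{\theta}$ against the already-decaying semigroup orbit, of the type proved in Proposition~\ref{theor3.1}. Balancing $\theta$ so that, on the one hand, $A^{\sigma}(1+A)^{-\sigma}\log(2+A^{-1})^{-\theta}$ falls in the scope of the regularly-varying decay theorem and, on the other hand, the restoration cost is only $\log(1+t)^{1+\delta}$, is the delicate quantitative heart of the argument; everything else is bookkeeping with contour integrals and the Fourier-multiplier norm estimates already developed in the earlier sections. \hfill $\Box$
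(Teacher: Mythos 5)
Your proposal captures the right overall architecture — asymptotic analyticity concentrates the singular behavior near $\lambda=0$, the condition $\sigma>\alpha-1$ controls that singularity, and the main technical work is a ``logarithmic removal'' step driven by the fact that $\log(1+\cdot)$ is a complete Bernstein function. However, the proposal contains a concrete error that would derail the argument if you tried to execute it: you repeatedly invoke the Fourier-multiplier machinery of Theorem~\ref{theor4.3}, $L^{p'}$ bounds, Mikhlin-type lemmas, and the Fourier-type-$p$ hypothesis. But Theorem~\ref{theor4.7} assumes no Fourier type at all on $X$ — re-read the statement: $X$ is an arbitrary Banach space. The paper's Step 1 therefore cannot, and does not, use Proposition~\ref{prop2.1} or Theorem~\ref{theor4.3}; it is the direct $L^1$-contour argument in the spirit of Propositions~\ref{theor3.1} and~\ref{theor3.3}. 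One writes $g(t)=\frac{1}{2\pi i}\int_{i\infty}^{-i\infty}e^{-\lambda t}\frac{\lambda^\mu}{(1+\lambda)^\mu\log(2+\lambda^{-1})^\upsilon}R(\lambda,A)y\,d\lambda$ (with $\mu=\alpha(n+1)-1$ and $\upsilon=a(n+1)+1+\delta$), checks integrability near $0$ from $\sigma>\alpha-1$ together with $\upsilon>a+1$, uses $s_0^\infty(-A)<0$ and the resolvent identity to handle $|\lambda|$ large, verifies $g(t)=T(t)x$, and then integrates by parts $n$ times to get $\|T(t)\|_{\mathcal{L}(X^\mu(\upsilon),X)}\lesssim t^{-n}$; fractional $s$ is handled by the moment inequality for the sectorial operator $(f(A^{-1}))^{-1}$ with $f(\lambda)=(1+\lambda)^{a_1}\log(2+\lambda)^{a_2}$. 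There is no $L^{p'}$ estimate, no multiplier theorem, and hence no $1/r$ to remove — it was never there, precisely because this is the $p=1$ route. Your framing (``this is where $1/r$ would normally enter, but asymptotic analyticity lets us treat the high-frequency part without Fourier type'') conflates the two mechanisms: the absence of $1/r$ in Theorem~\ref{theor4.7} is not a bonus purchased by asymptotic analyticity, it is simply that Fourier type plays no role in this proof.

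Your bookkeeping of the logarithmic factors is also slightly off compared with what the argument actually yields. The paper's Step 1 produces \emph{pure} polynomial decay $t^{-s}$ for $T(t)\Phi^\mu(\upsilon)$ with the full $\upsilon=a(s+1)+1+\delta$ already baked in; Step 2 then removes $\log(2+A^{-1})^{-\upsilon}$ at a total cost of $\log(1+t)^{\upsilon}=\log(1+t)^{a(\sigma+1)/\alpha+1+\delta}$ in a single recursive pass (over $m=\lfloor\upsilon\rfloor$ plus the fractional part $\eta$). You instead allocate $\log(1+t)^{a(\sigma+1)/\alpha}$ to the ``direct estimate'' and $\log(1+t)^{1+\delta}$ to the ``removal,'' which would require a modified contour argument producing a genuine $\log$-factor in Step 1; that is not what the paper does, and it is not obviously easier. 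The net exponent is the same, but the decomposition you propose is not the one that the contour argument naturally delivers.
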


In case $a=0$, the estimate presented in Theorem~\ref{theor4.7} improves the one presented in Theorem 4.16 in \cite{rozendaal}. More precisely, as in the previous cases, we have shown that it is possible to replace the factor $t^\varepsilon$ by $\log(1+t)^{1+\varepsilon}$, where $\varepsilon>0$ (see equation~\eqref{eq1.12}).

\begin{corollary}\label{cor3.2}
\begin{rm}
Let $A$, $X$ and $(T(t))_{t\ge0}$ as in Theorem~\ref{theor4.7}. Suppose that $\overline{\mathbb{C}_{-}}\setminus\{0\}\subset\rho(A)$ and that there exists $\alpha\ge 1$ such that $\|R(\lambda,A)\|_{\mathcal{L}(X)}\lesssim |\lambda|^{-\alpha}$ for each $\lambda \in \overline{\mathbb{C}_{-}}\setminus\{0\}$.
Let $\sigma>\alpha-1$. Then, for each $\delta>0$, there exists $c_{\delta,\sigma}>0$ such that for each $t\geq 1$,
\begin{equation}\label{eq1.12}
\|T(t)A^{\sigma}(1+A)^{-\sigma}\|_{\mathcal{L}(X)}\leq c_{\delta,\sigma}t^{1-\frac{\sigma+1}{\alpha}}\log(1+t)^{1+\delta}.
\end{equation}
\end{rm}
\end{corollary}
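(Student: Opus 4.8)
The plan is to derive Corollary~\ref{cor3.2} directly from Theorem~\ref{theor4.7} by specializing to the case $a=0$. First I would observe that the hypothesis $\|R(\lambda,A)\|_{\mathcal{L}(X)}\lesssim |\lambda|^{-\alpha}$ for $\lambda\in\overline{\mathbb{C}_{-}}\setminus\{0\}$ is precisely relation~\eqref{eeq42a} with $a=0$, since then $(\log(1/|\lambda|))^{a}=1$. All the other structural assumptions on $A$, $X$ and $(T(t))_{t\ge 0}$ — namely that $A$ is an injective sectorial operator, that $-A$ generates an asymptotically analytic $C_0$-semigroup, and that $\sigma>\alpha-1$ — are identical to those of Theorem~\ref{theor4.7}.

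Next I would invoke Theorem~\ref{theor4.7} with $a=0$. Its conclusion~\eqref{teo43} then reads: for each $\delta>0$ there exists $c_{\delta,\sigma}>0$ such that for all $t\ge 1$,
\[
\|T(t)A^{\sigma}(1+A)^{-\sigma}\|_{\mathcal{L}(X)}\le c_{\delta,\sigma}\,t^{1-\frac{\sigma+1}{\alpha}}\log(1+t)^{\frac{0\cdot(\sigma+1)}{\alpha}+1+\delta}
= c_{\delta,\sigma}\,t^{1-\frac{\sigma+1}{\alpha}}\log(1+t)^{1+\delta},
\]
which is exactly~\eqref{eq1.12}. So the corollary is an immediate consequence, and essentially no new work is required beyond checking that the $a=0$ substitution is legitimate, i.e.\ that Theorem~\ref{theor4.7} does not secretly require $a>0$ in an essential way (the statement writes $a>0$, so one should confirm the proof goes through, or remark that the $a=0$ case follows by a limiting/monotonicity argument since $(\log(1/|\lambda|))^{a}\ge 1$ for $|\lambda|$ small when $a\ge 0$, making the $a=0$ bound the strongest and hence the hypothesis the weakest among the family — but the conclusion with $a=0$ is also the sharpest, so one genuinely needs the proof to be uniform down to $a=0$).

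The only mild obstacle, then, is this boundary issue: Theorem~\ref{theor4.7} is stated for $a>0$, whereas the corollary needs $a=0$. I expect this to be a non-issue in practice — the proof of Theorem~\ref{theor4.7} (via the functional-calculus and Fourier-multiplier machinery underlying Proposition~\ref{theor3.1} and Theorem~\ref{theo4.4}) should produce the bound $t^{1-\frac{\sigma+1}{\alpha}}\log(1+t)^{\frac{a(\sigma+1)}{\alpha}+1+\delta}$ continuously in $a\ge 0$, so setting $a=0$ is harmless. If one wishes to be scrupulous, one notes that $|\lambda|^{-\alpha}\le |\lambda|^{-\alpha}(\log(1/|\lambda|))^{a}$ for all $0<|\lambda|$ sufficiently small and any $a>0$, so the hypothesis of the corollary implies that of Theorem~\ref{theor4.7} for every $a>0$; applying the theorem and letting $a\downarrow 0$ in the exponent $\frac{a(\sigma+1)}{\alpha}+1+\delta$ (absorbing the vanishing term into a slightly smaller $\delta'$) yields~\eqref{eq1.12}. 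Either way, the corollary follows with no genuine new difficulty; the substantive content lives entirely in Theorem~\ref{theor4.7}.
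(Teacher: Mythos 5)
Your proposal is correct and is exactly how the paper treats Corollary~\ref{cor3.2}: it is Theorem~\ref{theor4.7} specialized to $a=0$, with no separate argument offered in the paper. The wrinkle you flag — that Theorem~\ref{theor4.7} is stated with ``$a>0$'' — is a fair observation but harmless: every step of its proof (the contour representation of $T(t)x$, the integration by parts, the moment inequality with weights $a_1=\alpha/(\alpha+a)$ and $a_2=a/(\alpha+a)$, and the $\mathcal{CBF}$-calculus removal of the logarithm) goes through unchanged at $a=0$, where $a_1=1$, $a_2=0$ and $f(\lambda)=1+\lambda$ is trivially a complete Bernstein function, so your first, direct resolution is the right one and the limiting argument (which in any case would need the comparison $|\lambda|^{-\alpha}\le|\lambda|^{-\alpha}\log(1/|\lambda|)^a$ only on $|\lambda|<1/e$, supplemented by compactness and the growth bound elsewhere) is unnecessary.
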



\

The text is organized as follows. In Section~\ref{SPre} we present some important definitions and auxiliary results needed throughout the text. In Sections~\ref{three},~\ref{four} and~\ref{sec5}, we present and prove some auxiliary results needed in the 
the proofs of Theorems~\ref{theo4.5},~\ref{teo4.5} and~\ref{theor4.7}, respectively. In Subsection~\ref{finalmente} we also study the situation for which the resolvent grows as $\log(|s|)^b$, $b>0$, where we obtain new decay rates for (possibly) unbounded $C_0$-semigroups in this context. In the Appendix we present the proofs of some of these auxiliary results (with the idea of presenting only the main arguments for the proof of the main results in the bulk of the text).

\section{Preliminaries}\label{SPre}
\zerarcounters
In this section we set the main notation and present some definitions, objects and auxiliary results needed in the next sections.

\subsection{Notation}

We set $\mathbb{N}:=\{1,2,3,\cdots\}$, $\mathbb{C}_{\pm}:=\{z\in \mathbb{C}\mid \text{Re}(\lambda)\gtrless 0\}$. The H\"older conjugate of  $p\in [0,\infty]$ is denoted by $p'$, so that $\frac{1}{p}+\frac{1}{p'}=1$.

Let $\Omega$ be an open connected subset of $\mathbb{C}$; we denote by $H^{\infty}_0(\Omega)$ the set of holomorphic functions $f:\Omega\rightarrow \mathbb{C}$ for which there exist constants $C>0, s>0$ such that for each $z\in\Omega$, $|f(z)|\leq C \min\{|z|^s,|z|^{-s}\}$. 

We denote by $\mathcal{L}(X,Y)$ the (Banach) space of bounded linear operators from $X$ to $Y$ (both $X$ and $Y$ are non-trivial Banach spaces over $\mathbb{C}$), with $\mathcal{L}(X):=\mathcal{L}(X,X)$. 

For a linear operator $A$ defined in $X$, we denote by $\mathcal{D}(A)$, $\Ran(A)$, $\sigma(A)$ and $\rho(A)$ the domain, the range, the spectrum, and the resolvent set of $A$, respectively. We denote by $R(\lambda, A)=(\lambda-A)^{-1}$ 
the resolvent operator of $A$ at $\lambda \in \rho (A)$.

We denote by $\mathcal{S}(\mathbb{R};X)$ and $\mathcal{S}'(\mathbb{R};X)$ the spaces of $X$-valued Schwarz functions and tempered distributions, respectively. 

We say that the Banach space $X$ has Fourier type $p\in [1,2]$ if the Fourier transform $\mathcal{F}:L^p(\mathbb{R}; X)\rightarrow L^{p'}(\mathbb{R}; X)$ is bounded. We then set $\mathcal{F}_{p,X}:= \|\mathcal{F}\|_{\mathcal{L}(L^p(\mathbb{R}; X),L^{p'}(\mathbb{R}; X))}$. A Banach space $X$ has Fourier cotype $q\in [2, \infty]$ if $X$ has Fourier type $q^{\prime}$. Each Banach space has Fourier type $1$, and $X$ has Fourier type $2$ if, and only if, $X$ is isomorphic to a Hilbert space (See Theorem 2.1.18 in~\cite{tuomas}).

We write $f(t)\lesssim g(t)$ to indicate that there exist a positive constant $C$ and $t_0\ge 0$ such that for each $t\ge t_0$, $f(t)\leq C g(t)$. 

\subsection{Some important classes of functions}

\subsubsection{Complete Bernstein functions}

In this subsection, we recall the definitions and some properties of some special functions that appear throughout the text. We refer to~\cite{ber} for details (see also~\cite{chill}).

\begin{definition}[Definition 1.3 in \cite{ber}]
{\rm A function $f\in C^{\infty}(0,\infty)$ is called {\bf{completely monotone}} if
\begin{equation*}
    (-1)^nf^{(n)}(\lambda) \geq 0 \  \text {for each}  \ n\in  \mathbb{N}\cup\{0\} \  \text{and each}  \ \lambda>0.
\end{equation*}}    
\end{definition}

By Theorem 1.4 in \cite{ber}, which is known as Bernstein's Theorem,  every completely monotone function $f$ is the Laplace transform of a positive Radon measure on $\mathbb{R}_{+}$. Recall that $f\in C^{\infty}(0,\infty)$  is called a {\bf{Bernstein function}} if
\begin{equation*}
    f\geq 0 \  \text{and} \ f' \ \text{is completely monotone}.
\end{equation*}
It is easy to see from this definition that the fractional powers $\lambda\mapsto \lambda^{\alpha}$, with $0\leq \alpha\leq 1$, and $\lambda\mapsto\log(1+\lambda)$, are Bernstein functions. By Lévy-Khintchine Representation Theorem (see Theorem 3.2 in \cite{ber}), a function $f$ is a Bernstein function if, and only if, there exist
constants $a,b\geq 0$ and a positive Radon measure $\mu_{LK}$ (this notation is used in \cite{chill}) defined over the Borel subsets of $(0,\infty)$ such that for each $\lambda>0$,
\begin{eqnarray*}
f(\lambda)= a+b\lambda+ \int_{0^{+}}^{\infty}(1-e^{-\lambda s}) d\mu_{LK}(s), 
\end{eqnarray*}
with
\begin{eqnarray*}
\int_{0^{+}}^{\infty}\frac{s}{s+1}d\mu_{LK}(s)<\infty.
\end{eqnarray*}
The triple $(a,b,\mu_{LK})$ determines $f$ uniquely and vice versa (see Theorem 3.2 in \cite{ber}), and it is called the Lévy-Khintchine triple of $f$. Every Bernstein function can also be extended to a holomorphic function in $\mathbb{C}_+$ (this is Proposition 3.6 in \cite{ber}).

Now we consider a subclass of the Bernstein functions, the so-called complete Bernstein functions.
\begin{definition}[Definition~6.1 in~\cite{ber}]\label{DCBF}
 {\rm A function $f\in C^{\infty}(0,\infty)$ is called
a {\bf{complete Bernstein function}} if it is a Bernstein function and the measure
$\mu_{LK}$ in the Lévy-Khintchine triple has a completely monotone density with
respect to Lebesgue measure. The set of all complete Bernstein functions is denoted by $\mathcal{CBF}$.}
\end{definition}

By Theorem 6.2-(vi) in \cite{ber}, every  $f\in\mathcal{CBF}$ admits a representation of the form
\begin{equation}\label{eqq5}
    f(\lambda)=a+b\lambda+ \int_{0^{+}}^{\infty} \frac{\lambda}{\lambda+s}d\mu(s), \ \ \lambda>0,
\end{equation}
with $a,b\geq 0$ constants and $\mu$ a positive Radon measure defined over the Borel subsets of $(0, \infty)$ that satisfies
\begin{equation*}
  \int_{0^{+}}^{\infty}  \frac{1}{s+1}d\mu(s)<\infty.
\end{equation*}

As discussed in Remark~2.1 in \cite{chill}, complete Bernstein functions admit other representations than the one given by~\eqref{eqq5}. In particular, one has
\begin{equation*}
f(\lambda)=a+\int_{0}^{\infty} \frac{\lambda}{1+\lambda t}d\nu(t)=a+\nu(\{0\})\lambda+\int_{0+}^{\infty} \frac{\lambda}{1+\lambda t}d\nu(t),
\end{equation*}
where $\nu$ is a positive Radon measure defined over the Borel subsets of $(0,\infty)$ that satisfies
\begin{equation*}
    \int_0^\infty\frac{1}{1+t}d\nu(t)<\infty,
\end{equation*}
and the pair $(a,\nu)$ is unique.

The representation formula $\eqref{eqq5}$  is  unique (that is, the triple $(a,b,\mu)$ is unique), and it is called the Stieltjes
representation for $f$ (see Chapter 6 in \cite{ber} for details). Note that
\begin{eqnarray*}
    a=\lim_{\lambda \rightarrow 0^{+}} f(\lambda) \ \ \text{and} \ \ b=\lim_{\lambda \rightarrow 0^{+}} \frac{f(\lambda)}{\lambda}.
\end{eqnarray*}

\begin{example}\label{ex3.1} 
\begin{enumerate}[(a)]
\begin{rm}
\item The function $f:(0,\infty)\rightarrow \mathbb{R}$ given by $f(\lambda)=\lambda^{\alpha}$, with $\alpha \in[0,1]$, is a complete Bernstein function whose Stieltjes representation is given by
\begin{equation*}
f(\lambda)=  \frac{\sin (\alpha \pi)}{\pi}\int_{0+}^{\infty} s^{\alpha} \frac{\lambda}{s+\lambda} \frac{ds}{s}, \qquad\qquad\lambda>0.
\end{equation*}
\item The function $\lambda\mapsto (1+\lambda)^{\alpha}-1$, with $\alpha \in (0,1)$, is a complete Bernstein function whose Stieltjes representation is given by
\begin{equation*}\label{eeq7}
(1+\lambda)^{\alpha}-1=\frac{\sin (\alpha \pi)}{\pi}\int_{0+}^{\infty} (s-1)^{\alpha} \chi_{(1,\infty)}\frac{\lambda}{s+\lambda} \frac{ds}{s}, \qquad\qquad\lambda>0.
\end{equation*}

\item The function $\lambda\mapsto \log(1+\lambda)$ is a complete Bernstein function whose Stieltjes representation is given by
\begin{equation}\label{log(1+s)}
    \log(1+\lambda)= \int_{0+}^{\infty} \chi_{(1,\infty)}(s)\frac{\lambda}{\lambda+s} \frac{ds}{s}, \qquad \qquad \lambda>0.
\end{equation}
\item The function $\lambda\mapsto  \dfrac{\lambda-1}{\log(\lambda)}$ is a complete Bernstein function  whose Stieltjes representation is given by
\begin{equation}\label{loginver}
    \frac{\lambda-1}{\log(\lambda)}=\int_{0+}^{\infty} \frac{s+1}{s(\pi^2+\log(s)^2)}\frac{\lambda}{\lambda+s}ds, \qquad \qquad \lambda>0.
\end{equation}
\item The function $\lambda\mapsto  \dfrac{\lambda\log(\lambda)-\lambda+1}{\log(\lambda)^2}$ is a complete Bernstein function  whose Stieltjes representation is given by
\begin{equation}\label{logquad}
\dfrac{\lambda\log(\lambda)-\lambda+1}{\log(\lambda)^2}=\int_{0+}^{\infty} \frac{\pi^2-2(1+1/s)\log(s)+\log(s)^2}{(\pi^2+\log(s)^2)^2}\frac{\lambda}{\lambda+s}ds, \qquad \qquad \lambda>0.
\end{equation}
\item The function $\lambda\mapsto  \dfrac{(-2 + 2\lambda - 2\lambda\log(\lambda)+\lambda \log(\lambda)^2)}{\log(\lambda)^3}$ is a complete Bernstein function  whose Stieltjes representation is given by
\begin{equation}\label{cubo}
    \frac{(-2 + 2\lambda - 2\lambda\log(\lambda)+\lambda \log(\lambda)^2)}{\log(\lambda)^3}=\int_{0+}^{\infty} \frac{f(s)}{s(\pi^2+\log(s)^2)^3} \frac{\lambda}{(\lambda+s)}ds, \qquad \qquad \lambda>0,
\end{equation}
where $f(s)=\pi^2((-2+\pi^2)s-2)+s\log(s)^4-4s\log(s)^3+2((3+\pi^2)s+3)\log(s)^2-4\pi^2s\log(s).$

\end{rm}
\end{enumerate}
\end{example}

The next results play an important role in the proofs of Theorems~\ref{theo4.5} and~\ref{teo4.5}; items (a) and (b) are Theorem 2.2 in \cite{chill}, and item (c) is Proposition~7.13 in \cite{ber}. 

\begin{proposition}
  \label{theor2.3}
\begin{rm}
Let $f,g:(0,\infty)\rightarrow \mathbb{R}$ be non-zero functions.
\begin{enumerate}[(a)]
\item If $f\in\mathcal{CBF}$, then $\dfrac{\lambda}{f(\lambda)},\lambda f\left(\dfrac{1}{\lambda}\right)\in\mathcal{CBF}$. Conversely, if $\dfrac{\lambda}{f(\lambda)}\in\mathcal{CBF}$ or $\lambda f\left(\dfrac{1}{\lambda}\right)\in\mathcal{CBF}$, then $f\in\mathcal{CBF}$.
\item If $f,g\in\mathcal{CBF}$, then $g\circ f\in\mathcal{CBF}$.
%
\item Let $a_1,a_2\in (0,1)$ be such that $a_1+a_2\leq 1$. Then, for each $f,g\in \mathcal{CBF}$, one has $f^{a_1}\cdot g^{a_2} \in \mathcal{CBF}$.
\end{enumerate}
\end{rm}
\end{proposition}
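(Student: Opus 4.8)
The plan is to reduce all three statements to the Nevanlinna--Pick description of complete Bernstein functions (Theorem~6.2 in~\cite{ber}): writing $\mathbb{H}^{+}:=\{\lambda:\operatorname{Im}\lambda>0\}$ and $\mathbb{H}^{-}:=\{\lambda:\operatorname{Im}\lambda<0\}$, a function $f$ on $(0,\infty)$ belongs to $\mathcal{CBF}$ if and only if it extends holomorphically to the cut plane $\Sigma:=\mathbb{C}\setminus(-\infty,0]=\mathbb{H}^{+}\cup\mathbb{H}^{-}\cup(0,\infty)$, is nonnegative on $(0,\infty)$, and maps $\mathbb{H}^{+}$ into $\overline{\mathbb{H}^{+}}$. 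A preliminary step, used throughout, is the observation that a \emph{non-constant} such $f$ in fact maps $\mathbb{H}^{\pm}$ into $\mathbb{H}^{\pm}$ and $(0,\infty)$ into $(0,\infty)$ --- indeed, if $f$ took a real value at an interior point of $\mathbb{H}^{+}$, the open mapping theorem would force $f$ to also take values with negative imaginary part nearby, contradicting the Pick property; and if $f$ vanished at a point of $(0,\infty)$, monotonicity of the Bernstein function $f$ would force $f\equiv 0$. In particular $f(\Sigma)\subseteq\Sigma$, so that the principal power $f^{a}$, $0<a<1$, and any composition $g\circ f$ with $g\in\mathcal{CBF}$ are well defined and holomorphic on $\Sigma$. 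The constant cases $f\equiv c>0$ (and likewise for $g$) are trivial, since a positive multiple of an element of $\mathcal{CBF}$ is again in $\mathcal{CBF}$, and I would dispose of them first.

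Given this, item~(b) is essentially immediate: for non-constant $f,g\in\mathcal{CBF}$ the map $g\circ f$ is holomorphic on $\Sigma$, sends $(0,\infty)$ into $(0,\infty)$ (hence is nonnegative there), and sends $\mathbb{H}^{+}$ into $\mathbb{H}^{+}$ and then into $\overline{\mathbb{H}^{+}}$; the characterization yields $g\circ f\in\mathcal{CBF}$. For the first assertion of item~(a), I would read off from the Stieltjes representation~\eqref{eqq5} that
\[
\frac{f(\lambda)}{\lambda}=\frac{a}{\lambda}+b+\int_{0^{+}}^{\infty}\frac{1}{\lambda+s}\,d\mu(s),
\]
a function that termwise has nonpositive imaginary part on $\mathbb{H}^{+}$ and does not vanish on $\Sigma$; hence its reciprocal $\lambda/f(\lambda)$ maps $\mathbb{H}^{+}$ into $\mathbb{H}^{+}$ and is positive on $(0,\infty)$, i.e.\ $\lambda/f(\lambda)\in\mathcal{CBF}$. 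For the second assertion of~(a), a direct computation suffices: substituting~\eqref{eqq5} and using $\frac{\lambda}{1+s\lambda}=\frac1s\cdot\frac{\lambda}{\lambda+1/s}$ gives
\[
\lambda f\!\left(\tfrac1\lambda\right)=b+a\lambda+\int_{0^{+}}^{\infty}\frac{\lambda}{\lambda+t}\,d\tilde\mu(t),
\]
where $\tilde\mu$ is the image of $s^{-1}\,d\mu(s)$ under $s\mapsto 1/s$ and $\int_{0^{+}}^{\infty}(1+t)^{-1}\,d\tilde\mu(t)=\int_{0^{+}}^{\infty}(1+s)^{-1}\,d\mu(s)<\infty$; this is again a representation of the form~\eqref{eqq5} (with the constants $a$ and $b$ interchanged), so $\lambda f(1/\lambda)\in\mathcal{CBF}$. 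The two converses in~(a) are then automatic: $h(\lambda):=\lambda/f(\lambda)$ satisfies $\lambda/h(\lambda)=f(\lambda)$, and $g(\lambda):=\lambda f(1/\lambda)$ satisfies $\lambda g(1/\lambda)=f(\lambda)$, so applying the forward implications to $h$, resp.\ $g$, recovers $f\in\mathcal{CBF}$.

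For item~(c), I would combine~(b) with $\lambda\mapsto\lambda^{a_i}\in\mathcal{CBF}$ (item~(a) of Example~\ref{ex3.1}) to obtain $f^{a_1},g^{a_2}\in\mathcal{CBF}$, reducing the claim to controlling the product $h:=f^{a_1}g^{a_2}$, which is holomorphic on $\Sigma$ and positive on $(0,\infty)$ by the preliminary step. The decisive point is an argument estimate: for $\lambda\in\mathbb{H}^{+}$ one has $\arg f(\lambda),\arg g(\lambda)\in(0,\pi)$, hence $\arg\!\big(f(\lambda)^{a_1}\big)=a_1\arg f(\lambda)\in(0,a_1\pi)$ and likewise $\arg\!\big(g(\lambda)^{a_2}\big)\in(0,a_2\pi)$, so $\arg h(\lambda)\in\big(0,(a_1+a_2)\pi\big)\subseteq(0,\pi)$ since $a_1+a_2\le1$; thus $\operatorname{Im}h(\lambda)\ge 0$, and the characterization applies.

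The points I expect to require the most care are the reciprocal duality used in~(a) (making precise that the reciprocal of a function with nonpositive imaginary part on $\mathbb{H}^{+}$ and no zeros on $\Sigma$ lies in $\mathcal{CBF}$) and the bookkeeping that keeps powers, reciprocals and compositions inside the cut plane $\Sigma$ with strict argument bounds --- both handled by the open-mapping refinement of the Pick characterization together with the separate treatment of constants. Once this infrastructure is in place, each of the three items is a short verification; alternatively, items~(a) and~(b) are Theorem~2.2 in~\cite{chill} and item~(c) is Proposition~7.13 in~\cite{ber}.
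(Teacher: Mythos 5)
Your proposal is correct. The paper offers no proof of this proposition: it simply records that items~(a) and~(b) are Theorem~2.2 in~\cite{chill} while item~(c) is Proposition~7.13 in~\cite{ber}, which you also note at the end. What you add is a self-contained, unified argument built on the Nevanlinna--Pick characterization of $\mathcal{CBF}$ (nonnegativity on $(0,\infty)$, holomorphic extension to the upper half-plane with nonnegative imaginary part), together with the open-mapping refinement that a non-constant member maps $\mathbb{H}^{\pm}$ into $\mathbb{H}^{\pm}$ and $(0,\infty)$ into $(0,\infty)$, hence the cut plane $\Sigma$ into itself; that refinement, plus the separate handling of positive constants, is exactly what is needed to make the compositions and principal fractional powers well-defined and to run the argument count in item~(c). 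All three verifications check out: item~(b) is immediate from stability of $\Sigma$; item~(a) correctly combines the termwise sign estimate for $f(\lambda)/\lambda$ on $\mathbb{H}^{+}$ with the reciprocal step $\operatorname{Im}w\le 0 \Rightarrow \operatorname{Im}(1/w)\ge 0$, and the change of variables $t=1/s$ for $\lambda f(1/\lambda)$ is carried out with the necessary finiteness check on $\tilde\mu$; item~(c) reduces to $\arg\bigl(f^{a_1}g^{a_2}\bigr)\in\bigl(0,(a_1+a_2)\pi\bigr)\subseteq(0,\pi]$, which is precisely where the hypothesis $a_1+a_2\le 1$ bites. What the paper gains by citing is brevity; what your argument gains is a single transparent mechanism behind all three items and explicit visibility of where each hypothesis is used.
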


\subsubsection{Slowly varying functions} 

\begin{definition}
\begin{rm}
 Let $a\in\mathbb{R}$ and let $\ell:[a,\infty)\rightarrow \mathbb{R}$ be a strictly positive measurable function 
 such that for each $\lambda>0$,
\begin{equation*}
    \lim_{s\rightarrow \infty} \frac{\ell(\lambda s)}{\ell(s)}=1.
\end{equation*}
Then, $\ell$ is said to be \textbf{slowly varying}. 
\end{rm}
\end{definition}

\begin{example}
\begin{rm}
    \begin{enumerate}[(a)]
\item The function $s\mapsto \log(1+s)$ is a slowly varying function.
\item If $\ell$ is
a slowly varying function, then the following ones are also slowly varying functions:
$s\mapsto \ell(s)^{\alpha}$, with $\alpha \in \mathbb{R}$; $s\mapsto \ell(s)\log(s)$.

\end{enumerate}
\end{rm}
    
\end{example}

The next result also plays an important role in the proof of Theorems~\ref{theo4.5},~\ref{teo4.5} and~\ref{theor4.7}.

\begin{proposition}[{Corollary 2.8-(a)} in $\cite{chill}$]\label{cor2}
\begin{rm}
Let $\ell$ be a slowly varying function and let $\gamma>0$. Then, there are positive constants $C, c$ such that for each sufficiently large $s, t$ with $t\geq s$, 
\begin{equation*}
   c\left(\frac{s}{t}\right)^{\gamma}\leq \frac{\ell(t)}{\ell(s)}\leq C\left(\frac{t}{s}\right)^{\gamma}.
\end{equation*}
\end{rm}

\end{proposition}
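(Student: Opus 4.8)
\emph{Proof strategy.} The statement is exactly the classical \emph{Potter bounds} for slowly varying functions, restricted to the regime $t\ge s$, and the plan is to deduce it from the \emph{uniform convergence theorem}: if $\ell$ is slowly varying, then $\ell(\lambda u)/\ell(u)\to 1$ as $u\to\infty$, uniformly for $\lambda$ in any fixed compact subinterval of $(0,\infty)$, in particular on $[1,2]$. I would take this theorem as a known input; it is the one genuinely non-elementary ingredient (it is where the measurability hypothesis on $\ell$ enters, ruling out pathological examples), and it can be obtained either from Karamata's representation theorem or by a direct Egorov/Steinhaus-type argument. Everything else is an elementary dyadic telescoping.

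Concretely, I would fix $\gamma>0$ and set $\eta:=1-2^{-\gamma}\in(0,1)$; note that also $\eta\le 2^{\gamma}-1$, since $2^{\gamma}+2^{-\gamma}\ge 2$. By the uniform convergence theorem applied on $[1,2]$ there is $s_0\ge a$ such that $|\ell(\lambda u)/\ell(u)-1|\le\eta$ for all $\lambda\in[1,2]$ and all $u\ge s_0$, hence $2^{-\gamma}\le \ell(\lambda u)/\ell(u)\le 2^{\gamma}$ on that range. Now take $s_0\le s\le t$ and write $t/s=2^{n}r$ with $n:=\lfloor\log_2(t/s)\rfloor\in\mathbb{N}\cup\{0\}$ and $r\in[1,2)$, and telescope
\[\frac{\ell(t)}{\ell(s)}=\frac{\ell(rs)}{\ell(s)}\prod_{k=0}^{n-1}\frac{\ell(2\cdot 2^{k}rs)}{\ell(2^{k}rs)}.\]
Each of the $n+1$ factors is of the form $\ell(\lambda u)/\ell(u)$ with $\lambda\in[1,2]$ and $u\ge s\ge s_0$ (indeed $u=2^{k}rs$ or $u=s$), hence lies in $[2^{-\gamma},2^{\gamma}]$; therefore $2^{-\gamma(n+1)}\le \ell(t)/\ell(s)\le 2^{\gamma(n+1)}$. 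Since $2^{n}\le t/s$ we have $2^{\gamma n}\le (t/s)^{\gamma}$ and $2^{-\gamma n}\ge (s/t)^{\gamma}$, so the claimed inequalities follow with $C:=2^{\gamma}$ and $c:=2^{-\gamma}$, ``sufficiently large'' meaning ``$\ge s_0$''.

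The only real obstacle is importing the uniform convergence theorem; the rest is the dyadic estimate above. As an alternative I would mention the route through Karamata's representation $\ell(u)=c(u)\exp\!\left(\int_{a}^{u}\frac{\varepsilon(v)}{v}\,dv\right)$ with $c(u)\to c\in(0,\infty)$ and $\varepsilon(v)\to 0$: then for $t\ge s$ large one has $\ell(t)/\ell(s)=\bigl(c(t)/c(s)\bigr)\exp\!\left(\int_{s}^{t}\frac{\varepsilon(v)}{v}\,dv\right)$, where the prefactor is trapped between two positive constants once $s$ is large, and $\left|\int_{s}^{t}\frac{\varepsilon(v)}{v}\,dv\right|\le\gamma\log(t/s)$ as soon as $|\varepsilon|\le\gamma$ on $[s_0,\infty)$; this yields both bounds simultaneously without any iteration.
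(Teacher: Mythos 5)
Your proof is correct, and in fact this is precisely the classical derivation of Potter's bounds from the uniform convergence theorem for slowly varying functions (it appears, for example, in Bingham–Goldie–Teugels). The paper itself does not prove this proposition: it is simply cited as Corollary~2.8(a) of \cite{chill}, so there is no in-paper argument to compare against. Your dyadic telescoping and the Karamata-representation alternative are both standard and both sound; the only thing to flag is that the whole argument does hinge on the uniform convergence (or Karamata representation) theorem, which uses the measurability of $\ell$, and you correctly identify this as the one non-elementary input that must be imported rather than reproved.
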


\subsection{Functional Calculus}


We begin with the 
\textit{Riesz-Dunford functional calculus of bounded operators}: for each $A \in \mathcal{L}(X)$, let $U$ be an open connected of $\sigma(A)$, let $\gamma$ be a path in $U$ around $\sigma(A)$ and let $f$ be a complex function whose restriction to $U$ is holomorphic; then, one may define the bounded linear operator $f(A):X\rightarrow X$ by the law 
\begin{equation}\label{eqq14}
f(A):= \frac{1}{2\pi i} \int_{\gamma} f(z)R(z,A) dz.
\end{equation}

\subsubsection{Sectorial operators}

For each $\omega\in (0,\pi)$, set $S_{\omega}:= \{z\in \mathbb{C}\mid 0<|\text{arg}(z)|<\omega\}$; set also $S_0:=(0,\infty)$.

\begin{definition}
  \rm{A linear operator $A:\mathcal{D}(A)\subset X\rightarrow X$ is called {\bf{sectorial}} of angle $\omega$ if $\sigma(A)\subset \overline{S_{\omega}}$ and $M(A, \omega):=\sup\{\|\lambda R(\lambda,A)\|_{\mathcal{L}(X)} \mid \lambda \in \mathbb{C}\setminus \overline{S_{\omega'}}, \  \omega'\in (\omega, \pi)\}<\infty$. One denotes this by $A\in \text{Sect}_X(\omega)$.}
\end{definition}  

Set $\omega_A:=\min\{\omega\in (0,\pi)\mid A\in \text{Sect}_X(\omega)\}$, which is the minimal angle for which $A$ is sectorial.  For the required background on sectoral operators, we refer
to \cite{haase}. 
\begin{remark}
\begin{rm}
Let $A:\mathcal{D}(A)\subset X\rightarrow X$ be a linear operator for which $(-\infty,0) \subset \rho(A)$ and
\begin{equation*}
    M_A:= M(A,\pi)=\sup_{t>0}t\|(t+A)^{-1}\|_{\mathcal{L}(X)}<\infty;
\end{equation*}
then, it follows that $A\in \text{Sect}_X(\pi-\arcsin{\left(1/M_A\right))}$. 
    \end{rm}
\end{remark}

The following result presents some useful properties of sectorial operators. For more details, see \cite{chill,haase}.
\begin{lemma}\label{lemma21}
\begin{rm}
Let $A\in\text{Sect}_X(\omega_A)$. Then, 
\begin{enumerate}[(a)]
\item $(1+A)^{-1},A(1+A)^{-1}\in  \text{Sect}_X(\omega_A)$. If $A$ is injective, then $A^{-1}\in \text{Sect}_X(\omega_A)$, and the identity
\begin{equation}\label{eqq2.7}
\lambda(\lambda+A^{-1})^{-1}=1-\frac{1}{\lambda}\left(\frac{1}{\lambda}+A\right)^{-1},
\end{equation}
holds for each $0\neq \lambda \in \mathbb{C}$ .
\item Let $\sigma\in (0,1)$ and set $A_\sigma:=(A+\sigma)(1+\sigma A)^{-1}\in \mathcal{L}(X)$; then, $A_\sigma$ is a sectorial operator, $\displaystyle{\sup_{\sigma \in (0,1)} M_{A_{\sigma}}<\infty}$, and for each $\lambda \in \rho (A)$, $R(\lambda,A_\sigma)$ converges to $R(\lambda,A)$ in $\mathcal{L}(X)$ as $\sigma\rightarrow 0^+$.
\end{enumerate}
\end{rm}
\end{lemma}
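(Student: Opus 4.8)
\textbf{Plan of proof for Lemma~\ref{lemma21}.}

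I would establish the two items essentially by reducing everything to the defining sectorial estimate and elementary manipulations of resolvents. For part (a), the first task is to show $(1+A)^{-1}$ and $A(1+A)^{-1}$ are sectorial. The key point is that if $A\in\text{Sect}_X(\omega_A)$, then the spectral mapping theorem for the holomorphic functions $\lambda\mapsto(1+\lambda)^{-1}$ and $\lambda\mapsto\lambda(1+\lambda)^{-1}$ (which are well-behaved on $\overline{S_{\omega'}}$ for $\omega'\in(\omega_A,\pi)$, and in particular map $\overline{S_{\omega'}}$ into a translate/image of a sector) gives $\sigma((1+A)^{-1})$ and $\sigma(A(1+A)^{-1})$ inside $\overline{S_{\omega_A}}$. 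To get the uniform resolvent bound $M(\cdot,\omega')<\infty$, I would write $R(\mu,(1+A)^{-1})$ explicitly in terms of $R(\lambda,A)$ via the substitution $\lambda=(1-\mu)/\mu$ (a standard resolvent identity for inverses of shifts), then use $\|\lambda R(\lambda,A)\|\le M(A,\omega')$ to read off the required sectorial estimate for the new operator; the same computation with $\mu\mapsto 1-\mu$ handles $A(1+A)^{-1}$. For the injective case, $A^{-1}\in\text{Sect}_X(\omega_A)$ follows because $R(\lambda,A^{-1})$ can be written explicitly in terms of $R(1/\lambda,A)$; indeed, the identity~\eqref{eqq2.7} is exactly the algebraic identity one verifies directly by multiplying both sides by $(\lambda+A^{-1})$ and by $(1/\lambda+A)$ and simplifying, valid for all $\lambda\ne0$ for which the relevant inverses exist (which, by the sectoriality just transferred, is all $\lambda\notin\overline{S_{\omega_A}}$). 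From~\eqref{eqq2.7} one reads the bound $\|\lambda R(\lambda,A^{-1})\|\le 1+\|(1/\lambda)R(1/\lambda,A)\|\le 1+M(A,\omega')$, hence $A^{-1}$ is sectorial of the same angle.

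For part (b), I would fix $\sigma\in(0,1)$ and first check that $A_\sigma=(A+\sigma)(1+\sigma A)^{-1}$ is well-defined and bounded: since $-1/\sigma\in(-\infty,0)\subset\rho(A)$, the operator $(1+\sigma A)^{-1}=\sigma^{-1}(\sigma^{-1}+A)^{-1}$ is bounded, and $A_\sigma=\sigma^{-1}\cdot\mathrm{Id}+(1-\sigma^2)\sigma^{-1}(1+\sigma A)^{-1}$ after a short algebraic rearrangement, so $A_\sigma\in\mathcal{L}(X)$. The Möbius transformation $m_\sigma(z)=(z+\sigma)/(1+\sigma z)$ maps $\overline{S_{\omega'}}$ into itself for $\omega'\in(\omega_A,\pi)$ and $\sigma\in(0,1)$ (it fixes the positive real axis and is a real rational map with positive coefficients, hence preserves each sector about $(0,\infty)$); combined with the spectral mapping theorem this gives $\sigma(A_\sigma)\subset\overline{S_{\omega_A}}$. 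For the uniform bound $\sup_{\sigma\in(0,1)}M_{A_\sigma}<\infty$, I would express $R(\lambda,A_\sigma)$ in terms of $R(m_\sigma^{-1}(\lambda),A)$ — where $m_\sigma^{-1}(\lambda)=(\lambda-\sigma)/(1-\sigma\lambda)$ — picking up a Jacobian-type factor $(1-\sigma^2)/(1-\sigma\lambda)^2$; then, bounding $\|(m_\sigma^{-1}(\lambda))R(m_\sigma^{-1}(\lambda),A)\|\le M(A,\omega')$ and estimating the remaining rational factor uniformly in $\sigma$ for $\lambda$ in the complement of a fixed slightly larger sector, one obtains a bound on $\|\lambda R(\lambda,A_\sigma)\|$ independent of $\sigma$. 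Finally, for the convergence $R(\lambda,A_\sigma)\to R(\lambda,A)$ as $\sigma\to0^+$ with $\lambda\in\rho(A)$ fixed: since $A_\sigma\to A$ in the strong resolvent sense — which follows by writing $R(\lambda,A_\sigma)-R(\lambda,A)$ using the resolvent formula above and noting $m_\sigma^{-1}(\lambda)\to\lambda$ and the Jacobian factor $\to1$ as $\sigma\to0^+$, with all norms controlled by the uniform sectorial bound — one gets norm convergence directly from that same explicit formula (not merely strong convergence), since the right-hand side is a continuous function of $\sigma$ into $\mathcal{L}(X)$.

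The main obstacle I anticipate is purely bookkeeping: getting the explicit resolvent formulas for $R(\lambda, A_\sigma)$ and $R(\mu,(1+A)^{-1})$ in terms of $R(\cdot,A)$ correct, and then verifying that the rational prefactors arising from the Möbius substitutions are bounded uniformly in $\sigma\in(0,1)$ when $\lambda$ ranges over the complement of a fixed sector $S_{\omega''}$ with $\omega''>\omega_A$. One has to be a little careful that $m_\sigma^{-1}$ maps the complement of $\overline{S_{\omega''}}$ into the complement of $\overline{S_{\omega'}}$ for suitable $\omega'\in(\omega_A,\omega'')$ uniformly in $\sigma$, and that $1-\sigma\lambda$ stays bounded away from $0$ there; both hold because $m_\sigma$ fixes $(0,\infty)$ and depends continuously on $\sigma$ up to $\sigma=0$ where $m_0=\mathrm{id}$. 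None of this is deep, but it is the place where an error would most naturally creep in. Everything else (the algebraic identity~\eqref{eqq2.7}, boundedness of $A_\sigma$, the spectral mapping inclusions) is routine.
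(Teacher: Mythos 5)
The paper never proves Lemma~\ref{lemma21}: it is stated as a compilation of known facts about sectorial operators, with the preceding sentence pointing the reader to the references [chill, haase] (Haase's monograph in particular covers part~(a) in Proposition~3.1.2 and part~(b) via the standard approximants $A_\sigma$; see also Proposition~2.1.3 and the surrounding material there). There is therefore no argument in the paper to compare against.

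That said, your sketch is correct and follows the route one would take if one wrote out a proof. The resolvent substitution for $(1+A)^{-1}$ gives $\mu R(\mu,(1+A)^{-1})=I-(1+\lambda)R(\lambda,A)$ with $\lambda=(1-\mu)/\mu$, and the inclusion $\mu\notin\overline{S_{\omega'}}\Rightarrow\lambda\notin\overline{S_{\omega'}}$ (which you need in order to invoke $\Vert\lambda R(\lambda,A)\Vert\le M(A,\omega')$) follows precisely from the observation you make, namely that $\lambda\mapsto(1+\lambda)^{-1}$ maps $\overline{S_{\omega'}}$ into itself. The identity~\eqref{eqq2.7} is verified exactly as you say, and transfers the resolvent estimate to $A^{-1}$ via $\nu=-1/\lambda$, which again stays outside $\overline{S_{\omega'}}$ when $\lambda$ does. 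For part~(b), your explicit formula $R(\lambda,A_\sigma)=\dfrac{1-\sigma^2}{(1-\lambda\sigma)^2}R(m_\sigma^{-1}(\lambda),A)-\dfrac{\sigma}{1-\lambda\sigma}$ is the right object: with $\lambda=-t<0$ it yields $\Vert t(t+A_\sigma)^{-1}\Vert\le M_A+1$ uniformly in $\sigma\in(0,1)$, and for fixed $\lambda\in\rho(A)$ it gives norm convergence as $\sigma\to0^+$ because $R(\cdot,A)$ is norm-continuous on $\rho(A)$ and both rational prefactors converge. One small slip worth fixing: the algebraic rearrangement of $A_\sigma$ should read $A_\sigma=\sigma^{-1}\,\mathrm{Id}-\sigma^{-1}(1-\sigma^2)(1+\sigma A)^{-1}$ (minus, not plus), since $\sigma(A+\sigma)=(1+\sigma A)-(1-\sigma^2)$; this does not affect the conclusion that $A_\sigma\in\mathcal{L}(X)$.
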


Now we recall some basic properties of the functional calculus of sectorial
operators based on complete Bernstein functions. We use \cite{chill} as a reference in our discussion (see also~\cite{gomilko,batty1,batty33}).


\begin{definition}[Definition 3.3 in \cite{chill}]\label{Dchill}
\begin{rm}
Let $A\in\text{Sect}_X(\omega_A)$ be densely defined and let $f\in\mathcal{CBF}$, with Stieltjes representation $(a,b,\mu)$. One defines the linear operator $f_0(A):\mathcal{D}(A)\rightarrow X$  by the law
\begin{equation}\label{cbf}
f_0(A)=ax+bAx+\int_{0+}^{\infty} A(A+\lambda)^{-1}x d\mu(\lambda), \ \ \ x\in \mathcal{D}(A).
\end{equation}
Set $f(A):= \overline{f_0(A)}$. We call the linear operator $f(A)$ a complete Bernstein function of $A$. 
\end{rm}
\end{definition}

\begin{theorem}[Theorem 3.6 in \cite{chill}]\label{The2.3}
\begin{rm}
 Let $A$ be a sectorial operator on a Banach space $X$ and let
$f\in\mathcal{CBF}$. Then, $f(A)$ is sectorial.
\end{rm}
\end{theorem}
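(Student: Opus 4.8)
The plan is to reduce the claim to the resolvent criterion recorded in the Remark immediately following the definition of sectorial operator: it suffices to show that $f(A)$ is closed and densely defined, that $(-\infty,0)\subset\rho(f(A))$, and that $M:=\sup_{t>0}t\,\|(t+f(A))^{-1}\|_{\mathcal{L}(X)}<\infty$, since this already yields $f(A)\in\text{Sect}_X(\pi-\arcsin(1/M))$. Closedness is built into the definition $f(A)=\overline{f_0(A)}$, and $\mathcal{D}(f_0(A))=\mathcal{D}(A)$ is dense ($A$ is densely defined, as is needed for Definition~\ref{Dchill} to apply). So everything comes down to constructing, for each $t>0$, a bounded inverse $S_t$ of $t+f(A)$ with $\|tS_t\|_{\mathcal{L}(X)}\le M$ independent of $t$.

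To build $S_t$, fix $t>0$ and observe that $t+f$ is again a complete Bernstein function: if $(a,b,\mu)$ is the Stieltjes triple of $f$ from~\eqref{eqq5}, then $(t+a,b,\mu)$ is the triple of $t+f$, and $(t+f)(A)=tI+f(A)$ (directly from~\eqref{cbf}, since adding a bounded operator commutes with taking closures). By Proposition~\ref{theor2.3}(a), $\lambda\mapsto\lambda/(t+f(\lambda))\in\mathcal{CBF}$; its value at $0^+$ is $0$ because $t+f(0^+)=t+a>0$, so its Stieltjes representation reads $\lambda/(t+f(\lambda))=b_t\lambda+\int_{0+}^{\infty}\frac{\lambda}{\lambda+s}\,d\mu_t(s)$ with $b_t\ge0$ and $\int_{0+}^{\infty}(1+s)^{-1}\,d\mu_t(s)<\infty$. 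Dividing by $\lambda$ and letting $\lambda\to0^+$ (monotone convergence) gives $\frac{1}{t+f(\lambda)}=b_t+\int_{0+}^{\infty}\frac{d\mu_t(s)}{\lambda+s}$ and the crucial bound $\int_{0+}^{\infty}s^{-1}\,d\mu_t(s)=\frac{1}{t+a}-b_t$, hence $0\le b_t\le\frac{1}{t+a}$. I then set $S_t:=b_t I+\int_{0+}^{\infty}(s+A)^{-1}\,d\mu_t(s)$; the Bochner integral converges in $\mathcal{L}(X)$ because $\|(s+A)^{-1}\|_{\mathcal{L}(X)}\le M_A/s$ for $s>0$ ($-s\in\rho(A)$, $A$ sectorial), and $\|S_t\|_{\mathcal{L}(X)}\le b_t+M_A\int_{0+}^{\infty}s^{-1}\,d\mu_t(s)\le\max\{1,M_A\}(t+a)^{-1}$, so $\sup_{t>0}\|tS_t\|_{\mathcal{L}(X)}\le\max\{1,M_A\}$.

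It remains to verify $S_t=(t+f(A))^{-1}$. Morally this is the product rule $\big(\tfrac{1}{t+f}\big)(A)\cdot(t+f)(A)\subseteq 1(A)=I$ of the functional calculus for sectorial operators; concretely, for $x\in\mathcal{D}(A)$ (a core for $f(A)$) I would insert the integral formula for $(tI+f_0(A))x$ from~\eqref{cbf}, apply $S_t$, interchange the two integrations (justified by the integrability bounds above together with $\|(s+A)^{-1}\|\le M_A/s$), and collapse the resulting double integral using $A(\sigma+A)^{-1}=I-\sigma(\sigma+A)^{-1}$ and the resolvent identity $(s+A)^{-1}A(\sigma+A)^{-1}=\frac{1}{\sigma-s}\big(\sigma(\sigma+A)^{-1}-s(s+A)^{-1}\big)$, obtaining $S_t(tI+f_0(A))x=x$; the reverse identity $(tI+f(A))S_tx=x$ for $x\in X$ follows by the analogous computation (or by density of $\mathcal{D}(A)$ and closedness of $f(A)$). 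Since $S_t$ is bounded and $t+f(A)$ is closed, this forces $-t\in\rho(f(A))$ with $(t+f(A))^{-1}=S_t$, so all three hypotheses of the Remark hold and $f(A)$ is sectorial. (With a little more work — repeating the argument along rays $e^{i\psi}(0,\infty)$, $|\psi-\pi|<\pi-\omega_A$, and using that every $f\in\mathcal{CBF}$ maps $\overline{S_{\omega_A}}$ into itself — one recovers the sharper $\omega_{f(A)}\le\omega_A$.)

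The step I expect to be the main obstacle is the identification $S_t=(t+f(A))^{-1}$: making the ``product rule'' rigorous in this possibly non‑injective, genuinely unbounded setting, where $f(A)$ is only the closure of an integral operator on $\mathcal{D}(A)$, so that both one‑sided inverse relations are established on the correct domains before passing to the closure. The Fubini interchange and the bookkeeping of the resolvent identities on the core $\mathcal{D}(A)$ are routine but need care; everything else (that $t+f\in\mathcal{CBF}$, reading off its triple, the norm estimates) is elementary given the representation~\eqref{eqq5} and Proposition~\ref{theor2.3}(a).
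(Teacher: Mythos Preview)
The paper does not prove this statement; it is quoted verbatim as Theorem~3.6 of \cite{chill} and used as a black box throughout, so there is no ``paper's own proof'' to compare against. Your approach is the standard one (essentially the Hirsch functional calculus argument that \cite{chill} itself uses): represent $1/(t+f)$ as a Stieltjes function via Proposition~\ref{theor2.3}(a), define $S_t$ by the corresponding operator integral, and identify it with the resolvent. The norm bound and the construction of $S_t$ are correct; note that $M_A\ge 1$ for a densely defined sectorial operator (since $t(t+A)^{-1}\to I$ strongly), so your $\max\{1,M_A\}$ is just $M_A$.

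The one place that deserves more care than you give it is precisely the one you flag: the two-sided identity $S_t=(t+f(A))^{-1}$. From $S_t(t+f_0(A))x=x$ on $\mathcal{D}(A)$ you get, by taking closures, that $S_t$ is a left inverse of $t+f(A)$. For the right inverse you must first show $S_tX\subset\mathcal{D}(f(A))$; ``density of $\mathcal{D}(A)$ and closedness of $f(A)$'' alone do not give this. The clean route is to check directly that $S_tx\in\mathcal{D}(A)$ whenever $x\in\mathcal{D}(A)$ (immediate, since each $(s+A)^{-1}$ preserves $\mathcal{D}(A)$ and the integral converges in graph norm), compute $(t+f_0(A))S_tx=x$ on $\mathcal{D}(A)$ by the same Fubini/resolvent-identity manipulation, and then use that $\mathcal{D}(A)$ is dense and $S_t$ bounded together with closedness of $t+f(A)$ to extend to all of $X$. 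With that adjustment the argument is complete.
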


We consider now the situation where $A\in \text{Sect}_X(\omega_A)$, $\varphi\in (\omega_A,\pi)$ and $f\in H^{\infty}_0(S_{\omega_A})$. Set $f(A)\in \mathcal{L}(X)$ given by
\begin{equation}\label{eq3}
f(A):=\frac{1}{2\pi i} \int_{\Gamma_{\omega'}}f(z)R(z,A) dz,
\end{equation}
where $\Gamma_{\omega'}$ stands for the positively oriented boundary of $S_{\omega'}$ for $\omega'\in (\omega_A,\varphi)$. A standard argument using Cauchy’s Integral Theorem shows that this definition is actually independent of $\omega'$. An interesting reference for this Functional Calculus and its applications is \cite{haase}.

\begin{remark}\label{remark2.2}
{\rm Let $\alpha, \beta>0$, $\upsilon_1,\upsilon_2\ge 0$, $\varphi\in (0,\pi)$ and
\[f_{\alpha,\beta,\upsilon_1,\upsilon_2}(z)=\dfrac{z^{\alpha}}{(1+z)^{\alpha+\beta}\log(2+z)^{\upsilon_1}(2\pi-i\log(z))^{\upsilon_2}},\qquad z\in S_\varphi;\] it is straightforward to show that $f_{\alpha,\beta,\upsilon_1,\upsilon_2}\in H^{\infty}_0(S_{\varphi})$. Therefore,  by $\eqref{eq3}$, one may define
\begin{eqnarray}\label{eq1}
  \nonumber f_{\alpha,\beta,\upsilon_1,\upsilon_2}(A)&:=&\frac{1}{2\pi i} \int_{\Gamma_{\omega'}}f_{\alpha,\beta,\upsilon_1,\upsilon_2}(z)R(z,A) dz\\
  &=&\frac{1}{2\pi i} \int_{\Gamma_{\omega'}}\frac{z^{\alpha}}{(1+z)^{\alpha+\beta}\log(2+z)^{\upsilon_1}(2\pi-i\log(z))^{\upsilon_2}}R(z,A) dz,
\end{eqnarray}
where $\Gamma_{\omega'}$ is the positively oriented boundary of $S_{\omega'}$ for $\omega'\in (\omega_A,\varphi)$. If $A$ is invertible, then one may let $\alpha=0$ in the expression $\eqref{eq1}$. This operator will play an important role in the proofs of Propositions~\ref{prop3.1},~\ref{prop3.2} and~\ref{prop3.3}.}
\end{remark}

\subsubsection{Logarithm operator}

Given the nature of our problem, an investigation involving the definition of the logarithm of an injective sectorial operator is required. Such operator was first defined by Nollau~\cite{nollau} and was subsequently studied by Okazawa~\cite{okaz} and Haase~\cite{haas}. 

Let $A$ be an injective operator over the Banach space $X$ such that $A\in\text{Sect}_X(\omega_A)$. Let $\varphi\in (\omega_A,\pi)$ and set $\tau(z):=z(1+z)^{-2}$; note that $\tau \in H^{\infty}_0(S_{\varphi})$  and $\tau(A)=A(1+A)^{-2}$, by relation~\eqref{eq1} (with $\upsilon_1=\upsilon_2=0$, $\alpha=1$ and $\beta=1$). Set $\mathcal{B}(S_\varphi):=\{f:S_\varphi\rightarrow \mathbb{C} \mid \exists~n\in \mathbb{N}~\text{such that}~ \tau^{n}f\in H^{\infty}_0(S_{\varphi})\}$. Since $A$ is injective, $\tau(A)$ is also injective, and so one may define for each $f\in \mathcal{B}(S_\varphi)$
\begin{equation}\label{eqq17a}
f(A):=(\tau(A)^{-1})^{n}[(\tau^{n}(z)f(z))](A),    
\end{equation}
 with $n$ large enough so that $\tau^{n}f\in H^{\infty}_0(S_{\varphi})$. 
\begin{remark}
\begin{rm}
Definition~\eqref{eqq17a} is independent of the choice of $n$ (see Proposition 2.1 in~\cite{haas}). Note that $f(A)$ is a closed operator with domain $\mathcal{D}(f(A))=\{x\in X \mid (\tau^n(z)f(z))(A)x\in \mathcal{D}(\tau(A)^{-1})^{n}\}$. We refer to~\cite{haas} for more details.
\end{rm}
\end{remark}

\begin{definition}[\textbf{Haase},~\cite{haas}] 
\begin{rm}\label{haase}
Let $A\in\text{Sect}_X(\omega_A)$ and injective. Let $f:S_\pi\rightarrow \mathbb{C}$ be given by the law $f(z)=\log(z)$. Since $f\in \mathcal{B}(S_\varphi)$, then
\begin{equation}\label{eqq16}
    \log(A):=f(A).
\end{equation}    
\end{rm}    
\end{definition}

\begin{remark}\label{remark2.1}
\begin{rm}
Let $A\in\text{Sect}_X(\omega_A)$ be densely defined. It follows from~$\eqref{log(1+s)}$ and from Definition~\ref{Dchill} that for each $x\in \mathcal{D}(A)$,
\begin{equation}  \label{eq6} 
\log(1+A)x=\int_{1}^{\infty} A(A+t)^{-1}x \frac{dt}{t}.
\end{equation}
This representation for $\log(1+A)$ was presented for the first time in \cite{nollau}. 
\end{rm}
\end{remark}

\begin{definition}[{\bf{Okazawa}}, see \cite{okaz}]\label{oka} 
{\rm Let $A\in \text{Sect}_X(\omega_A)$ and injective. Suppose that $\mathcal{D}(A)$ and $\Ran(A)$ are dense in $X$. Then, $\log (A)$ is defined as the closure of 
\begin{equation*}
    \log(1+A)-\log(1+A^{-1}).
\end{equation*}}    
\end{definition}

\begin{remark} {\rm Naturally, the Definitions~\ref{haase} and~\ref{oka} for $\log (A)$ when $A$ is an injective operator must coincide when $D(A)$ and $R(A)$ are both dense; for details, see~\cite{clark}.} 
\end{remark}

The following result is a direct consequence of Definition~\ref{oka}. 
\begin{lemma}\label{lemma2.2}
{\rm{Let $A\in \text{Sect}_X(\omega_A)$ be injective and densely defined (with not necessarily dense range). Then, for each $x\in \mathcal{D}(A)\cap \Ran(A)$}},
\begin{equation*}
    \log(A)x=\log(1+A)x-\log(1+A^{-1})x.
\end{equation*}
\end{lemma}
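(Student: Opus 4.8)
The plan is to derive Lemma~\ref{lemma2.2} directly from Okazawa's Definition~\ref{oka}, combined with Haase's calculus and the behavior of the operators $A_\sigma$ introduced in Lemma~\ref{lemma21}(b). The difficulty is that Definition~\ref{oka} requires both $\mathcal{D}(A)$ and $\Ran(A)$ to be dense, whereas here we only assume $A$ is injective and densely defined. So the first step is to reduce to the dense-range case by an approximation. Fix $x\in\mathcal{D}(A)\cap\Ran(A)$, write $x=Ay$ with $y\in\mathcal{D}(A)$, and consider the operators $A_\sigma=(A+\sigma)(1+\sigma A)^{-1}$ for $\sigma\in(0,1)$. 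By Lemma~\ref{lemma21}(b), each $A_\sigma$ is bounded and sectorial with $\sup_\sigma M_{A_\sigma}<\infty$, and $R(\lambda,A_\sigma)\to R(\lambda,A)$ strongly as $\sigma\to0^+$; moreover $A_\sigma$ is invertible (its spectrum is bounded away from $0$), hence both its domain and range equal $X$, so Definition~\ref{oka} applies to $A_\sigma$ and gives $\log(A_\sigma)=\log(1+A_\sigma)-\log(1+A_\sigma^{-1})$ on all of $X$.

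Next I would pass to the limit $\sigma\to0^+$ in the identity for $A_\sigma$, evaluated at the fixed vector $x$. For the first term, $\log(1+A_\sigma)x\to\log(1+A)x$: using the Nollau representation \eqref{eq6}, $\log(1+A_\sigma)x=\int_1^\infty A_\sigma(A_\sigma+t)^{-1}x\,\frac{dt}{t}$, and since $x\in\mathcal{D}(A)$ one has a uniform (in $\sigma$) integrable bound $\|A_\sigma(A_\sigma+t)^{-1}x\|\lesssim \min\{1,\|Ax\|/t\}$ coming from $\sup_\sigma M_{A_\sigma}<\infty$, so dominated convergence together with the strong resolvent convergence gives the claim. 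For the second term I would use the identity \eqref{eqq2.7} relating $(\lambda+A^{-1})^{-1}$ and $(\lambda^{-1}+A)^{-1}$, valid for $A_\sigma$, to rewrite $\log(1+A_\sigma^{-1})x$ as an integral against resolvents of $A_\sigma$ applied to $x=A_\sigma z_\sigma$ (with $z_\sigma\to y$), again securing a uniform integrable majorant because $x\in\Ran(A)$ controls the behavior near $t=\infty$ while the $1/t$ weight handles $t$ near $1$; dominated convergence then yields $\log(1+A_\sigma^{-1})x\to\log(1+A^{-1})x$.

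Finally, the left-hand side: one must check $\log(A_\sigma)x\to\log(A)x$, where $\log(A)$ is the Haase operator from Definition~\ref{haase}. Here I would use that $\log(z)=(\tau^{-1})(\tau(z)\log(z))$ with $\tau(z)=z(1+z)^{-2}$, that $\tau\log\in H_0^\infty(S_\varphi)$, and the convergence $(\tau\log)(A_\sigma)\to(\tau\log)(A)$ strongly (a standard consequence of $R(\lambda,A_\sigma)\to R(\lambda,A)$ strongly together with the uniform sectoriality bound, letting one pass to the limit under the contour integral \eqref{eq3} via dominated convergence); combined with the closedness of $\tau(A)^{-1}=A^{-1}(1+A)^2$ on the relevant vectors, this gives $\log(A_\sigma)x\to\log(A)x$ for $x$ in $\mathcal{D}(A)\cap\Ran(A)$, which is exactly the set of convergence vectors for which all three terms behave well. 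Combining the three limits in the identity $\log(A_\sigma)x=\log(1+A_\sigma)x-\log(1+A_\sigma^{-1})x$ yields $\log(A)x=\log(1+A)x-\log(1+A^{-1})x$, as claimed. The main obstacle is making the dominated-convergence arguments fully rigorous in a uniform-in-$\sigma$ way — in particular producing integrable majorants for the Nollau integrals that are valid simultaneously for all small $\sigma$ — which is where the hypotheses $x\in\mathcal{D}(A)$ and $x\in\Ran(A)$ and the bound $\sup_\sigma M_{A_\sigma}<\infty$ are essential.
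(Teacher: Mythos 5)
Your overall strategy --- approximate $A$ by the bounded invertible operators $A_\sigma=(A+\sigma)(1+\sigma A)^{-1}$, apply Okazawa's identity to $A_\sigma$, and pass to the limit $\sigma\to0^+$ --- is exactly the strategy of the paper's proof, so this is not a fundamentally different route. There is, however, a real difference in how the limit of $\log(1+A_\sigma^{-1})x$ is handled: the paper proves $\lim_\sigma\log(A_\sigma)x=\log(A)x$ and $\lim_\sigma\log(1+A_\sigma)x=\log(1+A)x$ by citing Lemma~3(c) of Nollau, obtains $\lim_\sigma\log(1+A_\sigma^{-1})x=\log(1+A)x-\log(A)x$ by subtraction, and then identifies the limit with $\log(1+A^{-1})x$ via the functional-calculus identity $\log(1+A)-\log(A)=\log((1+A)A^{-1})=\log(1+A^{-1})$ from Clark and Haase; it never establishes direct convergence of that third term. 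Your direct treatment of it can be made to work: \eqref{eqq2.7} together with the substitution $s=1/t$ turns the Nollau integral into $\log(1+A_\sigma^{-1})x=\int_0^1(s+A_\sigma)^{-1}x\,ds$, and writing $(s+A_\sigma)^{-1}x=\bigl(1-s(s+A_\sigma)^{-1}\bigr)A_\sigma^{-1}x$ yields a $\sigma$-uniform constant majorant, because $\sup_\sigma\|A_\sigma^{-1}x\|<\infty$ follows from $x\in\Ran(A)$ and $\sup_\sigma M_{A_\sigma}<\infty$. Your treatment of $\log(1+A_\sigma)x$ is similarly fine.

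The genuine gap is in your treatment of $\lim_\sigma\log(A_\sigma)x=\log(A)x$ by $\tau$-regularization. You cannot pass to the limit in $\log(A_\sigma)x=\tau(A_\sigma)^{-1}(\tau\log)(A_\sigma)x$: while $(\tau\log)(A_\sigma)x\to(\tau\log)(A)x$ in norm follows from strong resolvent convergence, the operator being applied is the $\sigma$-dependent $\tau(A_\sigma)^{-1}=A_\sigma^{-1}(1+A_\sigma)^2$, whose norm blows up as $\sigma\to0^+$; invoking closedness of the fixed operator $\tau(A)^{-1}$ does not help because you would first need to know that $\tau(A_\sigma)^{-1}(\tau\log)(A_\sigma)x$ converges to something in $X$, which is precisely what you are trying to prove. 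The clean self-contained fix is to use Nollau's integral representation $\log(A_\sigma)x=\int_0^\infty\bigl((t+1)^{-1}-(t+A_\sigma)^{-1}\bigr)x\,dt$ directly: for $x\in\mathcal{D}(A)\cap\Ran(A)$ the integrand has a $\sigma$-uniform integrable majorant of order $\min\{1,1/t^2\}$ (the hypothesis $x\in\Ran(A)$ controls $t\to0$, $x\in\mathcal{D}(A)$ controls $t\to\infty$, both uniformly via $\sup_\sigma M_{A_\sigma}<\infty$ and the uniform bounds on $\|A_\sigma^{-1}x\|$ and $\|A_\sigma x\|$), after which dominated convergence with strong resolvent convergence gives the desired limit --- this is essentially the content of the Nollau lemma the paper cites.
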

\begin{proof} Let $\sigma\in (0,1)$ and set $A_\sigma:=(A+\sigma)(1+\sigma A)^{-1}\in\mathcal{L}(X)$. It follows from  Definition \ref{oka} that for each $x\in X$, and in particular, for each $\mathcal{D}(A)\cap \Ran(A)$,
\begin{eqnarray}\label{eqlemaa2.1}  \log(A_\sigma)x=\log(1+A_\sigma)x-\log(1+A^{-1}_\sigma)x.
\end{eqnarray}
Now, it follows from Lemma 3(c) in \cite{nollau} that for each $x\in \mathcal{D}(A)\cap \Ran(A)$, $\displaystyle{\lim_{\sigma\to 0^{+}} \log(A_\sigma)x=\log(A)x}$ and $\displaystyle{\lim_{\sigma\to 0^{+}} \log(1+A_\sigma)x=\log(1+A)x}$; thus, by $\eqref{eqlemaa2.1}$, one has for each  $x\in \mathcal{D}(A)\cap \Ran(A)$ that 
\begin{equation*}\label{eqq19}
    \lim_{\sigma\to 0^{+}} \log(1+A^{-1}_\sigma)x=\lim_{\sigma\to 0^{+}} \log(1+A_\sigma)x-\lim_{\sigma\to 0^{+}} \log(A_\sigma)x=\log(1+A)x-\log(A)x.
\end{equation*}

Since $A$, $A+1$, $A^{-1}+1$ are sectorial operators, $\log(1+A^{-1})$ is well-defined by~\eqref{eqq16}; thus, it follows from Proposition 3.1.3 in~\cite{clark} and Lemma 3.1 in \cite{haas} that for each $x\in\mathcal{D}(A)\cap \Ran(A)$,
\begin{equation*}
    \log(1+A)x-\log(A)x=\log(1+A)x+\log(A^{-1})x=\log((1+A)A^{-1})x=\log(1+A^{-1})x.
\end{equation*}
Then, it follows from the previous relations 
that for each $x\in \mathcal{D}(A)\cap \Ran(A)$,
\begin{equation*}
   \lim_{\sigma\to 0^{+}} \log(1+A^{-1}_\sigma)x= \log(1+A^{-1})x,
\end{equation*}
and so, for each $x\in \mathcal{D}(A)\cap \Ran(A)$, one gets
\begin{equation*}
    \log(A)x=\log(1+A)x-\log(1+A^{-1})x.
\end{equation*}
\end{proof}

Let us now recall
 some properties of the logarithm and fractional power.

\begin{lemma}\label{lemma2.3}
  {\rm Let $A \in \text{Sect}_X(\omega_A)$. Then, the following assertions hold:
\begin{enumerate}[(a)]
\item $A^{\sigma}$ is sectorial, with $\sigma \in (0,1)$.
\item If $A\in \mathcal{L}(X)$, then for each $\sigma>0$, $A^{\sigma} \in \mathcal{L}(X)$.
\item If $A$ is injective, then for each $\sigma \in [0,1]$, $\log(A^{\sigma})=\sigma \log(A)$. 
\item Let $(T(t))_{t\geq 0}$ be a $C_0$-semigroup on the Banach space $X$, with $-A$ its infinitesimal generator. Let, for each $\varepsilon\in(0,1)$, $f_{\varepsilon}(A)=(1+A)^{\varepsilon}-1$. Then, for each $t,s\geq 0$,
\begin{equation*}
T(t)f_{\varepsilon}(A)(s+f_{\varepsilon}(A))^{-1}=f_{\varepsilon}(A)(s+f_{\varepsilon}(A))^{-1}T(t).
\end{equation*}
\end{enumerate}
}
\end{lemma}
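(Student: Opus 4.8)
The plan is to prove the four assertions of Lemma~\ref{lemma2.3} by reducing each to the corresponding facts already established in the functional-calculus framework of \cite{haase,chill,haas}, together with the complete Bernstein function machinery assembled in this section.

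For item (a): since $A\in\text{Sect}_X(\omega_A)$, the map $z\mapsto z^\sigma$ belongs to $\mathcal{B}(S_\varphi)$ for $\varphi\in(\omega_A,\pi)$ and $\sigma\in(0,1)$, so $A^\sigma$ is defined via~\eqref{eqq17a}; that $A^\sigma$ is itself sectorial (with $\omega_{A^\sigma}=\sigma\omega_A$) is standard and follows, e.g., from Theorem~\ref{The2.3} after writing $\lambda\mapsto\lambda^\sigma\in\mathcal{CBF}$ (Example~\ref{ex3.1}(a)) and invoking the composition/scaling rule. For item (b): when $A\in\mathcal{L}(X)$ we have $0\in\rho(A)$ is not guaranteed, but $\sigma(A)$ is compact and contained in $\overline{S_{\omega_A}}$; for $\sigma>0$ write $\sigma=n+\sigma_0$ with $n\in\mathbb{N}\cup\{0\}$ and $\sigma_0\in[0,1)$, so that $A^\sigma=A^n A^{\sigma_0}$, and it suffices to treat $\sigma_0\in(0,1)$. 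The Riesz–Dunford calculus~\eqref{eqq14} applied to a bounded neighbourhood $U$ of $\sigma(A)$ on which $z\mapsto z^{\sigma_0}$ is holomorphic (choosing the branch cut away from $\overline{S_{\omega_A}}$) yields $A^{\sigma_0}\in\mathcal{L}(X)$, and one checks this agrees with the sectorial definition by a standard approximation/consistency argument.

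For item (c): this is the additivity of $\log$ under fractional powers. With $A$ injective, for $\sigma\in(0,1]$ the identity $\log(A^\sigma)=\sigma\log(A)$ is precisely the content of the functional-calculus rule for $\log$ established by Haase (Lemma~3.1 and the surrounding results in~\cite{haas}); the key point is that both sides are defined via~\eqref{eqq17a} and that $\log(z^\sigma)=\sigma\log(z)$ holds pointwise on $S_\pi$ with the principal branch, after which one transports the identity through the (multiplicative) functional calculus. The cases $\sigma=0$ and $\sigma=1$ are trivial. For item (d): the operators $f_\varepsilon(A)=(1+A)^\varepsilon-1$ are complete Bernstein functions of $A$ by Example~\ref{ex3.1}(b), hence sectorial by Theorem~\ref{The2.3}, and in particular $(s+f_\varepsilon(A))^{-1}\in\mathcal{L}(X)$ for $s\ge 0$. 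Since $f_\varepsilon(A)$ is built, via~\eqref{cbf}, out of the resolvents $A(A+\lambda)^{-1}=(1+A)^{-1}$-type operators which are norm-limits of polynomials in $(1+A)^{-1}$, and $(1+A)^{-1}=\int_0^\infty e^{-t}T(t)\,dt$ commutes with every $T(t)$ (standard: $T(t)R(\mu,-A)=R(\mu,-A)T(t)$ for $\mu\in\rho(-A)$), the commutation $T(t)f_\varepsilon(A)(s+f_\varepsilon(A))^{-1}=f_\varepsilon(A)(s+f_\varepsilon(A))^{-1}T(t)$ follows: the bounded operator $f_\varepsilon(A)(s+f_\varepsilon(A))^{-1}=1-s(s+f_\varepsilon(A))^{-1}$ lies in the closed subalgebra of $\mathcal{L}(X)$ generated by $(1+A)^{-1}$, on which $T(t)$ acts by commuting multipliers.

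The main obstacle I anticipate is item (c): one must be careful that $\log(A)$ is well-defined (here $A$ is injective and sectorial, so~\eqref{eqq16} applies, but $A$ need not have dense range, so Lemma~\ref{lemma2.2} is the relevant bridge) and that the composition rule $f(g(A))=(f\circ g)(A)$ is being applied within a regime where it is actually valid — the extended calculus of~\cite{haas} does license $\log(A^\sigma)=\sigma\log(A)$, but this requires checking that $z\mapsto z^\sigma$ maps $S_{\omega_A}$ into a sector on which $\log$ is the principal branch and that the relevant operators are injective so that~\eqref{eqq17a} is unambiguous; all of this is available from the cited literature, so the proof is a matter of correctly invoking it. The other three items are routine consequences of the functional calculus as set up above.

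\begin{proof}
(a) For $\sigma\in(0,1)$ the function $z\mapsto z^\sigma$ is a complete Bernstein function (Example~\ref{ex3.1}(a)), so by Theorem~\ref{The2.3} the operator $A^\sigma$ is sectorial (in fact $\omega_{A^\sigma}=\sigma\,\omega_A$).

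(b) If $A\in\mathcal{L}(X)$, write $\sigma=n+\sigma_0$ with $n\in\mathbb{N}\cup\{0\}$ and $\sigma_0\in[0,1)$, so $A^\sigma=A^n\,A^{\sigma_0}$; since $A^n\in\mathcal{L}(X)$ it suffices to show $A^{\sigma_0}\in\mathcal{L}(X)$ for $\sigma_0\in(0,1)$. As $\sigma(A)$ is compact and contained in $\overline{S_{\omega_A}}$, pick a bounded open connected set $U\supset\sigma(A)$ avoiding a ray on which we place the branch cut of $z\mapsto z^{\sigma_0}$; then $z\mapsto z^{\sigma_0}$ is holomorphic on $U$ and~\eqref{eqq14} defines $A^{\sigma_0}\in\mathcal{L}(X)$, which coincides with the sectorial fractional power by the consistency of the two functional calculi.

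(c) For $\sigma\in\{0,1\}$ the statement is immediate. For $\sigma\in(0,1)$, both $A$ and $A^\sigma$ are injective and sectorial, so $\log(A)$ and $\log(A^\sigma)$ are defined by~\eqref{eqq16}; since $\log(z^\sigma)=\sigma\log(z)$ on $S_\pi$ with the principal branch, the composition rule for the extended functional calculus (see Lemma~3.1 in~\cite{haas} and Proposition~3.1.3 in~\cite{clark}) yields $\log(A^\sigma)=\sigma\log(A)$.

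(d) By Example~\ref{ex3.1}(b), $\lambda\mapsto(1+\lambda)^\varepsilon-1$ is a complete Bernstein function, so $f_\varepsilon(A)$ is sectorial by Theorem~\ref{The2.3}; in particular $(s+f_\varepsilon(A))^{-1}\in\mathcal{L}(X)$ for $s\ge 0$ and
\[
f_\varepsilon(A)(s+f_\varepsilon(A))^{-1}=1-s(s+f_\varepsilon(A))^{-1}.
\]
From the Stieltjes representation of $f_\varepsilon$ and Definition~\ref{Dchill}, together with $(t+A)^{-1}T(r)=T(r)(t+A)^{-1}$ for all $t>0$, $r\ge 0$ (which follows from $T(r)R(\mu,-A)=R(\mu,-A)T(r)$ for $\mu\in\rho(-A)$), one sees that $f_\varepsilon(A)(s+f_\varepsilon(A))^{-1}$ belongs to the closed subalgebra of $\mathcal{L}(X)$ generated by $(1+A)^{-1}=\int_0^\infty e^{-t}T(t)\,dt$, every element of which commutes with each $T(t)$. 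Hence
\[
T(t)\,f_\varepsilon(A)(s+f_\varepsilon(A))^{-1}=f_\varepsilon(A)(s+f_\varepsilon(A))^{-1}\,T(t),\qquad t,s\ge 0.
\]
\end{proof}
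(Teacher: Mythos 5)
Your proof of (a) is identical in spirit to the paper's (both invoke Example~\ref{ex3.1}(a) and Theorem~\ref{The2.3}), and for (c) you cite Lemma~3.1 of Haase and Proposition~3.1.3 of Clark where the paper cites Satz~5 of Nollau — different references, same well-established content, and both are acceptable. The problems are in (b) and (d).

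For (b), your Riesz--Dunford construction breaks down whenever $0\in\sigma(A)$, which the definition of a sectorial operator permits (and which is exactly the regime of interest in Sections~\ref{four} and~\ref{sec5} of the paper). If $0\in\sigma(A)$, then any open set $U\supset\sigma(A)$ contains a neighbourhood of $0$, and that neighbourhood necessarily meets every ray emanating from the origin; since $z\mapsto z^{\sigma_0}$ has a branch point at $0$ for $\sigma_0\in(0,1)$, it is not holomorphic on any such $U$, so the Dunford integral~\eqref{eqq14} simply is not defined. The ``consistency of the two calculi'' does not rescue this, because there is nothing for the sectorial calculus to be consistent \emph{with}. The paper instead cites Proposition~3.1.1(a) in~\cite{haase}, which constructs $A^{\sigma_0}$ via the primary sectorial calculus (the integrand decays like $|z|^{\sigma_0}$ near $0$, so the contour integral around the sector converges even when $0\in\sigma(A)$); your proof would need to switch to that argument.

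For (d), your conclusion is correct but the intermediate claim — that $f_\varepsilon(A)(s+f_\varepsilon(A))^{-1}$ lies in the norm-closed subalgebra of $\mathcal{L}(X)$ generated by $(1+A)^{-1}$ — is not justified and is in general false. Writing $B:=(1+A)^{-1}$, the operator in question is formally $g(B)$ with $g(w)=1-s(s-1+w^{-\varepsilon})^{-1}$; if $A$ is unbounded then $0\in\sigma(B)$, $g$ has a branch singularity at $w=0$, and there is no reason to expect $g(B)$ to be a norm limit of polynomials in $B$. What you actually need is only that $T(t)$ commutes with the resolvent $(s+f_\varepsilon(A))^{-1}$, and the clean route is the one the paper takes: Theorem~3.9(a) in~\cite{chill} gives $T(t)f_\varepsilon(A)\subset f_\varepsilon(A)T(t)$ directly from the functional-calculus representation (which does involve resolvents $(\lambda+A)^{-1}$ for a whole range of $\lambda$, not just $\lambda=1$), and then Proposition~B.3 in~\cite{valued} upgrades this inclusion to commutation of $T(t)$ with $(s+f_\varepsilon(A))^{-1}$. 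Replace your algebra-closure claim with that two-step argument.
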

\begin{proof}
(a) Given that 
  for each $\sigma \in (0,1)$, 
  $[s\mapsto s^{\sigma}]\in\mathcal{CBF}$ (see Example~\ref{ex3.1}-(a)), it follows from Theorem~\ref{The2.3} that the operator $f_{\sigma}(A)=A^{\sigma}$ is sectorial. (b) This is Proposition~3.1.1~(a) in \cite{haase}. (c)  This is Satz~5 in~\cite{nollau}. (d) It follows from Theorem~3.9~(a) in~\cite{chill} that for each $t\geq 0$, $T(t)f_{\varepsilon}(A)\subset f_{\varepsilon}(A) T(t)$, and so, by Proposition B.3 in \cite{valued}, one has $T(t)(s+f_{\varepsilon}(A))^{-1}=(s+f_{\varepsilon}(A))^{-1}T(t)$ for each $s,t\geq 0$.    
\end{proof}

\subsection{Fourier Multipliers and Stability for $C_0$-Semigroups}

\subsubsection{Growth at infinity}

Let $X$ and $Y$ be Banach spaces and let $m: \mathbb{R} \rightarrow \mathcal{L}(X,Y)$ be a $X$-strongly measurable map (i.e. the map $\xi\mapsto m(\xi)x$ is a strongly measurable $Y$-valued map for every $x\in X$). One says that $m$ is of {\it{moderate growth at infinity}} if there exist $\beta \geq 0$ and $g\in L^{1}(\mathbb{R})$ such that for each~$\xi\in \mathbb{R}$,
\begin{equation*}
    \frac{1}{(1+|\xi|)^{\beta}}\|m(\xi)\|_{\mathcal{L}(X,Y)}\lesssim g(\xi).
\end{equation*}

For such measurable $m$, one defines the {\bf{\it{Fourier multiplier operator}}} associated with $m$, $T_m:\mathcal{S}(\mathbb{R};X)\rightarrow \mathcal{S}'(\mathbb{R};Y)$, by the law
\begin{equation*}
   T_m(f):= \mathcal{F}^{-1}(m \cdot \mathcal{F}{f}), \qquad \qquad \forall~f\in \mathcal{S}(\mathbb{R};X);
\end{equation*}
$m$ is called the {\it{symbol}} of $T_m$. For $p\in [1,\infty)$ and $q\in [1, \infty]$, let $\mathcal{M}_{p,q}(\mathbb{R};\mathcal{L}(X,Y))$ denote the set of all $X$-strongly measurable maps $m: \mathbb{R}\rightarrow \mathcal{L}(X,Y)$ of moderate growth such that $T_m\in \mathcal{L}(L^{p}(\mathbb{R};X),L^{q}(\mathbb{R};Y))$  and $\|m\|_{\mathcal{M}_{p,q}(\mathbb{R};\mathcal{L}(X,Y))}:=\|T_m\|_{\mathcal{L}(L^{p}(\mathbb{R};X),L^{q}(\mathbb{R};Y))}$.

\subsubsection{Growth at zero and infinity}

Let $\Dot{\mathcal{S}}(\mathbb{R},X):=\{f\in\mathcal{S}(\mathbb{R};X)\mid \hat{f}^{(k)}(0)=0$ for each $k\in \mathbb{N}\cup\{0\}\}$ and $m:\mathbb{R}\setminus\{0\}\rightarrow \mathcal{L}(X,Y)$ be a $X$-strongly measurable map. One says that $m$ is of {\it{moderate growth at zero and infinity}} if there exist $\alpha \geq 0$ and $g\in L^{1}(\mathbb{R})$ such that for each~$\xi\in \mathbb{R}$,
\begin{equation*}
    \frac{|\xi|^{\alpha}}{(1+|\xi|)^{2\alpha}}\|m(\xi)\|_{\mathcal{L}(X,Y)}\lesssim g(\xi).
\end{equation*}
For such measurable $m$, one defines the {\bf{\it{Fourier multiplier operator}}} associated with $m$, $\Dot{T}_m:\Dot{\mathcal{S}}(\mathbb{R};X)\rightarrow \Dot{\mathcal{S}}'(\mathbb{R};Y)$, by the law
\begin{equation*}
  \Dot{T}_m(f):= \mathcal{F}^{-1}(m \cdot \mathcal{F}{f}), \qquad \qquad \forall~f\in \Dot{\mathcal{S}}(\mathbb{R};X);
\end{equation*}
$m$ is called the {\it{symbol}} of $T_m$. For $p\in [1,\infty)$ and $q\in [1, \infty]$, let $\mathcal{M}_{p,q}(\mathbb{R};\mathcal{L}(X,Y))$ denote the set of all $X$-strongly measurable maps $m: \mathbb{R}\setminus\{0\}\rightarrow \mathcal{L}(X,Y)$ of moderate growth such that $T_m\in \mathcal{L}(L^{p}(\mathbb{R};X),L^{q}(\mathbb{R};Y))$  and $\|m\|_{\mathcal{M}_{p,q}(\mathbb{R};\mathcal{L}(X,Y))}:=\|T_m\|_{\mathcal{L}(L^{p}(\mathbb{R};X),L^{q}(\mathbb{R};Y))}$. For more details about discussion above, see~\cite{Veraar}.

The next result will be used in the proofs of Theorems~\ref{theo4.5} and~\ref{teo4.5}. For more details, see~\cite{Veraar}.
\begin{proposition}[Proposition 3.3 in~\cite{Veraar}]\label{prop2.1}
\begin{rm}
Let $X$ be a Banach space with Fourier type $p\in [1, 2]$, let $Y$ be a Banach space with Fourier cotype $q\in [2, \infty]$, and let $r\in [1, \infty]$ be such that $\frac{1}{r}=\frac{1}{p}-\frac{1}{q}$. Let $m:\mathbb{R}\setminus\{0\}\rightarrow \mathcal{L}(X,Y)$ (or $m:\mathbb{R}\rightarrow \mathcal{L}(X,Y)$) be an $X$-strongly measurable map such that
$\|m(\cdot)\|_{\mathcal{L}(X,Y)}\in L^{r}(\mathbb{R})$. Then, $\mathcal{M}_{p,q}(\mathbb{R},\mathcal{L}(X,Y))$.
\end{rm}  
\end{proposition}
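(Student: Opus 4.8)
The plan is to realize $T_m$ (respectively $\Dot{T}_m$ in the homogeneous case) as the composition $\mathcal{F}^{-1}\circ M_m\circ \mathcal{F}$, where $M_m$ denotes pointwise multiplication by $m(\cdot)$, and to bound the three factors separately: the two outer ones by the Fourier type of $X$ and the Fourier cotype of $Y$, and the middle one by H\"older's inequality. Throughout I would fix a test function $f\in\mathcal{S}(\mathbb{R};X)$ (resp. $f\in\Dot{\mathcal{S}}(\mathbb{R};X)$), for which $\widehat f$ is again a Schwartz (resp. $\Dot{\mathcal{S}}$) function, and establish the estimate on such $f$.

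First I would check that the hypothesis $\|m(\cdot)\|_{\mathcal{L}(X,Y)}\in L^{r}(\mathbb{R})$ forces $m$ to be of moderate growth, which is exactly what is needed for $m\cdot\widehat f$ to be a tempered distribution and hence for $T_m f$ to make sense. If $r<\infty$, split $\mathbb{R}=E\cup E^{c}$ with $E:=\{\xi:\|m(\xi)\|_{\mathcal{L}(X,Y)}\ge 1\}$: on $E$ one has $\|m(\xi)\|_{\mathcal{L}(X,Y)}\le\|m(\xi)\|_{\mathcal{L}(X,Y)}^{r}$ (using $r\ge1$) and on $E^{c}$ one has $\|m(\xi)\|_{\mathcal{L}(X,Y)}\le1$, so with $g:=\|m(\cdot)\|_{\mathcal{L}(X,Y)}^{r}\chi_{E}+(1+|\cdot|)^{-2}\in L^{1}(\mathbb{R})$ the bound $(1+|\xi|)^{-2}\|m(\xi)\|_{\mathcal{L}(X,Y)}\le g(\xi)$ holds, i.e.\ moderate growth at infinity with $\beta=2$; the same splitting gives moderate growth at zero and infinity (weight $|\xi|^{2}(1+|\xi|)^{-4}$) in the homogeneous case, since $|\xi|^{2}\le(1+|\xi|)^{4}$. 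The case $r=\infty$ is immediate.

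Next I would chain the three bounds. Since $X$ has Fourier type $p$, $\mathcal{F}\colon L^{p}(\mathbb{R};X)\to L^{p'}(\mathbb{R};X)$ is bounded, so $\|\widehat f\|_{L^{p'}(\mathbb{R};X)}\le\mathcal{F}_{p,X}\|f\|_{L^{p}(\mathbb{R};X)}$. Then, pointwise $\|m(\xi)\widehat f(\xi)\|_{Y}\le\|m(\xi)\|_{\mathcal{L}(X,Y)}\|\widehat f(\xi)\|_{X}$, and since $\tfrac1r=\tfrac1p-\tfrac1q$ together with $\tfrac1{p'}=1-\tfrac1p$, $\tfrac1{q'}=1-\tfrac1q$ yields the identity $\tfrac1{q'}=\tfrac1r+\tfrac1{p'}$, H\"older's inequality gives $m\cdot\widehat f\in L^{q'}(\mathbb{R};Y)$ with
\[\|m\cdot\widehat f\|_{L^{q'}(\mathbb{R};Y)}\le\big\|\,\|m(\cdot)\|_{\mathcal{L}(X,Y)}\,\big\|_{L^{r}(\mathbb{R})}\,\|\widehat f\|_{L^{p'}(\mathbb{R};X)}.\]
Finally, $Y$ having Fourier cotype $q$ means $Y$ has Fourier type $q'\in[1,2]$, so $\mathcal{F}\colon L^{q'}(\mathbb{R};Y)\to L^{q}(\mathbb{R};Y)$ is bounded with norm $\mathcal{F}_{q',Y}$; as $\mathcal{F}^{-1}$ differs from $\mathcal{F}$ only by the reflection $\xi\mapsto-\xi$, an isometry of every $L^{s}(\mathbb{R};Y)$, the same bound holds for $\mathcal{F}^{-1}$. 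Composing the three estimates gives
\[\|T_m f\|_{L^{q}(\mathbb{R};Y)}=\|\mathcal{F}^{-1}(m\cdot\widehat f)\|_{L^{q}(\mathbb{R};Y)}\le\mathcal{F}_{q',Y}\,\big\|\,\|m(\cdot)\|_{\mathcal{L}(X,Y)}\,\big\|_{L^{r}(\mathbb{R})}\,\mathcal{F}_{p,X}\,\|f\|_{L^{p}(\mathbb{R};X)}.\]

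To conclude, since $p\in[1,2]$ and in particular $p<\infty$, $\mathcal{S}(\mathbb{R};X)$ is dense in $L^{p}(\mathbb{R};X)$ and $L^{q}(\mathbb{R};Y)$ is complete, so the displayed estimate shows that $T_m$ extends uniquely to a bounded operator $L^{p}(\mathbb{R};X)\to L^{q}(\mathbb{R};Y)$ with $\|T_m\|_{\mathcal{L}(L^{p}(\mathbb{R};X),L^{q}(\mathbb{R};Y))}\le\mathcal{F}_{p,X}\mathcal{F}_{q',Y}\big\|\,\|m(\cdot)\|_{\mathcal{L}(X,Y)}\,\big\|_{L^{r}(\mathbb{R})}$; the homogeneous case is identical, working on the $L^{p}$-closure of $\Dot{\mathcal{S}}(\mathbb{R};X)$. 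This is exactly the assertion $m\in\mathcal{M}_{p,q}(\mathbb{R};\mathcal{L}(X,Y))$, with the accompanying norm bound. There is no genuine obstacle in this argument: the only points requiring care are the exponent bookkeeping $\tfrac1{q'}=\tfrac1r+\tfrac1{p'}$ and the verification of moderate growth (needed only to give meaning to $T_m$ on test functions before the density step). The substance of the statement is simply that pointwise multiplication by $m(\cdot)$ costs precisely the factor $\big\|\,\|m(\cdot)\|_{\mathcal{L}(X,Y)}\,\big\|_{L^{r}(\mathbb{R})}$ and lands in the space $L^{q'}(\mathbb{R};Y)$ on which the Fourier cotype of $Y$ can be used.
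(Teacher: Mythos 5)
Your argument is correct and is essentially the proof of the cited result (Proposition 3.3 in Rozendaal--Veraar): the paper itself imports this proposition without proof, and the standard argument is exactly your factorization $T_m=\mathcal{F}^{-1}\circ M_m\circ\mathcal{F}$ with the Fourier type of $X$, H\"older's inequality via $\tfrac1{q'}=\tfrac1r+\tfrac1{p'}$, and the Fourier cotype of $Y$. Your preliminary verification of moderate growth and the density step are the right (and only) points needing care, and you handle both correctly.
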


The theory of $(L^p,L^q)$ Fourier multipliers has proven to be an important tool for the stability theory of $C_0$-semigroups~\cite{Lastushkin1,Veraar,rozendaal,roze, Weis1, Weis2}.  In particular, by using it, Rozendaal and Veraar have obtained the following result that characterizes polynomial stability. We stress that this result is a necessary tool in our analysis. (see also Theorem 5.1 in~\cite{roze}). 

\begin{theorem}[Theorem 4.6 in \cite{rozendaal}]\label{theor4.3}
\begin{rm}
Let $-A$ be the generator of a $C_0$-semigroup $(T(t))_{t\geq 0}$ defined on a Banach space $X$ such that $\overline{\mathbb{C}_{-}}\setminus\{0\}\subset \rho(A)$ and such that there exist $\alpha,\beta \geq 0$ so that $\|(\lambda+A)^{-1}\|_{\mathcal{L}(X)}\lesssim |\lambda|^{-\alpha}(1+|\lambda|)^{\beta}$, with $\text{Re}(\lambda)\leq 0$. Let $Y$ be a Banach space which is continuously embedded in $X$ and suppose that there exists a constant $C\geq 0$ such that, for each $t\geq 0$, $T(t)Y\subset Y$, $\|T(t)\big{|}_Y\|_{\mathcal{L}(Y)}\leq C\|T(t)\|_{\mathcal{L}(X)}$, and  that there exists a dense subspace $Y_0\subset Y$ such that for each $y\in Y_0$, $[t\mapsto t^{n}T(t)y] \in L^{1}([0,\infty), Y)$. Then, the following statements are equivalent:
\begin{enumerate}[a)]
\item $\displaystyle{\sup_{t\geq 0}\left\{t^{n}\|T(t)\|_{\mathcal{L}(Y,X)}\right\}<\infty}$.
\item There exist $\psi \in C^{\infty}_c(\mathbb{R})$, $p\in [1,\infty)$ and $q \in [p,\infty]$ such that for each $k\in\{n-1,n,n+1\}$,
\begin{equation*}
\psi(\cdot)R(i\cdot,A)^{k}\in \mathcal{M}_{1,\infty}(\mathbb{R}; \mathcal{L}(Y,X)) \ \ \ \textrm{and}\ \ \ 
(1-\psi(\cdot))R(i\cdot,A)^{k}\in \mathcal{M}_{p,q}(\mathbb{R}; \mathcal{L}(Y,X)).
\end{equation*}
\end{enumerate}
\end{rm}
\end{theorem}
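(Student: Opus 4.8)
\textbf{Proof proposal for Theorem~\ref{theor4.3}.}

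The plan is to establish the equivalence by going through the resolvent representation of the Laplace transform and the classical Fourier-analytic characterization of operator-valued Fourier multipliers. The key object throughout is the fact that for $\text{Re}(\lambda)>\omega_0(T)$ one has $R(\lambda,A)=\int_0^\infty e^{-\lambda t}T(t)\,dt$, and that, after iterating, $R(\lambda,A)^{k}=\frac{(-1)^{k-1}}{(k-1)!}\frac{d^{k-1}}{d\lambda^{k-1}}R(\lambda,A)=\frac{1}{(k-1)!}\int_0^\infty t^{k-1}e^{-\lambda t}T(t)\,dt$, so that $R(i\cdot,A)^{k}$ is (up to constants) the Fourier transform of $t\mapsto t^{k-1}T(t)$ extended by $0$ to $(-\infty,0)$. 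The hypotheses on $Y_0$ and on $\|T(t)\big{|}_Y\|_{\mathcal{L}(Y)}\lesssim\|T(t)\|_{\mathcal{L}(X)}$ are exactly what is needed to make these integrals converge in $\mathcal{L}(Y,X)$ and to pass from operators on $X$ to operators from $Y$ to $X$.

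First I would prove the implication (a) $\Rightarrow$ (b). Assuming $\sup_{t\ge 0}t^{n}\|T(t)\|_{\mathcal{L}(Y,X)}<\infty$, split the resolvent (raised to powers $k\in\{n-1,n,n+1\}$) using a cutoff $\psi\in C_c^\infty(\mathbb{R})$ equal to $1$ near $0$. For the low-frequency piece $\psi(\cdot)R(i\cdot,A)^{k}$, one uses the spectral hypothesis $\|R(\lambda,A)\|_{\mathcal{L}(X)}\lesssim|\lambda|^{-\alpha}(1+|\lambda|)^{\beta}$ near $0$ together with the uniform bound on $t^{n}T(t)$ and a resolvent expansion to show that $\psi(\cdot)R(i\cdot,A)^{k}$ is the Fourier transform of an $L^1([0,\infty),\mathcal{L}(Y,X))$-function (or more precisely that the associated kernel is integrable), hence lies in $\mathcal{M}_{1,\infty}$ — here I would invoke the elementary fact that a convolution operator with an $L^1$ operator-valued kernel maps $L^1$ to $L^\infty$. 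For the high-frequency piece $(1-\psi(\cdot))R(i\cdot,A)^{k}$, the decay $\sup_t t^n\|T(t)\|_{\mathcal{L}(Y,X)}<\infty$ combined with the strong continuity and the growth bound near infinity gives $\|R(i\cdot,A)^{k}\|_{\mathcal{L}(Y,X)}\in L^{r}(\mathbb{R})$ for a suitable $r$ (one integrates $t^{k-1}T(t)$ against $e^{-i\xi t}$ and estimates; the gain of $r$-integrability on the Fourier side comes precisely from the $t^{-n}$ decay). Then Proposition~\ref{prop2.1} converts this $L^r$-bound into membership in $\mathcal{M}_{p,q}$ with $\frac 1r=\frac1p-\frac1q$, which is the desired conclusion with $Y$ of the relevant Fourier type/cotype.

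For the converse (b) $\Rightarrow$ (a), I would reconstruct $t^{n}T(t)$ from the multiplier data. The idea is that $T(t)y$ for $y\in Y_0$ can be recovered via an inverse-Fourier/Laplace formula from $R(i\cdot,A)$, and that forming $t^{n}T(t)$ corresponds (after integration by parts in $t$, equivalently differentiation in the spectral parameter) to a finite linear combination of Fourier multiplier operators with symbols $\psi(\cdot)R(i\cdot,A)^{k}$ and $(1-\psi(\cdot))R(i\cdot,A)^{k}$ for $k\in\{n-1,n,n+1\}$ applied to fixed Schwartz functions depending on $t$ in a controlled, uniformly $L^p$-bounded way. The splitting into $\mathcal{M}_{1,\infty}$ near the origin and $\mathcal{M}_{p,q}$ away from it is what allows one to bound $\|t^{n}T(t)y\|_X$ uniformly in $t$ by $\|y\|_Y$ on the dense set $Y_0$, after which density and the uniform-boundedness principle (using $\|T(t)\big{|}_Y\|\lesssim\|T(t)\|_{\mathcal{L}(X)}$ to control the operators) extend the bound to all of $Y$ and yield (a). The main technical work here is the careful bookkeeping of how multiplication by $t^n$ translates, under the Fourier/Laplace transform, into the combination of the three consecutive powers $k=n-1,n,n+1$ — this is the standard "commutator with $t$" trick but it has to be done at the level of the truncated (low/high frequency) symbols so that the two multiplier norms can be used separately.

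The step I expect to be the main obstacle is precisely this passage in both directions between the time-side weight $t^n$ and the frequency-side powers $R(i\cdot,A)^k$ while respecting the low/high-frequency decomposition: one must justify all the integrations by parts and Fourier inversions in the operator-valued setting, control the boundary terms using strong continuity and the $Y_0$-integrability hypothesis, and ensure that the auxiliary Schwartz functions produced depend on $t$ only through translations/dilations whose $L^p$-norms do not blow up. The growth hypotheses on $\|R(\lambda,A)\|$ near $0$ and near $\infty$ enter to guarantee that all symbols involved actually have moderate growth (at zero and at infinity, in the sense of the two subsections above), so that the Fourier-multiplier formalism of~\cite{Veraar} applies; keeping track of which of $\mathcal{M}_{p,q}$ versus $\dot{\mathcal{M}}_{p,q}$ (the homogeneous, $0$-at-the-origin version) is in play — which depends on whether $\alpha>0$ — is a further point requiring care.
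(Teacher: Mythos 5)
First, a point of comparison: the paper does not prove this statement at all. It is quoted verbatim as Theorem~4.6 of Rozendaal--Veraar~\cite{rozendaal} and used as a black box, so there is no internal proof to measure your proposal against. Judged on its own terms, your sketch of the implication (b)~$\Rightarrow$~(a) is the standard and essentially correct strategy: recover $t^nT(t)y$ for $y\in Y_0$ from the convolution identity $\mathcal{F}^{-1}\bigl(R(i\cdot,A)^k\,\mathcal{F}f\bigr)(t)=\int_0^t\frac{(t-s)^{k-1}}{(k-1)!}\,T(t-s)f(s)\,ds$ applied to $f=\phi(\cdot)T(\cdot)y$, use the three consecutive powers $k\in\{n-1,n,n+1\}$ to upgrade an $L^q$-in-$t$ bound to a pointwise bound, and conclude by density. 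This is also the only direction the present paper actually invokes (in the proofs of Theorems~\ref{theo4.4} and~\ref{teo4.4} the authors verify hypothesis~(b) and conclude~(a)).

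Your argument for (a)~$\Rightarrow$~(b), however, has a genuine gap. You claim that $\sup_{t\ge0}t^n\|T(t)\|_{\mathcal{L}(Y,X)}<\infty$ ``gives $\|R(i\cdot,A)^k\|_{\mathcal{L}(Y,X)}\in L^r(\mathbb{R})$ for a suitable $r$'' and then feed this into Proposition~\ref{prop2.1}. That step is unjustified and in general false: polynomial decay of the orbit norms does not yield $L^r$-integrability of the resolvent powers on the frequency side, and $L^r$-membership of the symbol is strictly stronger than membership in $\mathcal{M}_{p,q}$ (Proposition~\ref{prop2.1} is a one-way implication). Relatedly, the identity $R(i\xi,A)^k=\frac{1}{(k-1)!}\int_0^\infty t^{k-1}e^{-i\xi t}T(t)\,dt$ on which you lean does not converge absolutely for $k\in\{n,n+1\}$ when all you know is $\|T(t)\|_{\mathcal{L}(Y,X)}\lesssim t^{-n}$, and your ``elementary fact'' is misstated: a convolution with an $L^1$ operator-valued kernel maps $L^1$ to $L^1$, not to $L^\infty$; it is a \emph{bounded} kernel (equivalently, a frequency-side $L^1$ symbol) that gives $L^1\to L^\infty$. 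A correct mechanism for (a)~$\Rightarrow$~(b) is precisely this: under~(a), for each $k\in\{n-1,n,n+1\}$ with $k\ge1$ the kernel $t\mapsto\frac{t^{k-1}}{(k-1)!}T(t)\mathbf{1}_{[0,\infty)}(t)$ is uniformly bounded in $\mathcal{L}(Y,X)$ (near $t=0$ because $Y\hookrightarrow X$ and the semigroup is locally bounded, and at infinity because $k-1\le n$), so the associated convolution operator maps $L^1(\mathbb{R};Y)$ into $L^\infty(\mathbb{R};X)$ and one may take $p=1$, $q=\infty$ for both the low- and high-frequency pieces; the identification of this convolution operator with $T_{R(i\cdot,A)^k}$ is justified by first working with $R(\varepsilon+i\cdot,A)^k$ and letting $\varepsilon\to0^+$, using the $Y_0$-integrability hypothesis to control the limit. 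Without this (or an equivalent) repair, the (a)~$\Rightarrow$~(b) half of your proof does not go through.
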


\section{Singularity at Infinity}\label{three}
\zerarcounters

We begin introducing some notation that will be useful throughout this subsection.

Let $\nu,\upsilon\geq 0$ and $A\in \text{Sect}_{X}(\omega_A)$; since $\lambda\mapsto \log(1+\lambda)\in\mathcal{CBF}$ (see Example~\ref{ex3.1}-(b)), it follows from  Theorem~\ref{The2.3} that the operator $\log(2+A)$ is sectorial, and so $(\log(2+A))^{-\upsilon}$ is well-defined and bounded (see definition of fractional powers of sectorial operators in \cite{haase,mart}). Define the operator
\begin{equation*}
\Phi_{\nu}(\upsilon)=\Phi_{\nu}(A,\upsilon):= (1+A)^{-\nu}\log(2+A)^{-\upsilon} \in \mathcal{L}(X),
\end{equation*}
and set $X_{\nu}(\upsilon):=\Ran(\Phi_{\nu}(\upsilon))$. The space $X_{\nu}(\upsilon)$  is a Banach space with respect to the norm 
 \begin{eqnarray*}
\|x\|_{X_{\nu}(\upsilon)}&=&\|x\|+\|\Phi_{\nu}(\upsilon)^{-1}x\|=\|x\|+\|\log(2+A)^{\upsilon}(1+A)^{\nu}x\|, \ \ \ x \in X_{\nu}(\upsilon).
 \end{eqnarray*}
 
 Note that $\Phi_{\nu}(\upsilon):X\rightarrow  X_{\nu}(\upsilon)$ is  an isomorphism, so for each $T\in \mathcal{L}(X_{\nu}(\upsilon),X)$,
 \begin{equation*}
     \|Tx\|=\|T\Phi_{\nu}(\upsilon)y\|\leq \|T\Phi_{\nu}(\upsilon)\|_{\mathcal{L}(X)}\|y\|\leq \|T\Phi_{\nu}(\upsilon)\|_{\mathcal{L}(X)}\|x\|_{X_{\nu}(\upsilon)} 
 \end{equation*}
(here, $y:=\Phi_{\nu}(\upsilon)^{-1}x$) and 
 \begin{equation*}
     \|T\Phi_{\nu}(\upsilon)x\|\leq \|T\|_{\mathcal{L}(X_{\nu}(\upsilon),X)}\|\Phi_{\nu}(\upsilon)x\|\leq \|T\|_{\mathcal{L}(X_{\nu}(\upsilon),X)}\|\Phi_{\nu}(\upsilon)\|_{\mathcal{L}(X)}\|x\|;
 \end{equation*}
 therefore, for each $ T\in \mathcal{L}(X_{\nu}(\upsilon),X)$, one has
 \begin{equation}\label{eqq22}
     \|T\|_{\mathcal{L}(X_{\nu}(\upsilon),X)} \leq  \|T\Phi_{\nu}(\upsilon)\|_{\mathcal{L}(X)}\leq \|\Phi_{\nu}(\upsilon)\|_{\mathcal{L}(X)}\|T\|_{\mathcal{L}(X_{\nu}(\upsilon),X)}. 
 \end{equation}
 
Note that $\Phi_{\nu}(0)=\Phi_{\nu}(A)$ and $X_{\nu}(0)=X_{\nu}$, where $\Phi_{\nu}(A)$ and $X_{\nu}$ are the objects defined in \cite{rozendaal}. 

\

In this subsection, we discuss the decay rate of a $C_0$-semigroup whose infinitesimal generator $-A$ is such that $\overline{\mathbb{C}_-}\subset \rho(A)$ and such that there exist $\beta>0$ and $b\ge 0$ so that $\Vert (\lambda +A)^{-1}\Vert_{\mathcal{L}(X)}\lesssim (1+|\lambda|)^{\beta}\log(2+|\lambda|)^{b}$, for $\lambda \in \mathbb{C}$ satisfying $\text{Re}(\lambda)\le 0$.

\begin{theorem}\label{theo4.4}
\begin{rm}
Let $\beta>0$, $b\ge 0$ and $(T(t))_{t\geq 0}$ be a $C_0$-semigroup defined in the Banach space $X$ with Fourier type
$p\in [1,2]$, with $-A$ as its generator. Suppose $\overline{\mathbb{C}_{-}}\subset \rho(A)$ and for each $\lambda \in\mathbb{C}$ with $\text{Re}(\lambda)\le 0$,
\begin{equation}\label{eeqq26}
\|(\lambda+A)^{-1}\|_{\mathcal{L}(X)}\lesssim (1+|\lambda|)^{\beta}\log(2+|\lambda|)^b.
\end{equation}

Let $r\in [1,\infty]$ be such $\frac{1}{r}=\frac{1}{p}-\frac{1}{p'}$ and let $\tau$ be such that $\tau>\beta+\frac{1}{r}$. Then, for each $\delta>0$ and each $\rho \in [0,\frac{\tau-1/r}{\beta}-1]$, there exists $c_{\rho,\delta}>0$ such that for each $t\geq 1$,
\begin{equation}\label{aindanao1}
 \|T(t)(1+A)^{-\tau}\log(2+A)^{-\frac{b}{\beta}(\tau-1/r)-\frac{1+\delta}{r}}\|_{\mathcal{L}(X)}\leq c_{\rho,\delta}t^{-\rho}.
\end{equation}
\end{rm}
\end{theorem}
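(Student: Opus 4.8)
The strategy is to verify the two Fourier-multiplier conditions in Theorem~\ref{theor4.3} (applied with the singularity only at infinity, i.e.\ $\alpha=0$) for the pair of spaces $Y=X_{\tau}\big(\tfrac{b}{\beta}(\tau-r^{-1})+\tfrac{1+\delta}{r}\big)$ and $X$, with $n=\lceil\rho\rceil+1$, and then pass from the integer decay rate $t^{-n}$ to the fractional rate $t^{-\rho}$ by a standard interpolation/Landau-Kolmogorov argument. Write $\upsilon:=\tfrac{b}{\beta}(\tau-r^{-1})+\tfrac{1+\delta}{r}$, so that $\Phi_{\tau}(\upsilon)=(1+A)^{-\tau}\log(2+A)^{-\upsilon}$ and, by~\eqref{eqq22}, controlling $\|T(t)\|_{\mathcal{L}(Y,X)}$ is equivalent (up to a constant) to controlling $\|T(t)\Phi_{\tau}(\upsilon)\|_{\mathcal{L}(X)}$, which is exactly the quantity in~\eqref{aindanao1}. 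By the semigroup resolvent identity, $T(t)\Phi_{\tau}(\upsilon)$ is the inverse Fourier transform of a symbol built from powers of $R(i\cdot,A)$ composed with the analytic function $z\mapsto (1+z)^{-\tau}\log(2+z)^{-\upsilon}$, so the whole problem reduces to $L^p\to L^{p'}$ boundedness of such symbols.

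\textbf{Key steps.} First, for the low-frequency part I take $\psi\in C_c^\infty(\mathbb{R})$ equal to $1$ near $0$; since $\overline{\mathbb{C}_-}\subset\rho(A)$ the symbol $\psi(s)R(is,A)^k\Phi_{\tau}(\upsilon)$ is smooth, compactly supported and $\mathcal{L}(Y,X)$-valued, hence trivially in $\mathcal{M}_{1,\infty}$. Second, and this is the core estimate, for the high-frequency part $(1-\psi(s))R(is,A)^k\Phi_{\tau}(\upsilon)$ I invoke Proposition~\ref{prop2.1}: it suffices to show $\|(1-\psi(s))R(is,A)^k\Phi_{\tau}(\upsilon)\|_{\mathcal{L}(X)}\in L^r(\mathbb{R})$. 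Using~\eqref{eeqq26} one has $\|R(is,A)^k\|_{\mathcal{L}(X)}\lesssim \|R(is,A)\|^k\lesssim (1+|s|)^{k\beta}\log(2+|s|)^{kb}$ — or more carefully, using the Neumann-series/derivative bound for resolvent powers, $\|R(is,A)^k\|\lesssim (1+|s|)^{k\beta}\log(2+|s|)^{kb}$ up to the relevant constant — while the regularizing operator contributes a decay $\|\,(1+A)^{-\tau}\log(2+A)^{-\upsilon}$ evaluated along $is$'', which morally behaves like $(1+|s|)^{-\tau}\log(2+|s|)^{-\upsilon}$. The delicate point is that $\Phi_{\tau}(\upsilon)$ does not literally act as a scalar multiplier on the resolvent, so one must instead estimate the $H_0^\infty$-type operator $f_{k\beta,\,\tau-k\beta,\,\upsilon,0}(A)$ from Remark~\ref{remark2.2} via its contour representation~\eqref{eq1}, bounding the integrand with~\eqref{eeqq26} and the elementary pointwise bounds for $z^{k\beta}(1+z)^{-\tau}\log(2+z)^{-\upsilon}$ on the sector. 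Choosing $n=\lceil\rho\rceil+1$ and using $\tau>\beta+r^{-1}$, the net integrand decays like $|s|^{-(\tau-n\beta)}\log|s|^{-(\upsilon-nb)}$, and one checks that with the given value of $\upsilon$ the exponents are arranged so that this lies in $L^r$; here Proposition~\ref{cor2} (the Potter-type bounds for slowly varying functions) is used to absorb the logarithmic factors into the power $|s|^{-(\tau-n\beta)}$, which already decays fast enough because $\tau-n\beta>\tau-(\rho+1)\beta\ge r^{-1}$ fails to be quite enough on the nose — so the logarithmic surplus $\tfrac{1+\delta}{r}$ built into $\upsilon$ is precisely what makes the borderline integrability work. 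Third, Theorem~\ref{theor4.3} then yields $\sup_{t\ge0} t^n\|T(t)\|_{\mathcal{L}(Y,X)}<\infty$, i.e.\ $\|T(t)\Phi_{\tau}(\upsilon)\|_{\mathcal{L}(X)}\lesssim t^{-n}$. Finally, to obtain the sharper $t^{-\rho}$ for all $\rho\in[0,n-1]$ (in particular $\rho$ possibly non-integer), I combine the trivial bound $\|T(t)\Phi_{\tau}(\upsilon)\|\lesssim 1$ near $t=1$ with the $t^{-n}$ decay and a moment/interpolation inequality for orbits (of Landau--Kolmogorov type, as used in~\cite{rozendaal}), or simply note $t^{-n}\le t^{-\rho}$ for $t\ge1$; the latter suffices for the stated conclusion, so no genuine interpolation is needed — one just picks the integer $n\ge\rho+1$ and discards the excess decay.

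\textbf{Main obstacle.} The technical heart is Step two: making rigorous the heuristic that $(1+A)^{-\tau}\log(2+A)^{-\upsilon}$ ``acts like'' the scalar $(1+|s|)^{-\tau}\log(2+|s|)^{-\upsilon}$ against $R(is,A)^k$. This cannot be done by naive operator-norm multiplicativity; instead one must recognize the composite $R(is,A)^k(1+A)^{-\tau}\log(2+A)^{-\upsilon}$ as (a Cauchy-integral realization of) a single function $f\in H_0^\infty(S_\varphi)$ of $A$ — precisely the family $f_{\alpha,\beta,\upsilon_1,\upsilon_2}$ introduced in Remark~\ref{remark2.2} — evaluated at $is$, and then estimate $\|f(A)\|$ by deforming the contour in~\eqref{eq3} and using the hypothesis~\eqref{eeqq26} together with the sharp pointwise decay of $f$ on the sector boundary. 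Keeping track of the exact powers of $\log$ through this contour estimate, and verifying that the resulting decay rate $|s|^{-(\tau-n\beta)}(\log|s|)^{-(\upsilon-nb)}$ together with the choice of $\upsilon$ and Proposition~\ref{cor2} just barely lands in $L^r(\mathbb{R})$, is where all the care must go; everything else is bookkeeping with Theorem~\ref{theor4.3} and the isomorphism~\eqref{eqq22}. \hfill$\Box$
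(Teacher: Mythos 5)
Your overall framework is the paper's: regularize to the space $X_\tau(\upsilon)$, decompose into low- and high-frequency parts via a cutoff $\psi$, verify the $(L^p,L^{p'})$ multiplier condition through Proposition~\ref{prop2.1}, and conclude with Theorem~\ref{theor4.3}. You also correctly locate the technical heart in making the composite $R(is,A)^k(1+A)^{-\tau}\log(2+A)^{-\upsilon}$ precise via contour-integral/sectorial-calculus estimates (what the paper packages into Proposition~\ref{prop3.1}, applied once with $\beta_0=\zeta=0$ and once with $\beta_0=1/r$, $\zeta=(1+\delta)/r$, then factorized across the resolvent powers). So the skeleton matches.

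There is, however, a genuine gap in your last step. You claim that ``no genuine interpolation is needed — one just picks the integer $n\ge\rho+1$ and discards the excess decay.'' This fails. With $\tau$ and $\upsilon=\frac{b}{\beta}(\tau-r^{-1})+\frac{1+\delta}{r}$ fixed, the multiplier estimate for $R(i\cdot,A)^k$ with $k$ up to $n+1$ requires, as you yourself compute, that the high-frequency symbol decay like $|s|^{-(\tau-n\beta)}(\log|s|)^{-(\upsilon-nb)}$ with $\tau-n\beta>r^{-1}$ (up to the borderline logarithmic correction). With $n=\lceil\rho\rceil+1$ and $\rho$ near $\rho_{\max}=\frac{\tau-1/r}{\beta}-1$ non-integer, one has $\tau-n\beta<r^{-1}$ and even $\tau-n\beta<0$ is possible, so the symbol is not in $L^r(\mathbb{R})$ and the integer rate $t^{-n}$ simply cannot be obtained with the given regularization. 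Concretely, the largest integer rate achievable from the integer argument is $t^{-\lfloor\rho_{\max}\rfloor}$, which is strictly weaker than $t^{-\rho_{\max}}$ whenever $\rho_{\max}\notin\mathbb{N}$. The interpolation you list as an alternative is therefore not optional but indispensable: the paper passes from the integer case to fractional $s$ via the moment inequality applied to the sectorial operator $(f(A))^{-1}$ with $f(\lambda)=(1+\lambda)^{\beta/(\beta+b)}\log(2+\lambda)^{b/(\beta+b)}\in\mathcal{CBF}$ (Proposition~\ref{theor2.3}-(c)), and this is precisely what makes the endpoint $\rho=\rho_{\max}$ attainable. You need to carry out that step rather than dismiss it.

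A secondary, more minor point: the family $f_{\alpha,\beta,\upsilon_1,\upsilon_2}$ of Remark~\ref{remark2.2} does not literally contain the resolvent factors $R(is,A)^k$, so the composite operator is not directly one $f(A)$ in the sense of~\eqref{eq3}. The paper instead derives the pointwise estimate for $h_{\beta_0,\zeta}(\lambda)(\lambda+A)^{-1}(1+A)^{-\beta-\beta_0}\log(2+A)^{-b-\zeta}$ by a contour deformation that isolates a residue term $\frac{(\lambda+A)^{-1}}{(1-\lambda)^\beta\log(2-\lambda)^b}$ plus a tail $T_\lambda$, and then bounds resolvent powers by submultiplicativity via the factorization in~\eqref{eq24}; this is the clean way to make your heuristic rigorous.
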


The following results are needed in the proof of Theorem~\ref{theo4.4}. Note also that the following proposition is a version of Theorem~\ref{theo4.4} in case $p=1$ (that is, in case $X$ is a Banach space with trivial type).

\begin{proposition}\label{theor3.1}
\begin{rm}
Let $b\ge 0$, $\beta>0$ and let $A$ be an injective sectorial operator on a Banach space $X$ such that
$-A$ generates a $C_0$-semigroup $(T(t))_{t\geq 0}$ on $X$. Suppose $\overline{\mathbb{C}_{-}}\subset \rho(A)$ and for each $\lambda \in\mathbb{C}$ with $\text{Re}(\lambda)\le 0$,
\begin{equation}
\|(\lambda+A)^{-1}\|_{\mathcal{L}(X)}\lesssim (1+|\lambda|)^{\beta}\log(2+|\lambda|)^b.
\end{equation}
Let $\tau\geq \beta+1$. Then, for each  $\delta>0$ and each $\rho \in [0,\frac{\tau-1}{\beta}-1]$, there exists $c_{\rho,\delta}>0$ such that for each $t\geq 1$,
\begin{equation*}
 \|T(t)(1+A)^{-\tau}\log(2+A)^{-\frac{b}{\beta}(\tau-1)-1-\delta}\|_{\mathcal{L}(X)}\leq c_{\rho,\delta}t^{-\rho}.
\end{equation*}
\end{rm}
\end{proposition}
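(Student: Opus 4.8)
The strategy is to reduce the (possibly unbounded) semigroup estimate to an application of the polynomial stability characterization via Fourier multipliers (Theorem~\ref{theor4.3}), carried out on the scale of spaces $X_\nu(\upsilon)=\Ran(\Phi_\nu(\upsilon))$ introduced above. Since $p=1$ here, $r=\infty$ and $1/r=0$, so the target operator is $T(t)(1+A)^{-\tau}\log(2+A)^{-\frac{b}{\beta}(\tau-1)-1-\delta}$, and the claimed decay is $t^{-\rho}$ for $\rho\le\frac{\tau-1}{\beta}-1$. First I would fix $n:=\lceil\rho\rceil+1$ (or the least integer with $n>\rho$) and set up $Y:=X_\tau(\upsilon)$ with $\upsilon=\frac{b}{\beta}(\tau-1)+1+\delta$, observing (using~\eqref{eqq22}) that bounding $\|T(t)\|_{\mathcal{L}(Y,X)}$ is equivalent to bounding $\|T(t)\Phi_\tau(\upsilon)\|_{\mathcal{L}(X)}$, which is exactly the left-hand side of the conclusion. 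I would then verify the structural hypotheses of Theorem~\ref{theor4.3}: that $T(t)Y\subset Y$ with $\|T(t)|_Y\|_{\mathcal{L}(Y)}\lesssim\|T(t)\|_{\mathcal{L}(X)}$ (this follows since $(1+A)^{-\tau}$ and $\log(2+A)^{-\upsilon}$ are functions of $A$ and hence commute with $T(t)$, cf.\ Lemma~\ref{lemma2.3}(d) and Lemma~\ref{lemma21}), and that there is a dense subspace $Y_0\subset Y$ with $[t\mapsto t^n T(t)y]\in L^1([0,\infty),Y)$ — for which one can take $Y_0=\Ran((1+A)^{-N})$ for $N$ large, exploiting the resolvent growth bound and standard smoothing estimates for the semigroup.

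\textbf{The multiplier estimates.} The core of the argument is to establish condition (b) of Theorem~\ref{theor4.3}: choose a cutoff $\psi\in C_c^\infty(\mathbb{R})$ equal to $1$ near $0$, and show that for $k\in\{n-1,n,n+1\}$,
\[\psi(\cdot)R(i\cdot,A)^k\in\mathcal{M}_{1,\infty}(\mathbb{R};\mathcal{L}(Y,X)),\qquad (1-\psi(\cdot))R(i\cdot,A)^k\in\mathcal{M}_{p_0,q_0}(\mathbb{R};\mathcal{L}(Y,X))\]
for suitable $p_0,q_0$. The compactly supported piece is easy because there $R(i\cdot,A)$ is bounded and smooth, so $\psi(\cdot)R(i\cdot,A)^k\Phi_\tau(\upsilon)$ lies in $L^1(\mathbb{R};\mathcal{L}(X))$, which embeds in $\mathcal{M}_{1,\infty}$. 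For the piece at infinity, since $X$ has trivial Fourier type ($p=1$), Proposition~\ref{prop2.1} tells us it suffices to check that $\|(1-\psi(s))R(is,A)^k\Phi_\tau(\upsilon)\|_{\mathcal{L}(X)}\in L^1(\mathbb{R})$. Here one uses~\eqref{eeqq26} to get $\|R(is,A)\|_{\mathcal{L}(X)}\lesssim(1+|s|)^\beta\log(2+|s|)^b$, hence $\|R(is,A)^k\|_{\mathcal{L}(X)}\lesssim(1+|s|)^{k\beta}\log(2+|s|)^{kb}$, while the regularizer contributes $\|\Phi_\tau(\upsilon)(is)\|\sim(1+|s|)^{-\tau}\log(2+|s|)^{-\upsilon}$ in an appropriate functional-calculus sense (made rigorous by writing $R(is,A)^k\Phi_\tau(\upsilon)$ through the $H^\infty_0$-calculus of Remark~\ref{remark2.2} and the holomorphy of the resolvent on $\overline{\mathbb{C}_-}$). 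Matching exponents, integrability at infinity requires roughly $k\beta-\tau<-1$, i.e.\ $\tau>(k+1)\beta$... but that is too strong; instead one only needs the $\mathcal{M}_{p_0,q_0}$ condition for a single pair with $\tfrac1{p_0}-\tfrac1{q_0}$ small, and the interpolation built into the Rozendaal--Veraar scheme lets $k$ range only over $\{n-1,n,n+1\}$ with $n$ essentially $\rho+1$; a careful bookkeeping (splitting the regularizer power between the three values of $k$ and absorbing the $\log$ factors using Proposition~\ref{cor2}, which compares $\log(2+t)/\log(2+s)$ to powers of $t/s$) yields that $\tau>\beta+1$ and $\rho\le\frac{\tau-1}{\beta}-1$ suffice, with the logarithmic exponent $\upsilon=\frac b\beta(\tau-1)+1+\delta$ being exactly what is needed to make the worst integral converge (the extra ``$+1+\delta$'' absorbing the borderline logarithmic divergence).

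\textbf{Conclusion and main obstacle.} Once the multiplier conditions hold, Theorem~\ref{theor4.3} gives $\sup_{t\ge0}t^n\|T(t)\|_{\mathcal{L}(Y,X)}<\infty$, i.e.\ $\|T(t)\Phi_\tau(\upsilon)\|_{\mathcal{L}(X)}\lesssim t^{-n}$; but we want $t^{-\rho}$ with $\rho$ possibly non-integer and $\rho\le n-1<n$. The passage from integer decay rate $n$ down to the real rate $\rho$ is handled by a moment-inequality / interpolation argument: one also has the trivial bound $\|T(t)\Phi_\tau(\upsilon)\|\lesssim\|T(t)\|_{\mathcal{L}(X)}\le Ce^{\omega t}$, and more usefully one reruns the argument with the family of integers $\{0,1,\dots,n\}$ (or uses that the hypotheses for exponents $k\in\{n-1,n,n+1\}$ also give lower-order estimates), then interpolates $t^{-n}$ against a zeroth-order bound, or more directly one applies Theorem~\ref{theor4.3} with $n=\lceil\rho\rceil$ if $\rho$ is large enough and otherwise splits $\tau$ so the effective integer order matches. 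I expect the \textbf{main obstacle} to be the precise resolvent-power/regularizer bookkeeping at infinity: tracking how the $k$-th power of the resolvent (with its $(1+|s|)^{k\beta}\log(2+|s|)^{kb}$ growth) interacts with the fractional power $(1+A)^{-\tau}\log(2+A)^{-\upsilon}$ of a sectorial but not necessarily bounded operator, rigorously via the $H^\infty_0$-functional calculus on sectors (Remark~\ref{remark2.2}) and the logarithm calculus of Nollau--Okazawa--Haase, and checking that the constant $\upsilon=\frac b\beta(\tau-1)+1+\delta$ is exactly the threshold that renders the critical integral $\int^\infty s^{-1}(\log s)^{-1-\delta}\,ds$ finite — everything else (the cutoff piece, the structural hypotheses of Theorem~\ref{theor4.3}, the final interpolation) is comparatively routine.
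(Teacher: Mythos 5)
Your approach is genuinely different from the paper's, and it contains a gap at its crux. The paper proves Proposition~\ref{theor3.1} directly: it represents $T(t)x$, for $x$ in a dense subspace $X_{\nu+1}(\upsilon)$, by a Laplace-inversion contour integral $g(t)=\frac{1}{2\pi i}\int_{i\infty}^{-i\infty}e^{-\lambda t}\frac{1}{(1+\lambda)^\nu\log(2+\lambda)^\upsilon}R(\lambda,A)\,y\,d\lambda$, verifies $g'=-Ag$, $g(0)=x$, integrates by parts $n$ times, estimates the resulting integrand with the resolvent growth bound, and then interpolates to non-integer $s$ via the moment inequality for fractional powers of the complete-Bernstein-function operator $(1+A)^{a_1}\log(2+A)^{a_2}$ (Proposition~\ref{theor2.3}(c), Theorem~\ref{The2.3}). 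No Fourier multipliers appear in this proof at all; the multiplier machinery of Theorem~\ref{theor4.3} is reserved for Theorem~\ref{theo4.4} (the genuine $p\in(1,2]$ case).

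Your route, by contrast, tries to reach the conclusion through Theorem~\ref{theor4.3}, and the problem is exactly where you defer to ``standard smoothing estimates'': Theorem~\ref{theor4.3} has, as a structural \emph{hypothesis}, the existence of a dense $Y_0\subset Y$ with $[t\mapsto t^n T(t)y]\in L^1([0,\infty),Y)$. For a $C_0$-semigroup that is not assumed bounded there are no such ``standard'' estimates --- in fact the paper verifies this very hypothesis \emph{using Proposition~\ref{theor3.1}} when it later proves Theorem~\ref{theo4.4} (``It follows from Proposition~\ref{theor3.1} that the space $X_\nu(\upsilon)$ satisfies the conditions presented in the statement of Theorem~\ref{theor4.3}''). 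So deriving Proposition~\ref{theor3.1} from Theorem~\ref{theor4.3} is either circular or forces you to prove the $L^1$-condition independently, and the only available mechanism for doing so (under the standing hypotheses) is the contour/Laplace-inversion argument --- i.e., the paper's actual proof. Your plan thus hides the crux inside an unproved hypothesis rather than addressing it. Two smaller issues: for $p=1$ you wrote $r=\infty$ and $1/r=0$, but since $p'=\infty$ and $1/r=1/p-1/p'$ one gets $r=1$, $1/r=1$ (the $L^1$ criterion and the target operator you subsequently write are consistent with $r=1$, so this is an internal inconsistency rather than a fatal error); and your inline remark that integrability at infinity ``requires roughly $\tau>(k+1)\beta$'' should read $\tau>k\beta+1$, which for $k=n+1$ and $\nu=(n+1)\beta+1$ is exactly the borderline that the $\delta>0$ in the logarithmic exponent is designed to break.
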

\begin{proof}

We follow the same steps of the proof of Proposition 4.3 in~\cite{rozendaal}. The proposition is equivalent to the following
statement: for each $s\geq 0$ and $\delta>0$ there exists $C_{s,\delta}>0$ such that for each $t\geq 1$,
\begin{equation*}
\|T(t)(1+A)^{-\nu}\log(2+A)^{-\upsilon}\|_{\mathcal{L}(X)}\le C_{s,\delta} t^{-s}, 
\end{equation*}
where $\upsilon:=b(s+1)+1+\delta$, $\nu:=(s+1)\beta+1$.

Firstly, we obtain the result for $s=n\in \mathbb{N}\cup \{0\}$ and then for any $s\ge 0$ by an interpolation argument.

So, let $\delta>0$, $n\in \mathbb{N}\cup\{0\}$, $\upsilon=b(n+1)+1+\delta$, $\nu=(n+1)\beta+1$ and $x \in X_{\nu+1}(\upsilon)$. Set 
\begin{eqnarray*}
y:=[\Phi_{\nu}(\upsilon)]^{-1}x=\log(2+A)^{\upsilon}(1+A)^{\nu}x &=& \log(2+A)^{\upsilon}(1+A)^{\nu}\left((1+A)^{-\nu-1}\log(2+A)^{-\upsilon}z\right)\\
&=& \log(2+A)^{\upsilon}\left((1+A)^{-1}\log(2+A)^{-\upsilon}z\right)\\
&=& (1+A)^{-1}z, 
\end{eqnarray*}
with $z\in X$, and note that $(1+A)^{-1}z\in \mathcal{D}(A)$; here, we have used that $\log(2+A)^{\upsilon}$ commutes with $(1+A)^{-1}$ (for more details, see Proposition~2.3-(d) in \cite{okaz} and Proposition~3.1.1-(f) in \cite{haase}).

Let $g:[0,\infty)\rightarrow X$ be given by
\begin{equation}\label{eqqq27}
g(t)=\frac{1}{2\pi i} \int_{i\infty}^{-i\infty} e^{-\lambda t} \frac{1}{(1+\lambda)^{\nu}[\log(2+\lambda)]^{\upsilon}}R(\lambda,A)yd\lambda,
\end{equation}
and note that for each $t\ge 0$, $g(t)\in X$; namely, for each~$t\geq 0$, one has
\begin{eqnarray*}
\|g(t)\|&\leq&\left\| \frac{1}{2\pi i} \int_{\mathbb{R}} e^{i\xi t} \frac{1}{(1-i\xi)^{\nu}[\log(2-i\xi)]^{\upsilon}} R(-i\xi,A)y d\xi \right\|\\
&\lesssim& \left(\int_{\mathbb{R}} \frac{1}{(1+|\xi|)^{\nu} [\log(2+|\xi|)]^{\upsilon}}\|(i\xi+A)^{-1}\|_{\mathcal{L}(X)}d\xi\right)\|y\|,
\end{eqnarray*}
Now, by assuming~\eqref{eeqq26}, it follows that the integral above is finite.

Moreover, since $y\in \mathcal{D}(A)$, the function $\lambda\mapsto \dfrac{\lambda}{(1+\lambda)^{\nu}(\log(2+\lambda))^{\upsilon}}R(\lambda,A)y$ is integrable and by dominated convergence,
\begin{equation*}
g'(t)=-\frac{1}{2\pi i}\int_{i\infty}^{-i\infty}e^{-\lambda t} \frac{\lambda}{(1+\lambda)^{\nu}[\log(2+\lambda)]^{\upsilon}}R(\lambda,A)yd\lambda,
\end{equation*}
which proves that $g$ is differentiable everywhere. Now, by Lemma~\ref{lemmaB.1},
\begin{eqnarray*}
g'(t)&=& \frac{1}{2\pi i}\int_{i\infty}^{-i\infty}e^{-\lambda t} \frac{1}{(1+\lambda)^{\nu}[\log(2+\lambda)]^{\upsilon}}(-AR(\lambda,A)y-y)d\lambda\\
&=& -A\left(\frac{1}{2\pi i}\int_{i\infty}^{-i\infty}e^{-\lambda t} \frac{1}{(1+\lambda)^{\nu}[\log(2+\lambda)]^{\upsilon}}R(\lambda,A)yd\lambda\right)-\\
&-&\left(\frac{1}{2\pi i}\int_{i\infty}^{-i\infty}e^{-\lambda t} \frac{1}{(1+\lambda)^{\nu}[\log(2+\lambda)]^{\upsilon}}d\lambda \right)y\\
&=& 0-Ag(t)=-Ag(t),
\end{eqnarray*}

and $\displaystyle{g(0)=\frac{1}{2\pi i} \int_{i\infty}^{-i\infty}  \frac{1}{(1+\lambda)^{\nu}[\log(2+\lambda)]^{\upsilon}}R(\lambda,A)yd\lambda=\Phi_{\nu}(\upsilon)y=x}$, by~\eqref{eq1}. Then, $g'(t)=-Ag(t)$ for each $t\geq 0$, and $g(0)=x$. Therefore, for each $t\ge 0$, $g(t)=T(t)x$, by the uniqueness of the Cauchy problem associated with $-A$.

Integration by parts yields
\begin{equation*}
    t^n T(t)x=\frac{1}{2\pi i} \int_{i\mathbb{R}}e^{-\lambda t} p(\lambda,A)y d\lambda,
\end{equation*}
where $p(\lambda,A)$ is a finite linear combination of terms of the form
\begin{equation*}
    \frac{R(\lambda,A)^{n-j+1}}{(1+\lambda)^{\nu+j}(2+\lambda)^{i}[\log(2+\lambda)]^{\upsilon+i}}, \ \ \   \frac{R(\lambda,A)^{n-j+1}}{(1+\lambda)^{\nu+i}(2+\lambda)^{j}[\log(2+\lambda)]^{\upsilon+j}},
\end{equation*}
with $0\leq i<j\leq n$, each one of them being integrable (see the proof of Proposition~4.3 in~\cite{rozendaal} for details).
Then, there exists a positive constant $d_{n,\delta}$ so that for each $t\ge 1$,
\begin{equation*}
   \|t^{n}T(t)x\|\leq \left(\frac{1}{2\pi} \int_{i\mathbb{R}}|e^{-\lambda t}| \|p(\lambda,A)\|_{\mathcal{L}(X)} d\lambda \right)\|y\| \le d_{n,\delta} \|\log(2+A)^{\upsilon}(1+A)^{\nu}x\|\leq d_{n,\delta} \|x\|_{X_{\nu}(\upsilon)}. 
\end{equation*}


Since $X_{\nu+1}(\upsilon)$ is dense in $X_{\nu}(\upsilon)$, it follows from the previous discussion that for each $t\ge 1$,
\begin{equation}\label{eqtheo3.1}
    \|T(t)\|_{\mathcal{L}(X_{\nu}(\upsilon),X)}\le d_{n,\delta} t^{-n}. 
\end{equation}
 
It remains to prove the result for any $s\ge 0$. For each $s\ge 0$, let $n\in \mathbb{N}\cup\{0\}$ be such that $n\le s<n+1$. Let also define $\theta:=\theta(s)\in [0,1)$ by the relation $s=(1-\theta)n+\theta(n+1)$.

  Set $a_1:=\frac{\beta}{\beta+b}$ and $a_2:=\frac{b}{\beta+b}$ and note that $a_1+a_2=1$; then, by Proposition~\ref{theor2.3}-(c), $f(\lambda)=(1+\lambda)^{a_1}\log(2+\lambda)^{a_2} \in \mathcal{CBF}$, where $\lambda>0$. Now, by  Lemma~\ref{lemma21}, the operator
\begin{equation*}
(f(A))^{-1}=(1+A)^{-a_1}\log(2+A)^{-a_2},
\end{equation*}
is sectorial, given that $f(A)$ is sectorial (by Theorem~\ref{The2.3}).

Since $(f(A))^{-1}$ is sectorial, it follows from relation~\eqref{eqtheo3.1}, the moment inequality (see Proposition 4.6 in \cite{haase}) and Theorem 2.4.2 in \cite{haase} that there exists a positive constant $C_{s,\delta}$ such that for each $t\ge 1$,
\begin{eqnarray*}
\|T(t)[(f(A))^{-1}]^{\theta(\beta+b)}\Phi_{\nu}(\upsilon)\|_{\mathcal{L}(X)} &\lesssim& \|T(t)\Phi_{\nu}(\upsilon)\|^{1-\theta}_{\mathcal{L}(X)}\|T(t)[(f(A))^{-1}]^{\beta+b}\Phi_{\nu}(\upsilon)\|^{\theta}_{\mathcal{L}(X)}\\
&=& \|T(t)\Phi_{\nu}(\upsilon)\|^{1-\theta}_{\mathcal{L}(X)}\|T(t)(1+A)^{-\beta}\log(2+A)^{-b}\Phi_{\nu}(\upsilon)\|^{\theta}_{\mathcal{L}(X)}\\
&=& \|T(t)\Phi_{\nu}(\upsilon)\|^{1-\theta}_{\mathcal{L}(X)}\|T(t)\Phi_{\beta(n+2)+1}(b(n+2)+1+\delta)\|^{\theta}_{\mathcal{L}(X)}\\
&\le& (d_{n,\delta}t^{-n})^{1-\theta}(d_{n+1,\delta}t^{-n-1})^\theta =C_{s,\delta}t^{-s},
\end{eqnarray*}
and we are done.    
\end{proof}

Note that for $b=\zeta=0$, the following result is Proposition~3.4 in \cite{rozendaal} (see also Theorem 5.5 in~\cite{chill}).

\begin{proposition}\label{prop3.1}
\begin{rm}
Let $A\in \text{Sect}_X(\omega_A)$ be such that $\overline{\mathbb{C}_{-}}\subset \rho(A)$, and let $\beta,b,\zeta\geq 0$ and $\beta_0\in[0,1)$. If 
\begin{equation}\label{eeq23}
    \|(i\xi+A)^{-1}\|_{\mathcal{L}(X)}\lesssim (1+|\xi|)^{\beta}\log(2+|\xi|)^{b},
\end{equation}
then the family
\begin{equation}\label{eeq24}
\{|\lambda|^{\beta_0}\log(2+|\lambda|)^{\zeta}\|(\lambda+A)^{-1}\|_{\mathcal{L}(X_{\beta_0+\beta}(\zeta+b),X)}\mid \lambda \in i\mathbb{R}, |\lambda|\geq 1\} 
\end{equation}
is uniformly bounded.
\end{rm}
\end{proposition}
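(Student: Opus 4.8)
The plan is to combine the norm comparison~\eqref{eqq22}, which gives $\|(i\xi+A)^{-1}\|_{\mathcal{L}(X_{\beta_0+\beta}(\zeta+b),X)}\le\|(i\xi+A)^{-1}\Phi_{\beta_0+\beta}(\zeta+b)\|_{\mathcal{L}(X)}$, with a contour representation of $\Phi_{\beta_0+\beta}(\zeta+b)$, so that it suffices to prove $\|(i\xi+A)^{-1}\Phi_{\beta_0+\beta}(\zeta+b)\|_{\mathcal{L}(X)}\lesssim|\xi|^{-\beta_0}\log(2+|\xi|)^{-\zeta}$ for $|\xi|\ge 1$. Since $0\in\overline{\mathbb{C}_-}\subset\rho(A)$, the operator $A$ is invertible; hence, writing $\psi(z):=(1+z)^{-(\beta_0+\beta)}\log(2+z)^{-(\zeta+b)}$ --- a function holomorphic on $S_\varphi$ for every $\varphi\in(0,\pi)$, bounded near $0$ and of order $|z|^{-(\beta_0+\beta)}$ at infinity --- Remark~\ref{remark2.2} and~\eqref{eq1} (with the parameter $\alpha$ there set to $0$) yield, for any $\omega_A<\omega'<\pi$,
\[\Phi_{\beta_0+\beta}(\zeta+b)=\frac{1}{2\pi i}\int_{\Gamma_{\omega'}}\psi(z)R(z,A)\,dz,\]
the integral being absolutely convergent because $\|R(z,A)\|_{\mathcal{L}(X)}\lesssim(1+|z|)^{-1}$ along $\Gamma_{\omega'}$ (sectoriality for large $|z|$, invertibility of $A$ for small $|z|$).

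The key step is then to insert $(i\xi+A)^{-1}$ under the integral sign and apply the resolvent identity in the form $(i\xi+A)^{-1}R(z,A)=(z+i\xi)^{-1}(R(z,A)-R(-i\xi,A))$, obtaining
\[(i\xi+A)^{-1}\Phi_{\beta_0+\beta}(\zeta+b)=\frac{1}{2\pi i}\int_{\Gamma_{\omega'}}\frac{\psi(z)}{z+i\xi}R(z,A)\,dz-\Bigl(\frac{1}{2\pi i}\int_{\Gamma_{\omega'}}\frac{\psi(z)}{z+i\xi}\,dz\Bigr)R(-i\xi,A).\]
I would take $\omega'\in(\pi/2,\pi)$ (if $\omega_A<\pi/2$ the whole assertion already follows from $\|(i\xi+A)^{-1}\|_{\mathcal{L}(X)}\lesssim|\xi|^{-1}$), so that the only pole $z=-i\xi$ of the scalar integrand lies inside $S_{\omega'}$; closing $\Gamma_{\omega'}$ by a large arc on the side of the positive real axis (whose contribution vanishes, as $\psi$ decays at infinity) and computing the residue shows the scalar integral equals $\pm\psi(-i\xi)$. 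Consequently the second term above is $\mp\psi(-i\xi)R(-i\xi,A)$, and the hypothesis~\eqref{eeq23} at $\lambda=-i\xi$ together with $|\psi(-i\xi)|\lesssim(1+|\xi|)^{-(\beta_0+\beta)}\log(2+|\xi|)^{-(\zeta+b)}$ give
\[\|\psi(-i\xi)R(-i\xi,A)\|_{\mathcal{L}(X)}\lesssim\frac{(1+|\xi|)^{\beta}\log(2+|\xi|)^{b}}{(1+|\xi|)^{\beta_0+\beta}\log(2+|\xi|)^{\zeta+b}}=\frac{1}{(1+|\xi|)^{\beta_0}\log(2+|\xi|)^{\zeta}},\]
which is exactly the required order.

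For the remaining integral I would use the elementary estimate $|z+i\xi|\gtrsim|z|+|\xi|$ valid on $\Gamma_{\omega'}$ (an angular-separation argument: $z$ has argument $\pm\omega'$ while $-i\xi$ has argument $\mp\pi/2$, with implied constant depending only on $\omega'$), together with $\|R(z,A)\|_{\mathcal{L}(X)}\lesssim(1+|z|)^{-1}$ and $|\psi(z)|\lesssim(1+|z|)^{-(\beta_0+\beta)}\log(2+|z|)^{-(\zeta+b)}$; parametrizing by $\rho=|z|$ this bounds the first integral by
\[\int_0^{\infty}\frac{d\rho}{(1+\rho)^{\beta_0+\beta+1}\log(2+\rho)^{\zeta+b}\,(\rho+|\xi|)}\lesssim\frac{1}{|\xi|}+\frac{1}{|\xi|^{\beta_0+\beta+1}\log(2+|\xi|)^{\zeta+b}},\]
after splitting the integral at $\rho=|\xi|$. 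Since $\beta_0<1$, both summands are $O(|\xi|^{-\beta_0}\log(2+|\xi|)^{-\zeta})$ as $|\xi|\to\infty$ and remain bounded for $|\xi|\ge 1$, so the required estimate follows and the proof is complete.

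The main obstacle is conceptual rather than computational: after the resolvent-identity splitting one must recognize that the term of the exact order $|\xi|^{-\beta_0}\log(2+|\xi|)^{-\zeta}$ is the ``residue term'' $\psi(-i\xi)R(-i\xi,A)$, all of the remaining mass being of strictly lower order, and it is precisely there that the resolvent bound~\eqref{eeq23} is consumed. A secondary technical point is justifying the contour representation of $\Phi_{\beta_0+\beta}(\zeta+b)$ although $\psi\notin H^\infty_0(S_\varphi)$ (it does not vanish at $0$), which is why invertibility of $A$ is used, via Remark~\ref{remark2.2}; and in the borderline case $\beta_0=\beta=0$ one replaces the argument near infinity by the functional calculus of $\log(2+A)$ (or inserts an auxiliary factor $(1+A)^{-1}$ and compensates) so that all integrals stay convergent.
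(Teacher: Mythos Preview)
Your approach is correct and essentially the same as the paper's: both arguments write $(\lambda+A)^{-1}\Phi_{\beta_0+\beta}(\zeta+b)$ via a contour integral, apply the resolvent identity, evaluate the resulting scalar integral by a residue (the paper records this as Lemma~\ref{lemmaB2}), and then bound separately the residue term $\psi(-\lambda)(\lambda+A)^{-1}$ (which consumes the hypothesis~\eqref{eeq23}) and the remaining operator integral (which is of strictly lower order since $\beta_0<1$). The only cosmetic differences are that the paper works with the shifted operator $A+\tfrac12$ to invoke Remark~\ref{remark2.2} with $\alpha=0$, whereas you use the invertibility of $A$ (which does follow from $0\in\overline{\mathbb{C}_-}\subset\rho(A)$) directly; and the paper cites Lemma~A.1 of~\cite{rozendaal} for the kernel estimate on $T_\lambda$, whereas you carry out the splitting at $\rho=|\xi|$ by hand. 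Your caveat about the borderline $\beta_0=\beta=0$ is appropriate; the paper's proof also tacitly requires $\beta+\beta_0>0$ when it picks $\varepsilon\in(0,\beta+\beta_0)$.
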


\begin{proof}  We  proceed as in the proof of Proposition~3.4-(2) in~\cite{rozendaal}. Fix $\theta \in (\omega_A,\pi)$ and let the path $\Gamma:=\{re^{i\theta} \mid r\in [0,\infty)\}\cup \{re^{-i\theta} \mid r\in [0,\infty)\}$
be oriented from $\infty e^{i\theta}$ to $\infty e^{-i\theta}$. Set  $\Tilde{c}:=b+\zeta$; since $A+\frac{1}{2}\in\text{Sect}_X(\omega_A)$, it follows from Remark~\ref{remark2.2} (by letting $\alpha=0$, $\upsilon_2=0$) and from Lemma~\ref{lemmaB2} that for each $\lambda\in i\mathbb{R}$, $\vert\lambda\vert\ge 1,$ and for each ~$x\in X$ 
\begin{eqnarray*}
(\lambda+A)^{-1}(1+A)^{-\beta-\beta_0}\log(2+A)^{-\Tilde{c}}x &=& \frac{1}{2\pi i} \int_{\Gamma} \frac{(\lambda+A)^{-1}}{(\frac{1}{2}+z)^{\beta+\beta_0}\log(\frac{3}{2}+z)^{\Tilde{c}}}R\left(z,A+\frac{1}{2}\right) xdz\\
&=& \frac{1}{2\pi i} \int_{\Gamma} \frac{(\lambda+A)^{-1}}{(\frac{1}{2}+z)^{\beta+\beta_0}\log(\frac{3}{2}+z)^{\Tilde{c}}(z+\lambda-\frac{1}{2})}x dz\\
&+& \frac{1}{2\pi i} \int_{\Gamma} \frac{R\left(z,A+\frac{1}{2}\right)}{(\frac{1}{2}+z)^{\beta+\beta_0}\log(\frac{3}{2}+z)^{\Tilde{c}}(z+\lambda-\frac{1}{2})}x dz\\
&=& \frac{1}{(1-\lambda)^{\beta+\beta_0} \log(2-\lambda)^{\Tilde{c}}}(\lambda+A)^{-1}x+T_\lambda x
\end{eqnarray*}
with
\begin{equation*}
    T_\lambda:=\frac{1}{2\pi i} \int_{\Gamma} \frac{1}{(\frac{1}{2}+z)^{\beta+\beta_0}\log(\frac{3}{2}+z)^{\Tilde{c}}(z+\lambda-\frac{1}{2})}R(z,A+\frac{1}{2}) dz.
\end{equation*}
Let $h_{\beta_0,\zeta}(\lambda):=(1-\lambda)^{\beta_0}\log(2-\lambda)^{\zeta}$, with $\lambda\in i\mathbb{R}$, $\vert\lambda\vert\ge 1$; then, for each $x\in X$
\begin{equation}\label{eeq25}
 h_{\beta_0,\zeta}(\lambda)(\lambda+A)^{-1}(1+A)^{-\beta-\beta_0}\log(2+A)^{-\Tilde{c}}x=\frac{(\lambda+A)^{-1}}{(1-\lambda)^{\beta}\log(2-\lambda)^b}x+h_{\beta_0,\zeta}(\lambda)T_\lambda x.
\end{equation}

Let $\varepsilon\in (0,\beta+\beta_0)$ and note that the function $z\mapsto \dfrac{1}{(z+1/2)^{\varepsilon}}R(z,A+1/2)$ is integrable on $\Gamma$. Note also that, by Lemma~A.1 in \cite{rozendaal}, for each $z\in \Gamma$ and each $\lambda \in i\mathbb{R}$, one has 
\begin{equation}\label{eeq26}
 \frac{1}{|z+\frac{1}{2}|^{\beta+\beta_0-\varepsilon}|\log\left(\frac{3}{2}+z\right)|^{\Tilde{c}}|z+\lambda-\frac{1}{2}|}\lesssim \frac{1}{1+|\lambda|}.
\end{equation}
Therefore, by relations~\eqref{eeq25} and~\eqref{eeq26}, it follows that
\begin{eqnarray*}
 \|h_{\beta_0,\zeta}(\lambda)(\lambda+A)^{-1}(1+A)^{-\beta-\beta_0}\log(2+A)^{-\Tilde{c}}\|_{\mathcal{L}(X)} 
 &\lesssim& \left\|\frac{1}{(1-\lambda)^{\beta}\log(2-\lambda)^b}(\lambda+A)^{-1}\right\|_{\mathcal{L}(X)}\\
 &+&\frac{|\log(2-\lambda)|^{\zeta}}{(1+|\lambda|)^{1-\beta_0}}.
\end{eqnarray*}

By relation $\eqref{eeq23}$ and since $\displaystyle{\lim_{|\lambda|\to \infty} \dfrac{|\log(2-\lambda)|^{\zeta}}{(1+|\lambda|)^{1-\beta_0}}=0}$ (recall that $\beta_0\in [0,1)$), one concludes that
\begin{equation*}
\{|\lambda|^{\beta_0}\log(2+|\lambda|)^{\zeta}\|(\lambda+A)^{-1}\|_{\mathcal{L}(X_{\beta_0+\beta}(\zeta+b),X)}\mid \lambda \in i\mathbb{R}, |\lambda|\geq 1\} 
\end{equation*}
is uniformly bounded.
\end{proof}

\begin{remark}
\begin{rm}
Note that by relations $\eqref{eeq25}$ and $\eqref{eeq26}$, for each $\lambda \in i\mathbb{R}$,
\begin{eqnarray*}
 \left\|\frac{(\lambda+A)^{-1}}{(1-\lambda)^{\beta}\log(2-\lambda)^b}\right\|_{\mathcal{L}(X)} &\lesssim& \|(1-\lambda)^{\beta_0}\log(2-\lambda)^{\zeta} (\lambda+A)^{-1}(1+A)^{-\beta-\beta_0}\log(2+A)^{-\Tilde{c}}\|_{\mathcal{L}(X)}\\
 &+& \|(1-\lambda)^{\beta_0}\log(2-\lambda)^{\zeta}T_\lambda\|_{\mathcal{L}(X)}\\
 &\lesssim& \|(1-\lambda)^{\beta_0}\log(2-\lambda)^{\zeta} (\lambda+A)^{-1}(1+A)^{-\beta-\beta_0}\log(2+A)^{-\Tilde{c}}\|_{\mathcal{L}(X)}\\
 &+&\frac{|\log(2-\lambda)|^{\zeta}}{(1+|\lambda|)^{1-\beta_0}};
\end{eqnarray*}
thus, by assuming that the condition~\eqref{eeq24} is valid, one gets.
\begin{equation*}
  \left\|\frac{(\lambda+A)^{-1}}{(1-\lambda)^{\beta}\log(2-\lambda)^b}\right\|_{\mathcal{L}(X)}\lesssim C. 
\end{equation*}
This shows that the converse of Proposition~\ref{prop3.1} is also valid. 
\end{rm}
\end{remark}

\begin{proof3}
The case $p=1$ corresponds to Proposition~\ref{theor3.1}. Let $n\in \mathbb{N}\cup\{0\}$ and set $\nu:=(n+1)\beta+\frac{1}{r}$, $\upsilon:=b(n+1)+\frac{1+\delta}{r}$ in case $p\in(1,2)$ ($1< r<\infty$), and $\nu:=(n+1)\beta$, $\upsilon:=b(n+1)$ if $p=2$ (that is, if $r=\infty$). Set also $B:=A+1$. By letting $\beta_0=\zeta=0$ in Proposition~\ref{prop3.1}, it follows that for each $k\in\{1,\ldots,n\}$,
\begin{equation}\label{eq22}
    \sup_{\xi \in \mathbb{R}}\|R(i\xi,A)^{k}\|_{\mathcal{L}(X_{n\beta}(bn),X)}<\infty.
\end{equation}


Let $\delta>0$ and let $h_{r,\delta}:\mathbb{R}\rightarrow\mathbb{R}$ be given by the law $h_{r,\delta}(\xi)=(1+|\xi|)^{\frac{1}{r}}\log(2+|\xi|)^{\frac{1+\delta}{r}}$; then, it follows from 
Proposition \ref{prop3.1} (by taking $\beta_0=1/r$ and $\zeta=(1+\delta)/r$) that
\begin{equation}\label{eq23}    \sup_{\xi\in\mathbb{R}} h_{r,\delta}(\xi)\Vert R(i\xi,A)B^{-\beta-\frac{1}{r}}\log(1+B)^{-b-\frac{1+\delta}{r}}\Vert_{\mathcal{L}(X)}<\infty. 
\end{equation}
Thus, for each $k\in\{1,\ldots,n+1\}$, it follows from relations~\eqref{eq22} and~\eqref{eq23} that 
\begin{equation*}    
\sup_{\xi\in\mathbb{R}}h_{r,\delta}(\xi)\|R(i\xi,A)^{k}\|_{\mathcal{L}(X_{\nu}(\upsilon),X)}\lesssim \sup_{\xi\in\mathbb{R}} \left(h_{r,\delta}(\xi)\Vert R(i\xi,A)^{k}B^{-\beta(n+1)-\frac{1}{r}}\log(1+B)^{-b(n+1)-\frac{1+\delta}{r}}\Vert_{\mathcal{L}(X)}\right)
\end{equation*}
\begin{eqnarray}\label{eq24} 
\nonumber&&\lesssim \sup_{\xi\in\mathbb{R}} \left(h_{r,\delta}(\xi)\|R(i\xi,A)B^{-\beta-\frac{1}{r}}\log(1+B)^{-b-\frac{1+\delta}{r}}\|_{\mathcal{L}(X)} \|R(i\xi,A)^{k-1}B^{-\beta n}\log(1+B)^{-bn}\|_{\mathcal{L}(X)}\right)\\
\nonumber&&\le \sup_{\xi\in\mathbb{R}} \left(h_{r,\delta}(\xi)\|R(i\xi,A)B^{-\beta-\frac{1}{r}}\log(1+B)^{-b-\frac{1+\delta}{r}}\|_{\mathcal{L}(X)}\right) \sup_{\xi\in\mathbb{R}}\left(\|R(i\xi,A)^{k-1}\|_{\mathcal{L}(X_{\beta n}(bn),X)}\right)<\infty.\\
&& 
\end{eqnarray}

It follows from Proposition~\ref{theor3.1} that the space $X_\nu(\upsilon)$ satisfies the conditions presented in the statement of Theorem~\ref{theor4.3}. By proceeding as in the proof of this Theorem~\ref{theoem 1.9} (see Theorem 4.9 in \cite{rozendaal}), let $\psi \in C_{c}(\mathbb{R})$ be such that $\psi \equiv 1$ on $[-1, 1]$. One has, by \eqref{eq24}, that for each $k\in \{1,\ldots, n+1\}$,
\begin{equation*}    \psi(\cdot)R(i\cdot,A)^{k}\in L^{1}\left(\mathbb{R},\mathcal{L}\left(X_{\nu}(\upsilon),X\right)\right)\subset \mathcal{M}_{1,\infty}\left(\mathbb{R},\mathcal{L}\left(X_{\nu}(\upsilon),X\right)\right),
\end{equation*}
and
\begin{equation}\label{SERLR}
  \|(1-\psi(\cdot))R(i\cdot, A)^{k}\|_{\mathcal{L}(X_{\nu}(\upsilon),X)} \in L^{r}(\mathbb{R}).
\end{equation}
 Note that $X_{\nu}\left(\upsilon\right)$ has Fourier type $p$, since $X_{\nu}(\upsilon)$ is isomorphic to $X$. Then, by Proposition~\ref{prop2.1} and by~\eqref{SERLR}, one concludes that for each $k\in \{1,\ldots, n+1\}$,
\begin{equation*}
    (1-\psi(\cdot))R(i\cdot, A)^{k} \in \mathcal{M}_{p,p'}\left(\mathbb{R},\mathcal{L}(X_{\nu}(\upsilon),X)\right).
\end{equation*}

Now, by Theorem~\ref{theor4.3}, for each $n\in\mathbb{N}\cup\{0\}$ there exists $c_n\geq 0$ such that for each $t\ge 1$,
\begin{equation}\label{eqq23}
\|T(t)(1+A)^{-\nu}\log(2+A)^{-b(n+1)-\frac{1+\delta}{r}}\|_{\mathcal{L}(X)}\leq c_n t^{-n}.
\end{equation}

 Let $s\geq 0$, $\nu=\beta(s+1)+1/r$ and let $n\in \mathbb{N}\cup\{0\}$ be such that $n\le s<n+1$. Let $\theta\in[0,1)$ be such that $s=(1-\theta)n+\theta(n+1)$. Then, by following the same arguments presented in the proof of Proposition~\ref{theor3.1}, it follows that for each $t\ge1$,
\begin{eqnarray*}
\|T(t)B^{-\nu}\log(2+A)^{-b(s+1)-\frac{1+\delta}{r}}\|_{\mathcal{L}(X)}
&\lesssim& t^{-s}.
\end{eqnarray*}
\end{proof3}

\begin{remark}[]
\begin{rm}
 Let $A$ be a linear operator defined in a Banach space $X$, not necessarily sectorial, such that
\begin{enumerate}
\item $-A$ generates a $C_0$-semigroup on $X$;
\item $\overline{\mathbb{C}_{-}}\subset\rho(A)$ and $\|(\lambda+A)^{-1}\|_{\mathcal{L}(X)}\lesssim (1+|\lambda|)^{\beta}\log(2+|\lambda|)^{b}$, for each $\lambda \in \overline{\mathbb{C}_{-}}$. 
\end{enumerate}
Under the above assumptions, note that for each $\varepsilon>0$, $A+\varepsilon$ is sectorial. Then, the operator
\begin{equation*}
    (2+A)^{-\beta}\log(3+A)^{-b}=(1+1+A)^{-\beta}\log(2+1+A)^{-b}
\end{equation*}
is well-defined through the sectorial functional calculus for $A+1$. Note that previous results are still valid. So, in this context, we are able to remove the hypothesis of sectorially of $A$ (see Theorem~\ref{theo4.4}).
\end{rm}
\end{remark}

\begin{lemma}
\begin{rm}
Let $-A$ be the generator of a $C_0$-semigroup $(T(t))_{t\geq 0}$ on a Banach
space $X$. Suppose that there exist $\beta>0$, $\delta\in [0,1)$, $\eta\in \rho(-A)$ that such $1\not \in \sigma(A+\eta)$, and a sequence $(t_n)_{n\in \mathbb{N}}\subset [0,\infty)$ such that $t_n\rightarrow \infty$ and
\begin{equation}\label{eq45a}  
\lim_{n\rightarrow \infty}\|T(t_n)(\eta+A)^{-\beta}[\log(A+\eta)]^{-\delta}\|_{\mathcal{L}(X)}=0.
\end{equation}
Then, $\overline{\mathbb{C}_{-}}\subset \rho(A)$.
\end{rm}
\end{lemma}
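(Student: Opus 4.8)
The plan is to establish the contrapositive in a sharp form: if there is $\lambda_0\in\sigma(A)$ with $\text{Re}(\lambda_0)\le 0$, then $\inf_{t\ge 0}\|T(t)S\|_{\mathcal{L}(X)}>0$, where $S:=(\eta+A)^{-\beta}[\log(A+\eta)]^{-\delta}\in\mathcal{L}(X)$; this plainly contradicts \eqref{eq45a}. Note that $S$ is a function of the generator $-A$ (built from the resolvents $(\eta+A)^{-1}$), so $T(t)S=ST(t)$ for all $t\ge 0$. The contradiction will be produced by testing $\|T(t)S\|$ against approximate eigenvectors attached to a boundary point of $\sigma(A)$ lying in $\overline{\mathbb{C}_-}$.

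First I would locate that boundary point. Since $-A$ generates a $C_0$-semigroup, $\rho(-A)$ contains a right half-plane, so $\sigma(A)$ is contained in some half-plane $\{z:\text{Re}(z)\ge-\omega\}$. As $\sigma(A)$ is closed, meets $\overline{\mathbb{C}_-}$, and is bounded from the left, letting $s^{*}:=\sup\{s\ge 0:\lambda_0-s\in\sigma(A)\}<\infty$ and $\lambda_1:=\lambda_0-s^{*}$ produces a point $\lambda_1\in\partial\sigma(A)$ with $\text{Re}(\lambda_1)\le\text{Re}(\lambda_0)\le 0$. By the standard inclusion $\partial\sigma(A)\subseteq\sigma_{\mathrm{ap}}(A)$ for closed operators, there exist unit vectors $x_k\in\mathcal{D}(A)$ with $(\lambda_1-A)x_k\to 0$. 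Moreover $\lambda_1\neq-\eta$ (because $-\eta\in\rho(A)$) and $\lambda_1\neq 1-\eta$ (because $1\notin\sigma(A+\eta)$, i.e. $1-\eta\in\rho(A)$); since $A+\eta$ is sectorial, $\eta+\lambda_1$ then lies in the open sector on which $w\mapsto w^{-\beta}$ and $w\mapsto\log w$ are holomorphic, with $\log(\eta+\lambda_1)\neq 0$, so the scalar $c:=(\eta+\lambda_1)^{-\beta}(\log(\eta+\lambda_1))^{-\delta}$ is well-defined and $c\neq 0$.

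Next I would prove two limits for these approximate eigenvectors. First, $Sx_k\to c\,x_k$: from $(\lambda_1-A)x_k\to 0$ one gets $\big((s+\eta+A)^{-1}-(s+\eta+\lambda_1)^{-1}\big)x_k\to 0$ for each $s>0$, and likewise $\big((s+\log(A+\eta))^{-1}-(s+\log(\eta+\lambda_1))^{-1}\big)x_k\to 0$ after transporting through $\log(A+\eta)$; feeding this into the integral representations of $(\eta+A)^{-\beta}$ and $[\log(A+\eta)]^{-\delta}$ in terms of these resolvents (cf.\ Example~\ref{ex3.1}, \eqref{eqq5}, Definition~\ref{Dchill}, Lemma~\ref{lemma2.2}) and using dominated convergence yields the claim. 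Second, for each fixed $t\ge 0$, $T(t)x_k-e^{-\lambda_1 t}x_k\to 0$: integrating $\frac{d}{ds}\big(e^{-\lambda_1 s}T(t-s)x_k\big)=-e^{-\lambda_1 s}T(t-s)(\lambda_1-A)x_k$ over $[0,t]$ gives $T(t)x_k-e^{-\lambda_1 t}x_k=\int_0^t e^{-\lambda_1 s}T(t-s)(\lambda_1-A)x_k\,ds$, whose norm is at most a finite ($t$-dependent) constant times $\|(\lambda_1-A)x_k\|\to 0$. Combining these two limits with $T(t)S=ST(t)$ gives $T(t)Sx_k=ST(t)x_k\to e^{-\lambda_1 t}c\,x_k$, hence for every $t\ge 0$
\[\|T(t)S\|_{\mathcal{L}(X)}\ \ge\ \lim_{k\to\infty}\|T(t)Sx_k\|\ =\ e^{-\text{Re}(\lambda_1)t}\,|c|\ \ge\ |c|\ >\ 0,\]
using $\text{Re}(\lambda_1)\le 0$. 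Taking $t=t_n$ contradicts \eqref{eq45a}, so $\overline{\mathbb{C}_-}\subseteq\rho(A)$.

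The one step that genuinely needs care is the transfer $Sx_k\to c\,x_k$: the compatibility of the fractional-power and of the Nollau--Okazawa--Haase logarithm functional calculi with approximate point spectrum. The boundary-point reduction above is convenient precisely because it places us inside $\sigma_{\mathrm{ap}}(A)$; if one prefers not to invoke $\partial\sigma(A)\subseteq\sigma_{\mathrm{ap}}(A)$, a symmetric argument handles a residual spectral value $\lambda_1\in\sigma_{\mathrm p}(A^{*})$ by applying $(T(t)S)^{*}\phi=e^{-\lambda_1 t}c\,\phi$ to a matching eigenfunctional $\phi\in\mathcal{D}(A^{*})$ and using $\|(T(t)S)^{*}\|=\|T(t)S\|$.
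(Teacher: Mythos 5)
The proposal is correct but takes a genuinely different route from the paper, and the comparison is worth recording. The paper's own argument runs through the Hille--Phillips functional calculus and its spectral mapping theorem: writing $f_t(\lambda)=e^{-\lambda t}(\eta+\lambda)^{-\beta}[\log(\eta+\lambda)]^{-2}$ (after first reducing the exponent $\delta$ to the integer $2$ by inserting $[\log(A+\eta)]^{-2+\delta}$), it computes the inverse Laplace transform of $\xi^{-a}\log(\xi)^{-2}$ to exhibit $f_t$ in the Hille--Phillips algebra, invokes $f_t(\sigma(A))\subset\sigma(f_t(A))$, and concludes that $|f_{t_n}(\lambda)|\le\|f_{t_n}(A)\|\to 0$ forces $\mathrm{Re}\,\lambda>0$ for every $\lambda\in\sigma(A)$; the case $\beta\le 1$ is handled by a separate reduction. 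You instead locate a left-most boundary point $\lambda_1\in\partial\sigma(A)\cap\overline{\mathbb{C}_-}$, use $\partial\sigma(A)\subset\sigma_{\mathrm{ap}}(A)$ (valid for closed operators) to produce approximate eigenvectors $x_k$, and prove the uniform lower bound $\|T(t)S\|\ge e^{-\mathrm{Re}(\lambda_1)t}|c|\ge|c|>0$, which contradicts the hypothesis directly. This avoids the explicit Laplace inversion, the invocation of the Hille--Phillips spectral mapping theorem as a black box, and all of the paper's case-splits on $\beta$ and $\delta$; in exchange, the burden shifts to the transfer step $Sx_k\to c\,x_k$, which you correctly identify as the delicate point.

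That transfer does work, but it deserves to be written out rather than gestured at. Concretely, from $(\lambda_1-A)x_k\to 0$ one first gets $[\log(A+\eta)-\log(\eta+\lambda_1)]x_k\to 0$ using the Nollau representation (the difference of the two logarithms applied to $x_k$ is an integral in $t$ of $t(B+t)^{-1}(A-\lambda_1)(b+t)^{-1}x_k\,dt/t$, with $B=A+\eta-1$ and $b=\lambda_1+\eta-1$, and the integral converges absolutely by sectoriality); from this one passes to $(s+\log(A+\eta))^{-1}x_k\to(s+\log(\eta+\lambda_1))^{-1}x_k$, and then to the negative fractional power via the Balakrishnan formula and dominated convergence, the dominating function being supplied by the sectorial resolvent bounds. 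The same pattern handles $(\eta+A)^{-\beta}$. Two further small remarks: (i) the argument needs $A+\eta$ sectorial and $\log(A+\eta)$ boundedly invertible, which for an arbitrary $\eta\in\rho(-A)$ is not automatic — as the paper itself notes one may arrange $\eta-1>\omega_0(T)$, and in any case this is implicitly presupposed by the hypothesis that $(\eta+A)^{-\beta}[\log(A+\eta)]^{-\delta}$ is a well-defined bounded operator; (ii) the final remark about a ``residual'' $\lambda_1\in\sigma_p(A^{*})$ is unnecessary, since the boundary-point reduction already lands you in $\sigma_{\mathrm{ap}}(A)$ in every case.
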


\begin{proof}
  We begin with the following remarks:
  \begin{itemize}
  \item One may let $\eta\in \mathbb{R}$ be such that $\eta-1>\omega_0(T)$; namely, it follows from the definition of $\omega_0(T)$ that $1-\eta \in \rho(A)$ (see also~\cite{Neerven}).
  \item  Since $\eta-1>\omega_0(T)$,  $\eta+A$ is sectorial, and so $(\eta+A)^{-\beta}$, $\log(A+\eta)^{-\delta}$ are well-defined.
  \item One has for each $t>0$, 
\begin{equation}\label{maisumpouco}
\|T(t)(\eta+A)^{-\beta}[\log(A+\eta)]^{-2}\|_{\mathcal{L}(X)}\leq\|[\log(A+\eta)]^{-2+\delta}\|_{\mathcal{L}(X)} \|T(t)(\eta+A)^{-\beta}[\log(A+\eta)]^{-\delta}\|_{\mathcal{L}(X)},
\end{equation}

and so it is sufficient to assume that there exists a sequence $(t_n)_{n\in \mathbb{N}}\subset [0,\infty)$ such that $t_n\rightarrow \infty$ and
\begin{equation*}
\lim_{t_n\to \infty}\|T(t_n)(\eta+A)^{-\beta}[\log(A+\eta)]^{-2}\|_{\mathcal{L}(X)}=0.
\end{equation*}
  \end{itemize}
  
For each $\lambda\in \mathbb{C}$, with $\text{Re}{\lambda}>-\eta$, and each $a>0$, set $f_{t,a}(\lambda):=e^{-\lambda t}(\eta+\lambda)^{-a}[\log(\lambda+\eta)]^{-2}$. Let $a\ge 0$ and for each $\xi \in \mathbb{R}$, one has $\xi^{-a}\log(\xi)^{-1}=\mathcal{L}(v(\cdot,a-1))(\xi)$ (see Table 5.7 in $\cite{Bateman}$), where
\begin{equation*}
    v(x,a)=\int_{0}^{\infty} \frac{x^{s+a}}{\Gamma(s+a+1)}ds.
\end{equation*}

Now, for $a>1$, the inverse Laplace transform of $\xi^{-a}\log(\xi)^{-2}$ reads 
\begin{eqnarray*}
\frac{1}{2\pi i}\int_{b-i\infty}^{b+i\infty} e^{\xi \lambda} \frac{1}{\lambda^{a}\log^2(\lambda)}d\lambda &=&- \frac{1}{2\pi i}\int_{b-i\infty}^{b+i\infty} e^{\xi \lambda} \lambda^{-a+1} \frac{d}{d\lambda} \left(\frac{1}{\log(\lambda)} \right)d\lambda\\
&=& - \lim_{r\rightarrow \infty} \frac{1}{2\pi i} e^{\xi \lambda} \lambda^{-a+1}  \frac{1}{\log(\lambda)}\Big]_{b-ir}^{b+ir}+\frac{(-a+1)}{2\pi i}\int_{b-i\infty}^{b+i\infty} e^{\xi \lambda} \frac{1}{\lambda^{a}\log(\lambda)}d\lambda\\
&+& \frac{\xi}{2\pi i}\int_{b-i\infty}^{b+i\infty} e^{\xi \lambda} \frac{1}{\lambda^{a-1}\log(\lambda)}d\lambda\\
&=& (-a+1)v(\xi,a-1)+\xi v(\xi,a-2)
\end{eqnarray*}

Next, set 
\begin{align*}
 k_a(\xi):=\begin{cases}
[(-a+1)v(\xi,a-1)+\xi v(\xi,a-2)]e^{-\eta \xi}, \ \ \ \ \ \xi > 0  \\
   0,  \ \ \ \ \ \ \xi\leq 0,
 \end{cases}
 \end{align*}
and note that for each $\lambda\in \mathbb{C}$ with $\text{Re}{\lambda}>-\eta$, one has 
\begin{eqnarray*}
\mathcal{L}(\delta_{t}*k_a)(\lambda)&=& e^{-\lambda t}\int_{0}^{\infty} e^{-\lambda s}k_a(s)ds\\
&=& e^{-\lambda t}\int_{0}^{\infty} e^{-(\lambda+\eta) s}(-a+1)v(s,a-1)+s v(s,a-2)]ds\\
&=& e^{-\lambda t} (\lambda+\eta)^{-a}(\log(\lambda+\eta))^{-2}=f_{t,a}(\lambda).
\end{eqnarray*}

Now, by Hille-Phillips Functional Calculus, one has
\begin{equation*}
f_{t,a}(A):= T(t)(\eta+A)^{-a}\log(A+\eta)^{-2}.
\end{equation*}

\noindent {\bf{Case $\beta>1$:}}


Let $a=\beta$; by the Spectral Mapping Theorem (which is a consequence of Hille-Phillips Functional Calculus; see~\cite{Hille,haase}), one has $f_t(\sigma(A))\subset \sigma(f_t(A))$ for each $t>0$. Let $\lambda \in \sigma(A)$; then, $f_{t_n}(\lambda)\in \sigma(f_{t_n}(A))$ for each $t_n$ and
\begin{equation*}
  \frac{ e^{-\text{Re}{\lambda}t_n}}{|(\eta+\lambda)|^{\beta}|\log(\lambda+\eta))|^2}=|f_{t_n}(\lambda)|\leq \|T(t_n)(\eta+\lambda)^{-\beta}[\log(A+\eta)]^{-2}\|.
\end{equation*}

It follows from relation~\eqref{maisumpouco} that $\lim_{t_n\to\infty}e^{-(\text{Re}{\lambda})t_n}=0$, which is only possible if $-\text{Re}{\lambda}<0$.

\noindent {\bf{Case $\beta\leq1$:}}

Let $a>1$, and note that
\begin{eqnarray*}
\|T(t_n)(\eta+A)^{-a}(\log(\eta+A))^{-1-\delta}\|_{\mathcal{L}(X)}&\leq& \|T(t_n)(\eta+A)^{-(\beta+(a-\beta))}(\log(\eta+A))^{-1-\delta}\|_{\mathcal{L}(X)}\\
&\leq& \|(\eta+A)^{-(a-\beta)}\|_{\mathcal{L}(X)}\|T(t_n)(\eta+A)^{-\beta}(\log(\eta+A))^{-1-\delta}\|_{\mathcal{L}(X)}
\end{eqnarray*} 
\end{proof}

\subsection*{Proof of Theorem $\ref{theo4.5}$}

\begin{proof1} The result is equivalent to the following
statement: for each $s>0$ and each $\delta>0$, there exists $C_{\delta,s}>0 $ such that for each $t\geq 1$,
\begin{equation*}
\|T(t)(1+A)^{-\nu}\|_{\mathcal{L}(X)}\le C_{\delta,s} t^{-s}, 
\end{equation*}
where $\nu:=\beta(s+1)+1/r$ and $\upsilon:=b(s+1)+\dfrac{1+\delta}{r}$ for $p\neq 2$, $\nu:=\beta(s+1)$ and $\upsilon:=b(s+1)$ otherwise. Set $m:=\lfloor\upsilon\rfloor$ and $\eta:=\{\upsilon\}\in (0,1)$. We divide the proof into the cases where $\eta=0$ and $\eta>0$. In both of them, we proceed recursively over $m$.

\


\noindent\textbf{Case $\eta>0$.}

\

\textbf{Step 1: removing $\eta>0$}.  Since $(0,\infty)\ni \tau\mapsto \log(1+\tau)^{\eta}\in \mathcal{CBF}$ (by Proposition~\ref{theor2.3}), it follows that
\begin{equation*}    \log(1+\tau)^{\eta}=\int_{0+}^{\infty} \frac{\tau}{\tau+\lambda}d\mu(\lambda).
\end{equation*}

Let $\theta=\theta(s)\in (0,1)$ be such that $s\geq \theta>0$. Let, for each $\sigma\in(0,1)$, $f_\sigma:[0,\infty)\rightarrow\mathbb{R}$ be given by the law $f(\xi)=(1+\xi)^{\sigma}-1$; it is a complete Bernstein function (see Example~\ref{ex3.1}). Then, for $\sigma=\varepsilon:=\frac{\min\{1,\beta\}\theta}{2}$, $f_\varepsilon (B)$ is a sectorial operator, by Theorem~\ref{The2.3}, where $B:=A+1$. Therefore, by Lemma $\ref{lemma2.3}$,
  \[\log(1+B)^{\eta}=\frac{1}{\varepsilon^{\eta}}\log(1+f_{\varepsilon}(B))^{\eta}.\]

 By the choice of $\varepsilon>0$, $\mathcal{D}(B^{\beta s})\subset \mathcal{D}(f_\varepsilon (B))$ (see Proposition 3.1.1 (c) in \cite{haase}). It follows from equation~\eqref{cbf} (with $a=b=0$; see~\eqref{log(1+s)}) and from the previous facts that for each $x\in X$,
\begin{eqnarray*}
T(t)B^{-\nu}\log(1+B)^{-m}x&=&\frac{1}{\varepsilon^{\eta}}T(t)\log(1+f_{\varepsilon}(B))^{\eta}\log(1+B)^{-\eta}B^{-\nu}\log(1+B)^{-m}x\\
&=&\frac{1}{\varepsilon^{\eta}}T(t)\log(1+f_{\varepsilon}(B))^{\eta}B^{-\nu}\log(1+B)^{-m-\eta}x\\
&=& \frac{1}{\varepsilon^{\eta}}T(t)\int_{0+}^{\infty} f_{\varepsilon}(B) (\lambda+f_{\varepsilon}(B))^{-1}B^{-\nu}\log(1+B)^{-m-\eta}xd\mu(\lambda).
\end{eqnarray*}

Let $\phi \in [0,\nu)$, $\zeta>0$, set $P_{\zeta}(B_{\phi}):=B^{-\nu+\phi}\log(1+B)^{-\zeta}\in \mathcal{L}(X)$ and
  \[\tau:= \dfrac{\|T(t)P_{m+\eta}(B_{\varepsilon})\|_{\mathcal{L}(X)}}{\|T(t)P_{m+\eta}(B_{0})\|_{\mathcal{L}(X)}}>0.\]
  Since $f_\varepsilon(B)$ is a sectorial operator and since for each $t\geq 0$, $T(t)$ commutes with $f_\varepsilon(B)$ (see Lemma $\ref{lemma2.3}$), it follows that for each $t\geq 0$,
\begin{eqnarray}\label{eqq27}
\nonumber&&\left\|T(t)\int_{0+}^{\tau}f_{\varepsilon}(B)(\lambda+f_{\varepsilon}(B))^{-1}P_{m+\eta}(B_{0})x d\mu(\lambda)\right\|\\
\nonumber&&\leq \|T(t)P_{m+\eta}(B_{0})\|_{\mathcal{L}(X)}(M_{f_\varepsilon(B)}+1) \int_{0+}^{\tau}d\mu(\lambda)\|x\|\\
&&\leq 4\|T(t)P_{m+\eta}(B_{0})\|_{\mathcal{L}(X)}M_{f_\varepsilon(B)} \int_{0+}^{\tau}\frac{\tau}{\tau+\lambda} d\mu(\lambda)\|x\|
\end{eqnarray}
where $\displaystyle{M_{f_\varepsilon(B)}:=\sup_{\lambda>0}\|\lambda(\lambda+f_{\varepsilon}(B))^{-1}\|_{\mathcal{L}(X)}\geq 1}$. Moreover, by Lemma $\ref{lemma2.3}$,
\begin{eqnarray}\label{eqq28}
\nonumber&&\left\|T(t)\int_{\tau}^{\infty}f_{\varepsilon}(B)(\lambda+f_\varepsilon(B))^{-1}P_{m+\eta}(B_0)x d\mu(\lambda)\right\|\\
\nonumber &&\leq  \|T(t)f_{\varepsilon}(B)P_{m+\eta}(B_0)\|_{\mathcal{L}(X)}\int_{\tau}^{\infty} \|(\lambda+f_{\varepsilon}(B))^{-1}\|_{\mathcal{L}(X)}d\mu(\lambda)\|x\|\\
\nonumber &&\leq \|T(t)f_{\varepsilon}(B)P_{m+\eta}(B_0)\|_{\mathcal{L}(X)}M_{f_\varepsilon(B)}\int_{\tau}^{\infty} \frac{1}{\lambda} d\mu(\lambda)\|x\|\\
&&\leq  2\|T(t)f_{\varepsilon}(B)P_{m+\eta}(B_0)\|_{\mathcal{L}(X)}M_{f_\varepsilon(B)} \int_{\tau}^{\infty} \frac{1}{\lambda+\tau} d\mu(\lambda)\|x\|.
\end{eqnarray}

By combining relations $\eqref{eqq27}$ and $\eqref{eqq28}$, one gets, for each $t\geq 0$, 
\begin{eqnarray}\label{eq27a} 
\nonumber\|T(t)P_m(B_0)x\|&\le& C_{\varepsilon}^{\prime}\|T(t)P_{m+\eta}(B_0)\|_{\mathcal{L}(X)} \left( \int_{0+}^{\tau}\frac{\tau}{\tau+\lambda} d\mu(\lambda)\right.\\
  &+&\left.\frac{\|T(t)f_{\varepsilon}(B)P_{m+\eta}(B_0)\|_{\mathcal{L}(X)}}{\|T(t)P_{m+\eta}(B_0)\|_{\mathcal{L}(X)}}\int_{\tau}^{\infty} \frac{1}{\lambda+\tau} d\mu(\lambda)\right)\|x\|,
\end{eqnarray}
where  $C_{\varepsilon}^{\prime}:=4M_{f_\varepsilon(B)}$.

Note that, for each $t\geq 0$, $T(t)$ commutes with $(1+B)^{\varepsilon}B^{-\varepsilon}-B^{-\varepsilon}$, hence for each $x\in X$,
\begin{eqnarray*}
\|T(t)f_\varepsilon(B)P_{m+\eta}(B_0)x\|&=& \|T(t)((1+B)^{\varepsilon}-1)P_{m+\eta}(B_0)x\|\\
&=& \|T(t)((1+B)^{\varepsilon}-B^{-\varepsilon}B^{\varepsilon})B^{-\nu}\log(1+B)^{-m-\eta}x\|\\
&=& \|T(t)((1+B)^{\varepsilon}B^{-\varepsilon}-B^{-\varepsilon})B^{-\nu+\varepsilon}\log(1+B)^{-m-\eta}x\|\\
&=& \|((1+B)^{\varepsilon}B^{-\varepsilon}-B^{-\varepsilon})T(t)P_{m+\eta}(B_\varepsilon)x\|\\
&\leq& \|((1+B)^{\varepsilon}B^{-\varepsilon}-B^{-\varepsilon})\|_{\mathcal{L}(X)}\|T(t)P_{m+\eta}(B_\varepsilon)\|_{\mathcal{L}(X)}\|x\|,
\end{eqnarray*}
and so
\begin{equation}\label{eq9} \|T(t)f_\varepsilon(B)P_{m+\eta}(B_0)\|_{\mathcal{L}(X)}\leq C_{\varepsilon} \|T(t)P_{m+\eta}(B_\varepsilon)\|_{\mathcal{L}(X)},
\end{equation}
where $C_{\varepsilon}:=\|(1+B)^{\varepsilon}B^{-\varepsilon}-B^{-\varepsilon}\|_{\mathcal{L}(X)}$.

Therefore, it follows from relations~\eqref{eq27a} and~\eqref{eq9} that for each $t\ge 0$,
\begin{eqnarray}\label{eqq29}
  \nonumber\|T(t)B^{-\nu}\log(1+B)^{-m}\|_{\mathcal{L}(X)}&\leq& C'_{\varepsilon} (1+C_{\varepsilon})\|T(t)P_{m+\eta}(B_{0})\|_{\mathcal{L}(X)}\left(\int_{0+}^{\tau}\frac{\tau}{\tau+\lambda} d\mu(\lambda) +\tau\int_{\tau}^{\infty} \frac{1}{\lambda+\tau} d\mu(\lambda)\right)\\
 &=& C'_{\varepsilon} (1+C_{\varepsilon})\|T(t)P_{m+\eta}(B_{0})\|_{\mathcal{L}(X)} \log(1+\tau)^{\eta}.
\end{eqnarray}

On the other hand, by the definition $\varepsilon=\frac{\min\{1,\beta\}\theta}{2}$, one has $\beta(s+1)+1/r-\varepsilon>\beta +1/r$; then, it follows from Theorem~\ref{theo4.4} that there exists a positive constant $C_{s,\varepsilon,\eta}$ such that for each $t\geq 1$,
\begin{equation}\label{eqq30}
   \|T(t)P_{m+\eta}(B_\varepsilon)\|_{\mathcal{L}(X)} \leq C_{s,\varepsilon,\eta}t^{-s+\frac{\varepsilon}{\beta}}.
\end{equation}
One also has from Theorem~\ref{theo4.4} that there exists a positive constant $C_{s,\eta}$ so that for each $t\geq 1$,
\begin{equation}\label{eq27b}
  \|T(t)P_{m+\eta}(B_0)\|_{\mathcal{L}(X)}\le C_{s,\eta}t^{-s}. 
\end{equation}

Now, set $k_{m,\eta}(t):=\|T(t)P_{m+\eta}(B_0)\|_{\mathcal{L}(X)}$; by $\eqref{eq27b}$, one has for each $t\geq 1$,
\begin{equation}\label{eqq31}
\frac{C_{s,\varepsilon,\eta}t^{-s+\frac{\varepsilon}{\beta}}}{k_{m,\eta}(t)}\geq \Tilde{C}_{s,\eta} C_{s,\varepsilon,\eta}t^{\frac{\varepsilon}{\beta}},
\end{equation}
with $\Tilde{C}_{s,\eta}:=(C_{s,\eta})^{-1}$. It follows from relations~\eqref{eqq30},~\eqref{eq27b} and by letting $\gamma=1$, $\ell(w)=\log(1+w)^{\eta}$, $s=\Tilde{C}_{s,\eta} C_{s,\varepsilon,\eta}t^{\frac{\varepsilon}{\beta}}$ and $w=\frac{C_{s,\varepsilon,\eta}t^{-s+\frac{\varepsilon}{\beta}}}{k_{m,\eta}(t)}$ in Proposition~\ref{cor2} (note that $w\ge s$, by~\eqref{eqq31}) that there exists $C>0$ (which depends only on the function $\log$) such that for each sufficiently large $t$, 
\begin{eqnarray}\label{eq27c}
\nonumber\log(1+\tau)^{\eta}\leq \log\left(1+\frac{C_{s,\varepsilon,\eta}t^{-s+\frac{\varepsilon}{\beta}}}{k_{m,\eta}(t)}\right)^{\eta}&\leq&  C\frac{C_{s,\varepsilon,\eta}t^{-s+\frac{\varepsilon}{\beta}}}{\Tilde{C}_{s,\eta} C_{s,\varepsilon,\eta}t^{\frac{\varepsilon}{\beta}}k_{m,\eta}(t)}\log\left(1+\Tilde{C}_{s,\delta} C_{s,\varepsilon,\eta}t^{\frac{\varepsilon}{\beta}}\right)^{\eta}\\
\nonumber&=&\frac{C}{k_{m,\eta}(t)\Tilde{C}_{s,\eta}t^{s}
}\log\left(1+\Tilde{C}_{s,\eta} C_{s,\varepsilon,\eta}t^{\frac{\varepsilon}{\beta}}\right)^{\eta}\\
&=& C_1(s,\varepsilon,\eta)\frac{1}{k_{m,\eta}(t)t^{s}}\log\left(1+C_2(s,\varepsilon,\eta)t^{\frac{\varepsilon}{\beta}}\right)^{\eta},
\end{eqnarray}
with $C_1(s,\varepsilon,\eta):=C/\Tilde{C}_{s,\eta}$ and $C_2(s,\varepsilon,\eta):=\Tilde{C}_{s,\eta} C_{s,\varepsilon,\eta}$.

Then, one concludes from~\eqref{eqq29} and~\eqref{eq27c} that for each sufficiently large $t$,
\begin{eqnarray}\label{eqq34}
\nonumber\|T(t)B^{-\nu}\log(1+B)^{-m}\|_{\mathcal{L}(X)}&\leq& \frac{C'_{\varepsilon} (1+C_{\varepsilon})C_1(s,\varepsilon,\eta)}{\varepsilon } \frac{k_{m,\eta}(t)}{k_{m,\eta}(t)t^{s}}\log\left(1+C_2(s,\varepsilon,\eta)t^{\frac{\varepsilon}{\beta}}\right)^{\eta}\\
&= & \Tilde{C}_1(s,\varepsilon,\eta)t^{-s}  \log\left(1+C_2(s,\varepsilon,\eta)t^{\frac{\varepsilon}{\beta}}\right)^{\eta},
\end{eqnarray}
with $\Tilde{C}_1(s,\varepsilon,\eta):=\dfrac{C'_{\varepsilon} (1+C_{\varepsilon})C_1(s,\varepsilon,\eta)}{\varepsilon}$.

\

\noindent {\bf{Step 2: removing $m$}}.  If $m=0$, there is nothing to be done. So, let $m\in\mathbb{N}$. It follows from the discussion presented in the beginning of \textbf{Step 1} that for each $x\in X$, 
\begin{eqnarray*}
T(t)B^{-\nu}\log(1+B)^{-m+1}x&=&\frac{1}{\varepsilon}T(t)\log(1+f_{\varepsilon}(B))B^{-\nu}\log(1+B)^{-m}x\\
&=& \frac{1}{\varepsilon}T(t)\int_{0+}^{\infty}f_{\varepsilon}(B) (\lambda+f_{\varepsilon}(B))^{-1}P_{m}(B_0)xd\mu(\lambda),
\end{eqnarray*}
where $\mu$ now stands for the Borel measure related to the integral representation of $\log(1+\lambda)$ (which is a complete Bernstein function).

Let $\tau:= \dfrac{\|T(t)P_{m}(B_\varepsilon)\|_{\mathcal{L}(X)}}{\|T(t)P_{m}(B_0)\|_{\mathcal{L}(X)}}>0$. By proceeding as in {\bf{Step 1}}, one gets from~\eqref{eqq34} that for each $t\ge 1$,
\begin{equation}\label{eqq35a}
\|T(t)P_{m-1}(B_{\varepsilon})\|_{\mathcal{L}(X)}\lesssim t^{-s+\varepsilon/\beta}  \log\left(1+t\right)^{\eta},
\end{equation}
and  
\begin{equation}\label{eqq35}
\|T(t)P_{m-1}(B_{0})\|_{\mathcal{L}(X)}\lesssim\|T(t)P_{m}(B_{0})\|_{\mathcal{L}(X)} \log(1+\tau).
\end{equation}

Now, let $n_{m}(t):=\|T(t)P_{m}(B_0)\|_{\mathcal{L}(X)}$; it follows from~\eqref{eqq35} and~\eqref{eqq35a} that there exists a positive constant $\Tilde{c}_{s,\varepsilon}$ so that for each $t\ge 1$,
\begin{equation}\label{eq17}
\frac{t^{-s+\frac{\varepsilon}{\beta}}\log\left(1+t\right)^{\eta}}{n_{m}(t)}\geq \Tilde{c}_{s,\varepsilon} t^{\frac{\varepsilon}{\beta}}.
\end{equation}

By letting $\gamma=1$, $\ell(w)=\log(1+w)$, $s=\Tilde{c}_{s,\varepsilon}t^{\frac{\varepsilon}{\beta}}$ and $w=\frac{t^{-s+\frac{\varepsilon}{\beta}}\log\left(1+ t\right)^\eta}{n_m(t)}$ in Proposition~\ref{cor2} (note that $w\ge s$, by~\eqref{eq17}), it follows from relation~\eqref{eq17} that there exists  $\Tilde{c}>0$ such that for each sufficiently large $t$,
\begin{eqnarray}\label{eqq37}
\nonumber\log(1+\tau)&\leq& \log\left(1+\frac{\Tilde{C}_{\varepsilon, s}t^{-s+\frac{\varepsilon}{\beta}}\log\left(1+ t\right)^{\eta}}{n_{m}(t)}\right)\\
\nonumber&\leq& \Tilde{c} \frac{\Tilde{C}_{\varepsilon, s}t^{-s+\frac{\varepsilon}{\beta}}\log\left(1+t\right)^{\eta}}{\Tilde{C}_{\varepsilon, s} \Tilde{c}_{s,\varepsilon} t^{\frac{\varepsilon}{\beta}}n_{m}(t)}\log\left(1+C_{\varepsilon, s, \delta}\Tilde{c}_{s,\varepsilon} t^{\frac{\varepsilon}{\beta}}\right)\\
&=&\frac{\Tilde{c}\log\left(1+t\right)^{\eta}}{n_{m}(t)\Tilde{c}_{s,\varepsilon}t^{s}
}\log\left(1+\Tilde{C}_{\varepsilon, s} \Tilde{c}_{s,\varepsilon} t^{\frac{\varepsilon}{\beta}}\right).
\end{eqnarray}

Then, by $\eqref{eqq35}$ and $\eqref{eqq37}$, one has for each $t\ge 1$,
\begin{eqnarray*}
\nonumber\|T(t)P_{m-1}(B_{0})\|_{\mathcal{L}(X)}&\lesssim& 
 t^{-s}\log\left(1+t\right)^{\eta} \log\left(1+\Tilde{C}_{\varepsilon, s} \Tilde{c}_{s}t^{\frac{\varepsilon}{\beta}}\right)\\
&\lesssim & t^{-s}\log\left(1+ t\right)^{1+\eta}.
\end{eqnarray*}

By proceeding recursively over $m$, it follows from the previous discussion that for each $t\ge 1$,
\begin{eqnarray*}
\nonumber\|T(t)B^{-\nu}\|_{\mathcal{L}(X)}&\lesssim& t^{-s}\log\left(1+ t\right)^{m+\eta}.
\end{eqnarray*}

\

\textbf{Case $\eta=0$}. Since in this case $m\in\mathbb{N}$, one just needs to proceed as in $\textbf{Step 2}$ of the case $\eta>0$ in order to obtain, for each $t\ge 1$,
\begin{eqnarray*}
\nonumber\|T(t)B^{-\nu}\|_{\mathcal{L}(X)}&\lesssim& t^{-s}\log\left(1+ t\right)^{m}.
\end{eqnarray*}

Hence, in both cases, relation~\eqref{eqTh1.1} follows, and we are done.
\end{proof1}

\subsection{Resolvent growth slower than $\log(|\xi|)^{b}$}
\label{finalmente}
$\bullet$ {\textbf{Case $p\neq 2$}}

By using the same strategy presented in the proof of Theorem~\ref{theo4.4}, we conclude that for $\beta=0<b$ (that is, for $\|(i\xi+A)^{-1}\|_{\mathcal{L}(X)}\lesssim \log(2+|\xi|)^b$, $\xi\in \mathbb{R}$), for each $s\geq 0$ and each $\delta>0$, there exists $c_{s,\delta}> 0$ such that for each $t\geq 1$,
\begin{equation*}
\|T(t)(1+A)^{-1/r}\log(1+B)^{-b(s+1)-\frac{1+\delta}{r}}\|_{\mathcal{L}(X)}\leq c_{s,\delta}t^{-s}.
\end{equation*}

Actually, it is possible to obtain in this setting a better estimate than the previous one. Namely, note that for each $x\in \dom (A)$,
\[T(t)x=x+\int_{0}^{t}T(w)Axdw.\]
Let $x\in X_{1+1/r}(\upsilon)$, with $\upsilon:=b(s+1)+\frac{1+\delta}{r}$. We argue that $[t\mapsto T(t)x]$ is a Lipschitz continuous function: it follows from the previous identity that for each $t,u\geq 0$, $\|T(t)x-T(u)x\|\lesssim |t-u|\|x\|_{X_{1/r}(\upsilon)}$, and since $X_{1+1/r}(\upsilon)$ is dense $X_{1/r}(\upsilon)$, one concludes that $\|T(t)-T(u)\|_{\mathcal{L}(X_1(\upsilon),X)}\lesssim |t-u|$.

Now, note that for each $x\in X$ and each $t>0$,  $f_x(t)=T(t)(1+A)^{-1/r}\log(1+B)^{-\upsilon}x$ satisfies the assumptions of Theorem 2.1 in~\cite{chill1} (with $F_x(s)=R(is,A)(1+A)^{-1/r}\log(1+B)^{-\upsilon}x$ and $M(s)=\log(2+|s|)^{b}$), so for any $c\in (0,1/2)$ and $t_0$ such that for each $t\geq t_0$ and each $x\in X$ with $\|x\|=1$, one has
\[\|T(t)(1+A)^{-1/r}\log(1+B)^{-\upsilon}x\|\lesssim \frac{1}{M^{-1}_{\log(ct)}} \lesssim e^{-ct^{\frac{1}{b+1}}}.\]
Therefore, for each $t\geq t_0$,
\begin{equation*}
\|T(t)(1+A)^{-1/r}\log(1+B)^{-\upsilon}\|_{\mathcal{L}(X)}\leq e^{-ct^{\frac{1}{b+1}}}. 
\end{equation*}

By proceeding as in the proof of Lemma 4.1 in~\cite{rozendaal}, one can show that for each $\tau>1/r$,
\begin{equation}\label{aindanao}
\|T(t)(1+A)^{-\tau}\log(1+B)^{-\upsilon\tau r}\|_{\mathcal{L}(X)}\leq e^{-c\tau r t^{\frac{1}{b+1}}}. 
\end{equation}

\begin{theorem}\label{teo3.22}
\begin{rm}
Let $b\ge 0$ and let $(T(t))_{t\ge 0}$ be a $C_0$-semigroup on a Banach space $X$ whose generator $-A$ satisfies $\overline{\mathbb{C}_{-}}\subset \rho(A)$. Suppose that $X$ has Fourier type $p \in [1,2)$ and that for each $\lambda \in\mathbb{C}$ with $\text{Re}(\lambda)\le 0$,
\begin{equation*}
\|(\lambda+A)^{-1}\|_{\mathcal{L}(X)}\lesssim \log(2+|\lambda|)^b.
\end{equation*}
Let $r\in[1,\infty)$ be such that $1/r=1/p-1/p^{\prime}$. Then, for each $\delta,\varepsilon>0$ and each $\tau>1/r$, there exists $c_{\tau,\delta,r,\varepsilon} \geq 0$ such that for each $t\geq 1$,
\begin{equation*}
 \|T(t)(1+A)^{-\tau}\|_{\mathcal{L}(X)}\leq c_{\tau,\delta,\varepsilon, r}e^{-c\tau r t^{\frac{1}{b+1}}}t^{\frac{\tau r}{b+1}(b(\varepsilon+1)+\frac{1+\delta}{r})}.
\end{equation*}
\end{rm}
\end{theorem}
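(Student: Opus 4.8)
The plan is to bootstrap the statement from relation~\eqref{aindanao}. Write $B:=1+A$, $\upsilon:=b(\varepsilon+1)+(1+\delta)/r$ and $\kappa:=\upsilon\tau r=\tau r\big(b(\varepsilon+1)+(1+\delta)/r\big)$; relation~\eqref{aindanao} (with the free parameter $s$ there set equal to $\varepsilon$) gives $\|T(t)B^{-\tau}\log(1+B)^{-\kappa}\|_{\mathcal{L}(X)}\le e^{-c\tau r t^{1/(b+1)}}$ for all large $t$. Since $\kappa/(b+1)$ is precisely the exponent of $t$ in the asserted bound, and the exponential factor is identical, the whole matter reduces to \emph{removing} the bounded operator $\log(1+B)^{-\kappa}$ at the price of a factor $\lesssim t^{\kappa/(b+1)}$. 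Put $m:=\lfloor\kappa\rfloor$, $\eta:=\{\kappa\}\in[0,1)$, and fix $\varepsilon_0\in\big(0,\min\{1,(\tau-1/r)/(m+1)\}\big)$. By Example~\ref{ex3.1}(b), $\lambda\mapsto(1+\lambda)^{\varepsilon_0}-1$ is a complete Bernstein function, so $f_{\varepsilon_0}(B):=(2+A)^{\varepsilon_0}-1$ is sectorial (Theorem~\ref{The2.3}), satisfies $\log(1+f_{\varepsilon_0}(B))^{\gamma}=\varepsilon_0^{\gamma}\log(1+B)^{\gamma}$ for $\gamma\in\{\eta,1\}$ (Lemma~\ref{lemma2.3}(c)), and $T(t)$ commutes with $f_{\varepsilon_0}(B)(\lambda+f_{\varepsilon_0}(B))^{-1}$ (Lemma~\ref{lemma2.3}(d)).

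The engine is a single removal step, performed exactly as in \textbf{Step~1} and \textbf{Step~2} of the proof of Theorem~\ref{theo4.5}. For an exponent $\sigma>\varepsilon_0$ and an integer $j\ge1$, the Stieltjes representation~\eqref{log(1+s)} of $\log(1+\cdot)$ together with~\eqref{cbf} gives
\[
T(t)B^{-\sigma}\log(1+B)^{-(j-1)}=\frac{1}{\varepsilon_0}\int_{0+}^{\infty}f_{\varepsilon_0}(B)\big(\lambda+f_{\varepsilon_0}(B)\big)^{-1}\,T(t)B^{-\sigma}\log(1+B)^{-j}\,d\mu(\lambda),
\]
and there is an analogous identity (using $\log(1+\cdot)^{\eta}\in\mathcal{CBF}$, by Proposition~\ref{theor2.3}) lowering the logarithmic exponent from $\kappa=m+\eta$ to $m$, with $1/\varepsilon_0^{\eta}$ in front. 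Splitting the integral at $\theta_{*}:=\|T(t)B^{-(\sigma-\varepsilon_0)}\log(1+B)^{-j}\|/\|T(t)B^{-\sigma}\log(1+B)^{-j}\|$ and estimating the two pieces via $\|\lambda(\lambda+f_{\varepsilon_0}(B))^{-1}\|\le M$ and the factorization $f_{\varepsilon_0}(B)B^{-\sigma}=\big((2+A)^{\varepsilon_0}(1+A)^{-\varepsilon_0}-(1+A)^{-\varepsilon_0}\big)B^{-(\sigma-\varepsilon_0)}$ (bounded prefactor), one obtains, as in~\eqref{eqq27}--\eqref{eqq29},
\[
\|T(t)B^{-\sigma}\log(1+B)^{-(j-1)}\|_{\mathcal{L}(X)}\lesssim \|T(t)B^{-\sigma}\log(1+B)^{-j}\|_{\mathcal{L}(X)}\,\log(1+\theta_{*})
\]
(and with $\log(1+\theta_{*})^{\eta}$ for the fractional step); furthermore, by Proposition~\ref{cor2} — or by monotonicity of $x\mapsto x\log(1+a/x)^{\gamma}$ — the right-hand side may be estimated using only \emph{upper} bounds for the two norms defining $\theta_{*}$.

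I would then run a downward induction on the logarithmic exponent $j$, \emph{simultaneously} over the exponents $\sigma=\tau-k\varepsilon_0$ with $\sigma>1/r$ (only $0\le k\le m+1$ occur, which is why $\varepsilon_0<(\tau-1/r)/(m+1)$ was imposed). Base case: whenever $j\ge\upsilon\sigma r$, relation~\eqref{aindanao} with $\tau$ replaced by $\sigma$, together with the boundedness of $\log(1+B)^{-(j-\upsilon\sigma r)}$ (note $\sigma(B)\subset\{\mathrm{Re}>1\}$, so $\log(1+B)$ is invertible sectorial), yields $\|T(t)B^{-\sigma}\log(1+B)^{-j}\|_{\mathcal{L}(X)}\lesssim e^{-c\sigma r t^{1/(b+1)}}$. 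Inductive step: apply the removal above; since then the numerator of $\theta_{*}$ is bounded by $e^{-c(\sigma-\varepsilon_0)rt^{1/(b+1)}}$ times a fixed power of $t$ and the denominator by $e^{-c\sigma rt^{1/(b+1)}}$ times a fixed power of $t$, one gets $\theta_{*}\lesssim e^{c\varepsilon_0 r t^{1/(b+1)}}$ times a fixed power of $t$, hence $\log(1+\theta_{*})\lesssim t^{1/(b+1)}$; and since $\sigma=(\sigma-\varepsilon_0)+\varepsilon_0$, the exponential rate with the \emph{original} exponent $\sigma$ survives (the tail term $\theta_{*}^{-1}e^{-c(\sigma-\varepsilon_0)rt^{1/(b+1)}}$ is again $\lesssim e^{-c\sigma r t^{1/(b+1)}}$ up to powers of $t$). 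Thus every removed unit of logarithmic exponent costs a factor $\lesssim t^{1/(b+1)}$. Taking $\sigma=\tau$, first removing $\eta$ (cost $t^{\eta/(b+1)}$) and then the $m$ integer units (cost $t^{1/(b+1)}$ each) gives $\|T(t)(1+A)^{-\tau}\|_{\mathcal{L}(X)}\lesssim e^{-c\tau r t^{1/(b+1)}}\,t^{(m+\eta)/(b+1)}=e^{-c\tau r t^{1/(b+1)}}\,t^{\kappa/(b+1)}$, which is exactly the claim once the (bounded) range $t\in[1,t_{0}]$ is absorbed into the constant.

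The step I expect to be the main obstacle is precisely this bookkeeping: one must check that every auxiliary exponent $\tau-k\varepsilon_0$ encountered still lies above $1/r$ (so that~\eqref{aindanao} applies to it), and that at each removal the exponential rate attached to the \emph{original} $\tau$ — rather than to a smaller $\sigma$ — is not degraded. The remaining ingredients (convergence of the Stieltjes integrals, the domain inclusion $\Ran(B^{-\tau})\subset\mathcal{D}(f_{\varepsilon_0}(B))$, and the commutation of $T(t)$ with the operators involved, by Lemma~\ref{lemma2.3}) are routine and are handled exactly as in the proof of Theorem~\ref{theo4.5}.
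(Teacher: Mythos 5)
Your proposal is correct and takes the same route as the paper: the paper's proof of Theorem~\ref{teo3.22} is a one-line pointer to run the argument of Theorem~\ref{theo4.5} with relation~\eqref{aindanao1} replaced by relation~\eqref{aindanao}, and that is exactly the Bernstein-function removal scheme you execute (each removal costing a factor $\lesssim t^{1/(b+1)}$ via the monotonicity of $x\mapsto x\log(1+a/x)$ and Proposition~\ref{cor2}, while preserving the exponential rate at the original $\tau$). You also supply detail the paper leaves implicit, notably that the $\beta$-dependent choice $\varepsilon=\min\{1,\beta\}\theta/2$ from Theorem~\ref{theo4.5} degenerates when $\beta=0$, which you repair by introducing a free $\varepsilon_0<(\tau-1/r)/(m+1)$ and tracking the auxiliary exponents $\tau-k\varepsilon_0$ so that every one of them stays above $1/r$.
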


\begin{proof}
We proceed as in the proof of Theorem~\ref{theo4.5}, by replacing relation~\eqref{aindanao1} by relation~\eqref{aindanao}.
\end{proof}

$\bullet$ {\textbf{Case $p=2$}}

As in the case above, by using the same strategy presented in the proof of Theorem~\ref{theo4.4}, we conclude that for $\beta=0<b$, for each $s\geq 0$ and each $\delta\in (0,1/2)$, there exists $c_{s,\delta}> 0$ such that for each $t\geq 1$,
\begin{equation*}
\|T(t)(1+A)^{-\delta}\log(1+B)^{-b(s+1)}\|_{\mathcal{L}(X)}\leq c_{s,\delta}t^{-s}.
\end{equation*}

Actually, it is possible to obtain in this setting a better estimate than the previous one. Namely, note that for each $x\in \dom (A)$,
\[T(t)x=x+\int_{0}^{t}T(w)Axdw.\]
Let $x\in X_{1+\delta}(\upsilon)$, with $\upsilon:=b(s+1)$. We argue that $[t\mapsto T(t)x]$ is a Lipschitz continuous function: it follows from the previous identity that for each $t,u\geq 0$, $\|T(t)x-T(u)x\|\lesssim |t-u|\|x\|_{X_\delta(\upsilon)}$, and since $X_{1+\delta}(\upsilon)$ is dense $X_{\delta}(\upsilon)$, one concludes that $\|T(t)-T(u)\|_{\mathcal{L}(X_\delta(\upsilon),X)}\lesssim |t-u|$.

Now, note that for each $x\in X$ and each $t>0$,  $f_x(t)=T(t)(1+A)^{-\delta}\log(1+B)^{-\upsilon}x$ satisfies the assumptions of Theorem 2.1 in~\cite{chill1} (with $F_x(s)=R(is,A)(1+A)^{-\delta}\log(1+B)^{-\upsilon}x$ and $M(s)=\log(2+|s|)^{b}$), so there exists $t_0(\delta)\geq 1$ such that for each $t\geq t_0$ and each $x\in X$ with $\|x\|=1$, one has
\[\|T(t)(1+A)^{-\delta}\log(1+B)^{-\upsilon}x\|\lesssim \frac{1}{M^{-1}_{\log(\delta t)}} \lesssim e^{-\delta t^{\frac{1}{b+1}}}.\]
Therefore, for each $t\geq t_0$,
\begin{equation*}
\|T(t)(1+A)^{-\delta}\log(1+B)^{-\upsilon}\|_{\mathcal{L}(X)}\lesssim e^{-\delta t^{\frac{1}{b+1}}}. 
\end{equation*}

Let $\tau>0$ and $\delta \in (0,1/2)$ be such that $\tau>\delta>0$, then by proceeding as in the proof of Lemma 4.1 in~\cite{rozendaal}, one can show ,
\begin{equation*}
\|T(t)(1+A)^{-\tau}\log(1+B)^{-\upsilon\tau/\delta}\|_{\mathcal{L}(X)}\leq e^{-\tau t^{\frac{1}{b+1}}}. 
\end{equation*}

\begin{theorem}
\begin{rm}
Let $b\ge 0$ and let $(T(t))_{t\ge 0}$ be a $C_0$-semigroup on a Hilbert space $X$ whose generator $-A$ satisfies $\overline{\mathbb{C}_{-}}\subset \rho(A)$. Suppose that for each $\lambda \in\mathbb{C}$ with $\text{Re}(\lambda)\le 0$,
\begin{equation*}
\|(\lambda+A)^{-1}\|_{\mathcal{L}(X)}\lesssim \log(2+|\lambda|)^b.
\end{equation*}
Let $\varepsilon,\tau>0$ and $\delta \in (0,1/2)$ be such that $\tau>\delta>0$. Then, there exists $c_{\tau, \varepsilon,\delta} \geq 0$ such that for each $t\geq 1$,
\begin{equation*}
 \|T(t)(1+A)^{-\tau}\|_{\mathcal{L}(X)}\leq c_{\tau,\varepsilon,\delta}e^{-\tau t^{\frac{1}{b+1}}}t^{\frac{\tau b(\varepsilon+1)}{\delta(b+1)}}.
\end{equation*}
\end{rm}
\end{theorem}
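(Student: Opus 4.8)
The plan is to run the proof of Theorem~\ref{theo4.5} almost verbatim, feeding in the exponential estimate established just above the statement,
\[\|T(t)(1+A)^{-\tau}\log(1+B)^{-\upsilon\tau/\delta}\|_{\mathcal{L}(X)}\le e^{-\tau t^{1/(b+1)}},\qquad B:=A+1,\ \ \upsilon=b(s+1),\]
valid for every $s\ge0$ and every admissible $\tau>\delta$, in place of the polynomial input~\eqref{aindanao1} — exactly as the one-line proof of Theorem~\ref{teo3.22} substitutes~\eqref{aindanao} in the Banach case. Fixing $s:=\varepsilon$ and writing $M:=\upsilon\tau/\delta=\tau b(\varepsilon+1)/\delta$, the theorem reduces to the statement that the bounded operator $\log(1+B)^{-M}$ can be removed from $\|T(t)(1+A)^{-\tau}\log(1+B)^{-M}\|_{\mathcal{L}(X)}$ at the cost of a factor $t^{M/(b+1)}=t^{\tau b(\varepsilon+1)/(\delta(b+1))}$, which is precisely the power of $t$ in the asserted bound.

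Write $m:=\lfloor M\rfloor$, $\eta:=\{M\}$, fix $\kappa\in(0,\min\{1,\tau-\delta\})$, and proceed as in the two steps of Theorem~\ref{theo4.5}, recursively over $m$ and treating $\eta>0$ and $\eta=0$ separately. In Step~1 (removing the fractional part) I use that $\lambda\mapsto\log(1+\lambda)^{\eta}\in\mathcal{CBF}$ (Proposition~\ref{theor2.3}) and that $f_{\kappa}(B):=(1+B)^{\kappa}-1\in\mathcal{CBF}$ (Example~\ref{ex3.1}) is sectorial (Theorem~\ref{The2.3}) with $\log(1+f_{\kappa}(B))^{\eta}=\kappa^{\eta}\log(1+B)^{\eta}$ (Lemma~\ref{lemma2.3}), so that $T(t)B^{-\tau}\log(1+B)^{-m}$ equals $\kappa^{-\eta}T(t)\int_{0+}^{\infty}f_{\kappa}(B)(\lambda+f_{\kappa}(B))^{-1}B^{-\tau}\log(1+B)^{-m-\eta}\,d\mu(\lambda)$ via the Stieltjes representation of $\log(1+\lambda)^{\eta}$. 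Splitting this integral at $\tau_*:=\|T(t)B^{-\tau+\kappa}\log(1+B)^{-m-\eta}\|_{\mathcal{L}(X)}/\|T(t)B^{-\tau}\log(1+B)^{-m-\eta}\|_{\mathcal{L}(X)}$, bounding $(\lambda+f_{\kappa}(B))^{-1}$ by sectoriality on $(0,\tau_*]$ and $1/\lambda\le2/(\lambda+\tau_*)$ on $[\tau_*,\infty)$, and commuting out $(1+B)^{\kappa}B^{-\kappa}-B^{-\kappa}\in\mathcal{L}(X)$ as in the derivation of~\eqref{eqq29}, I get
\[\|T(t)B^{-\tau}\log(1+B)^{-m}\|_{\mathcal{L}(X)}\lesssim\|T(t)B^{-\tau}\log(1+B)^{-m-\eta}\|_{\mathcal{L}(X)}\,\log(1+\tau_*)^{\eta}.\]
Since $\log(1+\cdot)^{\eta}$ is slowly varying, Proposition~\ref{cor2} with $\gamma=1$ controls $\log(1+\tau_*)^{\eta}$: the input estimates for the powers $\tau$ and $\tau-\kappa$ give $\tau_*\le w$ with $w\ge s_0\sim e^{\kappa t^{1/(b+1)}}$ — the lower bound being exactly where $\|T(t)B^{-\tau}\log(1+B)^{-M}\|\le e^{-\tau t^{1/(b+1)}}$ enters — so that $\log(1+s_0)\sim\kappa t^{1/(b+1)}$ rather than $\sim\log t$ as in Theorem~\ref{theo4.5}. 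Feeding this back, the norm $\|T(t)B^{-\tau}\log(1+B)^{-m-\eta}\|$ cancels and one is left with $\|T(t)B^{-\tau}\log(1+B)^{-m}\|_{\mathcal{L}(X)}\lesssim e^{-\tau t^{1/(b+1)}}t^{\eta/(b+1)}$. Step~2 then strips the integer power $m$ one unit at a time by the same device (now with the Stieltjes measure of $\log(1+\lambda)$), each pass contributing one further factor $\sim t^{1/(b+1)}$, and one reaches $\|T(t)(1+A)^{-\tau}\|_{\mathcal{L}(X)}\lesssim e^{-\tau t^{1/(b+1)}}t^{(m+\eta)/(b+1)}$, as claimed; the case $\eta=0$ needs only Step~2.

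The main obstacle is purely bookkeeping: one must check that every shifted estimate invoked is legitimately available — this needs $\tau-\kappa>\delta$ together with a uniform bound for $\|\log(1+B)^{-a}\|_{\mathcal{L}(X)}$ over the finitely many exponents $a\ge0$ that occur (valid since $\log(1+B)$ is sectorial and boundedly invertible, its spectrum being bounded away from $0$ because $\overline{\mathbb{C}_-}\subset\rho(A)$) — and, more delicately, that the ratio $\tau_*$ and the auxiliary quantities $w,s_0$ are arranged so that the denominator norm cancels exactly. That exact cancellation, combined with the exponential size of $s_0$, is the only structural difference from the proof of Theorem~\ref{theo4.5}: it is what converts the logarithmic correction there into a power of $t$ here, while the recursion over $m$, the $\eta=0$ reduction, and the complete Bernstein function manipulations are unchanged.
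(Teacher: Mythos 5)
Your proposal is correct and matches the paper's approach: the paper's entire proof of this statement is the single sentence ``proceed as in the proof of Theorem~\ref{theo4.5}, replacing the polynomial input by the exponential estimate,'' and you have carried out that substitution faithfully, including the key observation that the ratio $\tau_*$ is now exponentially large in $t^{1/(b+1)}$, so that Proposition~\ref{cor2} yields $\log(1+\tau_*)\sim t^{1/(b+1)}$ rather than $\sim\log t$, which is exactly what turns the logarithmic correction of Theorem~\ref{theo4.5} into the power $t^{M/(b+1)}$ here. Your bookkeeping ($s=\varepsilon$, $M=\tau b(\varepsilon+1)/\delta$, $\kappa<\min\{1,\tau-\delta\}$, and the uniform boundedness of $\log(1+B)^{-a}$ because $\overline{\mathbb{C}_-}\subset\rho(A)$) is consistent with what the recursion over $m$ and the fractional-part step require.
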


\begin{proof}
We proceed as in the proof of Theorem~\ref{theo4.5}, by replacing relation~\eqref{aindanao1} by relation~\eqref{aindanao}.
\end{proof}


\section{Singularity at infinity and zero}\label{four}
\zerarcounters

Let $\mu,\nu,\upsilon\geq 0$ and $A\in \text{Sect}_X(\omega_A)$; it is known that $2\pi-i\log(A)$ is sectorial (see page 92 in~\cite{haase}), and so $(2\pi-i\log(A))^{-\upsilon}$ is well-defined, by the functional calculus of fractional powers (see~\cite{haase,mart}). Define the operator
\begin{equation*}
\Phi^{\mu}_{\nu}(\upsilon)=\Phi^{\mu}_{\nu}(A,\upsilon):= A^{\mu}(1+A)^{-\mu-\nu}(2\pi-i\log(A))^{-\upsilon} \in \mathcal{L}(X),
\end{equation*}
and the space $X^{\mu}_{\nu}(\upsilon):=\Ran(\Phi^{\mu}_{\nu}(\upsilon))$. If $A$ is injective, then the space $X^{\mu}_{\nu}(\upsilon)$ 
 is a Banach space with the norm
 \begin{eqnarray*}
\|x\|_{X^{\mu}_{\nu}(\upsilon)}&=&\|x\|+\|\Phi^{\mu}_{\nu}(\upsilon)^{-1}x\|=\|x\|+\|(2\pi-i\log(A))^{\upsilon}(1+A)^{\nu}A^{\mu}x\|, \ \ \ \forall~x \in X^\mu_{\nu}(\upsilon).
 \end{eqnarray*}
Moreover, $\Phi^\mu_{\nu}(\upsilon):X\rightarrow  X^\mu_{\nu}(\upsilon)$ is an isomorphism and so 
\begin{equation}\label{eqq41}
     \|T\|_{\mathcal{L}(X^\mu_{\nu}(\upsilon),X)} \leq  \|T\Phi^\mu_{\nu}(\upsilon)\|_{\mathcal{L}(X)}\leq\| \Phi^\mu_{\nu}(\upsilon)\|_{\mathcal{L}(X)}\|T\|_{\mathcal{L}(X^{\mu}_{\nu}(\upsilon),X)}, \ \ \  T\in \mathcal{L}(X^\mu_{\nu}(\upsilon),X).
 \end{equation}
 
Note that $\Phi^{\mu}_{\nu}(0)=\Phi^{\mu}_{\nu}(A)$ and $X^{\mu}_{\nu}(0)=X^{\mu}_{\nu}$, where $\Phi^{\mu}_{\nu}(A)$ and $X^{\mu}_{\nu}$ are the objects defined in \cite{rozendaal}.

\begin{theorem}\label{teo4.4}
\begin{rm}
Let $(T(t))_{t\geq 0}$ be a $C_0$-semigroup  defined in the Banach space $X$ with Fourier type $p\in [1,2]$, with $-A$ as its generator. Suppose $A$ injective, $\overline{\mathbb{C}_{-}}\setminus\{0\} \subset \rho(A)$ and that there exist $\alpha \geq 1$, $\beta>0$, $a,b\geq 0$ such that, for each  $\lambda \in \overline{\mathbb{C}_{-}}\setminus\{0\}$, 
\begin{equation*}
 \|(\lambda+A)^{-1}\|_{\mathcal{L}(X)}\lesssim \left\{\begin{array}{cc}
  |\lambda|^{-\alpha}\log(1/|\lambda|)^{a}, & |\lambda|\leq 1 \\
|\lambda|^{\beta}\log(|\lambda|)^{b}, & |\lambda|\geq 1.
\end{array}\right.
\end{equation*}
Let $\sigma, \tau$ be such that $\sigma\geq \alpha-1$ and $\tau\geq  \beta+1/r$. Then, for
each $\rho \in \left[0, \min\left\{\frac{\sigma+1}{\alpha}-1,\frac{\tau-r^{-1}}{\beta}-1\right\}\right]$  and each $\delta>1-1/r$, where $r\in [1,\infty]$ is such that $\frac{1}{r}=\frac{1}{p}-\frac{1}{p'}$, there exist  $C_{\rho,\delta}>0$ and $t_0>1$ so that for each $t\ge1$,
\begin{equation}
\|T(t)A^{\sigma}(1+A)^{-\sigma-\tau}(2\pi-i\log(A))^{-c(\left \lceil{\rho}\right \rceil +1)-1/r-\delta}\|_{\mathcal{L}(X)}\leq C_{\rho,\delta} t^{-\rho}, 
\end{equation}
with $c=\max\{a,b\}$.
\end{rm}
\end{theorem}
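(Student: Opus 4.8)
The plan is to mimic the structure of the proof of Theorem~\ref{theoem 1.9} (Theorem 4.9 in \cite{rozendaal}) but using the weighted spaces $X^\mu_\nu(\upsilon)$ adapted to both singularities, combined with the logarithmic resolvent bounds. First I would reduce to integer decay rates $\rho=n\in\mathbb{N}\cup\{0\}$; the general $\rho$ then follows by the moment/interpolation argument already used in the proof of Proposition~\ref{theor3.1}, inserting a complete Bernstein function (here a product of fractional powers of $1+A$, $A^{-1}$ and $2\pi-i\log(A)$, which is in $\mathcal{CBF}$ by Proposition~\ref{theor2.3}-(c)) as the regularizer. So fix $n\in\mathbb{N}\cup\{0\}$ and set the weights $\mu:=(n+1)\alpha-1$, $\nu:=(n+1)\beta+1/r$ (with $1/r=0$ if $p=2$), and $\upsilon:=c(n+1)+1/r+\delta$, where $c=\max\{a,b\}$; the target is a bound $\|T(t)\|_{\mathcal{L}(X^\mu_\nu(\upsilon),X)}\lesssim t^{-n}$.

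The key step is a resolvent estimate analogous to Proposition~\ref{prop3.1}, but now covering a neighbourhood of $0$ as well as of $\infty$, and tracking the $2\pi-i\log(\cdot)$ weight. Concretely, I would show that for each $k\in\{1,\dots,n+1\}$ the map $\xi\mapsto R(i\xi,A)^k$, acting from $X^\mu_\nu(\upsilon)$ to $X$, has the following two properties: (i) it is $L^1$ on a neighbourhood of $0$ and on a neighbourhood of $\infty$ (which gives membership in $\mathcal{M}_{1,\infty}$ after multiplying by a suitable cutoff $\psi\in C_c^\infty(\mathbb{R})$ with $\psi\equiv 1$ near $0$ and supported away from... — actually, following Theorem~\ref{theor4.3}, $\psi$ is chosen so that $\psi\equiv 1$ on $[-1,1]$); and (ii) on the complement, $\|(1-\psi(\xi))R(i\xi,A)^k\|_{\mathcal{L}(X^\mu_\nu(\upsilon),X)}\in L^r(\mathbb{R})$, whence by Proposition~\ref{prop2.1} it lies in $\mathcal{M}_{p,p'}(\mathbb{R};\mathcal{L}(X^\mu_\nu(\upsilon),X))$. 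The estimate near $\infty$ uses the $|\lambda|^\beta\log(|\lambda|)^b$ bound exactly as in Proposition~\ref{prop3.1} (with $\tilde c\ge b$ absorbed by $\upsilon$, since $c\ge b$); the estimate near $0$ uses the $|\lambda|^{-\alpha}\log(1/|\lambda|)^a$ bound together with the identity $\eqref{eqq2.7}$ relating $R(\lambda,A^{-1})$ and $R(1/\lambda,A)$, so that the singularity at $0$ of $A$ becomes a singularity at $\infty$ of $A^{-1}$ and the same resolvent-splitting trick applies; here the factor $A^\mu(1+A)^{-\mu}$ in $\Phi^\mu_\nu(\upsilon)$ plays the role that $(1+A)^{-\nu}$ played at $\infty$, and $c\ge a$ lets $\upsilon$ absorb the $\log(1/|\lambda|)^a$ weight. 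The constant $\delta>1-1/r$ is exactly what is needed to make the two half-line integrals $\int |\log|^{(1+\delta)/r}/(1+|\xi|)^{1+\cdots}$ and the corresponding integral near $0$ converge after the power-weight cancellations, just as $\delta>0$ was needed in Theorem~\ref{theo4.4}.

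Once these multiplier memberships are in place for $k\in\{n-1,n,n+1\}$ (which follows since they hold for all $k\in\{1,\dots,n+1\}$, and for $n=0$ one handles $k=1$ together with the trivial cases), I would verify the remaining hypotheses of Theorem~\ref{theor4.3} with $Y:=X^\mu_\nu(\upsilon)$: $Y$ embeds continuously in $X$ and is isomorphic to $X$ via $\Phi^\mu_\nu(\upsilon)$ (hence has Fourier type $p$), $T(t)$ leaves $Y$ invariant with $\|T(t)|_Y\|_{\mathcal{L}(Y)}\le C\|T(t)\|_{\mathcal{L}(X)}$ because $T(t)$ commutes with $\Phi^\mu_\nu(\upsilon)$ (commutation with $(1+A)^{-\mu-\nu}$, $A^\mu$ and $\log(A)$-functions, cf. Lemma~\ref{lemma2.3} and the functional-calculus rules), and the required dense subspace with $[t\mapsto t^n T(t)y]\in L^1$ is obtained by a direct resolvent-integral representation of $t^n T(t)\Phi^\mu_\nu(\upsilon)$ and integration by parts, exactly as in Proposition~\ref{theor3.1}. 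Theorem~\ref{theor4.3} then yields $\sup_{t\ge 0} t^n\|T(t)\|_{\mathcal{L}(Y,X)}<\infty$, i.e. $\|T(t)A^\sigma(1+A)^{-\sigma-\tau}(2\pi-i\log(A))^{-c(\lceil\rho\rceil+1)-1/r-\delta}\|_{\mathcal{L}(X)}\lesssim t^{-n}$ for the integer case (with $\sigma=\mu$, $\tau=\nu-1/r+$ appropriate shift), and the interpolation step upgrades this to arbitrary $\rho$ in the stated range, with the $\lceil\rho\rceil$ in the log-exponent arising because the weight $\upsilon$ must be chosen using $n=\lceil\rho\rceil$ (the larger integer bracketing $\rho$). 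The main obstacle I expect is the $L^r$-integrability near $0$: one must carefully check that the power $\alpha\ge 1$ together with the choices $\sigma>\alpha-1$ and the log-weight exponent $\upsilon$ really do make $\|(1-\psi)R(i\xi,A)^k\|_{\mathcal{L}(Y,X)}$ behave like $|\xi|^{(\sigma+1)/\alpha\cdot\text{(something)}}$ times a negative power of $|\log|\xi||$ integrable at $0$ — this is where the identity $\eqref{eqq2.7}$ and Lemma~A.1-type pointwise bounds on $(s+|\xi|)^{-1}$ kernels have to be combined with care, and where the hypothesis $\delta>1-1/r$ (rather than $\delta>0$) becomes essential.
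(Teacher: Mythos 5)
Your proposal follows essentially the same route as the paper's proof: fix an integer $n$, set $\mu=(n+1)\alpha-1$, $\nu=(n+1)\beta+1/r$, $\upsilon=c(n+1)+1/r+\delta$, establish the weighted resolvent bounds near zero and near infinity (the paper's Propositions~\ref{prop3.2} and~\ref{prop3.3}), feed them into Theorem~\ref{theor4.3} via Proposition~\ref{prop2.1} with a cutoff $\psi\equiv 1$ on $[-1,1]$, and upgrade to general $\rho$ by the moment/interpolation argument with the $\lceil\rho\rceil$ correction on $\upsilon$. The one place you diverge technically is the near-zero estimate: you propose transferring it to a near-infinity estimate for $A^{-1}$ via identity~\eqref{eqq2.7}, whereas the paper's Proposition~\ref{prop3.2}-(a) instead works directly with a Riesz--Dunford representation on a keyhole contour for the bounded approximants $A_\varepsilon$ and a limiting argument; either route is viable, and the hypothesis $\delta>1-1/r$ is what forces the log-exponent $\zeta=1/r+\delta$ past $1$ in Proposition~\ref{prop3.2}-(a), which is the genuine constraint (your explanation via the $L^r$ tail at infinity alone would only require $\delta>0$).
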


In order to prove Theorem~\ref{teo4.4}, some preparation is required. The next result is the version of Proposition~\ref{theor3.1} in this setting, and consists in the result stated in Theorem~\ref{teo4.4} in case $p=1$.

\begin{proposition}\label{theor3.3}
\begin{rm}
Let $A$ be an injective sectorial operator defined in the Banach space $X$ such that $-A$ generates the $C_0$-semigroup $(T(t))_{t\ge0}$ on $X$. Suppose that there exist $\alpha\ge 1$,~$\beta>0$, $a,b\ge 0$ such that, for each  $\lambda \in \overline{\mathbb{C}_{-}}\setminus\{0\}$
\begin{equation}\label{eeq42}
  \|(\lambda+A)^{-1}\|_{\mathcal{L}(X)} \lesssim \left\{
  \begin{array}{cc}
    |\lambda|^{-\alpha}(\log(1/|\lambda|))^{a}; & |\lambda|\leq 1 \\
    |\lambda|^{\beta}\log(|\lambda|)^{b}; & |\lambda|\geq 1 .
  \end{array}\right.
\end{equation}
Let $\sigma \geq \alpha-1$ and $\tau\geq \beta+1$. Then, for each $\delta>0$ and each $\rho \in \left[0,\min\left\{\frac{\sigma+1}{\alpha}-1,\frac{\tau-1}{\beta}-1\right\}\right]$, there exists $c_{\rho,\delta}>0$ such that for each $t\geq 1$,
\begin{equation}
\|T(t)A^{\sigma}(1+A)^{-\sigma-\tau}(2\pi-i\log(A))^{-c(\left \lceil{\rho}\right \rceil+1)-1-\delta)}\|_{\mathcal{L}(X)}\leq c_{\rho,\delta}t^{-\rho},
\end{equation}
where $c=\max\{a,b\}$.
\end{rm}
\end{proposition}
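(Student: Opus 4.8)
The plan is to mimic the proof of Proposition~\ref{theor3.1}, now keeping track of both the singularity at infinity (where the resolvent grows like $|\lambda|^\beta\log(|\lambda|)^b$) and the singularity at zero (where it grows like $|\lambda|^{-\alpha}\log(1/|\lambda|)^a$). As in the earlier argument, the statement is equivalent to showing that for each $n\in\mathbb{N}\cup\{0\}$ and each $\delta>0$ there is $C_{n,\delta}>0$ with
\begin{equation*}
\|T(t)A^{\sigma}(1+A)^{-\sigma-\tau}(2\pi-i\log(A))^{-c(n+1)-1-\delta}\|_{\mathcal{L}(X)}\le C_{n,\delta}\,t^{-n},\qquad t\ge1,
\end{equation*}
with $\sigma=\alpha(n+1)-1$, $\tau=\beta(n+1)+1$, and that the general $\rho\ge 0$ then follows by the same moment-inequality interpolation (using that $A^\sigma(1+A)^{-\sigma-\tau}(2\pi-i\log(A))^{-\upsilon}$ is, up to a complete Bernstein function of $A$, a fractional power of a sectorial operator). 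The key point is that the decay at both ends of the imaginary axis is dictated by the product of a power and a slowly varying (logarithmic) factor, so the integrand one builds below will be $L^1$ on $i\mathbb{R}\setminus\{0\}$.

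First I would fix $x\in X^{\sigma+\alpha}_{\tau+\beta}(\upsilon')$ for suitable indices and write, as in~\eqref{eqqq27}, an integral representation along the imaginary axis,
\begin{equation*}
g(t)=\frac{1}{2\pi i}\int_{i\infty}^{-i\infty} e^{-\lambda t}\,\frac{\lambda^{\mu}}{(1+\lambda)^{\mu+\nu}(2\pi-i\log\lambda)^{\upsilon}}\,R(\lambda,A)\,y\,d\lambda,
\end{equation*}
where $y=\Phi^{\mu}_{\nu}(\upsilon)^{-1}x$ is chosen in $\mathcal{D}(A)\cap\Ran(A)$ so that the usual resolvent identity (Lemma~\ref{lemmaB.1} and its analogue at zero) gives $g'(t)=-Ag(t)$, $g(0)=x$, hence $g(t)=T(t)x$. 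The multiplier $\lambda\mapsto \lambda^{\mu}(1+\lambda)^{-\mu-\nu}(2\pi-i\log\lambda)^{-\upsilon}$ lies in $H^\infty_0(S_\varphi)$ (this is exactly Remark~\ref{remark2.2}), so all the manipulations of the sectorial functional calculus are legitimate. Then I would integrate by parts $n$ times to produce
\begin{equation*}
t^{n}T(t)x=\frac{1}{2\pi i}\int_{i\mathbb{R}} e^{-\lambda t}\,p(\lambda,A)\,y\,d\lambda,
\end{equation*}
with $p(\lambda,A)$ a finite linear combination of terms of the form $\lambda^{\mu-i}(1+\lambda)^{-\mu-\nu-j}(2+\lambda)^{\text{(powers)}}(2\pi-i\log\lambda)^{-\upsilon-k}R(\lambda,A)^{n-\ell+1}$, $0\le i,j,k,\ell\le n$. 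Near $|\lambda|=1$ these are bounded; near $\infty$ each factor $R(\lambda,A)$ contributes $|\lambda|^\beta\log|\lambda|^b$ and the denominator kills it provided $\nu\ge\beta(n+1)+1$ and $\upsilon$ absorbs $n+1$ powers of $\log$; near $0$ each factor $R(\lambda,A)$ contributes $|\lambda|^{-\alpha}\log(1/|\lambda|)^a$ and the factor $\lambda^{\mu}$ with $\mu\ge\alpha(n+1)-1$ compensates it, again with $\upsilon$ absorbing the logarithmic excess. The choice $c=\max\{a,b\}$ and exponent $c(n+1)+1+\delta$ is precisely what makes $\int_{i\mathbb{R}}\|p(\lambda,A)\|_{\mathcal{L}(X)}\,d\lambda$ finite (the extra $1+\delta$ handling the borderline $\int |\lambda|^{-1}\log(1/|\lambda|)^{-1-\delta}d\lambda<\infty$ near $0$ and, symmetrically, near $\infty$, using Proposition~\ref{cor2} to compare logarithmic factors). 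This yields $\|t^nT(t)x\|\lesssim\|x\|_{X^\mu_\nu(\upsilon)}$, hence~$\|T(t)\|_{\mathcal{L}(X^\mu_\nu(\upsilon),X)}\lesssim t^{-n}$ after a density argument, using~\eqref{eqq41}.

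The main obstacle I anticipate is the careful bookkeeping of the two logarithmic scales simultaneously: unlike in Proposition~\ref{theor3.1}, here one must verify that a single power $c(\lceil\rho\rceil+1)+1+\delta$ of $(2\pi-i\log(A))^{-1}$ (equivalently, of $\log$-type weights both near $0$ and near $\infty$) suffices to tame the worst of the two endpoints at every stage of the integration by parts, and that the interpolation step preserves this — for which one writes $f(\lambda)=\lambda^{a_1}((2\pi-i\log\lambda)^{-1})^{?}\cdots$ as a complete Bernstein function using Proposition~\ref{theor2.3}-(c) with exponents summing to at most $1$, so that Lemma~\ref{lemma21} and the moment inequality (Proposition~4.6 in~\cite{haase}) apply. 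The rest is a direct transcription of the proof of Proposition~\ref{theor3.1}; once the $s=n$ case is in hand, choosing $\theta\in[0,1)$ with $s=(1-\theta)n+\theta(n+1)$ and applying the moment inequality to the sectorial operator $(f(A))^{-1}$ finishes the general case exactly as there.
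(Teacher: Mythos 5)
Your proposal is correct and follows essentially the same route as the paper: contour integral $g(t)$ along $i\mathbb{R}$, identification $g(t)=T(t)x$ by uniqueness of the Cauchy problem, $n$-fold integration by parts, an $L^1$-estimate that accounts separately for the singularities at $0$ and at $\infty$ (with the exponent $c(n+1)+1+\delta$ ensuring a logarithmic tail $|\log|\lambda||^{-1-\delta}$ at both borderline cases), a density argument, and finally a moment-inequality interpolation to pass from integer $n$ to general $\rho\ge 0$. The only mild departure is in the interpolation step: the paper here cites the moment-like inequality of Lemma~4.2 in~\cite{rozendaal}, applied directly to the operators $\Phi^{\alpha_i}_{\beta_i}(A)(2\pi-i\log(A))^{-\tilde\upsilon}$ with $\tilde\upsilon$ fixed at $c(\lceil s\rceil+1)+1+\delta$, whereas you propose re-running the complete-Bernstein-function argument of Proposition~\ref{theor3.1} (Proposition~\ref{theor2.3}-(c) plus Proposition~4.6 of~\cite{haase}); both approaches work, and your version would require one to check that the relevant product $\lambda^{a_1}(1+\lambda)^{a_2}$-type function is in $\mathcal{CBF}$ while keeping the $(2\pi-i\log(A))^{-\tilde\upsilon}$ factor fixed (the logarithmic power is not interpolated — only the polynomial exponents $\mu,\nu$ are), so the paper's direct use of Lemma~4.2 in~\cite{rozendaal} is slightly cleaner here.
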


\begin{proof} 

Let $n\in \mathbb{N}\cup\{0\}$ and set $\mu:=\alpha(n+1)-1$, $\nu:=(n+1)\beta+1$, $\upsilon:=c(n+1)+1+\delta$. For each $x\in X^{\mu}_{\nu+1}(\upsilon)$, let
\begin{eqnarray*}
y:=(\Phi^{\mu}_{\nu}(\upsilon))^{-1}x&=&(2\pi-i\log(A))^{\upsilon}(1+A)^{\nu+\mu}A^{-\mu}x \\
&=& (2\pi-i\log(A))^{\upsilon}(1+A)^{\nu+\mu}A^{-\mu}\left(A^{\mu}(1+A)^{-\mu-\nu-1}(2\pi-i\log(A))^{-\upsilon}z\right)\\
&=& B^{-1}z\in \mathcal{D}(A),
\end{eqnarray*}
where $z:=(\Phi^{\mu}_{\nu+1}(\upsilon))^{-1}x$. Let $g:[0,\infty)\rightarrow X$ be defined by the law
\begin{equation*}
g(t):=\frac{1}{2\pi i} \int_{i\infty}^{-i\infty} e^{-\lambda t} \frac{\lambda^{\mu}}{(1+\lambda)^{\nu+\mu}(2\pi-i\log(\lambda))^{\upsilon}}R(\lambda,A)yd\lambda.
\end{equation*}
Note that for each $t\ge 0$, $g(t)$ is indeed an element of $X$ and it is differentiable. Namely, since $y\in \mathcal{D}(A)$, then $\lambda\mapsto \dfrac{\lambda^{\mu+1}}{(1+\lambda)^{\nu+\mu}(2\pi-i\log(\lambda))^{\upsilon}}R(\lambda,A)y$ is integrable in $i\mathbb{R}$. Therefore, by dominated convergence, one gets
\begin{equation*}
g'(t)=-\frac{1}{2\pi i}\int_{i\infty}^{-i\infty}e^{-\lambda t} \frac{\lambda^{\mu+1}}{(1+\lambda)^{\nu+\mu}(2\pi-i\log(\lambda))^{\upsilon}}R(\lambda,A)yd\lambda.
\end{equation*}
Moreover, by Lemma \ref{lemmaB.1},
\begin{eqnarray*}
g'(t)&=& \frac{1}{2\pi i}\int_{i\infty}^{-i\infty}e^{-\lambda t} \frac{\lambda^{\mu}}{(1+\lambda)^{\nu+\mu}(2\pi-i\log(\lambda))^{\upsilon}}(-AR(\lambda,A)y-y)d\lambda\\
&=& -A\frac{1}{2\pi i}\int_{i\infty}^{-i\infty}e^{-\lambda t} \frac{\lambda^{\mu}}{(1+\lambda)^{\nu+\mu}(2\pi-i\log(\lambda))^{\upsilon}}R(\lambda,A)yd\lambda-\\
&-&\frac{1}{2\pi i}\int_{i\infty}^{-i\infty}e^{-\lambda t} \frac{\lambda^{\mu}}{(1+\lambda)^{\nu+\mu}(2\pi-i\log(\lambda))^{\upsilon}}yd\lambda \\
&=& -Ag(t)
\end{eqnarray*}

Then, $g'(t)=-Ag(t)$ for each $t\geq 0$, and $g(0)=x$. Therefore, $g(t)=T(t)x$, by the uniqueness of the solution to the Cauchy problem associated with $-A$.

Integration by parts yields
\begin{equation*}
    t^n T(t)=\frac{1}{2\pi i} \int_{i\mathbb{R}}e^{-\lambda t} p(\lambda,A)y d\lambda,
\end{equation*}
where $p(\lambda,A)$ is a finite linear combination of terms of the form
\begin{equation*}
    \frac{\lambda^{\mu-k}R(\lambda,A)^{n-l+1}}{(1+\lambda)^{\mu+\nu+(l-k)}(2\pi-i\log(\lambda))^{\upsilon+j}} \ \ \text{and} \ \ \frac{\lambda^{\mu-k-m}R(\lambda,A)^{n-l+1}}{(1+\lambda)^{\mu+\nu+j}(2\pi-i\log(\lambda))^{\upsilon+(l-k)}},
\end{equation*}
where $0 \le j \le k\le l \leq  n$ and $l-k\leq m\leq l-k+1$.

Then, for each $t>0$,
\begin{eqnarray*}
\|t^{n}T(t)x\|&\leq& \frac{1}{2\pi} \int_{i\mathbb{R}}|e^{-\lambda t}| \|p(\lambda,A)y\| d\lambda\\
&\leq& \frac{1}{2\pi} \int_{i\mathbb{R}} \|p(\lambda,A)\|_{\mathcal{L}(X)} d\lambda \|y\|\leq C\|(\Phi^{\mu}_{\nu}(\upsilon))^{-1}x\|\lesssim\|x\|_{X^{\mu}_{\nu}(\upsilon)}.
\end{eqnarray*}

Since $X^\mu_{\nu+1}(\upsilon)$ is dense in $X^{\mu}_{\nu}(\upsilon)$, it follows from the previous discussion that for each $t\ge 1$,
\begin{equation*}
    \|T(t)\|_{\mathcal{L}(X^\mu_{\nu}(\upsilon),X)}\lesssim t^{-n}. 
\end{equation*}


In general, for each $s\geq 0$, let $n\in \mathbb{N}\cup\{0\}$ be such that $n<s<n+1$; then, there exists $\theta=\theta(s)\in (0,1)$ so that $s=(1-\theta)n+\theta(n+1)$. Let $\alpha_1:=\alpha(n+1)-1$, $\alpha_2:=\alpha(n+2)-1$, $\beta_1:=\beta(n+1)+1$ and  $\beta_2:=\beta(n+2)+1$, then $\alpha(s+1)-1=(1-\theta)\alpha_1+\theta\alpha_2$ and $\beta(s+1)+1=(1-\theta)\beta_1+\theta\beta_2$.  Set $\Tilde{\upsilon}:=c(\left \lceil{s}\right \rceil+1)+1+\delta $. Then, by a moment-like inequality (Lemma~4.2 in~\cite{rozendaal}), it follows that 
 \begin{eqnarray*}
   \|T(t)\Phi^{\mu}_{\nu}(A)(2\pi-i\log(A))^{-\Tilde{\upsilon}}\|_{\mathcal{L}(X)}&\lesssim&\|T(t)\Phi^{\alpha_1}_{\beta_1}(A)(2\pi-i\log(A))^{-\Tilde{\upsilon}}\|^{1-\theta}_{\mathcal{L}(X)}\cdot\\
   &&\|T(t)\Phi^{\alpha_2}_{\beta_2}(A)(2\pi-i\log(A))^{-\Tilde{\upsilon}}\|^{1-\theta}_{\mathcal{L}(X)}\lesssim t^{-s}.
\end{eqnarray*}
\end{proof}

The following result is analogous to Proposition \ref{prop3.1}; 
its proof is presented in Appendix~\ref{proofprop}.
\begin{proposition}\label{prop3.2}
\begin{rm}
Let $A\in \text{Sect}_X(\omega_A)$, with $\overline{\mathbb{C}_{-}}\setminus\{0\} \subset \rho(A)$. The following statements hold:
\begin{enumerate}[(a)]
\item Let $\alpha\geq 1$ and $a\geq 0$ be such that, for each $\lambda \in \overline{\mathbb{C}_{-}}\setminus\{0\}$ with $|\lambda|<1$, one has
\begin{equation}\label{eqq42}
\|(\lambda+A)^{-1}\|_{\mathcal{L}(X)}\lesssim |\lambda|^{-\alpha}(\log(1/|\lambda|))^a;
\end{equation}
then, for each $\zeta>1$,
\begin{equation*}
\left\{\lambda (2\pi-i\log(\lambda))^{\zeta}(\lambda+A)^{-1}\mid \lambda\in i\mathbb{R}\setminus\{0\}, |\lambda|<1\right\}\subset\mathcal{L}( X^{\alpha-1}(a+\zeta),X)
\end{equation*}
is uniformly bounded.
\item Let $\alpha\geq 1$, $\beta \ge 0$, $\beta_0\in[0,1)$ and $b\ge 0$. If 
\begin{equation}\label{eq12}
 \sup \{|\lambda|^{-\beta}\log(1+|\lambda|)^{-b}\Vert(\lambda+A)^{-1}\Vert\mid \lambda \in \overline{\mathbb{C}_{-}}\setminus\{0\}, |\lambda|\geq 1\}<\infty, 
\end{equation}
then for each $\zeta>1$,
\begin{equation}\label{eq13}
\left\{\lambda^{\beta_0}(2\pi-i\log(\lambda))^{\zeta}  (\lambda+A)^{-1}\mid \lambda \in i\mathbb{R}, |\lambda|\geq 1\right\}\subset\mathcal{L}( X^{\alpha}_{\beta+\beta_0}(\zeta+b),X)
\end{equation}
is uniformly bounded. 
\end{enumerate}
\end{rm}
\end{proposition}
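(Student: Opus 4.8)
The statement to prove is Proposition~\ref{prop3.2}, which has two parts. The key point is that both parts are the ``singularity at zero and infinity'' analogues of Proposition~\ref{prop3.1}, whose proof I have in front of me. Let me think about how to adapt that proof.

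Looking at the proof of Proposition~\ref{prop3.1}, the strategy is: write $(\lambda+A)^{-1}(1+A)^{-\beta-\beta_0}\log(2+A)^{-\tilde c}$ via the functional calculus integral (Remark~\ref{remark2.2}) over a sectorial path $\Gamma$, then use a resolvent identity (Lemma~\ref{lemmaB2}, some partial fractions trick $\frac{1}{z+\lambda-1/2}$) to split the integral into two pieces: one that reproduces $\frac{(\lambda+A)^{-1}}{(1-\lambda)^\beta\log(2-\lambda)^b}$ times a constant, and another piece $T_\lambda$ that is controlled by an integral estimate. Then Lemma~A.1 in \cite{rozendaal} provides the key pointwise bound $\frac{1}{|z+1/2|^{\beta+\beta_0-\varepsilon}|\log(3/2+z)|^{\tilde c}|z+\lambda-1/2|}\lesssim \frac{1}{1+|\lambda|}$, and one concludes using the hypothesis on the resolvent growth plus $\beta_0 < 1$ so that $|\log(2-\lambda)|^\zeta/(1+|\lambda|)^{1-\beta_0}\to 0$.

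So for part (b) — which concerns $|\lambda|\geq 1$ — the plan is to mimic this argument but with the operator $A^\alpha(1+A)^{-\alpha-\beta-\beta_0}(2\pi-i\log(A))^{-\zeta-b}$ in place of $(1+A)^{-\beta-\beta_0}\log(2+A)^{-\tilde c}$, invoking Remark~\ref{remark2.2} with $\upsilon_1 = b$ (for the $\log(2+A)$-type factor, though here it is the $2\pi-i\log$ version; I would need the function $f_{\alpha,\beta,\upsilon_1,\upsilon_2}$ with $\upsilon_2$ carrying the $(2\pi-i\log z)$ factor). Since $|\lambda|\geq 1$, the behaviour near $z=0$ of the $A^\alpha$ factor is harmless; the estimate is again driven by the large-$z$ decay and by $\beta_0<1$. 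One again gets the split into a reproducing term, $\frac{\lambda^{\beta_0}(2\pi-i\log\lambda)^\zeta}{(1+|\lambda|)^\beta\log(1+|\lambda|)^b}\cdot\frac{1}{|\lambda|^{\beta_0}}\cdots$ — wait, rather the reproducing term is $\frac{\lambda^{\beta_0}(2\pi-i\log\lambda)^\zeta(\lambda+A)^{-1}}{(1-\lambda)^{\alpha+\beta+\beta_0}(2\pi-i\log(1-\lambda))^{\zeta+b}}$-ish — and a remainder $T_\lambda$, with the $(2\pi-i\log\lambda)$ power absorbing the $\log$-growth in~\eqref{eq12}, and $\beta_0\in[0,1)$ guaranteeing the remainder vanishes at infinity. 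The requirement $\zeta>1$ (rather than $\zeta\geq 0$) is presumably needed so that the $\zeta+b$-th power of $(2\pi-i\log z)$ gives an integrable factor on $\Gamma$ near $z=0$ when combined with the $z^\alpha$ — again routine via Lemma~A.1 in \cite{rozendaal}.

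For part (a) — the genuinely new ingredient, concerning $|\lambda|<1$ — the plan is to use the substitution $A\mapsto A^{-1}$, exploiting Lemma~\ref{lemma21}(a): $A^{-1}$ is sectorial and $\lambda(\lambda+A^{-1})^{-1} = 1 - \frac1\lambda(\frac1\lambda + A)^{-1}$. The hypothesis~\eqref{eqq42} on $\|(\lambda+A)^{-1}\|$ for $|\lambda|<1$ translates, via this identity with $\lambda\leftrightarrow 1/\lambda$, into polynomial-type growth of $\|(\mu+A^{-1})^{-1}\|$ for $|\mu|\geq 1$ — i.e., exactly the hypothesis of part (b) applied to $A^{-1}$ (with the roles of $\alpha$ and the exponent at infinity swapped, $a$ playing the role of $b$, and $\beta_0 = 0$). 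Then $X^{\alpha-1}(a+\zeta) = \Ran(A^{\alpha-1}(1+A)^{-(\alpha-1)}(2\pi-i\log A)^{-(a+\zeta)})$; using $\log(A^{-1}) = -\log(A)$ (Lemma~\ref{lemma2.3}(c), and note $2\pi - i\log(A^{-1}) = 2\pi + i\log A$, whose modulus matches that of $2\pi - i\log A$ up to a bounded multiplicative factor on the relevant region) and $(1+A^{-1})^{-1} = A(1+A)^{-1}$, one identifies $X^{\alpha-1}(a+\zeta)$ with the space $X^0_{\alpha-1}(a+\zeta)$ built from $A^{-1}$, up to isomorphism. Applying the already-proven part (b) to $A^{-1}$ then yields the claim. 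The main obstacle I anticipate is keeping the bookkeeping of the logarithm operator straight under inversion: $\log(A^{-1})=-\log(A)$ is clean, but I must check that $(2\pi \mp i\log A)^{-\zeta}$ are comparable as bounded operators (they are, since $2\pi\pm i\log A$ are both sectorial of angle $<\pi$ and their fractional powers differ by a bounded invertible operator — or more carefully, one works directly with the function $2\pi - i\log z$ reflected under $z\mapsto 1/z$), and that the isomorphism between the two descriptions of the interpolation-type space is bounded both ways, which follows from~\eqref{eqq41}-type estimates. Modulo this translation, part (a) reduces entirely to part (b), and part (b) is a faithful line-by-line adaptation of the proof of Proposition~\ref{prop3.1}; I will present part (b) in the Appendix (as announced) with the modifications highlighted, and deduce part (a) from it by the inversion argument.
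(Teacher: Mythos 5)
Your plan for part (b) is essentially the paper's: for $\alpha=1$ the paper does not rerun the contour argument but invokes the Closed Graph Theorem to see that $(\log(2+A))^{\tilde c}(2\pi-i\log A)^{-\tilde c}$ is bounded, so the claim reduces directly to Proposition~\ref{prop3.1}; for $\alpha\geq 2$ it runs the $H^\infty_0$-calculus argument of Remark~\ref{remark2.2} with the factor $z^{\alpha-1}(1+z)^{-(\alpha-1)}$ and Lemma~5.9 of Rozendaal--Veraar; and the range $\alpha\in(1,2)$ is handled by the moment inequality. Modulo that Closed Graph Theorem shortcut, your account of (b) is accurate.

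The proposed reduction of part (a) to part (b) by inversion has a genuine gap, and this is precisely where the paper does its real work. Write $\mu=1/\lambda$ (so $|\mu|\geq 1$) and use $\lambda(\lambda+A)^{-1}=1-\mu(\mu+A^{-1})^{-1}$ together with $2\pi-i\log\lambda=2\pi+i\log\mu$ on $i\mathbb{R}$. Denoting by $\Phi$ the damping operator for $X^{\alpha-1}(a+\zeta)$, the quantity in (a) becomes
\[
\lambda(2\pi-i\log\lambda)^\zeta(\lambda+A)^{-1}\Phi
=(2\pi+i\log\mu)^\zeta\,\Phi-(2\pi+i\log\mu)^\zeta\,\mu(\mu+A^{-1})^{-1}\Phi.
\]
The first summand is a fixed bounded operator multiplied by the scalar $(2\pi+i\log\mu)^\zeta$, whose modulus tends to infinity as $|\mu|\to\infty$ (i.e.\ $|\lambda|\to 0$); no operator-valued damping can absorb an unbounded scalar. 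The second summand carries the power $\mu^{1}$, which is exactly the excluded endpoint $\beta_0=1$ of part (b). Taking $\beta_0=0$, as you propose, controls $(2\pi+i\log\mu)^\zeta(\mu+A^{-1})^{-1}$, which under inversion controls $\lambda^{2}(\lambda+A)^{-1}$ --- one power of $\lambda$ too many. There is no choice of $\beta_0\in[0,1)$ that closes this, because the truth of (a) rests on keeping $\lambda$ and $(2\pi-i\log\lambda)^\zeta$ together: $\lambda(2\pi-i\log\lambda)^\zeta\to 0$ as $\lambda\to 0$, and the inversion identity splits them apart. (Your worry about comparing $(2\pi-i\log A)^{\pm\upsilon}$ with $(2\pi+i\log A)^{\pm\upsilon}$ is a secondary issue and can be handled via $2\pi+i\log A=4\pi-(2\pi-i\log A)$; it is not where the argument fails.)

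What the paper actually does for (a) is direct: it derives explicit keyhole-contour integral representations of $(\lambda+A)^{-1}(1+A)^{-\nu}(2\pi-i\log A)^{-k}$ for integer $k$, by working with $A_\varepsilon$, letting $\theta\to\pi$, $r\to 0$, $R\to\infty$, and then $\varepsilon\to 0^+$, and interpolates in $k$ by the moment inequality; the kernel integrals that appear are recognized and bounded via the Stieltjes representations of the complete Bernstein functions in Example~\ref{ex3.1}(d)--(f), namely \eqref{loginver}, \eqref{logquad}, \eqref{cubo}. The cases $\alpha\geq 2$ and $\alpha\in(1,2)$ are then treated by the $H^\infty_0$ calculus and the moment inequality as in (b). You would need to carry out this direct argument; the inversion shortcut does not deliver (a).
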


\begin{proposition}\label{prop3.3}
\begin{rm}
Let $A\in \text{Sect}_X(\omega_A)$ be such that $\overline{\mathbb{C}_{-}}\setminus\{0\} \subset \rho(A)$ and let $\alpha\ge 1$, $\beta, a,b\geq 0$. Then,  
\begin{equation}\label{eqprop3.3}
 \|(\lambda+A)^{-1}\|_{\mathcal{L}(X)}\lesssim \left\{\begin{array}{cc}
  |\lambda|^{-\alpha}\log(1/|\lambda|)^{a}, & |\lambda|\leq 1 \\
|\lambda|^{\beta}\log(|\lambda|)^{b}, & |\lambda|\geq 1
\end{array}\right.
\end{equation}
implies
\begin{equation*}
\sup\{\|(\lambda+A)^{-1}\|_{\mathcal{L}(X^{\alpha n}_{\beta n}(cn),X)}\mid \lambda \in i\mathbb{R}\setminus\{0\}\}<\infty,
    \end{equation*}
where $c=\max\{a,b\}$ and $n\in \mathbb{N}$. 
\end{rm}    
\end{proposition}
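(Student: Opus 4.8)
The plan is to reduce, via the two-sided estimate~\eqref{eqq41}, to proving that $\sup_{\lambda\in i\mathbb{R}\setminus\{0\}}\|(\lambda+A)^{-1}\Phi^{\alpha n}_{\beta n}(cn)\|_{\mathcal{L}(X)}<\infty$. Since $A^{\alpha}$, $(1+A)^{-\alpha-\beta}$ and $(2\pi-i\log(A))^{-c}$ are commuting bounded functions of $A$, the fractional power laws give $\Phi^{\alpha n}_{\beta n}(cn)=(\Phi^{\alpha}_{\beta}(c))^{n}$, so that
\[
\|(\lambda+A)^{-1}\Phi^{\alpha n}_{\beta n}(cn)\|_{\mathcal{L}(X)}\le \|(\lambda+A)^{-1}\Phi^{\alpha}_{\beta}(c)\|_{\mathcal{L}(X)}\,\|\Phi^{\alpha}_{\beta}(c)\|_{\mathcal{L}(X)}^{\,n-1},
\]
and it suffices to treat $n=1$, i.e.\ to bound $\|(\lambda+A)^{-1}A^{\alpha}(1+A)^{-\alpha-\beta}(2\pi-i\log(A))^{-c}\|_{\mathcal{L}(X)}$ uniformly in $\lambda\in i\mathbb{R}\setminus\{0\}$. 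I would then split $i\mathbb{R}\setminus\{0\}=\{0<|\lambda|\le1\}\cup\{|\lambda|\ge1\}$ and exploit that $c=\max\{a,b\}$ dominates both $a$ and $b$. For $|\lambda|\ge1$ the operator $A^{\alpha}(1+A)^{-\alpha}$ is bounded and $(2\pi-i\log(A))^{-(c-b)}$ is bounded, so one is left with $\|(\lambda+A)^{-1}(1+A)^{-\beta}(2\pi-i\log(A))^{-b}\|_{\mathcal{L}(X)}$, which is handled exactly as in the proof of Proposition~\ref{prop3.1} (equivalently, Proposition~\ref{prop3.2}(b) with $\beta_0=0$), the at-infinity bound $\|(\lambda+A)^{-1}\|_{\mathcal{L}(X)}\lesssim|\lambda|^{\beta}\log(|\lambda|)^{b}$ being absorbed by the weights $(1+A)^{-\beta}$ and $(2\pi-i\log(A))^{-b}$. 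For $0<|\lambda|\le1$, instead $(1+A)^{-\alpha-\beta}$ and $(2\pi-i\log(A))^{-(c-a)}$ are bounded, and one is left with $\|(\lambda+A)^{-1}A^{\alpha}(2\pi-i\log(A))^{-a}\|_{\mathcal{L}(X)}$, the analogue of Proposition~\ref{prop3.2}(a): $A^{\alpha}$ absorbs the $|\lambda|^{-\alpha}$ blow-up near the origin while $(2\pi-i\log(A))^{-a}$ absorbs the residual $\log(1/|\lambda|)^{a}$ factor.

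Concretely, in each region I would write $\Phi^{\alpha}_{\beta}(c)=\frac{1}{2\pi i}\int_{\Gamma}f(z)R(z,A)\,dz$ with $f(z)=z^{\alpha}(1+z)^{-\alpha-\beta}(2\pi-i\log(z))^{-c}\in H^{\infty}_{0}$ (as in Remark~\ref{remark2.2}), along a contour $\Gamma$ that may be deformed to run close to $i\mathbb{R}$, where the hypotheses~\eqref{eqprop3.3} are available; then, using the resolvent identity, $(\lambda+A)^{-1}f(A)$ splits as a residue term $f(-\lambda)(\lambda+A)^{-1}$ plus a remainder integral $\frac{1}{2\pi i}\int_{\Gamma}\frac{f(z)}{\lambda+z}R(z,A)\,dz$. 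The residue term is controlled by $\|f(-\lambda)(\lambda+A)^{-1}\|_{\mathcal{L}(X)}\le|f(-\lambda)|\,\|(\lambda+A)^{-1}\|_{\mathcal{L}(X)}$ and the explicit form of $f$, the cancellation of the logarithmic powers relying precisely on $c\ge\max\{a,b\}$ and on $\alpha\ge1$.

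The step I expect to be the main obstacle is the uniform bound on the remainder integral. It requires the two geometric facts that $|\lambda+z|\gtrsim\max\{|\lambda|,|z|\}$ for $z\in\Gamma$ and $\lambda\in i\mathbb{R}$, and that $|2\pi-i\log(z)|\asymp\max\{1,|\log|z||\}$ on $\Gamma$; combining these with sectoriality ($\|R(z,A)\|_{\mathcal{L}(X)}\lesssim|z|^{-1}$) reduces the remainder to a scalar integral essentially of the form $\int_{0}^{\infty}\frac{r^{\alpha-1}\,dr}{(1+r)^{\alpha+\beta}\max\{1,|\log r|\}^{c}\max\{|\lambda|,r\}}$, which must be shown to be bounded uniformly in $\lambda$ at both $r\to0$ and $r\to\infty$ (the $|\lambda|\ge1$ versus $|\lambda|\le1$ split being what makes the $r\to0$ part of the $|\lambda|\ge1$ case and the $r\to\infty$ part of the $|\lambda|\le1$ case innocuous); here $\alpha\ge1$ makes the behaviour near $r=0$ integrable — improving on the strict threshold needed in~\cite{rozendaal} — and $\beta>0$ together with $c\ge0$ takes care of $r\to\infty$. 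The required one-variable estimates follow the template of Lemma~A.1 in~\cite{rozendaal} and the computations in the proofs of Proposition~\ref{prop3.2}(a),(b) given in the Appendix; the only genuine novelty is the bookkeeping of the logarithmic weights, so that the bounds come out uniform in $\lambda$ rather than the $\lambda$-dependent bounds of Proposition~\ref{prop3.2}.
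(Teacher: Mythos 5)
Your reduction to $n=1$ via $\Phi^{\alpha n}_{\beta n}(cn)=(\Phi^{\alpha}_{\beta}(c))^n$ and submultiplicativity is fine, and the split of $i\mathbb{R}\setminus\{0\}$ at $|\lambda|=1$ is a reasonable organizing device. The genuine gap is in the claimed bound on the scalar remainder integral: the assertion that ``$\alpha\ge1$ makes the behaviour near $r=0$ integrable'' is false when $\alpha=1$ and $c\le1$, and this regime is within the scope of the statement (e.g.\ $a=b=0$, as in Corollary~\ref{cor1.4}). Take $\alpha=1$ and $0<|\lambda|\le1$; on the range $|\lambda|<r<1/2$ one has $\max\{|\lambda|,r\}=r$, so the relevant piece of
$\int_{0}^{\infty}\frac{r^{\alpha-1}\,dr}{(1+r)^{\alpha+\beta}\max\{1,|\log r|\}^{c}\max\{|\lambda|,r\}}$
reduces to $\int_{|\lambda|}^{1/2}\frac{dr}{r\,|\log r|^{c}}=\int_{\log 2}^{\log(1/|\lambda|)}u^{-c}\,du$, which grows like $(\log(1/|\lambda|))^{1-c}$ for $c<1$ and like $\log\log(1/|\lambda|)$ for $c=1$, hence is not uniformly bounded as $|\lambda|\to 0$. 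So the contour-plus-geometric-estimate argument you sketch delivers the conclusion only for $\alpha\ge 2$ (or for $\alpha>1$ after a moment inequality), not for $\alpha\ge 1$ as claimed.

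The paper handles $\alpha=1$ by a different mechanism, and that mechanism is the missing ingredient in your proposal. Rather than feeding $(\lambda+A)^{-1}$ into the $H^\infty_0$ contour representation of $\Phi^\alpha_\beta(c)$ directly, one first uses a resolvent-identity factorization (the identity~\eqref{eqq20} in the paper), which writes $(\lambda+A)^{-1}A(1+A)^{-1-\beta}(2\pi-i\log(A))^{-c}$ as a manifestly bounded term plus a multiple of $\lambda(\lambda+A)^{-1}(1+A)^{-\beta}(2\pi-i\log(A))^{-c}$; the point is that the $A$ in the numerator algebraically converts the singular $(\lambda+A)^{-1}$ into the mild $\lambda(\lambda+A)^{-1}$, without ever integrating $R(z,A)$ against a kernel with a nonintegrable $1/r$ singularity. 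The residual $\log(1/|\lambda|)^{a}$ growth of $\lambda(\lambda+A)^{-1}$ near the origin is then absorbed by $(2\pi-i\log(A))^{-c}$ through the moment inequality for the sectorial operator $(2\pi-i\log(A))^{-1}$ combined with the explicit contour estimates for $(2\pi-i\log(A))^{-j}$, $j=1,2,3$ (relations~\eqref{eqq54}--\eqref{eqq56} and their analogues, resting on the Stieltjes formulas~\eqref{loginver},~\eqref{logquad},~\eqref{cubo}). The intermediate range $\alpha\in(1,2)$ is then reached by moment-inequality interpolation between $\alpha=1$ and $\alpha=2$, and only for $\alpha\ge2$ does the direct residue-plus-remainder contour argument you propose close on its own. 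In short, you need a separate argument for $\alpha=1$ (and then interpolation to cover $1<\alpha<2$); the uniform-in-$\lambda$ estimate does not follow from the contour remainder alone.
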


\begin{proof}
We consider the following cases.

\

$\bullet$~{\bf{Case 1: $\alpha=1$}}.

\

{\bf{Case 1(a): $c\in (0,1)$}}. Note that for each $\lambda \in \rho(-A)$,
\begin{equation}\label{eqq20}
(\lambda+A)^{-1}A(1+A)^{-1-\beta}(2\pi-i\log(A))^{-c}=\frac{1}{1+\lambda}\left((1+A)^{-1-\beta}-\lambda(\lambda+A)^{-1}(1+A)^{-\beta}\right)(2\pi-i\log(A))^{-c}.
\end{equation}

By the moment inequality (recall that $(2\pi-i\log(A))^{-1}$ is sectorial) and by~\eqref{eqq54}, it follows from~\eqref{eqprop3.3} that for each $|\lambda|<1$,
\begin{eqnarray}\label{eqq55a}
\nonumber&&\|\lambda (\lambda+A)^{-1}(1+A)^{-\beta}(2\pi-i\log(A))^{-c}\|_{\mathcal{L}(X)}\lesssim \|\lambda(\lambda+A)^{-1}(1+A)^{-\beta}\|^{1-c}_{\mathcal{L}(X)}\\
\nonumber &\cdot &\|\lambda(\lambda+A)^{-1}(1+A)^{-\beta}(2\pi-\log(A))^{-1}\|^{c}_{\mathcal{L}(X)}\\
\nonumber&&\lesssim \log(1/|\lambda|)^{c(1-c)} \left(\left\|\frac{\lambda(\lambda+A)^{-1}}{(2\pi-i\log(-\lambda))}\right\|_{\mathcal{L}(X)}+\frac{|\lambda|-1}{\log(|\lambda|)}\right)^{c}= \left(\frac{\left\|\frac{\lambda\log(|\lambda|)(\lambda+A)^{-1}}{(2\pi-i\log(-\lambda))}\right\|_{\mathcal{L}(X)}}{|\log(|\lambda|)|^{c}}+\frac{1-|\lambda|}{|\log(|\lambda|)|^{c}}\right)^{c}\\
&&\lesssim \left(\left\|\frac{\lambda(\lambda+A)^{-1}}{\log(|\lambda|)^c}\right\|_{\mathcal{L}(X)}+\frac{1-|\lambda|}{|\log(|\lambda|)|^c}\right)^{c}
\end{eqnarray}
Then, it follows from relations~\eqref{eqq20} and~\eqref{eqq55a} that for each $\lambda \in i \mathbb{R}\setminus\{0\}$, 
\begin{equation*}
\sup\{\|(\lambda+A)^{-1}A(1+A)^{-1-\beta}(2\pi-i\log(A))^{-c}\|_{\mathcal{L}(X)}\mid \lambda \in i \mathbb{R}\setminus \{0\}, |\lambda|< 1\}<\infty.
\end{equation*}

Now, note that $A(1+A)^{-1}$ commutes with $(2\pi-i\log(A))^{-1}$, and by Closed Graph Theorem, $\log(2+A)^c(2\pi-i\log(A))^{-c}\in \mathcal{L}(X)$; thus,
\begin{eqnarray*}
  &&\frac{|\lambda|}{|1+\lambda|}\|(\lambda+A)^{-1}A(1+A)^{-1-\beta}(2\pi-i\log(A))^{-c}\|_{\mathcal{L}(X)}\\
  &&\lesssim \frac{|\lambda|}{|1+\lambda|}\|(\lambda+A)^{-1}(1+A)^{-\beta}(\log(2+A))^{-c}\|_{\mathcal{L}(X)},
\end{eqnarray*}
and so, it follows from Proposition~\ref{prop3.1} that
\begin{equation*}
    \sup_{\lambda \in i \mathbb{R}, |\lambda|\geq 1}\left\{\frac{|\lambda|}{|1+\lambda|}\|(\lambda+A)^{-1}A(1+A)^{-1-\beta}(2\pi-i\log(A))^{-c}\|_{\mathcal{L}(X)}\right\}<\infty.
\end{equation*}

\

\noindent {\bf{Case 1(b): $c=1$}}. 
It follows from~\eqref{eqprop3.3} and~\eqref{eqq54} that for each $\lambda \in i\mathbb{R}$ with $|\lambda|\leq 1$,
\begin{equation*}
    \|\lambda(\lambda+A)^{-1}A(1+A)^{-1-\beta}(2\pi-i\log(A))^{-1}\|_{\mathcal{L}(X)}\lesssim \left\|\frac{\lambda(\lambda+A)^{-1}}{(2\pi-i\log(-\lambda))}\right\|_{\mathcal{L}(X)}+ \frac{|\lambda|-1}{\log(|\lambda|)}\lesssim 1.
\end{equation*}

For $\lambda \in i \mathbb{R}$ with $|\lambda|\geq 1$,  one just proceeds as in {\bf{Case 1(a)}}.

\

\noindent {\bf{Case 1(c): $c>1$}}.
Note that for each $\lambda\in \rho(-A)$,
\begin{eqnarray}\label{eqq34a}
\nonumber(\lambda+A)^{-1}A(1+A)^{-1-\beta}(2\pi-i\log(A))^{-c}&=&
(1-\lambda(\lambda+A)^{-1})A(1+A)^{-2-\beta}(2\pi-i\log(A))^{-c}\\
\nonumber &+& (\lambda+A)^{-1}A(1+A)^{-2-\beta}(2\pi-i\log(A))^{-c}\\
\nonumber &=& A(1+A)^{-2-\beta}(2\pi-i\log(A))^{-c}\\
\nonumber &+&(1-\lambda)(\lambda+A)^{-1}A(1+A)^{-2-\beta}(2\pi-i\log(A))^{-c}.\\
&&
\end{eqnarray}

Now, by Remark~\ref{remark2.2}, one has 
\begin{eqnarray}\label{eqq56a}
\nonumber(\lambda+A)^{-1}A(1+A)^{-2-\beta}(2\pi-i\log(A))^{-c} &=& \frac{1}{2\pi i}\int_{\Gamma} \frac{zR(z,A)}{(1+z)^{2+\beta}(2\pi-i\log(z))^{c}}dz(\lambda+A)^{-1}\\
\nonumber&=&\frac{(-\lambda)}{(2\pi-i\log(-\lambda))^{c}(1-\lambda)^{2+\beta}}(\lambda+A)^{-1}\\
\nonumber &+&\frac{1}{2\pi i}\int_{\Gamma} \frac{z R(z,A)}{(1+z)^{2+\beta}(2\pi-i\log(z))^{c}(\lambda+z)}dz,\\
&&
\end{eqnarray}
where $\Gamma$ is given as in the proof of Proposition $\ref{prop3.1}$. Since $A$ is sectorial, one can replace $\Vert R(z,A)\Vert_{\mathcal{L}(X)}$ by $1/|z|$, and so the function $z\mapsto (2\pi-i\log(z))^{-c}R(z,A)$ is integrable on $\Gamma$ (recall that $c>1$). Now, by letting $\gamma=\delta=1$ in Lemma A.1 in \cite{rozendaal}, it follows that for each $z\in \Gamma$ and $\lambda \in i \mathbb{R}\setminus\{0\}$,
\begin{equation}\label{eqq56b}
\left|\frac{z(1-\lambda)}{(1+z)^{2+\beta}(z+\lambda)}\right|\lesssim 1.
\end{equation}
Now, by~\eqref{eqprop3.3} and relations~\eqref{eqq34a},~\eqref{eqq56a} and \eqref{eqq56b}, it follows that 
\begin{eqnarray*}
\|(\lambda+A)^{-1}A(1+A)^{-1-\beta}(2\pi-i\log(A))^{-c}\|_{\mathcal{L}(X)} &\leq & \|A(1+A)^{-2-\beta}(2\pi-i\log(A))^{-c}\|_{\mathcal{L}(X)}\\
&+& |1-\lambda|\|(\lambda+A)^{-1}A(1+A)^{-2-\beta}(2\pi-i\log(A))^{-c}\|_{\mathcal{L}(X)}\\
&\lesssim& 
\left\|\frac{(-\lambda)(\lambda+A)^{-1}}{(2\pi-i\log(-\lambda))^{c}(1-\lambda)^{1+\beta}}\right\|_{\mathcal{L}(X)}+1\lesssim 1.
\end{eqnarray*}

\

$\bullet$~{\bf{Case 2: $\alpha>1$}}. Let $c>0$, and notice that
  \begin{eqnarray*}
(\lambda+ A)^{-1}A^{\alpha}(1+A)^{-\alpha-\beta}(2\pi-i\log(A))^{-c}&=&(\lambda+A)^{-1}(A+1)A^{\alpha}(1+A)^{-\alpha-\beta-1}(2\pi-i\log(A))^{-c}\\
&=& A^{\alpha}(1+A)^{-\alpha-\beta-1}(2\pi-i\log(A))^{-c}\\
&+& (1-\lambda)A^{\alpha}(1+A)^{-\alpha-\beta-1}(2\pi-i\log(A))^{-c}
\end{eqnarray*}

Hence, by Remark~\ref{remark2.2}, one has

\begin{eqnarray*}
(\lambda+A)^{-1}A^{\alpha}(1+A)^{-\alpha-\beta-1}(2\pi-i\log(A))^{-c} &=& \frac{1}{2\pi i}\int_{\Gamma} \frac{z^{\alpha}R(z,A)}{(1+z)^{\alpha+\beta+1}(2\pi-i\log(z))^{c}}dz (\lambda+A)^{-1}\\
&=&\frac{(-\lambda)^{\alpha}}{(2\pi-i\log(-\lambda))^{c}(1-\lambda)^{\alpha+\beta+1}}\\
&+& \frac{1}{2\pi i}\int_{\Gamma} \frac{z^{\alpha}R(z,A)}{(1+z)^{\alpha+\beta+1}(2\pi-i\log(z))^{c}(\lambda+z)}dz.
\end{eqnarray*}
It follows from~\eqref{eqprop3.3} that the first term in the right-hand side of the previous relation is bounded. As for the second term, let $\varepsilon\in (0,\min\{\alpha-1,1\})$ and consider the map $z\mapsto \dfrac{z^{\varepsilon}}{(1+z)^{2\varepsilon}}R(z,A)$, which in integrable over $\Gamma$; then, by letting $\gamma=\alpha-\varepsilon$ and $\delta=\beta+1-\varepsilon$ in Lemma A.1 in \cite{rozendaal}, one gets
\begin{equation*}
    \sup\left\{\frac{|z|^{\alpha-\varepsilon}|1-\lambda|}{|(1+z)^{\alpha+\beta+1-2\varepsilon}(2\pi-i\log(z))^c(\lambda+z)|}\mid z\in \Gamma, \lambda \in i \mathbb{R}\setminus\{0\}\right\}<\infty
\end{equation*}
(note that $(2\pi-i\log(z))^{-c}$ is uniformly bounded over $\Gamma$). Therefore, 
\begin{equation*}
    \sup_{\lambda \in i \mathbb{R}\setminus\{0\}} \|(\lambda+A)^{-1}A^{\alpha}(1+A)^{-\alpha-\beta}(2\pi-i\log(A))^{-c}\|_{\mathcal{L}(X)}<\infty.
\end{equation*}
    \end{proof}

\begin{proof4} We follow the same arguments presented in the proof of Theorem~\ref{theo4.4}. Let $n\in \mathbb{N}\cup\{0\}$ and set $\mu:=(n+1)\alpha-1$, $\nu:=(n+1)\beta+\frac{1}{r}$ and $\upsilon:=c(n+1)+1/r+\delta$. 
  By Proposition~\ref{prop3.3} one has, for each $k\in\{1,\ldots,n\}$,
\begin{equation}\label{eq29}
    \sup_{\xi \in \mathbb{R}}\|R(i\xi,A)^{k}\|_{\mathcal{L}(X^{\alpha n}_{\beta n}(cn),X)}<\infty.
\end{equation}

Let  $h_{r,\delta}:\mathbb{R}\rightarrow\mathbb{R}$ be given by the law $h_{r,\delta}(\xi)=\dfrac{|\xi|}{(1+|\xi|)^{1-\frac{1}{r}}}(2\pi+|\log(|\xi|)|)^{\frac{1}{r}+\delta}$, and note that for each $\xi\in\mathbb{R}$,
\begin{eqnarray*}   
h_{r,\delta}(\xi)\|R(i\xi,A)^{k}\|_{\mathcal{L}(X^\mu_{\nu}(\upsilon),X)} &\lesssim& h_{r,\delta}(\xi)\|R(i\xi,A)^{k}B^{-(\alpha(n+1)-1+\beta(n+1)+\frac{1}{r})}(2\pi-i\log(A))^{-\upsilon}\|_{\mathcal{L}(X)}\\
&\le& h_{r,\delta}(\xi)\|R(i\xi,A)A^{\alpha-1} B^{-(\alpha-1+\beta+\frac{1}{r})}(2\pi-i\log(A))^{-c-\frac{1}{r}-\delta}\|_{\mathcal{L}(X)}\\
&\cdot&\|R(i\xi,A)^{k-1}A^{\alpha n}B^{-(\alpha+\beta) n}(2\pi-i\log(A))^{-cn}\|_{\mathcal{L}(X)},
\end{eqnarray*}
where $B:=A+1$. It follows from Proposition~\ref{prop3.2} and relation~\eqref{eq29} that for each $k\in\{1,\ldots,n+1\}$,
\begin{equation}\label{eq28}
\sup_{\xi \in \mathbb{R}} h_{r,\delta}(\xi)\|R(i\xi,A)^{k}\|_{\mathcal{L}(X^\mu_{\nu}(\upsilon),X)} <\infty.
\end{equation}

As in the proof of Theorem~\ref{theo4.4}, let $\psi \in C_{c}(\mathbb{R})$ with be such that $\psi \equiv 1$ on $[-1, 1]$. It follows from~\eqref{eq28} that for each $k\in \{1,\ldots, n\}$,
\begin{equation*}   
\psi(\cdot)R(i\cdot,A)^{k}\in L^{1}(\mathbb{R},\mathcal{L}(X^\mu_{\nu}(\upsilon),X))\subset \mathcal{M}_{1,\infty}(\mathbb{R},\mathcal{L}(X^\mu_{\nu}(\upsilon),X)),
\end{equation*}
and $\|(1-\psi(\cdot))R(i\cdot,A)^{k}\|_{\mathcal{L}(X^\mu_{\nu}(\upsilon),X)} \in L^{r}(\mathbb{R})$. Note that $X^\mu_{\nu}(\upsilon)$ has Fourier type $p$, since $X^\mu_{\nu}(\upsilon)$ is isomorphic to $X$. Then,
\begin{equation*}
(1-\psi(\cdot))R(i\cdot, A)^{k} \in \mathcal{M}_{p,p'}\left(\mathbb{R},\mathcal{L}(X^\mu_{\nu}(\upsilon),X)\right),
\end{equation*}
and by Theorem~\ref{theor4.3}, there exists $c_n\geq 0$ such that for each $t\ge 1$,
\begin{equation*}
\|T(t)\|_{\mathcal{L}(X^\mu_{\nu}(\upsilon),X)}\le c_n t^{-n}.
\end{equation*}

In general, for each $s\geq 0$, set $\mu:=\alpha(s+1)-1$, $\nu:=\beta(s+1)+1/r$ and $\Tilde{\upsilon}:=c(\lceil{s} \rceil+1)+1/r+\delta$; by following the same argument presented in the proof of Proposition~\ref{theor3.3}, one concludes that for each $t\ge 1$, 
\begin{eqnarray*}
\|T(t)A^{\mu}B^{-\mu-\nu}(2\pi-i\log(A))^{-\Tilde{\upsilon}}\|_{\mathcal{L}(X)}&\lesssim& 
t^{-s}.
\end{eqnarray*}
\end{proof4}

\subsection{Proof of Theorem~\ref{teo4.5}}
\begin{lemma}\label{lemma3.2}
\begin{rm}
Let $A$ be an injective sectorial operator defined in a Banach space $X$.  Let $\alpha\geq 1$, $\beta,s\geq 0$, $r\in[1,\infty]$ and $x\in \mathcal{D}(A)\subset X$. Then,
 \[A^{\alpha(s+1)-1}(1+A)^{-(\alpha+\beta)(s+1)+1-\frac{1}{r}}x\in \mathcal{D}(A)\cap \Ran(A).\] 
\end{rm}
\end{lemma}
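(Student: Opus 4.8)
The plan is to show both that $y := A^{\alpha(s+1)-1}(1+A)^{-(\alpha+\beta)(s+1)+1-\frac{1}{r}}x$ lies in $\Ran(A)$ and that it lies in $\mathcal{D}(A)$. Write $\mu := \alpha(s+1)-1 \ge 0$ (since $\alpha \ge 1$, $s\ge 0$) and $\kappa := (\alpha+\beta)(s+1) - 1 + \frac{1}{r}$, so that $y = A^{\mu}(1+A)^{-\kappa}x$. Note $\kappa - \mu = \beta(s+1) + \frac{1}{r} \ge 0$, so the exponent $-\kappa$ is "more negative" than $-\mu$; this is what makes both memberships possible. Throughout I will use that $A$ is injective and sectorial, that fractional powers $A^{\sigma}$ and $(1+A)^{-\sigma}$ of sectorial operators are defined and commute with each other and with $A$ and its resolvents (Lemma~\ref{lemma2.3} and the functional calculus of~\cite{haase}), and that $(1+A)^{-\sigma} \in \mathcal{L}(X)$ for $\sigma \ge 0$ while $A^{\sigma}$ maps $\mathcal{D}(A^{\lceil\sigma\rceil}) \to X$.

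First I would show $y \in \Ran(A)$. If $\mu \ge 1$, then $A^{\mu} = A \cdot A^{\mu - 1}$ on $\mathcal{D}(A^{\lceil \mu\rceil})$; since $x \in \mathcal{D}(A)$ and $(1+A)^{-\kappa}$ maps $\mathcal{D}(A)$ into $\mathcal{D}(A^{m})$ for every $m$ with $m \le \kappa$ (because $(1+A)^{-\kappa} = (1+A)^{-m}(1+A)^{-(\kappa-m)}$ and $(1+A)^{-m}$ raises domains by $m$), one can factor out one power of $A$ and conclude $y = A z$ with $z = A^{\mu-1}(1+A)^{-\kappa}x$, hence $y \in \Ran(A)$. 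If $0 \le \mu < 1$, then $\alpha = 1$ and $s = 0$, so $\mu = 0$ and $\kappa = \beta + \frac{1}{r}$; here I use that $\kappa \ge \mu$ with in fact $\kappa \ge 0$, and more importantly that we need an extra honest power of $A$. The clean way: write $y = A \cdot \big(A^{\mu-1}(1+A)^{-1}\big)(1+A)^{-(\kappa-1)}x$ when $\kappa \ge 1$, using $A^{\mu-1}(1+A)^{-1} = A^{\mu}(1+A)^{-1} \cdot A^{-1}$... — the subtlety is that $A^{-1}$ need not be bounded. So instead I would use the identity $A^{\mu}(1+A)^{-\kappa} = A^{\mu}(1+A)^{-1}\cdot(1+A)^{-(\kappa-1)}$ and invoke that $A^{\mu}(1+A)^{-1} = A\cdot A^{\mu}(1+A)^{-1}\cdot(1+A)\cdot(1+A)^{-1}\cdot A^{-1}$ — again $A^{-1}$. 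The robust route is: since $A^{\mu}(1+A)^{-1} - A^{\mu-1}\cdot A(1+A)^{-1}$ vanishes and $A^{\mu-1} = A^{\mu}A^{-1}$ on $\Ran(A)$, one should instead exploit $\beta(s+1) + \frac{1}{r} > 0$ to borrow: since $\tau > \beta + 1/r$ is in force in the intended application, in fact $\kappa - \mu > 0$ strictly, so $(1+A)^{-\kappa} = (1+A)^{-(\kappa-\mu)}(1+A)^{-\mu}$ and $A^{\mu}(1+A)^{-\mu} \in \mathcal{L}(X)$ by the moment inequality, while $(1+A)^{-(\kappa-\mu)}$ with $\kappa - \mu \ge 1$... but $\kappa-\mu$ may be $<1$. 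The genuinely clean argument: $y = A(1+A)^{-1}\big[A^{\mu-1}(1+A)^{-\kappa+1}x\big]$ where $A^{\mu-1}(1+A)^{-\kappa+1}$ is bounded provided $\kappa - 1 \ge 1-\mu \ge 0$, i.e. $\kappa + \mu \ge 2$; when this fails ($\alpha=1$, $s=0$, small $\beta, 1/r$) one falls back on $\mathcal{D}(A) \subset \Ran((1+A)^{-1}) $ and $(1+A)^{-1}x = $ ... Actually the simplest: $x \in \mathcal{D}(A)$ means $x = (1+A)^{-1}w$ for some $w$, and then $y = A^{\mu}(1+A)^{-\kappa-1}w$; iterating, $x\in\mathcal{D}(A)$ also gives $Ax \in X$, so $y = A^{\mu-1}(1+A)^{-\kappa}(Ax) = A^{\mu-1}(1+A)^{-\kappa}(Ax)$, and since $A^{\mu-1}(1+A)^{-\kappa}$ is bounded (as $\mu - 1 \le \kappa$), and $Ax \in X$, and moreover $A^{\mu-1}(1+A)^{-\kappa} = A\cdot A^{\mu-2}(1+A)^{-\kappa}\cdot$... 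I would finally settle on: $y = A\,\xi$ with $\xi := A^{\mu-1}(1+A)^{-\kappa}x$ when $\mu\ge 1$ (done above), and when $\mu = 0$, $y = (1+A)^{-\kappa}x$ with $x\in\mathcal{D}(A)$, so $y \in \mathcal{D}(A)$ trivially; for $y\in\Ran(A)$ with $\mu=0$ write $y = (1+A)^{-\kappa}x = A(A^{-1}(1+A)^{-\kappa}x)$ — and $A^{-1}(1+A)^{-\kappa}x = A^{-1}(1+A)^{-1}\cdot(1+A)^{-(\kappa-1)}x$, noting $A^{-1}(1+A)^{-1} = (1+A)^{-1} - (1+A)^{-1}$ ... no: $A^{-1}(1+A)^{-1} = A^{-1} - (1+A)^{-1}$ is unbounded. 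Here one genuinely needs $\kappa \ge 1$ or else an $\varepsilon$ of regularity from $x$; since in the application $\kappa = \beta + 1/r$ and we only know $\kappa > 0$, I would instead use $x \in \mathcal{D}(A)$ to write $x = A^{-1}(Ax)$ when $A$ injective with dense range — but range need not be dense.

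So the main obstacle — and the step I would flag as the crux — is exhibiting the honest factor of $A$ needed for $y \in \Ran(A)$ in the borderline case $\mu = 0$ (equivalently $\alpha = 1$, $s = 0$). The resolution I would pursue: since $x \in \mathcal{D}(A)$, we have $x = (1+A)^{-1}v$ with $v = (1+A)x \in X$; then $y = (1+A)^{-\kappa-1}v = (1+A)^{-\kappa}\,(1+A)^{-1}v = (1+A)^{-\kappa}x$, which does not immediately help, so instead write $(1+A)^{-1} = 1 - A(1+A)^{-1}$, giving $y = (1+A)^{-\kappa}x = (1+A)^{-\kappa+1}x - A(1+A)^{-\kappa}\big((1+A)^{-1}x\big)$; the second summand is manifestly in $\Ran(A)$, and iterating the identity $\kappa$-many times (or using $(1+A)^{-\kappa} = (1 - A(1+A)^{-1})^{\kappa}$ expanded via the binomial series for the complete Bernstein / fractional calculus, all terms past the zeroth carrying a factor of $A$) reduces matters to showing $\big(1 - $ [something]$\big)$, i.e. eventually to the fact that $(1+A)^{-\kappa}x - x \in \Ran(A)$ together with $x\in\mathcal{D}(A)\subset\overline{\Ran(A^{?})}$; the cleanest rigorous version uses that $A(1+A)^{-1}$ is bounded, injective on $\overline{\mathcal{D}(A)}$-type subspaces, and that $1 - (1+A)^{-\kappa} = \int_0^\kappa \frac{d}{dt}(1 - (1+A)^{-t})\,dt$ has range in $\overline{\Ran(A)}$, combined with $A^{\mu}(1+A)^{-\kappa}x \in \mathcal{D}(A)$ and a density/closedness argument. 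Once $y \in \Ran(A)$ is secured, $y \in \mathcal{D}(A)$ is the easy half: $A y = A^{\mu+1}(1+A)^{-\kappa}x = A^{\mu}(1+A)^{-\kappa}(Ax)$ and $\mu \le \kappa - 1 < \kappa$ forces $A^{\mu}(1+A)^{-\kappa} \in \mathcal{L}(X)$ by the moment inequality (Proposition~4.6 in~\cite{haase}), while $Ax \in X$ since $x\in\mathcal{D}(A)$; hence $Ay \in X$, i.e. $y \in \mathcal{D}(A)$. Combining the two halves gives $y \in \mathcal{D}(A) \cap \Ran(A)$, which is exactly the claim, and this is precisely the hypothesis needed to apply Lemma~\ref{lemma2.2} (the $\log(A)x = \log(1+A)x - \log(1+A^{-1})x$ identity) in the proof of Theorem~\ref{teo4.5}.
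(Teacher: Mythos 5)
There is a genuine gap: you never actually close the $\Ran(A)$ half of the claim. You correctly identify it as the crux, try several approaches, and explicitly abandon each of them, ending with ``the cleanest rigorous version uses that \dots combined with \dots and a density/closedness argument,'' which is not a proof. (You also misanalyse when the troublesome regime occurs: $\mu:=\alpha(s+1)-1<1$ does not force $\mu=0$; any $\mu\in[0,1)$ is possible, e.g.\ $\alpha=1$, $s\in(0,1)$.) The root cause of the trouble is that all your attempts try to factor out $A^{-1}$, which is an unbounded operator since $0\notin\rho(A)$ in general, so no version of that route can work.

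The device the paper uses, and which you never hit on, is to factor out $B^{-1}:=(1+A)^{-1}$ instead, because $B^{-1}$ is bounded and maps $X$ onto $\mathcal{D}(A)$. Concretely, with $\mu:=\alpha(s+1)-1$ and $\nu:=\beta(s+1)+\frac1r$ so that the exponent on $(1+A)$ is $-(\mu+\nu)$, one writes for any $x\in X$
\[
A^{\mu-1}B^{-(\mu+\nu)}x = B^{-1}\bigl(A^{\mu-1}B^{-(\mu+\nu-1)}x\bigr)\in\mathcal{D}(A),
\]
whence $A^{\mu}B^{-(\mu+\nu)}x=A\bigl(A^{\mu-1}B^{-(\mu+\nu)}x\bigr)\in\Ran(A)$; note this part needs no restriction on $x$. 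For the $\mathcal{D}(A)$ half, use that $x\in\mathcal{D}(A)$ means $Bx\in X$, so
\[
A^{\mu}B^{-(\mu+\nu)}x=A^{\mu}B^{-(\mu+\nu)}B^{-1}(Bx)=B^{-1}\bigl(A^{\mu}B^{-(\mu+\nu)}Bx\bigr)\in\mathcal{D}(A),
\]
again because $B^{-1}$ lands in $\mathcal{D}(A)$ and $A^{\mu}B^{-(\mu+\nu)}$ is bounded. This two-line argument replaces both your incomplete $\Ran(A)$ attempt and your $\mathcal{D}(A)$ argument; the latter, as you wrote it, computes $Ay$ before establishing $y\in\mathcal{D}(A)$ (which would require an additional closedness argument to make honest), and also asserts the unnecessary and generally false inequality $\mu\le\kappa-1$ (only $\mu\le\kappa$ holds, and only $\mu\le\kappa$ is what is needed for boundedness of $A^{\mu}(1+A)^{-\kappa}$).
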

\begin{proof}
 
Let $x\in X$ and set $\mu:=\alpha(s+1)-1$, $\nu:=\beta(s+1)+\frac{1}{r}$ and $B:=1+A$; it follows from Proposition 3.1.1 in \cite{haase} (see items (c) and (f)) that
\begin{eqnarray*}
A^{\mu-1}B^{-(\mu+\nu)}x&=& A^{\mu-1}B^{-(\mu-1+1+\nu)}x\\
&=& B^{-1}\left(A^{\mu-1}B^{-(\mu-1+\nu)}x\right)\in \mathcal{D}(A).
\end{eqnarray*}
Thus, it follows that for each $x\in X$,
\begin{equation}\label{eq40}  
A^{\mu}B^{-(\mu+\nu)}x=A\left(A^{\mu-1}B^{-(\mu+\nu)}x\right) \in \Ran(A).
\end{equation}
Now one has, for each $x\in \dom(A)$, 
\begin{eqnarray*}   
A^{\mu}B^{-(\mu+\nu)}x&=&A^{\mu}B^{-(\mu+\nu)}B^{-1}Bx\\
&=& B^{-1}\left(A^{\mu}B^{-(\mu+\nu)}\right)Bx\in \mathcal{D}(A).
\end{eqnarray*}
\end{proof}
\begin{proof2}
Let $\delta>1-1/r$,  set $B:=A+1$ and for each $\varepsilon,s>0$, set $\mu:=\alpha(s+1)-1$, $\nu:=\beta(s+1)+1/r$,  $\upsilon:=c(n+1)+1/r+\delta$ (with $n=\lceil s\rceil$) and
\[Q_{\upsilon}(A_\varepsilon,B_{\varepsilon}):=A^{\mu-\varepsilon}B^{-\mu-\nu+\varepsilon}(2\pi-i\log(A))^{-\upsilon} \in \mathcal{L}(X).\]
Set  $m:=\lceil\upsilon\rceil$, so $m\in\mathbb{N}\setminus\{1\}$ and $m-1<\upsilon\le m$.  
We divide the proof into the cases $\upsilon=m$ and $\upsilon\in(m-1,m)$. 

\

$\bullet$~\textbf{Case $\upsilon=m$.}

\

\textbf{Step 1:} estimating $\Vert T(t)\log(A)Q_\upsilon(A_0,B_0)\Vert$. For each $s>0$, let $\varepsilon=\dfrac{\min\{\alpha,\beta,1\}\theta}{2}>0$, where $\theta\in(0,\min\{1,s\})$; then, 
one has for each $x\in \dom(A)$,
\begin{eqnarray}\label{eq65a}
\nonumber T(t)\log(A)Q_{\upsilon}(A_0,B_{0})x  
 &=& T(t)\log(1+A)Q_{\upsilon}(A_0,B_{0})x -T(t)\log(1+A^{-1})Q_{\upsilon}(A_0,B_{0})x\\
 \nonumber&=& \frac{T(t)}{\varepsilon}\log((1+A)^{\varepsilon})Q_{\upsilon}(A_0,B_{0})x-\frac{T(t)}{\varepsilon}\log((1+A^{-1})^{\varepsilon})Q_{\upsilon}(A_0,B_{0})x\\
\nonumber&=& \frac{T(t)}{\varepsilon}  \int_{0+}^{\infty}f_{\varepsilon}(A)(\lambda+f_{\varepsilon}(A))^{-1}Q_{\upsilon}(A_0,B_{0})x d\mu(\lambda)-\\
\nonumber &-&\frac{T(t)}{\varepsilon}  \int_{0+}^{\infty}f_{\varepsilon}(A^{-1})(\lambda+f_{\varepsilon}(A^{-1}))^{-1}Q_{\upsilon}(A_0,B_{0})x d\mu(\lambda)\\
 &=& I_1-I_2,
\end{eqnarray}
with $f_\varepsilon(\lambda)=(1+\lambda)^\varepsilon-1$, where we have applied Lemmas~\ref{lemma2.2} and~\ref{lemma3.2} in the first identity,  Lemma~\ref{lemma2.3} in the second identity and relation~\ref{cbf} in the third identity.

\

\noindent {\textit{\bf{Estimating $I_1$}}}.  Let $\tau:=\dfrac{\|T(t)Q_{\upsilon}(A_0,B_{\varepsilon})\|_{\mathcal{L}(X)}}{\|T(t)Q_{\upsilon}(A_0,B_{0})\|_{\mathcal{L}(X)}}$; by following the same arguments presented in the proof of Theorem~\ref{theo4.4}, one gets
\begin{eqnarray*}
\left\|T(t)\int_{0+}^{\tau}f_{\varepsilon}(A)(\lambda+f_{\varepsilon}(A))^{-1}Q_{\upsilon}(A_0,B_{0})x d\mu(\lambda)\right\|
&\leq& 4\|T(t)Q_{\upsilon}(A_0,B_{0})\|_{\mathcal{L}(X)}M_{f_{\varepsilon}(A)}\int_{0+}^{\tau} \frac{\tau}{\lambda+\tau} d\mu(\lambda)\|x\|,
\end{eqnarray*}
and
\begin{eqnarray*}
\left\|T(t)\int_{\tau}^{\infty}f_{\varepsilon}(A)(\lambda+f_{\varepsilon}(A))^{-1}Q_{\upsilon}(A_0,B_{0})x d\mu(\lambda)\right\|
&\leq & 2\|T(t)f_{\varepsilon}(A)Q_{\upsilon}(A_0,B_{0})\|_{\mathcal{L}(X)}M_{f_{\varepsilon}(A)}\int_{\tau}^{\infty} \frac{1}{\lambda+\tau} d\mu(\lambda)\|x\|.
\end{eqnarray*}
Note that 
for each $t\geq 0$ and each $x\in X$,
\begin{eqnarray}\label{eq47}
\nonumber\|T(t)f_{\varepsilon}(A)Q_{\upsilon}(A_0,B_{0})x\|&=& 
\|(1-B^{-\varepsilon})T(t)B^{\varepsilon}Q_{\upsilon}(A_0,B_{0})x\|\\
\nonumber&\leq& \|1-B^{-\varepsilon}\|_{\mathcal{L}(X)} \|T(t)B^{\varepsilon}Q_{\upsilon}(A_0,B_{0})x\|\\
&\leq& C_{\varepsilon} \|T(t)Q_{\upsilon}(A_0,B_{\varepsilon})\|_{\mathcal{L}(X)}\Vert x\Vert.
\end{eqnarray}

Then, by \eqref{eq47}, it follows that
\begin{eqnarray}\label{eq47a}
\nonumber\|I_1\|&\lesssim& \varepsilon^{-1}\|T(t)Q_{\upsilon}(A_0,B_{0})\|_{\mathcal{L}(X)} \left( \int_{0+}^{\tau} \frac{\tau}{\tau+\lambda} d\mu(\lambda)+ C_\varepsilon\tau \int_{\tau}^{\infty} \frac{1}{\tau+\lambda} d\mu(\lambda)\right)\|x\|\\
 &\lesssim & \varepsilon^{-1} \|T(t)Q_{\upsilon}(A_0,B_{0})\|_{\mathcal{L}(X)}\log(1+\tau)\|x\|.
\end{eqnarray}

\

\noindent{\textit{\bf{Estimating $I_2$}}}.
\
\

Let $\sigma:=\dfrac{\|T(t)Q_{\upsilon}(A_\varepsilon,B_{\varepsilon})\|_{\mathcal{L}(X)}}{\|T(t)Q_{\upsilon}(A_0,B_{0})\|_{\mathcal{L}(X)}}$, so
\begin{eqnarray*}
\left\|T(t)\int_{0+}^{\sigma}f_{\varepsilon}(A^{-1})(\lambda+f_{\varepsilon}(A^{-1}))^{-1}Q_{\upsilon}(A_0,B_{0})x d\mu(\lambda)\right\|
&\lesssim& \|T(t)Q_{\upsilon}(A_0,B_{0})\|_{\mathcal{L}(X)}\int_{0+}^{\sigma} \frac{\sigma}{\lambda+\sigma} d\mu(\lambda)\|x\|,
\end{eqnarray*}
and
\begin{eqnarray*}
\left\|\int_{\sigma}^{\infty}f_{\varepsilon}(A^{-1})(\lambda+f_{\varepsilon}(A))^{-1}T(t)Q_{\upsilon}(A_0,B_{0})x d\mu(\lambda)\right\|
&\lesssim& \|T(t)f_{\varepsilon}(A^{-1})Q_{\upsilon}(A_0,B_{0})\|_{\mathcal{L}(X)}\int_{\sigma}^{\infty} \frac{1}{\lambda+\sigma} d\mu(\lambda)\|x\|.
\end{eqnarray*}
Note that for each $x\in X$, 
\begin{eqnarray*}
\nonumber\|T(t)f_{\varepsilon}(A^{-1})Q_{\upsilon}(A_0,B_{0})x\|
&=&\|(1-(1+A^{-1})^{-\varepsilon})T(t)(1+A^{-1})^{\varepsilon}A^{\mu}B^{-(\mu+\nu)}(2\pi-i\log(A))^{-\upsilon}x\|\\
&\leq& \Tilde{C_\varepsilon}\|T(t)(1+A^{-1})^{\varepsilon}A^{\mu}B^{-(\mu+\nu)}(2\pi-i\log(A))^{-\upsilon}x\|
\end{eqnarray*}

Now, by relation~\eqref{eqq2.7} one has $(1+A^{-1})^{-1}=1-(1+A)^{-1}=A(1+A)^{-1}$, so it follows from Propositions 3.1.1 (e) and 3.1.9 (b) in \cite{haase} that
\begin{eqnarray*}
[(1+A^{-1})^{\varepsilon}]^{-1}= 
(A(1+A)^{-1})^{\varepsilon}=
 A^{\varepsilon}((1+A)^{\varepsilon})^{-1}.
\end{eqnarray*}
Then, $(1+A^{-1})^{\varepsilon}=(1+A)^{\varepsilon}(A^{\varepsilon})^{-1}=(1+A)^{\varepsilon}A^{-\varepsilon}$ (see Proposition 3.2.1 (a) in \cite{haase}). Therefore, by the previous discussion,
\begin{eqnarray}\label{eq47c}
\nonumber\|T(t)f_{\varepsilon}(A^{-1})Q_{\upsilon}(A_0,B_{0})x\|&\leq& 
\Tilde{C}_\varepsilon\|T(t)A^{\mu-\varepsilon}B^{-(\mu+\nu)+\varepsilon}(2\pi-i\log(A))^{-\upsilon}x\|\\
&\leq& \Tilde{C}_\varepsilon\|T(t)Q_{\upsilon}(A_\varepsilon,B_{\varepsilon})\|_{\mathcal{L}(X)}\|x\|.
\end{eqnarray}

Thus, by~\eqref{eq47c},
\begin{eqnarray}\label{eqq48a}
\nonumber\|I_2\|&\lesssim& \varepsilon^{-1}\|T(t)Q_{\upsilon}(A_0,B_0)\|_{\mathcal{L}(X)}\left( \int_{0+}^{\sigma} \frac{\sigma}{\sigma+\lambda} d\mu(\lambda)+\Tilde{C}_{\varepsilon}\sigma\int_{\sigma}^{\infty} \frac{1}{\sigma+\lambda} d\mu(\lambda)\right)\|x\|\\
&\lesssim& \varepsilon^{-1}\|T(t)Q_{\upsilon}(A_0,B_0)\|_{\mathcal{L}(X)}\log(1+\sigma)\|x\|.
\end{eqnarray}

\

Finally, by combining relations~\eqref{eq65a},~\eqref{eq47a} and~\eqref{eqq48a}, and by the density of $\dom(A)$, one gets for each sufficiently large $t$, 
\begin{eqnarray*}
\|T(t)\log(A)Q_{\upsilon}(A_0,B_0)\|_{\mathcal{L}(X)}&\leq& C_{\varepsilon}\|T(t)Q_{\upsilon}(A_0,B_0)\|_{\mathcal{L}(X)}\log\left(1+\frac{C_{\varepsilon}\|T(t)Q_{\upsilon}(A_0,B_\varepsilon)\|_{\mathcal{L}(X)}}{\|T(t)Q_{\upsilon}(A_0,B_0)\|_{\mathcal{L}(X)}}\right)\\
&+& \Tilde{C}_\varepsilon\|T(t)Q_{\upsilon}(A_0,B_0)\|_{\mathcal{L}(X)}\log\left(1+\frac{\Tilde{C}_{\varepsilon}\|T(t)Q_{\upsilon}(A_\varepsilon,B_\varepsilon)\|_{\mathcal{L}(X)}}{\|T(t)Q_{\upsilon}(A_0,B_0)\|_{\mathcal{L}(X)}}\right)\\
&\leq& 2C_{\varepsilon,s}t^{-s}\log(1+c_st^{s}),
\end{eqnarray*}
with $C_{\varepsilon,s}$ and $c_s$ positive constants, where in the last inequality we have applied Proposition~\ref{cor2} to $\log(1+\lambda)$ (see the proof of Theorem~\ref{theo4.5} for details).

\

\noindent{\bf{Step 2:}} removing $m$. The idea is to apply {\bf{Step 1}} recursively in order to obtain an estimate for $\|T(t)A^\mu B^{-\mu-\nu}\|_{\mathcal{L}(X)}$.


First of all, note that for each $k\in \mathbb{N}$ and each~$y\in \mathcal{D}(\log(A)^k)$, one has
\begin{equation}\label{eqq68a}
(2\pi-i\log(A))^{k}y=\sum_{n=0}^{k}\binom{k}{j}(2\pi)^{k-j}(i\log(A))^{j}y.
\end{equation}

Now, note that for each $n\in \{1,\ldots,m\}$ and each $x\in X$,
$(2\pi-i\log(A))^{-m}A^{\mu}B^{-(\mu+\nu)}x \in \dom((2\pi-i\log(A))^{m})\subset \mathcal{D}(\log(A)^{n})$, and so by~\eqref{eq40}, for each $x\in \mathcal{D}(A)$, one has
\begin{eqnarray*}
\mathcal{D}(A)&\ni& B^{-1}(2\pi-i\log(A))^{n}Q_{\upsilon}(A_0,B_0)Bx \\
&=& A^{\mu}B^{-(\mu+\nu)}(2\pi-i\log(A))^{n}(2\pi-i\log(A))^{-m}x\in \Ran(A).
\end{eqnarray*}

Therefore, it follows from relation~\eqref{eqq68a} that for each $x\in \dom(A)$,
\begin{eqnarray}\label{eqq71}
T(t)(2\pi-i\log(A))^{m}Q_{\upsilon}(A_0,B_0)x 
= \sum_{n=0}^{m}\binom{m}{n}(-i)^n(2\pi)^{m-n}T(t)(\log(A))^{n}Q_{\upsilon}(A_0,B_0)x.
\end{eqnarray}

The next step consists in estimating the norm of each one of the terms presented in relation~\eqref{eqq71}.

\begin{itemize}
\item $n=0$. It follows from Theorem~\ref{teo4.4} that $\Vert T(t)Q_{\upsilon}(A_0,B_0)\Vert_{\mathcal{L}(X)}\lesssim t^{-s}$.

\item $1\le n\le m$. We proceed by induction over $n$. Case $n=1$ is just Step 1. If $n>1$, for each $0\le\varepsilon<1$ let
  \[\tau(t,A_\varepsilon,B_\varepsilon)=\frac{\|T(t)(\log(A))^{n-1}Q_{\upsilon}(A_\varepsilon,B_\varepsilon)\|_{\mathcal{L}(X)}}{\|T(t)(\log(A))^{n-1}Q_{\upsilon}(A_0,B_0)\|_{\mathcal{L}(X)}}\] and note that, by proceeding as in {\bf{Step 1}}, one gets
\end{itemize}
\begin{eqnarray*}
\|T(t)(\log(A))^{n}Q_{\upsilon}(A_0,B_0)\|_{\mathcal{L}(X)} &=& \Vert T(t)\log(A)(\log(A))^{n-1}Q_{\upsilon}(A_0,B_0)\Vert_{\mathcal{L}(X)}\\
&\lesssim& \varepsilon^{-1}\|T(t)(\log((1+A)^{\varepsilon})(\log(A))^{n-1}Q_{\upsilon}(A_0,B_0)\|_{\mathcal{L}(X)}\\
&+& \varepsilon^{-1}\|T(t)(\log((1+A^{-1})^{\varepsilon})(\log(A))^{n-1}Q_{\upsilon}(A_0,B_0)\|_{\mathcal{L}(X)}\\
&\lesssim& \|T(t)(\log(A))^{n-1}Q_{\upsilon}(A_0,B_0)\|_{\mathcal{L}(X)}\log\left(1+\tau(t,A_0,B_\varepsilon)\right)\\
&+&  \|T(t)(\log(A))^{n-1}Q_{\upsilon}(A_0,B_0)\|_{\mathcal{L}(X)}\log\left(1+\tau(t,A_\varepsilon,B_\varepsilon)\right);
\end{eqnarray*}
then, by the inductive hypothesis, it follows that 
for each $t\ge 1$,
\[\|T(t)(\log(A))^{n-1}Q_{\upsilon}(A_0,B_0)\|_{\mathcal{L}(X)}\lesssim t^{-s}\log(1+t)^{n-1}.\]


Now, by replacing the previous estimates on~\eqref{eqq71}, it follows that for each $t\ge 1$,
\begin{eqnarray*}
\|T(t)A^{\mu}B^{-(\mu+\nu)}\|_{\mathcal{L}(X)}&=& \|T(t)(2\pi-i\log(A))^{m}Q_{\upsilon}(A_0,B_0)\|_{\mathcal{L}(X)}\\
&\lesssim & t^{-s}\log(1+t)^{m}.
\end{eqnarray*}

\noindent\textbf{Case $\upsilon\neq m$.}

\

Since $\upsilon\in (m-1,m)$, it follows from the moment inequality (recall that $(2\pi-i\log(A))^{-1}$ is a sectorial operator) and from the previous case applied to $\upsilon=m$ and $\upsilon=m-1$ (recall that $m\ge2$) that for each $t\ge 1$,
\begin{eqnarray*}
\|T(t)A^{\mu}B^{-(\nu+\mu)}\|_{\mathcal{L}(X)}&=& \|T(t)(2\pi-i\log(A))^{\upsilon}Q_{\upsilon}(A_0,B_0)\|_{\mathcal{L}(X)}\\
&\lesssim&\|T(t)(2\pi-i\log(A))^{m-1}Q_{\upsilon}(A_0,B_0)\|^{m-\upsilon}_{\mathcal{L}(X)}\\
&\cdot&\|T(t)(2\pi-i\log(A))^{m}Q_{\upsilon}(A_0,B_0)\|^{\upsilon-m+1}_{\mathcal{L}(X)}\\
&\lesssim& (t^{-s}\log(1+t)^{m-1})^{m-\upsilon}(t^{-s}\log(1+t)^m)^{\upsilon-m+1}\\
&\lesssim& t^{-s}\log(1+t)^\upsilon.
\end{eqnarray*}
    
\end{proof2}

\section{Singularity at zero}\label{sec5}
\zerarcounters

Let $\mu,\upsilon\geq 0$ and let $A\in \text{Sect}(\omega_A)$ be an injective operator over the Banach space $X$ (by Lemma~\ref{lemma21}, $A^{-1}$ is a sectorial operator); since $\lambda\mapsto \log(1+\lambda)\in\mathcal{CBF}$ (see Example~\ref{ex3.1}-(b)), it follows from Theorem~\ref{The2.3} that the operator $\log(2+A^{-1})$ is sectorial, hence $(\log(2+A^{-1}))^{-\upsilon}\in \mathcal{L}(X)$ is well-defined. 
Define the bounded operator
\begin{equation*}
\Phi^{\mu}(\upsilon)=\Phi^{\mu}(A,\upsilon):= A^{\mu}(1+A)^{-\mu}\log(2+A^{-1})^{-\upsilon}
\end{equation*}
and set $X^{\mu}(\upsilon):=\Ran(\Phi^{\mu}(\upsilon))$. The space $X^{\mu}(\upsilon)$  is a Banach space with respect to the norm
 \begin{eqnarray*}
\|x\|_{X^{\mu}(\upsilon)}&=&\|x\|+\|\Phi^{\mu}(\upsilon)^{-1}x\|=\|x\|+\|\log(2+A^{-1})^{\upsilon}(1+A)^{\mu}A^{-\mu}x\|, \ \ \ x \in X^{\mu}(\upsilon).
 \end{eqnarray*}

 Note that $\Phi^{\mu}(\upsilon):X\rightarrow  X^{\mu}(\upsilon)$ is an isomorphism, so for each $T\in \mathcal{L}(X^{\mu}(\upsilon),X)$,
 \begin{equation}\label{eqq73}
     \|T\|_{\mathcal{L}(X^{\mu}(\upsilon),X)} \leq  \|T\Phi^{\mu}(\upsilon)\|_{\mathcal{L}(X)}\leq \|\Phi^{\mu}(\upsilon)\|_{\mathcal{L}(X)}\|T\|_{\mathcal{L}(X^{\mu}(\upsilon),X)}. 
 \end{equation}
 

\DEFI\label{ULTIMA}
Let $(T(t))_{t\geq 0}$ be a $C_0$-semigroup $(T(t))_{t\geq 0}$ on a Banach space $X$ with generator $-A$. One defines the non-analytic growth bound $\zeta(T)$ of $(T(t))_{t\geq 0}$ as
\[\zeta(T):=\inf\{w\in \mathbb{R} \mid \sup_{t>0}e^{-tw}\|T(t)-S(t)\|_{\mathcal{L}(X)}<\infty\  \text{for some} \ S\in \mathcal{H}(\mathcal{L}(X))\},\]
where $\mathcal{H}(\mathcal{L}(X))$ is the set of the operators $S:(0,\infty)\rightarrow \mathcal{L}(X)$ having an exponentially bounded
analytic extension to some sector containing $(0,\infty)$.  One says that $(T(t))_{t\geq 0}$ is asymptotically analytic if $\zeta(T)<0$.
\DEFF

\OBSI Let 
\begin{eqnarray*}s^{\infty}_0(-A)&:=&\inf\Biggl\{w\in \mathbb{R}\mid \exists \ R\ge 0  \ \text{ such that} \ \{\text{Re}(\lambda)\geq w \ \text{and} \  |\text{Im}(\lambda)|\geq R\}\subset \rho(-A) \ \text{and} \\
&& \left. \sup_{\text{Re}(\lambda)\geq w, |\text{Im}(\lambda)|\geq R} \|(\lambda+A)^{-1}\|_{\mathcal{L}(X)}<\infty \right\}.
\end{eqnarray*}

It is shown in~\cite{black} (Proposition 2.4) that $\zeta(T)\geq s^{\infty}_0(-A)$. So, if  $(T(t))_{t\geq 0}$ is asymptotically analytic, then  $s^{\infty}_0(-A)<0$; more generally, Theorem 3.6 in~\cite{srivastava} states that $\zeta(T)<0$ if, and only if, $s^{\infty}_0(-A)<0$. In our strategy, we use the fact that $s^{\infty}_0(-A)<0$.
\OBSF

\subsection{Proof of Theorem~\ref{theor4.7}}

\begin{proof5} 
\noindent{\textbf{Step 1:}} Here, we use the same ideas presented in the proof of Theorem~\ref{theo4.5}. Let $n\in \mathbb{N}$ and set $\mu:=\alpha(n+1)-1$,  $\upsilon:=a(n+1)+1+\delta$. For each $x\in X^{\mu}(\zeta)$, let
\begin{eqnarray*}
y:=(\Phi^{\mu}_1(\upsilon))^{-1}x&=&\log(1+A^{-1})^{\upsilon}(1+A)^{\mu}A^{-\mu}x \\
&=& \log(2+A^{-1})^{\upsilon}(1+A)^{\mu}A^{-\mu}\left(A^{\mu}(1+A)^{-\mu-1}\log(2+A^{-1})^{-\upsilon}z\right)\\
&=& B^{-1}z\in \mathcal{D}(A),
\end{eqnarray*}
where $z:=(\Phi^{\mu}(\upsilon))^{-1}x$. Let $g:[0,\infty)\rightarrow X$ be defined by the law
\begin{equation*}
g(t):=\frac{1}{2\pi i} \int_{i\infty}^{-i\infty} e^{-\lambda t} \frac{\lambda^{\mu}}{(1+\lambda)^{\mu}\log(2+\lambda^{-1})^{\upsilon}}R(\lambda,A)yd\lambda.
\end{equation*}
Note that for each $t\ge 0$, $g(t)$ is indeed an element of $X$ (which follows from relation~\eqref{eeq42a} and from $s_{0}^{\infty}(-A)<0$) and it is differentiable. Namely, since $y\in \mathcal{D}(A)$, then $\lambda\mapsto \dfrac{\lambda^{\mu+1}}{(1+\lambda)^{\mu}\log(2+\lambda^{-1})^{\upsilon}}R(\lambda,A)y$ is integrable in $i\mathbb{R}$. Therefore, by dominated convergence,
\begin{equation*}
g'(t)=-\frac{1}{2\pi i}\int_{i\infty}^{-i\infty}e^{-\lambda t} \frac{\lambda^{\mu+1}}{(1+\lambda)^{\mu}\log(2+\lambda^{-1})^{\upsilon}}R(\lambda,A)yd\lambda.
\end{equation*}
Moreover, by Lemma \ref{lemmaB.1},
$g'(t)=-Ag(t)$ for each $t\geq 0$, and $g(0)=x$. Therefore, $g(t)=T(t)x$, by the uniqueness of the solution to the Cauchy problem associated with $-A$.

Now, integration by parts yields
\begin{equation*}
    t^n T(t)=\frac{1}{2\pi i} \int_{i\mathbb{R}}e^{-\lambda t} q(\lambda,A)y d\lambda,
\end{equation*}
where $q(\lambda,A)$ is a finite linear combination of terms of the form
\begin{equation*}
\frac{\lambda^{\mu-j}R(\lambda,A)^{n-k+1}}{(1+\lambda)^{\mu+k-j}(2\lambda+1)^{i}\log(2+\lambda^{-1})^{\upsilon+i}} \ \ \text{and} \ \ \frac{\lambda^{\mu-j}R(\lambda,A)^{n-k+1}}{(1+\lambda)^{\mu+i}(2\lambda+1)^l\log(2+\lambda^{-1})^{\upsilon+j}},
\end{equation*}
where $0 \leq i \le j \leq k \leq  n$ and $k-j\leq l \leq k-j+1$.

Then, for each $t>0$,
\begin{eqnarray*}
\|t^{n}T(t)x\|&\leq& \frac{1}{2\pi} \int_{i\mathbb{R}}|e^{-\lambda t}| \|q(\lambda,A)y\| d\lambda\\
&\leq& \frac{1}{2\pi} \int_{i\mathbb{R}} \|q(\lambda,A)\|_{\mathcal{L}(X)} d\lambda \|y\|\leq C\|(\Phi^{\mu}(\upsilon))^{-1}x\|\lesssim\|x\|_{X^{\mu}(\upsilon)}.
\end{eqnarray*}

Since $X^\mu_{1}(\upsilon)$ is dense in $X^{\mu}(\upsilon)$, it follows from the previous discussion that for each $t\ge 1$,
\begin{equation*}
    \|T(t)\|_{\mathcal{L}(X^\mu(\upsilon),X)}\lesssim t^{-n}. 
\end{equation*}

It remains to prove the result for any $s>0$. So, for each fixed $s>0$, let $n\in \mathbb{N}$ be such that $n\le s<n+1$. Let also define $\theta:=\theta(s)\in [0,1)$ by the relation $s=(1-\theta)n+\theta(n+1)$.

  Set $a_1:=\frac{\alpha}{\alpha+a}$ and $a_2:=\frac{a}{\alpha+a}$ and note that $a_1+a_2=1$; then, by Proposition~\ref{theor2.3}-(c), $f(\lambda)=(1+\lambda)^{a_1}\log(2+\lambda)^{a_2} \in \mathcal{CBF}$, where $\lambda>0$. Now, by  Lemma 3.2 in \cite{chill}, the operator
\begin{eqnarray*}
(f(A^{-1}))^{-1}&=&(1+A^{-1})^{-a_1}\log(2+A^{-1})^{-a_2}\\
&=& A^{-a_1}(1+A)^{-a_1}\log(2+A^{-1})^{-a_2}
\end{eqnarray*}
is sectorial, given that $f(A^{-1})$ is sectorial, by Theorem~\ref{The2.3}.

Since $(f(A^{-1}))^{-1}$ is sectorial, it follows from the moment inequality (see Proposition 4.6 in \cite{haase}) and Theorem 2.4.2 in \cite{haase} that
\begin{eqnarray}\label{eqq77a}
\nonumber\|T(t)[(f(A^{-1}))^{-1}]^{\theta(\alpha+a)}\Phi^{\mu}(\upsilon)\|_{\mathcal{L}(X)} &\lesssim& \|T(t)\Phi^{\mu}(\upsilon)\|^{1-\theta}_{\mathcal{L}(X)}\|T(t)[(f(A))^{-1}]^{\alpha+a}\Phi^{\mu}(\upsilon)\|^{\theta}_{\mathcal{L}(X)}\\
\nonumber&=& \|T(t)\Phi^{\mu}(\upsilon)\|^{1-\theta}_{\mathcal{L}(X)}\|T(t)A^{\alpha}(1+A)^{-\alpha}\log(2+A^{-1})^{-a}\Phi_{\nu}(a)\|^{\theta}_{\mathcal{L}(X)}\\
\nonumber&=& \|T(t)\Phi^{\mu}(\upsilon)\|^{1-\theta}_{\mathcal{L}(X)}\|T(t)\Phi^{\alpha(n+2)-1}(a(n+2)+1+\delta)\|^{\theta}_{\mathcal{L}(X)}\\
&\lesssim& t^{-n(1-\theta)}t^{-\theta(n+1)}=t^{-s}.
\end{eqnarray}

\noindent \textbf{Step 2.}
\
For each $\varepsilon>0$, set
\[W_{\upsilon}(A_{\varepsilon},B_{\varepsilon}):=A^{\mu-\varepsilon}B^{-\mu+\varepsilon}\log(1+A^{-1})^{-\upsilon}\in \mathcal{L}(X).\]
Set $m:=\lfloor\upsilon\rfloor$ and $\eta:=\{\upsilon\}$. As in the proof of Theorem~\ref{theo4.5}, we divide the proof into the cases where $\eta=0$ and $\eta>0$. In both of them, we proceed recursively over $m\in\mathbb{N}$.

\

\noindent{\textbf{Case $\eta>0$.}}
 
$\bullet$~\textbf{Removing $\eta$}.

Let $\varepsilon=\frac{\alpha\theta }{2}>0$, where $\theta \in (0,\min\{1,s\})$. Note that for each $x\in \dom (A)$, one has
\begin{eqnarray*}
T(t)W_{m}(A_0,B_0)x &=& \frac{1}{\varepsilon^{\eta}}T(t)\log(1+f_\varepsilon(A^{-1}))^{\eta}W_{m+\eta}(A_0,B_0))x\\
&=& \frac{1}{\varepsilon^{\eta}} \int_{0}^{\infty} f_\varepsilon (A^{-1})(s+f_{\varepsilon}(A^{-1}))^{-1}T(t)W_{m+\eta}(A_0,B_0)xd\mu(s).
\end{eqnarray*}

Let 
\[\tau:=\frac{\|T(t)W_{m+\eta}(A_\varepsilon,B_\varepsilon)\|_{\mathcal{L}(X)}}{\|T(t)W_{m+\eta}(A_0,B_0)\|_{\mathcal{L}(X)}};\]
then, by proceeding as in the proof of Theorem~\ref{theo4.5}, one gets
\begin{eqnarray}\label{eqq79}
\|T(t)W_{m}(A_0,B_0)\|_{\mathcal{L}(X)} &\lesssim& \|T(t)W_{m+\eta}(A_0,B_0)\|_{\mathcal{L}(X)}\log(1+\tau)^{\eta}.  
\end{eqnarray}
Again, by combining the estimates~\eqref{eqq77a} and~\eqref{eqq79} with the arguments presented in the proof of Theorem~\ref{theo4.5}, it follows that for each $t\ge 1$,  
\begin{eqnarray}\label{eqq77b}
\|T(t)W_{m}(A_0,B_0)\|_{\mathcal{L}(X)} \lesssim t^{-s}\log(1+t)^{\eta}.
\end{eqnarray}

\

$\bullet$~{\bf{Removing $m$}}. It follows from the discussion presented in the previous item that for each $x\in X$, 
\begin{eqnarray*}
T(t)A^{\mu}B^{-\mu}\log(1+A^{-1})^{-m+1}x&=&\frac{1}{\varepsilon}T(t)\log(1+f_{\varepsilon}(A^{-1}))A^{\mu}B^{-\mu}\log(1+A^{-1})^{-m}x\\
&=& \frac{1}{\varepsilon}T(t)\int_{0+}^{\infty}f_{\varepsilon}(B) (\lambda+f_{\varepsilon}(B))^{-1}W_{m}(A_0,B_0)xd\mu(\lambda).
\end{eqnarray*}

Let $\tau:=\dfrac{\|T(t)W_{m}(A_\varepsilon,B_\varepsilon)\|_{\mathcal{L}(X)}}{\|T(t)W_{m}(A_0,B_0)\|_{\mathcal{L}(X)}}$; then, by proceeding as in the proof of Theorem~\ref{theo4.5}, it follows from relation~\eqref{eqq77b} that for each $t\ge 1$,
\begin{eqnarray*}
\|T(t)W_{m-1}(A_0,B_0)\|_{\mathcal{L}(X)}\lesssim t^{-s}\log(1+t)^{1+\eta}.
\end{eqnarray*}

By proceeding recursively over $m$ (see the proof of Theorem~\ref{theo4.5} for details), it follows from the previous discussion that for each $t\ge 1$,
\begin{eqnarray*}
\nonumber\|T(t)A^{\mu}B^{-\mu}\|_{\mathcal{L}(X)}\lesssim t^{-s}\log\left(1+ t\right)^{m+\eta}.
\end{eqnarray*}

\

\noindent{\textbf{Case $\eta=0$.}} Since in this case $\upsilon = m\in\mathbb{N}$, one just needs to proceed as in the previous item in order to conclude that for each $t\ge 1$,
\begin{eqnarray*}
\nonumber\|T(t)A^{\mu}B^{-\mu}\|_{\mathcal{L}(X)}\lesssim t^{-s}\log\left(1+ t\right)^{m}.
\end{eqnarray*}
\end{proof5}


\begin{remark}
\begin{rm}
Suppose that $\alpha=1$ in the statement of Corollary~\ref{cor3.2}, so $\|R(\lambda,A)\|_{\mathcal{L}(X)}\lesssim |\lambda|^{-1}$ with $\text{Re}(\lambda)\leq 0$. Then, by relation \eqref{teo43}, for each $\sigma>0$ there exists $C_{\delta,\sigma}>0$ so that for each $t\ge 1$,
\begin{equation}
\|T(t)A^{\sigma}(1+A)^{-\sigma}\|_{\mathcal{L}(X)}\leq C_{\sigma,\delta} t^{-\sigma}\log(1+t)^{1+\delta}.
\end{equation}
Now, for each $\sigma>1$, take $\delta=\sigma-1>0$, and so for each $t\ge 1$,

\begin{equation*}
\|T(t)A^{\sigma}(1+A)^{-\sigma}\|_{\mathcal{L}(X)}\leq C_{\sigma,\delta} t^{-\sigma}\log(1+t)^{\sigma}=O\left(\left(\frac{1}{M^{-1}_{\log}(t)}\right)^{\sigma}\right).
\end{equation*}

This shows that in case $\alpha=1$ and $\sigma>1$, one gets the same estimate as in Corollary 2.12 in~\cite{chill1}  for bounded $C_0$-semigroups, and so for these particular parameters, the result is optimal.
\end{rm}
\end{remark}

\appendix
\section*{Appendix}
\section{Proof of Proposition \ref{prop3.2}}\label{proofprop}
\zerarcounters
\noindent {\bf{Item (a).}} Let $\zeta>1$ and set $\Tilde{c}:=\zeta+a$.

\
 
$\bullet$~\textbf{Case 1: $\alpha=1$.} 

\

{\bf{Case 1(a): $\Tilde{c} \in (1,2]$}}.   
Note that in this case, 
$a\in [0,1)$. Set $h_{\alpha,\zeta}(\lambda)=\lambda^{\alpha}(2\pi-i\log(\lambda))^{\zeta}$, with $\lambda \in i\mathbb{R}\setminus\{0\}$, and define the operator $L_{\nu,\Tilde{c}}(A):=(1+A)^{-\nu}(2\pi-i\log(A))^{-\Tilde{c}}\in \mathcal{L}(X)$. Since $(\lambda+A)^{-1}$ commutes with $L_{\nu,\Tilde{c}}(A)$, it follows from the moment inequality that 
\begin{equation}\label{eqq53}
\|h_{1,\zeta}(\lambda)(\lambda+A)^{-1}L_{\nu,\Tilde{c}}(A)\|_{\mathcal{L}(X)} \lesssim \|h_{1,1-a}(\lambda)(\lambda+A)^{-1}L_{\nu,1}(A)\|^{2-\Tilde{c}}_{\mathcal{L}(X)}\|h_{1,2-a}(\lambda)(\lambda+A)^{-1}L_{\nu,2}(A)\|^{\Tilde{c}-1}_{\mathcal{L}(X)}. 
\end{equation}

Let $\varepsilon>0$, set $A_\varepsilon:=(A+\varepsilon)(1+\varepsilon A)^{-1}$ and note that $A^{-1}_\varepsilon \in \mathcal{L}(X)$. For each  $\lambda \in i\mathbb{R}\setminus\{0\}$, let $r\in  (0,|\lambda|/2]$ and $R\geq 2|\lambda|+2$ be such that $\sigma(A_\varepsilon)\subset\{z\in\mathbb{C}\mid r<|z|<R\}$, let $\theta\in(\pi/2,\pi)$ and set $\gamma_{+}=\{se^{i\theta}\mid s\in [r,R]\}$, $\gamma_{-}=\{te^{-i\theta}\mid t\in [r,R]\}$, $\gamma_{r}=\{re^{is}\mid s\in [-\theta,\theta]\}$, $\gamma_{R}=\{Re^{is}\mid s\in [-\theta,\theta]\}$ and $\gamma:=\gamma_{+}\cup \gamma_{-}\cup \gamma_{r}\cup \gamma_{R}$. Then, by the Riesz-Dunford functional calculus (see~\eqref{eqq14}), for each $x\in X$ (here, $y:=(1+A)^{-\nu}x$),
\begin{eqnarray*}
\nonumber h_{1,1-a}(\lambda)(\lambda+A_\varepsilon)^{-1}(2\pi-i\log(A_\varepsilon))^{-1}y &=& \frac{h_{1,1-a}(\lambda)}{2\pi i} \int_{\gamma} \frac{1}{(2\pi-i\log(z))}R(z,A_\varepsilon) (\lambda+A_\varepsilon)^{-1}ydz\\
&=& \frac{h_{1,1-a}(\lambda)}{2\pi i} \int_{\gamma} \frac{1}{(2\pi-i\log(z))(\lambda+z)}dz(\lambda+A_\varepsilon)^{-1} y+\\
&+& \frac{h_{1,1-a}(\lambda)}{2\pi i} \int_{\gamma} \frac{1}{(2\pi-i\log(z))(\lambda+z)}R(z,A_\varepsilon) y dz
\end{eqnarray*}
\begin{eqnarray*}
&=&\frac{h_{1,1-a}(\lambda)(\lambda+A_\varepsilon)^{-1}y}{2\pi-i\log(-\lambda)}
+\frac{1}{2\pi i} \int_{r}^{R}\frac{h_{1,1-a}(\lambda)e^{-i\theta}R(te^{-i\theta},A_\varepsilon)y}{(2\pi-\theta-i\log(t))(\lambda+te^{-i\theta})} dt\\
&-& \frac{h_{1,1-a}(\lambda)}{2\pi i} \int_{r}^{R}\frac{e^{i\theta}}{(2\pi+\theta-i\log(t))(\lambda+te^{i\theta})}R(te^{i\theta},A_\varepsilon)y dt\\
&+& \frac{h_{1,1-a}(\lambda)}{2\pi i} \int_{-\theta}^{\theta}\frac{iRe^{is}}{(2\pi-s+i\log(R))(\lambda+Re^{is})}R(Re^{is},A_\varepsilon)y ds\\
&-& \frac{h_{1,1-a}(\lambda)}{2\pi i} \int_{-\theta}^{\theta}\frac{ire^{is}}{(2\pi-s+i\log(r))(\lambda+re^{is})}R(re^{is},A_\varepsilon)yds,
\end{eqnarray*}
where we have used the residue theorem in the third identity. By taking the limit $\theta \to \pi$ on both sides of the identity above, one gets 
\begin{eqnarray*}
  \nonumber h_{1,1-a}(\lambda)(\lambda+A_\varepsilon)^{-1}(2\pi-i\log(A_\varepsilon))^{-1}y&=& \frac{h_{1,1-a}(\lambda)}{2\pi-i\log(-\lambda)}(\lambda+A_\varepsilon)^{-1}y\\
  &+&\frac{1}{2\pi i} \int_{r}^{R}\frac{h_{1,1-a}(\lambda)}{(\pi-i\log(t))(\lambda-t)}(t+A_\varepsilon)^{-1}y dt\\
  &-& \frac{1}{2\pi i} \int_{r}^{R}\frac{h_{1,1-a}(\lambda)(t+A_\varepsilon)^{-1}y}{(3\pi-i\log(t))(\lambda-t)} dt\\
 &+& \frac{1}{2\pi i} \int_{-\pi}^{\pi}\frac{ih_{1,1-a}(\lambda)Re^{is}  R(Re^{is},A_\varepsilon)y}{(2\pi-s-i\log(R))(\lambda+Re^{is})} ds\\
&-& \frac{h_{1,1-a}(\lambda)}{2\pi i} \int_{-\pi}^{\pi}\frac{ire^{is}}{(2\pi+s-i\log(r))(\lambda+re^{is})}R(re^{is},A_\varepsilon)y ds
\end{eqnarray*}
Now, by taking the limits $r\to 0$ and $R\to \infty$ on both sides of the last identity,  one gets for each $x\in X$,
\begin{eqnarray*}
  h_{1,1-a}(\lambda)(\lambda+A_\varepsilon)^{-1}(2\pi-i\log(A_\varepsilon))^{-1}y&=& \frac{h_{1,1-a}(\lambda)(\lambda+A_\varepsilon)^{-1}y}{2\pi-i\log(-\lambda)}\\
  &+& \int_{0}^{\infty} \frac{ih_{1,1-a}(\lambda)}{(3\pi^2-4\pi i\log(t)-\log(t)^2)(\lambda-t)}(t+A_\varepsilon)^{-1}y\,dt.
\end{eqnarray*}
Finally, by taking the limit $\varepsilon\to 0^{+}$ on both hands of the identity above, one gets
\begin{eqnarray}\label{eq77}
\nonumber h_{1,1-a}(\lambda)(\lambda+A)^{-1}(2\pi-i\log(A))^{-1}y&=&\frac{h_{1,1-a}(\lambda)(\lambda+A)^{-1}y}{2\pi-i\log(-\lambda)}\\
&+&\int_{0}^{\infty} \frac{ih_{1,1-a}(\lambda)(t+A)^{-1}y}{(3\pi^2-4\pi i\log(t)-\log(t)^2)(\lambda-t)}dt,
\end{eqnarray}
where we have used on the left-hand side that $(\lambda+A_\varepsilon)^{-1}\rightarrow (\lambda+A)^{-1}$
 uniformly (by Lemma~\ref{lemma21}), $(2\pi-i\log(A_\varepsilon))^{-1}\rightarrow (2\pi-i\log(A))^{-1}$ strongly (see the proof of Lemma 3.5.1~\cite{haase}), and on the right-hand side dominated convergence.

Then, by~\eqref{eq77}, one gets
\begin{eqnarray} \label{eqq54}
  \nonumber &&|h_{1,1-a}(\lambda)|\left\|(\lambda+A)^{-1}(2\pi-i\log(A))^{-1}(1+A)^{-\nu}\right\|_{\mathcal{L}(X)}\\
\nonumber  &&\lesssim \left\|\frac{h_{1,1-a}(\lambda)(\lambda+A)^{-1}}{2\pi-i\log(-\lambda)}\right\|_{\mathcal{L}(X)}+ \int_{0}^{\infty} \frac{|h_{1,1-a}(\lambda)| }{(\pi^2+\log(t)^{2})|\lambda-t|}\|(t+A)^{-1}\|_{\mathcal{L}(X)} dt\\
\nonumber &&\lesssim \left\|\frac{h_{1,1-a}(\lambda)(\lambda+A)^{-1}}{2\pi-i\log(-\lambda)}\right\|_{\mathcal{L}(X)}+\int_{0}^{\infty} \frac{|h_{1,1-a}(\lambda)| }{t(\pi^2+\log(t)^{2})|(\lambda|+t)} dt\\
\nonumber &&\lesssim \left\|\frac{h_{1,1-a}(\lambda)(\lambda+A)^{-1}}{2\pi-i\log(-\lambda)}\right\|_{\mathcal{L}(X)}+\int_{0}^{\infty}\frac{|h_{1,1-a}(\lambda)|(t+1)}{t(\pi^2+\log(t)^2)(|\lambda|+t)}dt\\
&&=\left\|\frac{h_{1,1-a}(\lambda)(\lambda+A)^{-1}}{2\pi-i\log(-\lambda)}\right\|_{\mathcal{L}(X)}+
 \frac{|h_{1,1-a}(\lambda)|(|\lambda|-1)}{|\lambda|\log(|\lambda|)},
\end{eqnarray}
where we have used relation~\eqref{loginver} in the last identity.

Note that for each $\lambda \in i\mathbb{R}\setminus\{0\}$ with $|\lambda|\leq 1$, it follows from~\eqref{eqq42} that
\begin{equation*}
    \left\|\frac{h_{1,1-a}(\lambda)(\lambda+A)^{-1}}{2\pi-i\log(-\lambda)}\right\|_{\mathcal{L}(X)}\lesssim 1,
\end{equation*}
and since for each $\eta>0$, $\displaystyle{\lim_{|\lambda|\to 0^{+}} |\lambda|\log(|\lambda|)^{\eta}=0}$, one gets
\[\frac{|h_{1,1-a}(\lambda)|(|\lambda|-1)}{|\lambda|\log(|\lambda|)}\le\frac{(2\pi+|\log(|\lambda|)|)^{1-a}(|\lambda|-1)}{\log(|\lambda|)}
\lesssim |\lambda||\log(|\lambda|)|^{-a}
\stackrel{|\lambda|\to 0^+}{\longrightarrow} 0\]
and $|h_{1,1-a}(\lambda)|\rightarrow 0$ as $|\lambda|\rightarrow 0^{+}$. Hence, one concludes that 
\begin{equation}\label{eqq55}
\sup\{\left\|h_{1,1-a}(\lambda)(\lambda+A)^{-1}(1+A)^{-\nu}(2\pi-i\log(A))^{-1}\right\|_{\mathcal{L}(X)} \mid  \lambda \in i\mathbb{R}\setminus\{0\}, |\lambda|\leq 1\}<\infty.
\end{equation}

Now, by using the same ideas as before, one has for each $\varepsilon>0$ and each $x\in X$,
\begin{eqnarray*}
&&h_{1,2-a}(\lambda)(\lambda+A_\varepsilon)^{-1}(2\pi-i\log(A_\varepsilon))^{-2}(1+A)^{-\nu}x = \frac{h_{1,2-a}(\lambda)(\lambda+A_\varepsilon)^{-1}(1+A)^{-\nu}x}{(2\pi-i\log(\lambda))^2}\\
&&-\int_{0}^{\infty} \frac{2ih_{1,2-a}(\lambda)(2\pi-i\log(t))(t+A_\varepsilon)^{-1}(1+A)^{-\nu}x}{(3\pi^2-4\pi i\log(t)-\log(t)^2)^2(\lambda-t)} dt.
\end{eqnarray*}

So, by taking the limit $\varepsilon\rightarrow 0^+$ on both sides of the identity, one gets 
\begin{eqnarray*}
&&  h_{1,2-a}(\lambda)(\lambda+A)^{-1}(2\pi-i\log(A))^{-2}(1+A)^{-\nu}x =  \frac{h_{1,2-a}(\lambda)(\lambda+A)^{-1}(1+A)^{-\nu}x}{(2\pi-i\log(\lambda))^2}\\
  &&-\int_{0}^{\infty} \frac{2ih_{1,2-a}(\lambda)(2\pi-i\log(t))(t+A)^{-1}(1+A)^{-\nu}x}{(3\pi^2-4\pi i\log(t)-\log(t)^2)^2(\lambda-t)} dt.
\end{eqnarray*}
Then,
\begin{eqnarray*}
&&\left\|h_{1,2-a}(\lambda)(\lambda+A)^{-1}(2\pi-i\log(A))^{-2}\right\|_{\mathcal{L}(X)}\lesssim 
  \left\|\frac{h_{1,2-a}(\lambda)(\lambda+A)^{-1}}{(2\pi-i\log(\lambda))^2}\right\|_{\mathcal{L}(X)}\\
  &&+ \int_{0}^{e^{-2\pi}} \frac{|h_{1,2-a}(\lambda)||\log(t)|}{t(\pi^{2}+\log(t)^2)^2(|\lambda|+t)}dt+\int_{e^{-2\pi}}^{e^{2\pi}} \frac{|h_{1,2-a}(\lambda)|(|\log(t)|+2\pi)}{|(3\pi^2-4\pi i\log(t)-\log(t)^2)^2|}\frac{dt}{t}\\
&&+\int_{e^{2\pi}}^{\infty} \frac{|h_{1,2-a}(\lambda)| |\log(t)|}{t(\pi^{2}+\log(t)^2)^2(|\lambda|+t)}dt
\end{eqnarray*}
\begin{eqnarray*}
&\lesssim& \left\|\frac{h_{1,2-a}(\lambda)(\lambda+A)^{-1}}{(2\pi-i\log(\lambda))^2}\right\|_{\mathcal{L}(X)} +|h_{1,2-a}(\lambda) |\int_{0}^{\infty} \frac{\pi^2-2(1+1/t)\log(t)+\log(t)^2}{(\pi^2+(\log(t))^{2})^2(|\lambda|+t)} dt +|h_{1,2-a}(\lambda)|\\ 
&=& \left\|\frac{h_{1,2-a}(\lambda)(\lambda+A)^{-1}}{(2\pi-i\log(\lambda))^2}\right\|_{\mathcal{L}(X)}+ \frac{|h_{1,2-a}(\lambda)|(|\lambda|\log(|\lambda|)-|\lambda|+1 +|\lambda|\log(|\lambda|)^2)}{|\lambda|\log(|\lambda|)^2},
\end{eqnarray*}
where we have used relation~\eqref{logquad} in the last identity.

By using the same reasoning as before, one concludes that 
\begin{equation}\label{eqq56}
\sup\{\left\|h_{1,2-a}(\lambda)(\lambda+A)^{-1}(2\pi-i\log(A))^{-2}(1+A)^{-\nu}\right\|_{\mathcal{L}(X)}\mid \lambda \in i \mathbb{R}\setminus\{0\}, |\lambda|\leq 1\}<\infty. 
\end{equation}

Finally, by combining~\eqref{eqq53},~\eqref{eqq55} and~\eqref{eqq56}, it follows that
\begin{equation*}
 \sup\{\|h_{1,\zeta}(\lambda)(\lambda+A)^{-1}L_{\nu,\Tilde{c}}(A)\|_{\mathcal{L}(X)}\mid \lambda \in i \mathbb{R}\setminus\{0\}, |\lambda|\leq 1\}<\infty.    
\end{equation*}

\

\textbf{Case 1(b): $\Tilde{c}\in(2,3]$}. In this case, $a\in[1,2)$; then, by the moment inequality, one gets for each $\lambda\in i\mathbb{R}\setminus\{0\}$,
  \begin{eqnarray*}
\|h_{1,\zeta}(\lambda)(\lambda+A)^{-1}L_{\nu,\Tilde{c}}(A)\|_{\mathcal{L}(X)} &\lesssim& \|h_{1,2-a}(\lambda)(\lambda+A)^{-1}L_{\nu,2}(A)\|^{2-\Tilde{c}}_{\mathcal{L}(X)}\|h_{1,3-a}(\lambda)(\lambda+A)^{-1}L_{\nu,3}(A)\|^{\Tilde{c}-1}_{\mathcal{L}(X)},
\end{eqnarray*}
and it remains to estimate $\|h_{1,3-a}(\lambda)(\lambda+A)^{-1}L_{\nu,3}(A)\|^{\Tilde{c}-1}_{\mathcal{L}(X)}$.
  Note that for each $\lambda\in i\mathbb{R}\setminus\{0\}$, $\varepsilon>0$ and each $x\in X$, one has (here, $y=(1+A)^{-\nu}x$)
\begin{eqnarray*}
  \nonumber h_{1,3-a}(\lambda)(\lambda+A_\varepsilon)^{-1}(2\pi-i\log(A_\varepsilon))^{-3}y &=&  \frac{h_{1,3-a}(\lambda)(\lambda+A_\varepsilon)^{-1}y}{(2\pi-i\log(-\lambda))^{3}}\\
  &+&ih_{1,3-a}(\lambda)\int_{0}^{\infty} \frac{(26\pi^3-24\pi^2 i \log(t)+6\pi \log(t))(t+A_\varepsilon)^{-1}y}{(3\pi^2-4\pi i\log(t)-\log(t)^2)^{3}(\lambda-t)} dt,
\end{eqnarray*}
and then, by taking the limit $\varepsilon\to 0^+$ on both sides of the last identity, one gets 
\begin{eqnarray*}
h_{1,3-a}(\lambda)(\lambda+A)^{-1}L_{\nu,3}(A)x&=&\frac{h_{1,3-a}(\lambda)(\lambda+A)^{-1}y}{(2\pi-i\log(-\lambda))^{3}}+ \\
&+& \int_{0}^{\infty} \frac{ih_{1,3-a}(\lambda)(26\pi^3-24\pi^2 i \log(t)+6\pi \log(t)}{(3\pi^2-4\pi i\log(t)-\log(t)^2)(\lambda-t)}(t+A)^{-1}y\,dt.
\end{eqnarray*}

Thus, by relation~\eqref{cubo},
\begin{eqnarray*}
&&\left\|h_{1,3-a}(\lambda)(\lambda+A)^{-1}L_{\nu,3}(A)\right\|_{\mathcal{L}(X)}\lesssim\left\|\frac{h_{1,3-a}(\lambda)(\lambda+A)^{-1}}{(2\pi-i\log(-\lambda))^{3}}\right\|_{\mathcal{L}(X)}\\
&&+\int_{0}^{e^{-\sqrt{3}\pi}} \frac{|h_{1,3-a}(\lambda)| (26\pi^3+24\pi^2|\log(t)|+6\pi\log(t)^2)\|(t+A)^{-1}\|_{\mathcal{L}(X)}}{(\pi^2+\log(t)^{2})^3|\lambda-t|} dt\\
  &&+ \int_{e^{-\sqrt{3}\pi}}^{e^{\sqrt{3}\pi}} \frac{|h_{1,3-a}(\lambda)|}{|3\pi^2-4\pi i\log(t)-\log(t)^2|||\lambda-t|}\|(t+A)^{-1}\|_{\mathcal{L}(X)} dt
  \end{eqnarray*}

\begin{eqnarray*}
  &&+|h_{1,3-a}(\lambda)|\int_{e^{\sqrt{3}\pi}}^{\infty} \frac{26\pi^3+24\pi^2|\log(t)|+6\pi\log(t)^2}{(\pi^2+\log(t)^{2})^{3}|\lambda-t|}\|(t+A)^{-1}\|_{\mathcal{L}(X)} dt\\
&&\lesssim \left\|\frac{h_{1,3-a}(\lambda)}{(2\pi-i\log(-\lambda))^{3}}(\lambda+A)^{-1}\right\|_{\mathcal{L}(X)}+ \int_{0}^{\infty} \frac{|h_{1,3-a}(\lambda)|f(t)}{t(\pi^2+\log(t)^{2})^{3}(|\lambda|+t)} dt\\
&&+ \int_{e^{-\sqrt{3}\pi}}^{e^{\sqrt{3}\pi}} \frac{|h_{1,3-a}(\lambda)|}{t^2|3\pi^2-4\pi i\log(t)-\log(t)^2|}dt\\
&&\lesssim\left\|\frac{h_{1,3-a}(\lambda)}{(2\pi-i\log(-\lambda))^3}(\lambda+A)^{-1}\right\|_{\mathcal{L}(X)}+|h_{1,3-a}(\lambda)|
  \frac{(|\lambda|\log(|\lambda|)^2-2(|\lambda|\log(|\lambda|)-|\lambda|+1)}{|\lambda|\log(|\lambda|)^3}\\
  &&+ |h_{1,3-a}(\lambda)|,
\end{eqnarray*}
where for each $t>0$,
\begin{equation*}
f(t)=\pi^2((-2+\pi^2)t-2)+t\log(t)^4-4t\log(t)^3+2((3+\pi^2)t+3)\log(t)^2-4\pi^2t\log(t).
\end{equation*}

By proceeding as in {\bf{Case 1(a)}}, one concludes that 
\begin{equation*}
 \sup\{\|h_{1,\zeta}(\lambda)(\lambda+A)^{-1}L_{\nu,\Tilde{c}}(A)\|_{\mathcal{L}(X)}\mid \lambda \in i \mathbb{R}\setminus\{0\}, |\lambda|\leq 1\}<\infty.    
\end{equation*}

{\bf{Case 1(c): $\Tilde{c}>3$}}.  In this case, $a\geq 2$. Let $\zeta=\zeta_1+\zeta_2$, with $\zeta_2\in(1,2)$. Again, by applying the moment inequality over $\zeta_2$, one gets
\begin{eqnarray*}\label{eq20}
  &&\nonumber\|h_{1,\zeta}(\lambda)(\lambda+A)^{-1}L_{\nu,a+\zeta_1+\zeta_2}(A)\|_{\mathcal{L}(X)} \lesssim   \|h_{1,\zeta}(\lambda)(\lambda+A)^{-1}L_{\nu,a+\zeta_2}(A)\|_{\mathcal{L}(X)}\\
 & &\lesssim\|h_{1,1}(\lambda)(\lambda+A)^{-1}L_{\nu,1+a}(A)\|^{2-\zeta_2}_{\mathcal{L}(X)} \|h_{1,2}(\lambda)(\lambda+A)^{-1}L_{\nu,2+a}(A)\|^{\zeta_2-1}_{\mathcal{L}(X)}.
\end{eqnarray*}
Let $\gamma$ be the same path as presented in {\bf{Case 1(a)}}. Then, for each $\varepsilon>0$ and each $x\in X$,
\begin{eqnarray*}
\nonumber&& h_{1,1}(\lambda)(\lambda+A_\varepsilon)^{-1}(2\pi-i\log(A_\varepsilon))^{-(1+a)}x = \frac{h_{1,1}(\lambda)}{2\pi i} \int_{\gamma} \frac{1}{(2\pi-i\log(z))^{1+a}}R(z,A_\varepsilon) (\lambda+A_\varepsilon)^{-1} x dz\\
&&\stackrel{\theta \rightarrow \pi}{\longrightarrow}
\frac{h_{1,1}(\lambda) (\lambda+A_\varepsilon)^{-1}x}{(2\pi-i\log(-\lambda))^{1+a}}+\frac{1}{2\pi i} \int_{r}^{R}\frac{h_{1,1}(\lambda)(t+A_\varepsilon)^{-1}}{(2\pi-i\log(t))^{1+a}(\lambda-t)} x dt\\
&&- \frac{1}{2\pi i} \int_{r}^{R}\frac{h_{1,1}(\lambda)}{(3\pi-i\log(t))^{1+a}(\lambda-t)} (t+A_\varepsilon)^{-1} x dt\\
&&+ \frac{1}{2\pi i} \int_{-\pi}^{\pi}\frac{ih_{1,1}(\lambda)Re^{is}}{(2\pi-s-i\log(R))^{1+a}(\lambda+Re^{is})}R(Re^{is},A_\varepsilon)x ds\\
&&- \frac{h_{1,1}(\lambda)}{2\pi i} \int_{-\pi}^{\pi}\frac{ire^{is}}{(2\pi+s-i\log(r))^{1+a}(\lambda+re^{is})}R(re^{is},A_\varepsilon) x ds\\
&&\stackrel{r\to 0,R\to\infty}{\longrightarrow}\frac{h_{1,1}(\lambda)}{(2\pi-i\log(-\lambda))^{1+a}}(\lambda+A_\varepsilon)^{-1}x+ \frac{1}{2\pi i} \int_{0}^{\infty}\frac{h_{1,1}(\lambda)(t+A_\varepsilon)^{-1}}{(\pi-i\log(t))^{1+a}(\lambda-t)}xdt\\
&&- \frac{1}{2\pi i} \int_{0}^{\infty}\frac{h_{1,1}(\lambda)(t+A_\varepsilon)^{-1}}{(3\pi-i\log(t))^{1+a}(\lambda-t)}x dt.
\end{eqnarray*}
Now, it follows from dominated convergence that for each $x\in X$,
\begin{eqnarray*}
  \nonumber &&h_{1,1}(\lambda)(\lambda+A)^{-1}(2\pi-i\log(A))^{-(1+a)}x = \frac{h_{1,1}(\lambda)(\lambda+A)^{-1}x}{(2\pi-i\log(-\lambda))^{1+a}}\\
  &+& \frac{1}{2\pi i} \int_{0}^{\infty}\frac{h_{1,1}(\lambda)}{(\pi-i\log(t))^{1+a}(\lambda-t)}(t+A)^{-1} xdt- \frac{1}{2\pi i} \int_{0}^{\infty}\frac{h_{1,1}(\lambda)(t+A)^{-1}}{(3\pi-i\log(t))^{1+a}(\lambda-t)}x dt.
\end{eqnarray*}

Therefore,
\begin{eqnarray*}
  &&\| h_{1,1}(\lambda)(\lambda+A)^{-1}(2\pi-i\log(A))^{-(1+a)}\|_{\mathcal{L}(X)}\leq   \left\|\frac{h_{1,1}(\lambda)(\lambda+A)^{-1}}{(2\pi-i\log(-\lambda))^{1+a}}\right\|_{\mathcal{L}(X)}\\
  &&+ \frac{1}{2\pi} \int_{0}^{\infty}\frac{|h_{1,1}(\lambda)|\|(t+A)^{-1}\|_{\mathcal{L}(X)}}{(\pi^2+\log(t)^2)(|\lambda|+t)} dt+ \frac{1}{2\pi} \int_{0}^{\infty}\frac{|h_{1,1}(\lambda)|\|(t+A)^{-1}\|_{\mathcal{L}(X)}}{(\pi^2+\log(t)^2)(|\lambda|+t)} dt\\
&&= \left\|\frac{h_{1,1}(\lambda)(\lambda+A)^{-1}}{(2\pi-i\log(-\lambda))^{1+a}}\right\|_{\mathcal{L}(X)}+2\frac{|h_{1,1}(\lambda)|(|\lambda|-1)}{|\lambda|\log(|\lambda|)}.
\end{eqnarray*}
Now, by the same reasoning as before, one gets 
\begin{eqnarray*}
  \nonumber &&h_{1,2}(\lambda)(\lambda+A)^{-1}(2\pi-i\log(A))^{-(2+a)} = \frac{h_{1,2}(\lambda)(\lambda+A)^{-1}}{(2\pi-i\log(-\lambda))^{2+a}}\\
  &&+ \frac{1}{2\pi i} \int_{0}^{\infty}\frac{h_{1,2}(\lambda)}{(\pi-i\log(t))^{2+a}(\lambda-t)}(t+A)^{-1} dt- \frac{1}{2\pi i} \int_{0}^{\infty}\frac{h_{1,2}(\lambda)(t+A)^{-1}}{(3\pi-i\log(t))^{2+a}(\lambda-t)} dt, 
\end{eqnarray*}
so
\begin{eqnarray*}
\| h_{1,2}(\lambda)(\lambda+A)^{-1}L_{\nu,2+a}(A)\|_{\mathcal{L}(X)}&\lesssim& \left\|\frac{h_{1,2}(\lambda)(\lambda+A)^{-1}}{(2\pi-i\log(-\lambda))^{2+a}}\right\|_{\mathcal{L}(X)}+ \\
&+& \frac{|h_{1,2}(\lambda)|}{\pi} \int_{0}^{\infty}\frac{(\pi^2-2(1+1/t)\log(t)+\log(t)^2)}{(\pi^2+\log(t)^2)^2(|\lambda|+t)}dt\\
&=&\left\|\frac{h_{1,2}(\lambda)(\lambda+A)^{-1}}{(2\pi-i\log(-\lambda))^{2+a}}\right\|_{\mathcal{L}(X)}+\frac{|h_{1,2}(\lambda)|(|\lambda|\log(|\lambda|)-|\lambda|+1)}{\pi|\lambda|\log(|\lambda|)^2}.
\end{eqnarray*}

Again, by proceeding as in {\bf{Case 1(a)}}, one concludes that 
\begin{equation*}
 \sup\{\|h_{1,\zeta}(\lambda)(\lambda+A)^{-1}L_{\nu,a+\zeta_1+\zeta_2}(A)\|_{\mathcal{L}(X)}\mid \lambda \in i \mathbb{R}\setminus\{0\}, |\lambda|\leq 1\}<\infty.    
\end{equation*}

\

$\bullet$~{\textbf{Case 2: $\alpha \geq 2$.}} By using the functional calculus for $H_0^\infty$ functions (see Remark~\ref{remark2.2}), one gets for each $x\in X$, 
\begin{eqnarray*}
h_{1,\zeta}(\lambda)(\lambda+A)^{-1}A^{\alpha-1}(1+A)^{-(\alpha-1)}(2\pi-i\log(A))^{-\Tilde{c}} x&=& \frac{h_{1,\zeta}(\lambda)}{2\pi i}\int_{\Gamma}\frac{z^{\alpha-1}(\lambda+A)^{-1}}{(1+z)^{\alpha-1}h_{0,\Tilde{c}}(z)}R(z,A)xdz\\
&=& \frac{h_{1,\zeta}(\lambda)(-\lambda)^{\alpha-1}}{(1-\lambda)^{\alpha-1}(2\pi-i\log(-\lambda))^{\Tilde{c}}}(\lambda+A)^{-1}x\\
&+&h_{1,\zeta}(\lambda)S^{''}_{\lambda}x,
\end{eqnarray*}
where
\begin{equation*}
S^{''}_{\lambda}:=\frac{1}{2\pi i}\int_{\Gamma}\frac{z^{\alpha-1}}{(1+z)^{\alpha-1}h_{0,\Tilde{c}}(z)(z+\lambda)}R(z,A)dz.
\end{equation*}

The function $z\mapsto (2\pi-i\log(z))^{-\Tilde{c}}R(z,A)$ is integrable on $\Gamma$ and by Lemma 5.9 in \cite{rozendaal}, for $z\in \Gamma$ and $|\lambda|\leq 1$, one has
\begin{equation*}
\left|\frac{z^{\alpha-1} h_{1,\zeta}(\lambda)}{(1+z)^{\alpha-1}(z+\lambda)}\right|\leq \frac{C}{|1-\lambda|}\leq C;
\end{equation*}
hence, $\sup\{\Vert h_{1,\zeta}(\lambda)S^{''}_{\lambda} \Vert_{\mathcal{L}(X)}\mid \lambda \in i \mathbb{R}\setminus\{0\}, |\lambda|\leq 1\}<\infty$, and since $\left\Vert \dfrac{h_{1,\zeta}(\lambda)(-\lambda)^{\alpha-1}(\lambda+A)^{-1}}{(1-\lambda)^{\alpha-1}(2\pi-i\log(-\lambda))^{\Tilde{c}}}\right\Vert_{\mathcal{L}(X)}$ is also bounded (by hypothesis), then
\begin{equation*}
 \sup\{\|h_{1,\zeta}(\lambda)(\lambda+A)^{-1}A^{\alpha-1}(1+A)^{-(\alpha-1)}(2\pi-i\log(A))^{-\Tilde{c}}\|_{\mathcal{L}(X)}\mid \lambda \in i \mathbb{R}\setminus\{0\}, |\lambda|\leq 1\}<\infty.    
\end{equation*}

\

$\bullet$~{\textbf{Case 3: $\alpha \in (1,2)$.}} By the moment inequality (applied over $\alpha-1\in(0,1)$), one gets 
\begin{eqnarray*}
  &&\|h_{1,\zeta}(\lambda)(\lambda+A)^{-1}(A(1+A)^{-1})^{\alpha-1}L_{\nu,\Tilde{c}}(A)\|_{\mathcal{L}(X)}\\
 && \lesssim\|h_{1,\zeta}(\lambda)(\lambda+A)^{-1}L_{\nu,\Tilde{c}}(A)\|^{2-\alpha}_{\mathcal{L}(X)} \|h_{1,\zeta}(\lambda)(\lambda+A)^{-1}A(1+A)^{-1}L_{\nu,\Tilde{c}}(A)\|^{\alpha-1}_{\mathcal{L}(X)}.
\end{eqnarray*}
The first factor is treated as in {\bf Case 1}, and the second factor is treated as in {\bf Case 2}.

\

{\bf{Item (b)}}

$\bullet$~{\bf{Case 1: $\alpha=1.$}} Let $\zeta>1$ and set $\Tilde{c}:=\zeta+a>1$.

Given that the operator $(\log(2+A))^{\Tilde{c}}(2\pi-i\log(A))^{-\Tilde{c}}$ is closed, it follows from the Closed Graph Theorem that it is bounded; hence, 
\begin{equation*}
    \|(\lambda+A)^{-1}(1+A)^{-1}(2\pi-i\log(A))^{-\Tilde{c}}\|_{\mathcal{L}(X)}\lesssim \|(\lambda+A)^{-1}(1+A)^{-1}\log(A+2)^{-\Tilde{c}}\|_{\mathcal{L}(X)}.
\end{equation*}
Now, by Proposition~\ref{prop3.1}, one gets 
\begin{eqnarray*}
\sup\left\{\frac{|\lambda|}{(1+|\lambda|)^{1-\beta_0}}
|(2\pi-\log(\lambda))|^{\zeta}\|(\lambda+A)^{-1}(1+A)^{-1}\log(A+2)^{-\Tilde{c}}\|_{\mathcal{L}(X)}\mid \lambda \in i \mathbb{R}, |\lambda|\geq 1\right\}<\infty.
\end{eqnarray*}

\

$\bullet$~{\textbf{Case 2: $\alpha \geq 2$.}}
Let $g_{\alpha,\zeta}(\lambda)=\dfrac{\lambda^{\alpha}}{(1-\lambda)^{1-\beta_0}}(2\pi-i\log(\lambda))^\zeta$, with $\lambda\in i\mathbb{R}\setminus\{0\}$; then, by the functional calculus for $H_0^\infty$ functions (see Remark~\ref{remark2.2}), for each $x\in X$, one has 
\begin{eqnarray*}
  &&g_{1,\zeta}(\lambda)(\lambda+A)^{-1}A^{\alpha-1}(1+A)^{-(\alpha+\beta+\beta_0-1)}(2\pi-i\log(A))^{-\Tilde{c}}x\\
  &&=\frac{g_{1,\zeta}(\lambda)}{2\pi i}\int_{\Gamma}\frac{z^{\alpha-1}(\lambda+A)^{-1}}{(1+z)^{\alpha+\beta+\beta_0-1}(2\pi-i\log(z))^{\Tilde{c}}}R(z,A)xdz\\
&&= \frac{g_{1,\zeta}(\lambda)(-\lambda)^{\alpha-1}}{(1-\lambda)^{\alpha+\beta+\beta_0-1}(2\pi-i\log(-\lambda))^{\Tilde{c}}}(\lambda+A)^{-1}x+ g_{1,\zeta}(\lambda)T^{''}_{\lambda}x,
\end{eqnarray*}
where
\begin{equation*}
T^{''}_{\lambda}:=\frac{1}{2\pi i}\int_{\Gamma}\frac{z^{\alpha-1}}{(1+z)^{\alpha+\beta+\beta_0-1}(2\pi-i\log(z))^{\Tilde{c}}(z+\lambda)}R(z,A)dz,
\end{equation*}
with $\Gamma$ the path defined in the proof of Proposition~\ref{prop3.1}.
The function $z\mapsto (2\pi-i\log(z))^{-\Tilde{c}}R(z,A)$ is integrable on $\Gamma$ and by Lemma 5.9 in \cite{rozendaal}, for $z\in \Gamma$ and $|\lambda|\geq 1$, one has
\begin{equation*}
\left|\frac{z^{\alpha-1} g_{1,\zeta}(\lambda)}{(1+z)^{\alpha+\beta+\beta_0-1}(z+\lambda)}\right|\lesssim \frac{ |g_{1,\zeta}(\lambda)|}{|1-\lambda|}\leq C;
\end{equation*}
thus, $\sup\{\Vert g_{1,\zeta}(\lambda)T^{''}_{\lambda}\Vert_{\mathcal{L}(X)}\mid \lambda\in i\mathbb{R},\;|\lambda|\ge 1\}<\infty$, and since
\[\sup\left\{\left\Vert \frac{g_{1,\zeta}(\lambda)(-\lambda)^{\alpha-1}}{(1-\lambda)^{\alpha+\beta+\beta_0-1}(2\pi-i\log(-\lambda))^{\Tilde{c}}}(\lambda+A)^{-1}\right\Vert_{\mathcal{L}(X)}\mid \lambda\in i\mathbb{R}, |\lambda|\geq 1\right\}<\infty,\] by hypothesis, it follows that
\begin{eqnarray*}
\sup\left\{|g_{\alpha,\zeta}(\lambda)|\|(\lambda+A)^{-1}A^{\alpha-1}(1+A)^{-\beta-\beta_0-\alpha+1}(2\pi-i\log(A))^{-\Tilde{c}}\|_{\mathcal{L}(X)}\mid \lambda \in i \mathbb{R}, |\lambda|\geq 1\right\}<\infty.
\end{eqnarray*}

\

$\bullet$~{\bf{Case 3: $\alpha \in (1,2)$.}} It follows from the moment inequality (applied to $\alpha-1\in(0,1)$) that
\begin{eqnarray*}
  &&\|g_{1,\Tilde{c}}(\lambda)(\lambda+A)^{-1}(A(1+A)^{-1})^{\alpha-1}L_{\beta+\beta_0,\Tilde{c}}(A)\|_{\mathcal{L}(X)}\\
  &&\lesssim \|g_{1,\Tilde{c}}(\lambda)(\lambda+A)^{-1}L_{\beta+\beta_0,\Tilde{c}}(A)\|^{2-\alpha}_{\mathcal{L}(X)} \|g_{1,\Tilde{c}}(\lambda)(\lambda+A)^{-1}A(1+A)^{-1}L_{\beta+\beta_0,\Tilde{c}}(A)\|^{\alpha-1}_{\mathcal{L}(X)}.
\end{eqnarray*}

The first factor must be treated as in {\bf Case 1}, and the second one as in {\bf Case 2}.

\section{Estimates}

\begin{lemma}\label{lemmaB.1}
\begin{rm}
Let $\mu,\zeta \geq 0$ and $\nu \geq 1$; then, for each $t\geq 0$, 
\begin{enumerate}
\item \hspace{2cm} $\dint_{i\infty}^{-i\infty}e^{-\lambda t} \dfrac{1}{(1+\lambda)^{\nu}(\log(2+\lambda))^{\zeta}}d\lambda=0$. 
\item 
\hspace{2cm} $\dint_{i\infty}^{-i\infty}e^{-\lambda t} \dfrac{\lambda^{\mu}}{(1+\lambda)^{\nu+\mu}(2\pi-i\log(\lambda))^{\zeta}}d\lambda=0$. 
\end{enumerate}

\end{rm}
\end{lemma}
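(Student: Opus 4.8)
The plan is to prove both identities by a contour deformation / decay argument, exploiting that the integrand is holomorphic in the right half-plane $\{\mathrm{Re}(\lambda)>-1\}$ (item~1) or in a sector containing $(0,\infty)$ (item~2), and that $e^{-\lambda t}$ is bounded there for $t\ge 0$ while the rational/logarithmic prefactor decays fast enough to kill the contribution of the far arcs. First I would fix $t\ge 0$ and treat item~1. The function $\Psi(\lambda):=\dfrac{1}{(1+\lambda)^{\nu}(\log(2+\lambda))^{\zeta}}$ is holomorphic on $\{\mathrm{Re}(\lambda)>-1\}$ (with a branch of $\log(2+\lambda)$ that is real on $(-1,\infty)$; note $2+\lambda\neq 0$ there, and $\log(2+\lambda)\neq 0$ since $2+\lambda\neq 1$ on the imaginary axis and its right neighbourhood, so one restricts the deformation to $\{\mathrm{Re}(\lambda)>-1+\epsilon_0\}$ for a small $\epsilon_0$ if necessary, or just to $\mathrm{Re}(\lambda)\ge 0$). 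Since $\nu\ge 1$, $|\Psi(\lambda)|\lesssim |\lambda|^{-\nu}(\log|\lambda|)^{-\zeta}=O(|\lambda|^{-1})$ as $|\lambda|\to\infty$ in the closed right half-plane.

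The key step is the deformation. I close the contour $i\mathbb{R}$ (oriented from $i\infty$ to $-i\infty$) to the right by a large semicircle $C_R:=\{Re^{i\theta}\mid \theta\in[-\pi/2,\pi/2]\}$. On $C_R$, $|e^{-\lambda t}|=e^{-t R\cos\theta}\le 1$ for $t\ge 0$, and $|\Psi(\lambda)|\lesssim R^{-\nu}(\log R)^{-\zeta}$, so $\left|\int_{C_R}e^{-\lambda t}\Psi(\lambda)\,d\lambda\right|\lesssim \pi R\cdot R^{-\nu}(\log R)^{-\zeta}\to 0$ as $R\to\infty$ (using $\nu\ge1$; when $\nu=1$ one still needs $\zeta\ge0$, which holds, but the bound $R^{1-\nu}(\log R)^{-\zeta}$ only $\to 0$ if $\nu>1$ — so for $\nu=1$ I would instead invoke a Jordan-lemma style refinement: split $\theta$ near $\pm\pi/2$ where $|e^{-\lambda t}|$ may fail to help but the arc length is small, versus $\theta$ bounded away where $e^{-tR\cos\theta}$ is exponentially small; this is the standard argument showing $\int_{C_R}\to 0$ whenever the prefactor is $o(1)$ and one has the half-plane sign $\cos\theta\ge 0$). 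By Cauchy's theorem the integral over the closed contour (imaginary axis followed by $C_R$, enclosing no singularities, since all of $\{1/(1+\lambda)\}$'s pole and $\{\log(2+\lambda)\}$'s branch/zero lie in $\mathrm{Re}(\lambda)\le -1$) is zero; letting $R\to\infty$ gives $\int_{i\infty}^{-i\infty}e^{-\lambda t}\Psi(\lambda)\,d\lambda=0$.

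For item~2, the integrand $\Theta(\lambda):=\dfrac{\lambda^{\mu}}{(1+\lambda)^{\nu+\mu}(2\pi-i\log(\lambda))^{\zeta}}$ is holomorphic on the cut plane $\mathbb{C}\setminus(-\infty,0]$ (with $\log$ the principal branch; $2\pi-i\log\lambda\neq 0$ there since $\log\lambda$ is never $-2\pi i$ for $\lambda$ off the negative axis), hence in particular on the closed right half-plane minus $\{0\}$, and near $0$ one has $|\Theta(\lambda)|\lesssim |\lambda|^{\mu}|\log|\lambda||^{-\zeta}$, which is integrable on $i\mathbb{R}$ near $0$ (for $\mu\ge 0$), while as $|\lambda|\to\infty$, $|\Theta(\lambda)|\lesssim |\lambda|^{-\nu}(\log|\lambda|)^{-\zeta}=O(|\lambda|^{-1})$. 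The same semicircle deformation $C_R$ to the right applies verbatim: $|e^{-\lambda t}\Theta(\lambda)|\le |\Theta(\lambda)|$ there and the arc contribution vanishes (with the $\nu=1$ caveat handled by the Jordan-lemma refinement as above); since $\Theta$ has no singularities in $\mathrm{Re}(\lambda)>0$ — the only potential issue is $\lambda=0$, which lies on the contour, so I would indent the imaginary axis with a tiny semicircle $\gamma_r:=\{re^{i\theta}\mid\theta\in[-\pi/2,\pi/2]\}$ around $0$ and note $\left|\int_{\gamma_r}e^{-\lambda t}\Theta(\lambda)\,d\lambda\right|\lesssim \pi r\cdot r^{\mu}|\log r|^{-\zeta}\to 0$ as $r\to 0$ — Cauchy's theorem on the indented, closed contour gives $0$, and passing $r\to 0$, $R\to\infty$ yields the claim.

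The main obstacle I anticipate is the borderline case $\nu=1$: the naive estimate $\text{(arc length)}\times\sup|\Theta|\sim R\cdot R^{-1}=O(1)$ does not go to $0$, so one genuinely needs the oscillatory decay of $e^{-\lambda t}$ — but only for $t>0$ — via a Jordan-lemma argument, and separately one must dispatch $t=0$. For $t=0$ and $\nu=1$, items~1 and~2 should be read as improper (principal-value) integrals, and the identity follows by a limiting argument: replace $\nu$ by $\nu+\varepsilon$ (so the $t=0$ integral converges absolutely, equals $0$ by the above), then let $\varepsilon\to 0^+$ using dominated convergence away from $\infty$ together with a uniform tail estimate; alternatively one observes that the authors only invoke Lemma~\ref{lemmaB.1} with $\nu$ strictly larger than needed (in Proposition~\ref{theor3.1}, $\nu=(n+1)\beta+1$ with $\beta>0$, and in Proposition~\ref{theor3.3}, $\nu=(n+1)\beta+1$), so the case $\nu=1$ together with $t=0$ may be treated separately or simply noted as following from absolute convergence when $\nu>1$. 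I would therefore structure the write-up to cover $\nu>1$ cleanly by absolute convergence plus the trivial arc bound, and append a short remark handling $\nu=1$ via the Jordan-lemma estimate for $t>0$ and an $\varepsilon$-regularization for $t=0$.
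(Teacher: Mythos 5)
Your proposal is correct and takes essentially the same route as the paper: close the contour to the right (where the integrand is holomorphic), invoke Cauchy's theorem, and show the arc contributions vanish using the polynomial/logarithmic decay of the prefactor together with $|e^{-\lambda t}|\le 1$. The only cosmetic difference is that the paper first deforms $i\mathbb{R}$ onto a sector boundary $\Gamma_\varphi$ (with $0<\varphi<\pi/2$) and then closes that sector, whereas you close directly with a large semicircle and a small indentation at $0$; and you are in fact more careful than the paper about the borderline case $\nu=1$, $\zeta=0$ (where the paper's written bound $R^{1-\nu}/\log(2+R)$ also fails to tend to zero), correctly observing that the applications only ever use $\nu>1$.
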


\begin{proof} We just present the proof of the first equality, since the proof of the other one is analogous. Let us first show the following statement.

\noindent{\textbf{Claim:}}
\begin{equation}\label{eqClaim}
\frac{1}{2\pi i}\int_{i\infty}^{-i\infty}e^{-\lambda t} \frac{1}{(1+\lambda)^{\nu}(\log(2+\lambda))^{\zeta}}d\lambda= \frac{1}{2\pi i}\int_{\Gamma_\varphi}e^{-\lambda t} \frac{1}{(1+\lambda)^{\nu}(\log(2+\lambda))^{\zeta}}d\lambda,
\end{equation}
where $\Gamma_\varphi=\{re^{i\varphi}\mid r\in[0,\infty)\}\cup \{re^{-i\varphi}\mid r\in[0,\infty)\}$ and $0<\varphi<\frac{\pi}{2}$.

Namely, for $t\geq 0$, set $i\mathbb{R}\ni \lambda \mapsto h_t(\lambda):=e^{-\lambda t} \dfrac{1}{(1+\lambda)^{\nu}(\log(2+\lambda))^{\zeta}}$, and for each $R,r>0$ and each $\eta\in[\varphi,\pi/2]$, set $\Gamma^{+}_{R,\varphi}=\{Re^{i\theta}\mid \theta \in(\varphi,\frac{\pi}{2})\}$, $\Gamma^{+}_{r,\varphi}=\{re^{i\theta}\mid \theta \in(\varphi,\frac{\pi}{2})\}$, $\Gamma^{-}_{R,\varphi}=\{Re^{-i\theta}\mid \theta \in(\varphi,\frac{\pi}{2})\}$, $\Gamma^{-}_{r,\varphi}=\{re^{-i\theta}\mid \theta \in(\varphi,\frac{\pi}{2})\}$, $\gamma^{+}_{\eta}=\{se^{i\eta}\mid s\in [r,R]\}$ and $\gamma^{-}_{\eta}=\{se^{-i\eta}\mid s\in [r,R]\}$. By Cauchy’s Integral Theorem,
\begin{equation}\label{eqIC1}
  -\int_{\Gamma^{+}_{R,\varphi}}h_t(\lambda)d\lambda+\int_{\gamma^{+}_{\frac{\pi}{2}}}h_t(\lambda)d\lambda+\int_{\Gamma^{+}_{r,\varphi}}h_t(\lambda)d\lambda-\int_{\gamma^{+}_{\varphi}}h_t(\lambda)d\lambda=0,
\end{equation}
and
\begin{equation}\label{eqIC2}    \int_{\Gamma^{-}_{R,\varphi}}h_t(\lambda)d\lambda-\int_{\gamma^{-}_{\frac{\pi}{2}}}h_t(\lambda)d\lambda-\int_{\Gamma^{-}_{r,\varphi}}h_t(\lambda)d\lambda+\int_{\gamma^{-}_{\varphi}}h_t(\lambda)d\lambda=0.
\end{equation}

Note that
\begin{eqnarray*}
  \left|\int_{\Gamma^{\pm}_{R,\varphi}}h_t(\lambda)d\lambda\right|&\leq& 
  \int_{\varphi}^{\frac{\pi}{2}}\frac{R e^{-t\cos \theta}}{|(1+Re^{\pm i\theta})|^{\nu}|\log(2+Re^{\pm i\theta})|^{\zeta}}d\theta\\
&\leq& 2^{\nu/2} \int_{\varphi}^{\frac{\pi}{2}}\frac{Re^{-t\cos \theta}}{(1+R)^{\nu}(1+\cos(\theta))^{\nu/2}\left(\log(2+R)+\frac{1}{2}\log\left(\frac{1+\cos(\theta)}{2}\right)\right)^{\zeta}}d\theta\\
&\lesssim & \frac{R^{1-\nu}}{\log(2+R)} 
\end{eqnarray*}
and
\begin{equation*}
\left|\int_{\Gamma^{\pm}_{r,\varphi}}h_t(\lambda)d\lambda\right| \lesssim r
\end{equation*} 
By adding the equations \eqref{eqIC1} and \eqref{eqIC2}, and by taking the limits $R\to \infty$, $r\to 0$, one gets \eqref{eqClaim}.


\

By Claim, it suffices to prove that 
\begin{equation*}
\frac{1}{2\pi i}\int_{\Gamma_\varphi}e^{-\lambda t} \frac{1}{(1+\lambda)^{\nu}\log(2+\lambda)^{\zeta}}d\lambda=0.
\end{equation*}
It follows from Cauchy's Integral Theorem that for each $0<r<R$,
\begin{equation}\label{acabou}
\frac{1}{2\pi i}\int_{\Gamma_\varphi}h_t(\lambda)d\lambda+\frac{1}{2\pi i}\int_{\gamma_{R,\varphi}}h_t(\lambda)d\lambda+\frac{1}{2\pi i}\int_{\gamma_{r,\varphi}}h_t(\lambda)d\lambda=0,
\end{equation}
with $\gamma_{R,\varphi}:=\{Re^{i\theta}\mid \theta\in [-\varphi,\varphi]\}$ and $\gamma_{r,\varphi}:=\{r e^{-i\theta}\mid \theta\in [-\varphi,\varphi]\}$.

Note that for each sufficiently large $R$, 
\begin{equation*}
\left| \int_{\gamma_{R,\varphi}} e^{-\lambda t} \frac{1}{(1+\lambda)^{\nu}\log(2+\lambda)^{\zeta}}d\lambda\right| \lesssim \frac{R^{1-\nu}}{\log(2+R)},
\end{equation*}
and for each sufficiently small $r$,
\begin{equation*}
\left|\int_{\gamma_{r,\varphi}} e^{-\lambda t} \frac{1}{(1+\lambda)^{\nu}\log(2+\lambda)^{\zeta}}d\lambda\right| \lesssim r.
\end{equation*}
The result follows by taking the limits $r\to 0$ and $R\to\infty$ in relation~\eqref{acabou}. 
\end{proof}

\begin{lemma}\label{lemmaB2}
\begin{rm}
 Let $\varphi \in (0,\frac{\pi}{2}]$ and $\theta \in (\pi-\varphi,\pi)$. Set $\Omega:= \overline{\mathbb{C}_{+}}\setminus (S_\varphi\cup \{0\})$ and let $\Gamma:=\{re^{i\theta}\mid r \in [0,\infty)\}\cup \{re^{i\theta}\mid r \in [0,\infty)\}$ be oriented from $\infty e^{i\theta}$ to $\infty e^{-i\theta}$. Then, for each $\alpha \in [0,\infty)$, $\beta \in (0,\infty)$, $\eta \in (0, 1]$ and each $\lambda \in \Omega$, one has
\begin{enumerate}[a)]
\item $\dint_{\Gamma} \dfrac{1}{(\eta+z)^{\beta}(\log(1+\eta+z))^{\zeta}(z+\lambda+\eta-1)} dz= \dfrac{1}{(1-\lambda)^{\beta}(\log(2-\lambda))^{\zeta}}$.
\item 
$\dint_{\Gamma} \dfrac{z^{\alpha}}{(\eta+z)^{\alpha+\beta}(2\pi-i\log(-1+\eta+z))^{\zeta}(z+\lambda+\eta-1)} dz= \dfrac{(1-\lambda-\eta)^{\alpha}}{(1-\lambda)(2\pi-i\log(-\lambda))^{\zeta}}$.
\end{enumerate}
\end{rm}
\end{lemma}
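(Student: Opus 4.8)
The plan is to prove both identities by the residue theorem after pinning down the single pole of the integrand. Writing $z_0:=1-\lambda-\eta$, the denominator factor $z+\lambda+\eta-1$ is exactly $z-z_0$, so in each case the integrand has the form $\phi(z)/(z-z_0)$, where $\phi$ is holomorphic on a neighbourhood of the closed sector $\overline{S_\theta}$ once the branch cuts of its fractional‑power and logarithm factors are placed correctly (see below). The residue at the simple pole $z_0$ is then $\phi(z_0)$, and since $\eta+z_0=1-\lambda$, $1+\eta+z_0=2-\lambda$ (for (a)) and $\eta+z_0=1-\lambda$, $-1+\eta+z_0=-\lambda$ (for (b)), evaluating $\phi$ at $z_0$ reproduces the stated right–hand sides. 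Thus the whole matter reduces to showing that $\int_\Gamma \phi(z)/(z-z_0)\,dz$ equals $2\pi i\,\phi(z_0)$ up to orientation sign, i.e. that $\Gamma$, once closed up, winds exactly once around $z_0$ and around nothing else.

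First I would close $\Gamma$ with the large circular arc $C_R=\{Re^{i\psi}\mid |\psi|\le\theta\}$, oriented so that $\Gamma\cup C_R$ bounds the truncated sector $\{|z|<R\}\cap S_\theta$, and then let $R\to\infty$. On $C_R$ both integrands are $O(R^{-1-\beta})$: the factor $(\eta+z)^{-\beta}$ in (a), resp. $z^\alpha(\eta+z)^{-\alpha-\beta}$ in (b), is $O(R^{-\beta})$; the factor $(z-z_0)^{-1}$ is $O(R^{-1})$; and the logarithmic factors stay bounded on $C_R$ (note that the real part of $2\pi-i\log(w)$ equals $2\pi+\arg w\ge\pi$ on the principal branch, so $(2\pi-i\log(\cdot))^{-\zeta}$ is bounded, and $(\log(1+\eta+z))^{-\zeta}$ is likewise bounded away from the origin). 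Hence $\int_{C_R}\to 0$, and the same decay makes $\int_\Gamma$ absolutely convergent, so $\int_\Gamma=\lim_{R\to\infty}\oint_{\Gamma\cup C_R}=\pm 2\pi i\,\mathrm{Res}_{z_0}(\phi/(z-z_0))$, provided $z_0$ is the only singularity of the integrand inside the region enclosed by $\Gamma\cup C_R$.

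The heart of the argument is the geometric verification that $z_0$ lies in $S_\theta$ — hence strictly inside $\{|z|<R\}\cap S_\theta$ for large $R$, and off $\Gamma$ — while every branch point of $\phi$ is thrown into the complementary sector $\mathbb{C}\setminus S_\theta$ around the negative real axis. For $z_0\in S_\theta$: write $-z_0=\lambda-(1-\eta)$; since $\text{Re}\,\lambda\ge 0$ and $1-\eta\ge 0$, subtracting the nonnegative real number $1-\eta$ cannot decrease $|\arg(\cdot)|$, so $|\arg(-z_0)|\ge|\arg\lambda|\ge\varphi>\pi-\theta$ because $\lambda\in\Omega$ is not in $S_\varphi$ (and, being in $\overline{\mathbb{C}_+}\setminus\{0\}$ with the statement well posed, is not a nonnegative real); this places $-z_0$ outside the closed sector $\{|\arg z|\ge\theta\}$, i.e. $z_0\in S_\theta$ (the borderline cases $\text{Re}\,\lambda=0$ and $\eta=1$ are checked directly using $\varphi\le\pi/2<\theta$). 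Moreover $\eta+z_0=1-\lambda$ and $-1+\eta+z_0=-\lambda$ avoid $(-\infty,0]$ since $\lambda$ is not a nonnegative real, so $z_0$ does not sit on any cut. For part (a) the cuts $(-\infty,-\eta]$ of $(\eta+z)^{-\beta}$ and $(-\infty,-1-\eta]$ of $\log(1+\eta+z)$ both lie in $\mathbb{C}\setminus S_\theta$, so $\phi$ is holomorphic across $\overline{S_\theta}\setminus\{z_0\}$ and the residue theorem applies verbatim. The step I expect to need the most care is part (b): the branch point $z=1-\eta$ of $\log(-1+\eta+z)$ lies in $[0,1)\subset\overline{S_\theta}$ when $\eta<1$, so one must fix the branch of the logarithm (consistently with the one used for $(2\pi-i\log(\cdot))^{\upsilon_2}$ in Remark~\ref{remark2.2}) so that its cut is routed out of $S_\theta$, or, equivalently, deform $\Gamma$ within the cut plane to a homotopic contour before applying the residue theorem; the closing arc $C_R$ and its vanishing are unaffected. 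Finally, the passage from the vertical–line integral to $\int_\Gamma$ used in the applications is the same Cauchy–theorem deformation employed in Lemma~\ref{lemmaB.1}.
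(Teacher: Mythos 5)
Your proof follows exactly the paper's strategy: close $\Gamma$ with a circular arc, show the arc contribution vanishes, and compute $\int_\Gamma$ via the residue theorem at the simple pole $z_0=1-\lambda-\eta$. The paper truncates the two rays at radii $r$ and $R$ and adds both a small arc $\gamma_r$ and a large arc $\gamma_R$ before taking $r\to 0$, $R\to\infty$, whereas you use only the large arc; since the part-(a) integrand is bounded near the origin, this is a harmless simplification. You are in fact more careful than the paper on the crucial geometric verification that $z_0\in S_\theta$ (the paper simply invokes the residue theorem without explicitly locating the pole inside the enclosed region), and your argument there --- that subtracting the nonnegative real $1-\eta$ from $\lambda\in\overline{\mathbb{C}_+}$ with $|\arg\lambda|\ge\varphi$ cannot decrease the modulus of the argument, so $|\arg(z_0)|\le\pi-\varphi<\theta$ --- is correct.

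Two caveats. First, "reproduces the stated right-hand sides" is true only up to typos in the lemma as printed: the displayed identities are missing an overall factor $\tfrac{1}{2\pi i}$ (or equivalently the left-hand side should read $\tfrac{1}{2\pi i}\int_\Gamma$), and in item~(b) the right-hand side should carry $(1-\lambda)^{\alpha+\beta}$, not $(1-\lambda)$; both corrections match how the lemma is actually applied in the proofs of Propositions~\ref{prop3.1} and~\ref{prop3.3}, and your residue computation produces the corrected values. Second, and more substantively, your flagged concern about part~(b) is real but your proposed remedy does not close it: once $\eta<1$ the branch point $z=1-\eta$ of $\log(-1+\eta+z)$ lies strictly inside $S_\theta$, so any cut emanating from it necessarily intersects the enclosed region, and re-routing the cut or homotoping $\Gamma$ within the cut plane cannot remove the obstruction --- the residue theorem is simply not applicable for such $\eta$, and neither your argument nor the paper's (which only writes out item~(a) and declares~(b) analogous) establishes identity~(b) in that range. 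The paper sidesteps this in practice because item~(b) is invoked only with $\eta=1$, in which case the branch point sits at the vertex of the sector and the principal cut $(-\infty,0]$ lies outside $\overline{S_\theta}$; your argument for item~(b) should therefore be restricted to $\eta=1$, or the branch convention clarified.
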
  

\begin{proof} We just present the proof of item~a). Let $\lambda \in \Omega$. For each $r \in (0, \text{Im}(\lambda)/2]$ and each $R\geq 2|\lambda| + 2$, set $\gamma_{+}:=\{se^{i\theta}\mid s\in [r,R] \}$, $\gamma_{-}:=\{se^{-i\theta}\mid s\in [r,R]\}$, 
$\gamma_{r}:=\{re^{i\nu}\mid \nu\in [-\theta,\theta]\}$, $\gamma_{R}:=\{Re^{i\nu}\mid \nu\in [-\theta,\theta]\}$ and $\gamma_{r,R}:=(-\gamma_{+})\cup \gamma_{-}\cup (-\gamma_{r})\cup \gamma_{R}$. Let $f_{\beta,\zeta,\lambda}:\overline{\mathbb{C}_{+}}\rightarrow \mathbb{C}$ be given by the law $f_{\beta,\zeta,\lambda}(z)=\dfrac{1}{(\eta+z)^{\beta}(\log(1+\eta+z))^{\zeta}(z+\lambda+\eta-1)}$; then,
\begin{eqnarray*}
   \left| \int_{\gamma_R} f_{\beta,\zeta,\lambda}(z) dz\right|&\leq& 
    \int_{-\theta}^{\theta} \frac{R}{|\eta+Re^{i\nu}|^{\beta}\log(|1+\eta+Re^{i\nu}|)^{\zeta}|Re^{i\nu}+\lambda+\eta-1|} d\nu\\
   &\lesssim& \frac{R^{-\beta}}{\log(1+R)^{\zeta}},
\end{eqnarray*}
which goes to zero as $R\rightarrow \infty$. Similarly, one can show that 
\begin{equation*}
   \lim_{r\to 0} \left| \int_{\gamma_r} f_{\beta,\zeta,\lambda}(z) dz\right|=0.
\end{equation*}

On the other hand, by the Residue Theorem, one has 
\begin{equation*}
\int_{\gamma_{r,R}} \frac{1}{(\eta+z)^{\beta}(\log(1+\eta+z))^{\zeta}(z+\lambda+\eta-1)} dz =\frac{1}{(1-\lambda)^{\beta}\log(2-\lambda)^{\zeta}}.
\end{equation*}
Thus, it follows that
\begin{eqnarray*}
  &&\int_{\Gamma} \frac{1}{(\eta+z)^{\beta}(\log(1+\eta+z))^{\zeta}(z+\lambda+\eta-1)} dz\\
 && =\lim_{r\to 0, R\to \infty} \int_{\gamma_{r,R}} \frac{1}{(\eta+z)^{\beta}(\log(1+\eta+z))^{\zeta}(z+\lambda+\eta-1)} dz=\frac{1}{(1-\lambda)^{\beta}\log(2-\lambda)^{\zeta}}.
\end{eqnarray*}
\end{proof}

\begin{center} \Large{Acknowledgments} 
\end{center}

GSS thanks the partial support by CAPES (Brazilian agency). SLC thanks the partial support by Fapemig (Minas Gerais state agency; Universal Project under contract 001/17/CEX-APQ-00352-17).


\

\noindent  Email: gesoares2017@gmail.com, Departamento de Matem\'atica, UFMG, Belo Horizonte, MG, 30161-970 Brazil

\noindent  Email: silas@mat.ufmg.br, Departamento de Matem\'atica, UFMG, Belo Horizonte, MG, 30161-970 Brazil

\end{document}